\newcommand{\lb}{\left\langle}
\newcommand{\rb}{\right\rangle}
\newcommand{\abs}[1]{\left|#1\right|}
\newcommand{\norm}[1]{\left\| #1 \right\|}
\newcommand{\bb}{\mathbb}
\newcommand{\Grad}{\nabla}
\newcommand{\bdy}{\partial}
\newcommand{\weakconv}{\rightharpoonup}
\renewcommand{\d}{{\rm d}}
\newcommand\xqed[1]{%
  \leavevmode\unskip\penalty9999 \hbox{}\nobreak\hfill
  \quad\hbox{#1}}
\newcommand\demo{\xqed{$\triangle$}}
\newtheorem{theorem}{Theorem}
\newtheorem{coro}[theorem]{Corollary}
\newtheorem{prop}[theorem]{Proposition}
\newtheorem{lemma}[theorem]{Lemma}
\newtheorem{claim}[theorem]{Claim}
\newtheorem{remark}[theorem]{Remark}
\numberwithin{theorem}{section}
\numberwithin{equation}{section}
\numberwithin{figure}{section}
\newif\ifdetails
\title[Conformally Invariant Extension Operators on $\bb R_+^n$]{Subcritical Approach to Conformally Invariant Extension Operators on the Upper Half Space}
\subjclass[2010]{35Jxx, 45Gxx, 31-xx}
\keywords{sharp Hardy-Littlewood-Sobolev inequality, moving sphere method, conformally invariant}
\author{Mathew Gluck}
\address{Mathew Gluck, Department of Mathematics,
The University of Oklahoma, Norman, OK 73019, USA}
\email{mgluck@math.ou.edu}
\date{\today}
\begin{document}
\maketitle
\begin{abstract}
In this work we obtain sharp embedding inequalities for a family of conformally invariant integral extension operators. This family includes (among others) the classical Poisson extension operator and the extension operator with Riesz kernel. We show that the sharp constants in these inequalities are attained and classify the corresponding extremal functions. We also compute the limiting behavior at the boundary of the extensions of the extremal functions. 
\end{abstract}
\keywords{}
\section{Introduction}
The classical Hardy-Littlewood-Sobolev (HLS) inequality \cite{HL1928, HL1930, Sobolev1938, Lieb1983} states that if $1< p, t< \infty$ and if $1< \alpha < n$ satisfy $\frac 1 p + \frac 1 t = \frac{n + \alpha}{n}$ then there exists a sharp constant $\mathcal H(n, \alpha, p)>0$ such that 
\begin{equation}
\label{eq:ClassicalHLS}
	\abs{\int_{\bb R^n}\int_{\bb R^n}\frac{f(y)g(x)}{\abs{x - y}^{n - \alpha}}\; \d y\; \d x}
	\leq
	\mathcal H(n, \alpha, p) \norm{f}_{L^p(\bb R^n)}\norm{g}_{L^t(\bb R^n)} 
\end{equation}
for all $f\in L^p(\bb R^n)$ and all $g\in L^t(\bb R^n)$. In the diagonal case $p = t = 2n/(n +\alpha)$ the extremal functions were classified and the value of the sharp constant was computed by Lieb in \cite{Lieb1983}. If in the diagonal case $\alpha = 2$ and $n\geq 3$ then sharp inequality \eqref{eq:ClassicalHLS} is dual to the classical sharp Sobolev inequality $S_n\norm{u}_{2n/(n-2)}\leq\norm{\Grad u}_2^2$ and the sharp constant for each of these inequalities gives the sharp constant for the other. The sharp Sobolev and HLS inequalities play prominent roles in many geometric problems including, for example the Yamabe problem \cite{Trudinger1968, Aubin1976, Schoen1984, LeeParker1987}. In recent years, variants and generalizations of the classical HLS inequality have been investigated, some of which also have geometric implications. For example, in \cite{FrankLieb2012} Frank and Lieb prove a sharp HLS inequality on the Heisenberg group. Another variant of \eqref{eq:ClassicalHLS} is the reversed HLS inequality of Dou and Zhu \cite{DouZhu2015} (see also \cite{NgoNguyen2017}) which applies to the case where the differential order exceeds the dimension. \\ 

Of particular interest in this paper is a family of HLS-type inequalities of the form 
\begin{equation}
\label{eq:BoundaryFamily}
	\abs{\int_{\bb R_+^n}\int_{\bdy \bb R_+^n} K(x' - y, x_n) f(y)g(x)}
	\leq
	C\norm{f}_{L^p(\bdy \bb R_+^n)}\norm{g}_{L^t(\bb R_+^n)}, 
\end{equation}
where $K$ is a kernel of the form 
\begin{equation}
\label{eq:UHSUnweightedKernel}
	K(x) 
	= 
	K_{\alpha, \beta}(x)
	= 
	\frac{x_n^\beta}
	{(\abs{x'}^2 + x_n^2)^{(n - \alpha)/2}}
\end{equation}
and $x = (x', x_n)\in \bb R^{n - 1}\times(0, \infty)$. This family of kernels includes the classical Poisson kernel $K_{0, 1}$, the Riesz kernel $K_{\alpha, 0}$ and the Poisson kernel $K_{\alpha, 1-\alpha}$ for the divergence-form operator $u\mapsto{\rm div}(x_n^\alpha \Grad u)$ on the upper half space. These three kernels are well-studied and arise in connection with many interesting problems. For example the relationship between the Poisson kernel and the isoperimetric problem for scalar-flat metrics on compact Riemannian manifolds with boundary was pointed out in \cite{HangWangYan2009}, see also \cite{GluckZhu2017, JinXiong2017}. The relationship between the kernel $K_{\alpha, 1-\alpha}$ and the fractional Laplacian was pointed out in \cite{CaffarelliSilvestre2007}. Inequalities of the form of \eqref{eq:BoundaryFamily} have been obtained for each of the kernels $K_{0, 1}$, $K_{\alpha, 0}$ and $K_{\alpha, 1 - \alpha}$, see \cite{HangWangYan2008}, \cite{DouZhu2013} and \cite{Chen2012} respectively. In fact, each of these inequalities (with their corresponding choices of $\alpha$ and $\beta$) were shown to hold for all exponents $1< p, t< \infty$ on the so-called critical hyperbola
\begin{equation}
\label{eq:CriticalHyperbola}
	\frac 1 t + \frac{n - 1}{np} = 1 + \frac{\alpha + \beta -1}{n}. 
\end{equation}
Moreover, the proofs of these inequalities all follow the classical approach of obtaining weak-type estimates then interpolating. These proofs all rely on the assumption that $\alpha + \beta \geq 1$. On the other hand, in \cite{DouGuoZhu2017} a subcritical approach was taken to prove an inequality of the form \eqref{eq:BoundaryFamily} for the kernel $K_{\alpha, 1}$ and for the conformal exponents $p = 2(n-1)/(n + \alpha -2)$ and $t = 2n/(n + \alpha + 2)$ (which satisfy \eqref{eq:CriticalHyperbola} with $\beta = 1$). Their approach also relies on the assumption that $\alpha + \beta \geq 1$. In this work a subcritical approach is used to obtain inequality \eqref{eq:BoundaryFamily} for the conformal exponents $p = 2(n-1)/(n + \alpha - 2)$, $t = 2n/(n + \alpha + 2\beta)$ and for the full range of admissible $\alpha$ and $\beta$ (i.e. in the absence of the assumption $\alpha + \beta\geq 1$). To state our results we introduce the extension operator 
\begin{equation*}
	E_{\alpha, \beta}f(x) = \int_{\bdy \bb R_+^n} \frac{x_n^\beta f(y)}{\abs{x - y}^{n - \alpha}}\; \d y
	\qquad
	\text{ for } f\in L^p(\bdy \bb R_+^n)
\end{equation*}
and the corresponding restriction operator
\begin{equation*}
	R_{\alpha, \beta}g(y) = \int_{\bb R_+^n} \frac{x_n^\beta g(x)}{\abs{x - y}^{n - \alpha}}\; \d x
	\qquad
	\text{ for } g\in L^t(\bb R_+^n). 
\end{equation*}
Note that the use of the adjective ``extension'' in this context is not meant to imply that $\lim_{x_n\to 0}E_{\alpha, \beta}f(x', x_n) = f(x', 0)$. In fact, the limiting behavior of $E_{\alpha, \beta}f$ depends crucially on $\alpha, \beta$ and is one of the topics of investigation of this work, see Theorem \ref{theorem:UHSConformalExtremalClassification} below. In the following theorem and throughout this work $B = B_1(0)$ will denote the open unit ball in $\bb R^n$. 
\begin{theorem}
\label{theorem:ConformallyInvariantSharpHalfSpaceInequality}
Let $n\geq 2$ and suppose $\alpha, \beta$ satisfy 
\begin{equation}
\label{eq:MinimalAlphaBeta}
	\beta\geq 0,\quad   0< \alpha + \beta < n - \beta
\end{equation}
and 
\begin{equation}
\label{eq:AlphaBetaSubAffine}
	\frac{n - \alpha - 2\beta}{2n} + \frac{n - \alpha}{2(n - 1)}< 1. 
\end{equation}
There exists an optimal constant $C_e(n, \alpha, \beta)$ such that for every $f\in L^{\frac{2(n -1)}{n +\alpha - 2}}(\bdy \bb R_+^n)$ the inequality 
\begin{equation}
\label{eq:SharpConformallyInvariantUHSInequality}
	\norm{E_{\alpha, \beta}f}_{L^{\frac{2n}{n - \alpha - 2\beta}}(\bb R_+^n)}
	\leq
	C_e(n, \alpha, \beta)\norm{f}_{L^{\frac{2(n - 1)}{n +\alpha -2}}(\bdy \bb R_+^n)}
\end{equation}
holds. Moreover, the value of the optimal constant is
\begin{equation}
\label{eq:OptimalExtensionConstant}
	C_e(n, \alpha, \beta) 
	= 
	(n\omega_n)^{-\frac{n + \alpha -2}{2(n - 1)}}
	\norm{\int_{\bdy B}H(\cdot, \zeta)\; \d S_\zeta }_{L^{\frac{2n}{n -\alpha - 2\beta}}(B)}, 
\end{equation}
where $H:B\times \bdy B\to \bb R$ is given by
\begin{equation}
\label{eq:BallKernel}
	H(\xi, \zeta) = H_{\alpha, \beta}(\xi, \zeta)= \left(\frac{1 - \abs{\xi}^2}{2}\right)^\beta \abs{\xi - \zeta}^{\alpha- n}. 
\end{equation}
\end{theorem}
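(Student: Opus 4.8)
The plan is to exploit the conformal invariance of $E_{\alpha,\beta}$ to transport the problem to the unit ball $B$, where a subcritical truncation restores compactness, then to identify the extremals of the truncated problems explicitly by the moving plane method, and finally to pass to the limit. To set this up, let $\Phi\colon\overline{\bb R_+^n}\to\overline B$ be inverse stereographic projection composed with a reflection, so that $\Phi(\bdy\bb R_+^n)=\bdy B$. A change of variables shows that, up to explicit powers of the conformal factor $\abs{\det D\Phi}$ — the powers being exactly the ones that render $p=\tfrac{2(n-1)}{n+\alpha-2}$ and $q=\tfrac{2n}{n-\alpha-2\beta}$ conformal — the operator $E_{\alpha,\beta}$ corresponds to $Tf(\xi):=\int_{\bdy B}H_{\alpha,\beta}(\xi,\zeta)f(\zeta)\,\d S_\zeta$ on $B$, with $H_{\alpha,\beta}$ as in \eqref{eq:BallKernel}. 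Hence \eqref{eq:SharpConformallyInvariantUHSInequality} is equivalent to the scale-free inequality $\norm{Tf}_{L^q(B)}\le C\norm{f}_{L^p(\bdy B)}$ with the same optimal constant, and since $\norm{1}_{L^p(\bdy B)}=(n\omega_n)^{(n+\alpha-2)/(2(n-1))}$, the asserted value \eqref{eq:OptimalExtensionConstant} is precisely the Rayleigh quotient $\norm{T1}_{L^q(B)}/\norm{1}_{L^p(\bdy B)}$. Thus it suffices to prove that the constant function maximizes $\norm{Tf}_{L^q(B)}$ over $\norm{f}_{L^p(\bdy B)}=1$, i.e.\ that the conformal bubbles obtained from constants on $\bdy\bb R_+^n$ are extremal. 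Here \eqref{eq:MinimalAlphaBeta} guarantees that $T$ is well defined on $L^p(\bdy B)$, while \eqref{eq:AlphaBetaSubAffine} is equivalent to $T1\in L^q(B)$, i.e.\ finiteness of the candidate constant; the latter is automatic when $\alpha+\beta\ge1$ but not in general.

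For $r\in(0,1)$ set $S_r:=\sup\{\norm{Tf}_{L^q(B_r)}:\norm{f}_{L^p(\bdy B)}=1\}$; this is the subcritical truncation, in the sense that on $B_r\times\bdy B$ the kernel $H_{\alpha,\beta}$ is bounded and continuous, so $T\colon L^p(\bdy B)\to L^q(B_r)$ is compact and a direct method produces a nonnegative maximizer $f_r$. By its Euler–Lagrange integral system $f_r(\zeta)^{p-1}=c_r\int_{B_r}(Tf_r)(\xi)^{q-1}H_{\alpha,\beta}(\xi,\zeta)\,\d\xi$ on $\bdy B$ (coupled with $Tf_r=\int_{\bdy B}H_{\alpha,\beta}(\cdot,\zeta)f_r(\zeta)\,\d S_\zeta$ on $B$) together with a regularity bootstrap, $f_r$ is positive and continuous. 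Since $\bigl(\tfrac{1-\abs{\xi}^2}{2}\bigr)^\beta$ depends only on $\abs{\xi}$ and $\abs{\xi-\zeta}^{\alpha-n}$ transforms well under reflections across hyperplanes through the origin, the method of moving planes applies to this system and forces $f_r$ to be invariant under all such reflections, hence constant on $\bdy B$. Therefore $S_r=\norm{T1}_{L^q(B_r)}/\norm{1}_{L^p(\bdy B)}$.

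Letting $r\uparrow1$, monotone convergence gives $S_r\uparrow\norm{T1}_{L^q(B)}/\norm{1}_{L^p(\bdy B)}$, which is finite by \eqref{eq:AlphaBetaSubAffine}; and for every $f$ with $\norm{f}_{L^p(\bdy B)}=1$, monotone convergence also gives $\norm{Tf}_{L^q(B)}=\lim_{r\uparrow1}\norm{Tf}_{L^q(B_r)}\le\lim_{r\uparrow1}S_r=\norm{T1}_{L^q(B)}/\norm{1}_{L^p(\bdy B)}$. Thus \eqref{eq:SharpConformallyInvariantUHSInequality} holds with constant equal to the right-hand side of \eqref{eq:OptimalExtensionConstant}, and equality is attained by the constant function (equivalently, by the conformal bubble), which shows that constant is optimal.

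The step I expect to be the main obstacle is the moving plane classification of the maximizers $f_r$, which must be carried out for the full admissible range \eqref{eq:MinimalAlphaBeta}--\eqref{eq:AlphaBetaSubAffine}. When $\alpha+\beta\ge1$ the kernel $H_{\alpha,\beta}$ is, up to a harmless bounded factor, close to the classical cases of \cite{HangWangYan2008,DouZhu2013,Chen2012,DouGuoZhu2017}; but when $\alpha+\beta<1$ (allowed here) the exponents $p-1$ and $q-1$ appearing in the Euler–Lagrange system, together with the behavior of $H_{\alpha,\beta}$ near $\bdy B$, fall outside the previously treated ranges, so the regularity theory, the estimates needed to start the moving planes, and the integrability bookkeeping must all be reworked at the sharp thresholds encoded in the hypotheses. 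A secondary technical point is the precise verification of the conformal covariance identity for $E_{\alpha,\beta}$ and the exact power of the Jacobian weight.
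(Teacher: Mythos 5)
Your overall strategy is sound and would prove the theorem, but the subcritical approximation you choose is genuinely different from the paper's. The paper keeps the full domains $B$ and $\bdy B$ and perturbs the \emph{exponents} to a doubly subcritical pair satisfying \eqref{eq:psDoublySubcritical}; compactness of $E_B$ (Lemma \ref{lemma:BallExtensionDoublySubcriticalCompact}) yields an extremal, and after transporting to $\bb R_+^n$ the subcriticality appears as weights $w(x)^\tau w(y)^\sigma$ with $\sigma,\tau>0$ whose strict monotonicity under reflection is exactly what closes the moving-plane argument at $\lambda=0$ (Lemmas \ref{lemma:KeyEstimatesForMovingPlanes}--\ref{lemma:PlaneLambdaBar=0}); the sharp inequality is then recovered by letting the exponents tend to the conformal ones. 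You instead keep the critical exponents and truncate the \emph{domain} to $B_r$. This buys real simplifications: the kernel is smooth and bounded on $\overline{B_r}\times\bdy B$, so compactness is immediate, the maximizer $f_r$ is automatically continuous and bounded below by a positive constant (one application of the Euler--Lagrange identity, no bootstrap), and the passage $r\uparrow 1$ is a clean monotone-convergence limit rather than a limit in the exponents. Your identification of \eqref{eq:AlphaBetaSubAffine} with the finiteness of $\norm{T1}_{L^{2n/(n-\alpha-2\beta)}(B)}$ is also correct (it is equivalent to $(2n-1)\alpha+2(n-1)\beta>n$, which is what Lemma \ref{lemma:MappingPropertiesBallExtension} needs).

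The cost is concentrated in the classification step, and there your sketch is not yet an argument. Reflecting ``across hyperplanes through the origin'' is not a moving-plane method --- with all planes through the origin there is nothing to move, and on $\bdy B$ the Euclidean reflection of a cap across an off-center hyperplane does not even preserve $\bdy B$. What must actually be done is: transport the truncated Euler--Lagrange system to $\bb R_+^n$ via $T\circ\rho$ for each rotation $\rho$ (as in the proof of Theorem \ref{theorem:ConstantExtremalsBestEmbeddingConstant}), obtaining a system coupling $u$ on all of $\bdy\bb R_+^n$ with $v$ on the bounded ball $T(B_r)\subset\subset\bb R_+^n$, and then run genuine moving planes in the horizontal directions. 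Since the critical exponents kill the weights ($\sigma=\tau=0$), the strictness that forces the critical plane to $\{y_1=0\}$ must come instead from the lune $T(B_r)\cap\{x_1>\lambda\}$ not covered by the reflected cap --- a mechanism that does work (and degenerates exactly at $r=1$, consistent with the existence of non-constant critical extremals), but that still requires the two pieces of technology the paper's Section \ref{section:SubcriticalExtremalClassification} exists to supply: control near infinity on the unbounded half-plane $\Sigma_{\lambda,n-1}$ when advancing past the critical position, and the concavity trick with $u^{1/r}-u_\lambda^{1/r}$ when $\theta=p'-1<1$ (i.e.\ $\alpha<1$), since the naive Mean Value Theorem estimate fails there. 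So the proposal is a viable, and in several respects cleaner, alternative route, but its central lemma is currently asserted rather than proved, and filling it in is work of roughly the same scale as the paper's Lemmas \ref{lemma:KeyEstimatesForMovingPlanes}--\ref{lemma:PlaneLambdaBar=0}.
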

By duality, inequality \eqref{eq:SharpConformallyInvariantUHSInequality} is equivalent to inequality \eqref{eq:BoundaryFamily} (with sharp constant) and to the following inequality for $R_{\alpha, \beta}$. 
\begin{coro}
\label{coro:ConformallyInvariantSharpHalfSpaceInequality}
Under the assumptions of Theorem \ref{theorem:ConformallyInvariantSharpHalfSpaceInequality}, for every $g\in L^{\frac{2n}{n + \alpha + 2\beta}}(\bb R_+^n)$ the inequality 
\begin{equation*}
	\norm{R_{\alpha, \beta} g}_{L^{\frac{2(n - 1)}{n - \alpha}}(\bdy \bb R_+^n)}
	\leq
	C_e(n, \alpha, \beta)\norm{g}_{L^{\frac{2n}{n + \alpha + 2\beta}}(\bb R_+^n)}
\end{equation*}
holds and the constant $C_e(n, \alpha, \beta)$ is optimal. 
\end{coro}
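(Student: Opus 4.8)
The plan is to derive the restriction inequality from the extension inequality \eqref{eq:SharpConformallyInvariantUHSInequality} of Theorem \ref{theorem:ConformallyInvariantSharpHalfSpaceInequality} by a standard duality argument, taking care that the two optimal constants coincide. First I would record the Fubini identity that links the two operators: for $f$ on $\bdy\bb R_+^n$ and $g$ on $\bb R_+^n$ one has
\begin{equation*}
	\int_{\bb R_+^n} E_{\alpha,\beta}f(x)\, g(x)\; \d x
	=
	\int_{\bdy\bb R_+^n}\int_{\bb R_+^n}\frac{x_n^\beta f(y)g(x)}{\abs{x-y}^{n-\alpha}}\; \d x\; \d y
	=
	\int_{\bdy\bb R_+^n} f(y)\, R_{\alpha,\beta}g(y)\; \d y,
\end{equation*}
the interchange of the order of integration being justified once we know both sides are finite, which follows from Theorem \ref{theorem:ConformallyInvariantSharpHalfSpaceInequality} together with H\"older's inequality. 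Note that the exponents match up correctly: $\tfrac{2n}{n-\alpha-2\beta}$ is conjugate to $\tfrac{2n}{n+\alpha+2\beta}$, and $\tfrac{2(n-1)}{n+\alpha-2}$ is conjugate to $\tfrac{2(n-1)}{n-\alpha}$, so that each pairing above is a legitimate $L^p$–$L^{p'}$ duality bracket.

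Next I would run the duality in the forward direction. Fix $g\in L^{\frac{2n}{n+\alpha+2\beta}}(\bb R_+^n)$. For arbitrary $f\in L^{\frac{2(n-1)}{n+\alpha-2}}(\bdy\bb R_+^n)$, the identity above combined with H\"older's inequality on $\bb R_+^n$ and then with \eqref{eq:SharpConformallyInvariantUHSInequality} gives
\begin{equation*}
	\abs{\int_{\bdy\bb R_+^n} f\, R_{\alpha,\beta}g}
	=
	\abs{\int_{\bb R_+^n} E_{\alpha,\beta}f\cdot g}
	\leq
	\norm{E_{\alpha,\beta}f}_{L^{\frac{2n}{n-\alpha-2\beta}}(\bb R_+^n)}\,\norm{g}_{L^{\frac{2n}{n+\alpha+2\beta}}(\bb R_+^n)}
	\leq
	C_e(n,\alpha,\beta)\,\norm{f}_{L^{\frac{2(n-1)}{n+\alpha-2}}(\bdy\bb R_+^n)}\,\norm{g}_{L^{\frac{2n}{n+\alpha+2\beta}}(\bb R_+^n)}.
\end{equation*}
Taking the supremum over $f$ in the unit ball of $L^{\frac{2(n-1)}{n+\alpha-2}}(\bdy\bb R_+^n)$ and invoking the $L^p$–$L^{p'}$ duality characterization of the norm identifies $\norm{R_{\alpha,\beta}g}_{L^{\frac{2(n-1)}{n-\alpha}}(\bdy\bb R_+^n)}$ as that supremum, yielding the asserted inequality with the same constant $C_e(n,\alpha,\beta)$. (One should check that $R_{\alpha,\beta}g$ is measurable and that the pairing with test functions determines it as an element of the dual, which is routine since the kernel is nonnegative.)

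Finally I would argue that the constant is optimal, i.e. cannot be replaced by anything smaller. Let $\{f_k\}$ be a sequence in $L^{\frac{2(n-1)}{n+\alpha-2}}(\bdy\bb R_+^n)$ that is extremizing (or near-extremizing) for \eqref{eq:SharpConformallyInvariantUHSInequality}, normalized so that $\norm{f_k}_{L^{\frac{2(n-1)}{n+\alpha-2}}(\bdy\bb R_+^n)}=1$ and $\norm{E_{\alpha,\beta}f_k}_{L^{\frac{2n}{n-\alpha-2\beta}}(\bb R_+^n)}\to C_e(n,\alpha,\beta)$; such a sequence exists because $C_e$ is defined as an optimal (least) constant, and in fact Theorem \ref{theorem:ConformallyInvariantSharpHalfSpaceInequality} shows the sharp constant is attained, so one may even take $f_k\equiv f$ an extremal. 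Choosing $g_k$ dual to $E_{\alpha,\beta}f_k$, namely $g_k=\abs{E_{\alpha,\beta}f_k}^{\frac{2n}{n-\alpha-2\beta}-1}\operatorname{sgn}(E_{\alpha,\beta}f_k)$ suitably normalized in $L^{\frac{2n}{n+\alpha+2\beta}}(\bb R_+^n)$, the Fubini identity gives $\int f_k\,R_{\alpha,\beta}g_k=\int E_{\alpha,\beta}f_k\cdot g_k=\norm{E_{\alpha,\beta}f_k}_{L^{\frac{2n}{n-\alpha-2\beta}}(\bb R_+^n)}$, whence $\norm{R_{\alpha,\beta}g_k}_{L^{\frac{2(n-1)}{n-\alpha}}(\bdy\bb R_+^n)}\geq \norm{E_{\alpha,\beta}f_k}_{L^{\frac{2n}{n-\alpha-2\beta}}(\bb R_+^n)}\to C_e(n,\alpha,\beta)$, so no smaller constant works. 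I do not anticipate a serious obstacle here; the only points requiring mild care are the measurability and integrability bookkeeping needed to legitimize Fubini and the duality pairing, and the verification that the four exponents pair up conjugately — both of which are straightforward consequences of the hypotheses \eqref{eq:MinimalAlphaBeta}–\eqref{eq:AlphaBetaSubAffine} and Theorem \ref{theorem:ConformallyInvariantSharpHalfSpaceInequality}.
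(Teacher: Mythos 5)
Your proposal is correct and is exactly the argument the paper has in mind: the paper derives Corollary \ref{coro:ConformallyInvariantSharpHalfSpaceInequality} from Theorem \ref{theorem:ConformallyInvariantSharpHalfSpaceInequality} simply "by duality," and your write-up supplies the standard Fubini--H\"older--duality details (with the correct conjugate pairings $\tfrac{2n}{n-\alpha-2\beta}\leftrightarrow\tfrac{2n}{n+\alpha+2\beta}$ and $\tfrac{2(n-1)}{n+\alpha-2}\leftrightarrow\tfrac{2(n-1)}{n-\alpha}$) together with the routine optimality check via the extremal (or near-extremal) $f$ and its dual test function.
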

We'll show that the optimal constant $C_e(n, \alpha, \beta)$ in Theorem \ref{theorem:ConformallyInvariantSharpHalfSpaceInequality} is attained and we'll classify the corresponding extremal functions via the method of moving spheres. In fact, up to a constant multiple, the Euler-Lagrange equation for the nonnegative extremal functions in \eqref{eq:SharpConformallyInvariantUHSInequality} is
\begin{equation}
\label{eq:ConformallyInvariantELHalfSpace}
	f^{\frac{n - \alpha}{n + \alpha -2}}(y)
	= 
	\int_{\bb R_+^n} \frac{x_n^\beta \left(E_{\alpha, \beta}f(x)\right)^{\frac{n +\alpha + 2\beta}{n - \alpha - 2\beta}}}{\abs{x - y}^{n -\alpha}}\; \d x
	\qquad
	\text{ for } y\in \bdy \bb R_+^n. 
\end{equation}
The following theorem classifies all solutions to this equation together with the boundary behavior of the corresponding extensions. 
\begin{theorem}
\label{theorem:UHSConformalExtremalClassification}
Let $n\geq 2$ and suppose $\alpha, \beta$ satisfy \eqref{eq:MinimalAlphaBeta} and \eqref{eq:AlphaBetaSubAffine}. If $f\in L^{\frac{2(n-1)}{n + \alpha - 2}}(\bdy \bb R_+^n)$ is a nonnegative solution to \eqref{eq:ConformallyInvariantELHalfSpace} then there are constants $c\geq 0$, $d>0$ and there is $y_0\in \bdy \bb R_+^n$ such that
\begin{equation}
\label{eq:ConformalBoundaryFunctionExtremal}
	f(y) = 
	c\left(d^2 + \abs{y - y_0}^2\right)^{-\frac{n + \alpha - 2}{2}}. 
\end{equation}
Moreover for such $f$, the following limits at $\bdy \bb R_+^n$ hold for $E_{\alpha, \beta}f$: 
\begin{enumerate}[(a)]
	\item If $\alpha< 1$ then 
	\begin{equation*}
		\lim_{x_n\to 0^+}
		x_n^{1 - \alpha - \beta}E_{\alpha, \beta}f(x', x_n) 
		= 
		C_{n, \alpha}f(x'),
	\end{equation*}
	where $C_{n, \alpha} = \int_{\bb \bdy \bb R_+^n}(1 + \abs y^2)^{(\alpha - n)/2}\; \d y$. 
	\item If $\alpha = 1$ then 
	\begin{equation*}
		- \lim_{x_n\to 0^+}
		\frac{E_{1, \beta}f(x', x_n)}{x_n^\beta \log x_n}
		= 
		(n - 1)\omega_{n - 1} f(x'). 
	\end{equation*}
	\item If $1< \alpha$ then there is a positive constant $C = C(n, \alpha)$ such that 
	\begin{equation*}
		\lim_{x_n\to 0^+}x_n^{-\beta}E_{\alpha, \beta}f(x', x_n)
		= 
		Cf(x')^{\frac{n - \alpha}{n + \alpha - 2}}. 
	\end{equation*}
\end{enumerate}
\end{theorem}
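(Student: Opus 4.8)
The plan is to split the argument into two independent parts: (i) the classification of nonnegative solutions to the Euler--Lagrange equation \eqref{eq:ConformallyInvariantELHalfSpace}, and (ii) the computation of the boundary limits of $E_{\alpha,\beta}f$ for such $f$.

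For part (i) I would invoke the conformal structure of the problem. The key observation is that the pair of integral operators $E_{\alpha,\beta}$ and $R_{\alpha,\beta}$ is conformally covariant under the conformal transformations of $\bb R^n$ that preserve $\bb R_+^n$ (translations parallel to $\bdy\bb R_+^n$, dilations, and the inversions fixing a boundary point), and the exponents in \eqref{eq:ConformallyInvariantELHalfSpace} are precisely the conformal ones that make this covariance an exact symmetry of the equation. I would then run the method of moving spheres on the boundary function $f$: for each $y_0\in\bdy\bb R_+^n$ and each $\lambda>0$ compare $f$ with its Kelvin transform $f_{y_0,\lambda}(y)=\bigl(\lambda/\abs{y-y_0}\bigr)^{n+\alpha-2}f\bigl(y_0+\lambda^2(y-y_0)/\abs{y-y_0}^2\bigr)$. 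Using the integral form of the equation together with the elementary comparison identity for $\abs{x-y}^{\alpha-n}$ under reflection about the sphere $\bdy B_\lambda(y_0)$, one shows that if the spheres cannot be moved all the way in from infinity (equivalently $f$ is not identically a bubble), then $f$ must in fact be invariant under every such Kelvin transform, which forces \eqref{eq:ConformalBoundaryFunctionExtremal} (including the degenerate case $c=0$). I expect this to be largely parallel to the moving-sphere arguments of Li--Zhu / Dou--Guo--Zhu, so the only care needed is bookkeeping the exponents $\frac{n-\alpha}{n+\alpha-2}$ and $\frac{n+\alpha+2\beta}{n-\alpha-2\beta}$ and checking the integrability conditions guaranteed by \eqref{eq:MinimalAlphaBeta}--\eqref{eq:AlphaBetaSubAffine} so that all integrals converge.

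For part (ii), once $f$ has the explicit form \eqref{eq:ConformalBoundaryFunctionExtremal}, everything reduces to asymptotics of the single integral $E_{\alpha,\beta}f(x',x_n)=x_n^\beta\int_{\bdy\bb R_+^n}\abs{x-y}^{\alpha-n}f(y)\,\d y$ as $x_n\to 0^+$. By the translation/dilation covariance it suffices to treat one representative $f$, say $c=d=1$, $y_0=0$. Writing $\abs{x-y}^2=\abs{x'-y}^2+x_n^2$ and rescaling $y=x'+x_n z$, the integral becomes $x_n^{\alpha-1}\int_{\bdy\bb R_+^n}(\abs z^2+1)^{(\alpha-n)/2}f(x'+x_n z)\,\d z$; the behavior of this as $x_n\to0$ is governed by whether $(\abs z^2+1)^{(\alpha-n)/2}$ is integrable at infinity on $\bb R^{n-1}$, i.e.\ by the sign of $\alpha-1$. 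When $\alpha<1$ the integral converges and dominated convergence gives $f(x')\int(\abs z^2+1)^{(\alpha-n)/2}\,\d z=C_{n,\alpha}f(x')$, yielding case (a) after restoring the factor $x_n^\beta$. When $\alpha=1$ the integral diverges logarithmically and a standard splitting into $\abs z\le x_n^{-1}$ and $\abs z> x_n^{-1}$ extracts the $-\log x_n$ term with coefficient $(n-1)\omega_{n-1}f(x')$, giving case (b). When $\alpha>1$ the naive rescaling over-divides; instead one notes the integral $\int\abs{x'-y}^{\alpha-1-n}\!\cdots$ type tail dominates and, using the explicit $f$, the limit $\lim_{x_n\to0}x_n^{-\beta}E_{\alpha,\beta}f$ equals $R_{\alpha,\beta}$ of the relevant power of $E_{\alpha,\beta}f$ evaluated via \eqref{eq:ConformallyInvariantELHalfSpace}, which is $Cf(x')^{(n-\alpha)/(n+\alpha-2)}$ with $C$ the explicit constant coming from the boundary integral in \eqref{eq:ConformallyInvariantELHalfSpace}; this is case (c).

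The main obstacle I anticipate is not the moving-sphere step, which is by now routine, but rather justifying the interchange of limit and integral uniformly in part (ii) case (c): there the dependence on $x'$ is not trivially handled by a single dominated-convergence bound because the relevant integrable majorant degenerates near $y=x'$, and one must use the precise decay of the bubble \eqref{eq:ConformalBoundaryFunctionExtremal} together with the constraint \eqref{eq:AlphaBetaSubAffine} to produce a uniform integrable bound. A secondary technical point, needed for all three cases, is verifying a priori that $E_{\alpha,\beta}f(x',x_n)$ is finite for $x_n>0$ and that $f\in L^{2(n-1)/(n+\alpha-2)}$ — both of which follow from the explicit form of $f$ and the exponent arithmetic, but should be stated carefully so the limits are taken for a genuinely admissible function.
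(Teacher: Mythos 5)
Your overall strategy coincides with the paper's: moving spheres for the classification, then rescaling/splitting for the boundary asymptotics. Cases (a) and (b) of part (ii) are handled exactly as in the paper (the substitution $y\mapsto x'+x_nz$ plus dominated convergence for $\alpha<1$; a split at scale $x_n^{-1}$ extracting the logarithm for $\alpha=1$). However, the sketch has genuine gaps at the two places where the real work lies.

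First, in part (i) your final step --- ``$f$ invariant under every Kelvin transform forces the bubble form'' --- is the pointwise Li--Zhu calculus lemma, which requires $f$ to be defined everywhere (e.g.\ continuous). Here $f$ is only an $L^{\frac{2(n-1)}{n+\alpha-2}}$ function, so the paper must instead invoke the Frank--Lieb a.e.\ classification (Theorem \ref{theorem:FrankLiebClassification}), which needs \emph{both} inversion invariance about every center \emph{and} reflection invariance about every hyperplane; the latter is established by a separate moving-planes argument that your proposal omits entirely. Relatedly, all the moving-sphere comparisons must be run on the system $(u,v)=\bigl(f^{\frac{n-\alpha}{n+\alpha-2}},E_{\alpha,\beta}f\bigr)$ jointly and in integral form ($L^{\theta+1}$/$L^{\kappa+1}$ norms over the ``bad'' sets, with approximation by continuous functions to pass to the critical radius), and the case $\theta<1$ (i.e.\ $\alpha<1$) requires a non-obvious modification --- the $\ell^r$-splitting applied to $u^{1/r}-u_{z,\lambda}^{1/r}$ --- because the Mean Value Theorem estimate used for $\theta\geq1$ fails. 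None of this is ``routine bookkeeping of exponents.'' Second, in part (ii)(c) the limit $\lim_{x_n\to0^+}x_n^{-\beta}E_{\alpha,\beta}f$ is the \emph{boundary} Riesz potential $\int_{\bdy\bb R_+^n}f(y)\abs{x'-y}^{\alpha-n}\,\d y$, not ``$R_{\alpha,\beta}$ of a power of $E_{\alpha,\beta}f$'' --- the Euler--Lagrange equation \eqref{eq:ConformallyInvariantELHalfSpace} is an integral over the half-space and does not directly evaluate this boundary integral. One must verify separately that the $(n-1)$-dimensional Riesz potential of the bubble \eqref{eq:ConformalBoundaryFunctionExtremal} is again a bubble with the conjugate exponent; the paper does this by conformally transplanting to $\bdy B$, where $\int_{\bdy B}\abs{\xi-\zeta}^{\alpha-n}\,\d S_\zeta$ is constant in $\xi\in\bdy B$. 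Without that computation case (c) is not proved.
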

This paper is organized as follows. In Section \ref{section:SubcriticalnequalityOnBall} we establish both a sharp extension inequality on the unit ball $B$ and the existence of the corresponding extremal functions for subcritical exponents. In Section \ref{section:SubcriticalExtremalClassification} we show that the extremal functions of the subcritical extension inequality on $B$ must be constant and thereby obtain an explicit expression for the sharp constant in the subcritical inequality. In Section \ref{section:ConformallyInvariantExponents}, by allowing the subcritical exponents to approach the conformal exponents, we obtain inequality \eqref{eq:SharpConformallyInvariantUHSInequality} from the subcritical inequality obtained in Section \ref{section:SubcriticalnequalityOnBall}. In Section \ref{section:ConformallyInvariantExponents} we also classify the extremal functions corresponding to \eqref{eq:SharpConformallyInvariantUHSInequality}  and compute the limiting behavior of their extensions at $\bdy \bb R_+^n$. Section \ref{section:Appendix} is an appendix containing some computations that may be useful for the reader yet detract too much from the main storyline to be included in the main body of the paper. \\

We will use the following notational conventions throughout. For $p\geq 1$, $p'$ will denote the Lebesgue-conjugate exponent, the solution to $\frac 1p + \frac 1{p'} = 1$. For $r>0$, $B_r$ will denote the open unit ball in $\bb R^n$ and we will write $B_r^{n - 1} = B_r\cap \bb \bdy \bb R_+^n$ and $B_r^+= B_r\cap \bb R_+^n$. If $\Omega$ is a subset of either $\bb R^n$ (respectively $\bdy \bb R_+^n$ or $\bb S^{n-1}$) then $\abs \Omega$ will denote either the $n$-dimensional Lebesgue measure (respectively the $n-1$-dimensional Lebesgue measure or the spherical measure) of $\Omega$. 
\section{Subcritical Inequality on the Ball}
\label{section:SubcriticalnequalityOnBall}

In this section we establish a sharp embedding inequality for an extension operator $E_B:L^p(\bdy B)\to L^s(B)$ when $p$ and $s$ are subcritical. The existence of the corresponding extremal functions is also established. Throughout, the integral kernel $H:B\times \bdy B\to \bb R$ will be as in \eqref{eq:BallKernel}. \\
\begin{prop}
\label{prop:SubcriticalBallHLSInequality}
Suppose $n\geq 2$,  $\beta\geq 0$ and $0< \alpha + \beta$. If $p>1$, $t>1$ satisfy both $1< \frac 1p + \frac 1 t$ and 
\begin{equation}
\label{eq:ptSubcritical}
	\frac 1 t + \frac{n -1}{np} < 1 + \frac{\alpha + \beta - 1}{n}
\end{equation}
then there exists a constant $C = C(n, \alpha, \beta, p, t)>0$ such that 
\begin{equation*}
	\abs{
	\int_B\int_{\bdy B}H(\xi, \zeta) f(\zeta)g(\xi)  \; \d S_\zeta\; \d \xi
	}
	\leq
	C\norm{f}_{L^p(\bdy B)}\norm{g}_{L^t(B)}
\end{equation*}
for all $f\in L^p(\bdy B)$ and all $g\in L^t(B)$. 
\end{prop}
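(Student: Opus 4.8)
The plan is to reduce the inequality on the ball to a Hardy–Littlewood–Sobolev–type estimate by controlling the kernel $H(\xi,\zeta)$ and then to split the integration domain according to whether $\xi$ is near $\bdy B$ or not. First I would observe that for $\xi\in B$ and $\zeta\in\bdy B$ the weight $(1-\abs\xi^2)/2$ is bounded above by $1$, so since $\beta\geq 0$ we immediately get the pointwise bound $H(\xi,\zeta)\le \abs{\xi-\zeta}^{\alpha-n}$ on all of $B\times\bdy B$. The bilinear form is therefore dominated by $\int_B\int_{\bdy B}\abs{\xi-\zeta}^{\alpha-n}\abs{f(\zeta)}\abs{g(\xi)}\,\d S_\zeta\,\d\xi$, and the whole problem becomes an off-diagonal HLS inequality between the $(n-1)$-dimensional sphere $\bdy B$ and the $n$-dimensional ball $B$ with Riesz kernel of order $\alpha$. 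When $\alpha>0$ this is a genuinely singular kernel; when $\alpha\le 0$ it is in fact bounded (note $\abs{\xi-\zeta}\ge 1-\abs\xi$ can be small, but $\abs{\xi-\zeta}\le 2$, so $\abs{\xi-\zeta}^{\alpha-n}$ is integrable in $\xi$ uniformly in $\zeta$ precisely when $\alpha-n>-n$, i.e. $\alpha>0$; for $\alpha\le 0$ the weight $\beta>0$ must be used to recover integrability). So the case analysis on the sign of $\alpha$ — or more precisely on whether $\alpha+\beta$ is large enough — is where the hypothesis $0<\alpha+\beta$ and the subcritical condition \eqref{eq:ptSubcritical} will enter.

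Next I would set up the dual formulation: by Hölder it suffices to show that the linear map $f\mapsto \int_{\bdy B}H(\cdot,\zeta)f(\zeta)\,\d S_\zeta$ is bounded from $L^p(\bdy B)$ to $L^{t'}(B)$, where $\tfrac1{t}+\tfrac1{t'}=1$; equivalently, appealing to the dual pair, one shows $g\mapsto \int_B H(\xi,\cdot)g(\xi)\,\d\xi$ maps $L^t(B)$ into $L^{p'}(\bdy B)$. The clean route is a Schur-type / Young-type argument: I would decompose $B$ into the collar $A_\delta=\{\xi\in B:1-\abs\xi<\delta\}$ and its complement. On the complement $B\setminus A_\delta$ the kernel is bounded (by $\delta^{\alpha-n}$ if $\alpha<n$, since $\abs{\xi-\zeta}\ge\delta$ there), and the weight is harmless, so that piece is trivially controlled by $\norm f_{L^1(\bdy B)}\le C\norm f_{L^p(\bdy B)}$ in $L^\infty(B)$. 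On the collar $A_\delta$ I would use the product structure: parametrize $\xi$ near $\bdy B$ by $(r\omega)$ with $r=\abs\xi$ close to $1$ and $\omega\in\bdy B$, so $1-\abs\xi^2\approx 1-r$ and $\abs{\xi-\zeta}^2\approx (1-r)^2+\abs{\omega-\zeta}^2$. The kernel on the collar is then comparable to $(1-r)^\beta\big((1-r)^2+\abs{\omega-\zeta}^2\big)^{(\alpha-n)/2}$, which is exactly (up to the change $x_n\leftrightarrow 1-r$) the half-space kernel $K_{\alpha,\beta}$; thus the collar estimate is morally the same as a half-space estimate, and I would prove it by a direct computation: integrate in $\zeta\in\bdy B$ first to get a function of $(1-r,\omega)$, bounding $\int_{\bdy B}\big((1-r)^2+\abs{\omega-\zeta}^2\big)^{(\alpha-n)/2}\,\d S_\zeta$ by $C(1-r)^{\alpha-1}$ when $\alpha<1$, by $C\abs{\log(1-r)}$ when $\alpha=1$, and by $C$ when $\alpha>1$ — this is the standard polar-coordinates integral in $\bb R^{n-1}$ against a bump of width $1-r$. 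Combining with the weight $(1-r)^\beta$ and then applying one-dimensional Hardy-type / Young inequalities in the $r$ variable together with Minkowski's integral inequality in $\omega$, the subcriticality \eqref{eq:ptSubcritical} is precisely what makes the resulting power of $(1-r)$ integrable against $\d r$ and the mixed-norm bookkeeping close.

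The main obstacle I anticipate is the collar estimate, and specifically keeping track of the three regimes $\alpha<1$, $\alpha=1$, $\alpha>1$ simultaneously while the weight exponent $\beta\ge 0$ ranges freely: one must verify that in every regime the effective power of $(1-r)$ appearing after the $\zeta$-integration, namely roughly $\beta+\min(\alpha-1,0)$ adjusted by the $r$-Jacobian and the Lebesgue exponents, has strictly negative-enough (i.e. strictly greater than $-1$) exponent against $\d r$, and that this is equivalent to \eqref{eq:ptSubcritical}. A secondary bookkeeping nuisance is the sphere-vs-ball dimension mismatch $(n-1$ against $n)$, which is why the exponents $p$ and $t$ enter asymmetrically and why one should work with the mixed Lebesgue norm $L^t_rL^t_\omega$ on the collar and use Hölder in $r$ carefully; I would handle this by first proving the estimate with $g$ replaced by a radial-in-$r$ profile via Minkowski, then peel off the angular integration using the (already classical) HLS inequality on the sphere $\bb S^{n-1}$ for the tangential variable. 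Since the statement only claims existence of \emph{some} finite constant $C(n,\alpha,\beta,p,t)$, no sharpness is needed here and all of these estimates may be carried out crudely.
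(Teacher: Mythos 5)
Your route is genuinely different from the paper's, and the comparison is instructive. The paper's entire proof rests on one observation you passed over: for $\xi\in B$ and $\zeta\in\bdy B$ one has $1-\abs\xi^2=(\zeta-\xi)\cdot(\zeta+\xi)\leq 2\abs{\xi-\zeta}$, so the weight is absorbed into the kernel and $H(\xi,\zeta)\leq\abs{\xi-\zeta}^{\alpha+\beta-n}$ --- not merely $\abs{\xi-\zeta}^{\alpha-n}$. With that bound the weight disappears from the problem entirely, and the proof is a single Young-type (triple H\"older) argument: write $\frac1{p'}+\frac1{t'}+\frac1{q'}=1$ (this is where $1<\frac1p+\frac1t$ is used), split $H=H^{a}\cdot H^{1-a}$, and check that $\sup_\zeta\int_B H^{at'}\,\d\xi$ and $\sup_\xi\int_{\bdy B}H^{(1-a)p'}\,\d S_\zeta$ are finite for a suitable $a\in(0,1)$; condition \eqref{eq:ptSubcritical} is exactly the statement that the interval of admissible $a$ is nonempty. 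No collar decomposition, no case analysis on $\alpha$, and no sphere-versus-ball dimension mismatch to manage.

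Your collar strategy can be made to close --- if you run Young's inequality for zonal convolution on the sphere at the exponent $m$ with $\frac1m=2-\frac1p-\frac1t$ and compute $\norm{((1-r)^2+\abs{\omega-\zeta}^2)^{(\alpha-n)/2}}_{L^m(\d S_\zeta)}\sim(1-r)^{(n-1)/m+\alpha-n}$, the radial integrability condition you are left with is precisely \eqref{eq:ptSubcritical} --- but two points in your sketch need repair. First, the opening ``reduction'' to the unweighted kernel $\abs{\xi-\zeta}^{\alpha-n}$ is vacuous: for $\alpha\leq 0$, which is allowed since only $\alpha+\beta>0$ is assumed, the dominating integral is infinite already for $f=g=1$, so nothing may be discarded there and the weight must be retained (or, better, converted into kernel decay as above). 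Second, ``peel off the angular integration using HLS on the sphere'' only makes sense for $1<\alpha<n$; for $\alpha\leq1$ the tangential kernel is not locally integrable on the sphere, so you must work with the mollified kernel and its $r$-dependent $L^m$ norm, and the relevant regime boundary becomes $m(n-\alpha)$ versus $n-1$ rather than $\alpha$ versus $1$ (your three-regime split is the special case $m=1$, i.e.\ the $L^\infty\to L^{t'}$ bound, which does not suffice for general $f\in L^p$). These are fixable, but they make the collar route substantially longer than the two-line kernel bound plus triple H\"older.
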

\ifdetails 
\begin{figure}[h!]
\begin{center}
\begin{tikzpicture}[>=stealth]
	\coordinate (yint1) at (0, 2);
	\coordinate (yint2) at (0, 2.5); 
	\coordinate (xint1) at (2, 0); 
	\coordinate (xint2) at (4, 0); 
	\fill (xint1) node[below left]{$1$} circle (0.05); 
	\fill (yint1) node[below left]{$1$} circle (0.05); 
	\fill (xint2) node [above right]{$1 + \frac{\alpha + \beta}{n - 1}$} circle (0.05); 
	\fill (yint2)  node[above right]{$1 + \frac {\alpha + \beta - 1}{n}$} circle (0.05); 
	\draw[name path = line 1] 
		($(xint1)!-2.5cm!(yint1)$)node[below]{$\frac 1 p + \frac 1 t = 1 $} 
		-- ($(yint1)!-2.5cm!(xint1)$);
	\draw[name path = line 2] 
		($(xint2)!-2.5cm!(yint2)$)node[below right]{$\frac 1 t + \frac{n -1}{np} = 1 + \frac{\alpha + \beta -1}{n}$} -- ($(yint2)!-2.5cm!(xint2)$); 
	\draw[name path = line 3] (2, -1) -- (2, 5) node[right]{$\frac 1 p = 1$}; 
	\draw[name path = line 4] (-1,2) -- (5,2)node[right]{$\frac 1 t = 1$}; 
	\path [name intersections={of=line 1 and line 2,by=P1}];
	\path [name intersections={of=line 2 and line 3,by=P2}];
	\path [name intersections={of=line 2 and line 4,by=P3}];
	\node [fill,inner sep=1pt] at (P1) {}; 
	\begin{pgfonlayer}{bg}
	\fill[blue!10!white] (xint1)--(P2)-- (P3) --(yint1)--cycle; 
	\end{pgfonlayer}
	\draw[thick,->] (-3,0) -- (9,0) node[below]{$\frac 1 p$}; 
	\draw[thick,->] (0, -1) -- (0, 5) node[left]{$\frac 1 t$}; 
\end{tikzpicture}
	\caption{Parameter region for HLS type inequality on $B$. This picture is for the case $\alpha + \beta>1$. We also have the cases (not pictured) $0< \alpha + \beta < 1$ and $\alpha + \beta = 1$.}
	\label{figure:BallHLSParameters}
\end{center}
\end{figure}
\fi 
Under the hypotheses of Proposition \ref{prop:SubcriticalBallHLSInequality} the quantity
\begin{equation}
\label{eq:BallSubcriticalHLSSharpConstant}
\begin{array}{lcl}
\multicolumn{3}{l}{
	C_*(n, \alpha, \beta, p, t)
	}
	\\
	& = & 
	\displaystyle
	\sup\left\{
	\abs{\int_B\int_{\bdy B} H(\xi, \zeta)f(\zeta) g(\xi)\; \d S_\zeta \;\d\xi}
	: \norm{f}_{L^p(\bdy B)} = 1 = \norm{g}_{L^t(B)}
	\right\}
\end{array}
\end{equation}
is well-defined and finite. We define the extension operator 
\begin{equation*}
	E_Bf(\xi) = \int_{\bdy B} H(\xi, \zeta)f(\zeta)\; \d S_\zeta
\end{equation*}
for $f:\bdy B\to \bb R$ and the restriction operator
\begin{equation*}
	R_Bg(\zeta) = \int_{B}H(\xi, \zeta) g(\xi)\; \d\xi
\end{equation*}
for $g:B\to \bb R$. We note that the use of the word extension in this context is not meant to imply that $\lim_{\xi\to \zeta\in \bdy B} E_Bf(\xi) = f(\zeta)$. Nor is the use of the word restriction meant to imply that $\lim_{\xi \to \zeta\in \bdy B} g(\xi) = R_Bg(\zeta)$. From Lebesgue duality and Proposition \ref{prop:SubcriticalBallHLSInequality} we immediately obtain the following corollary regarding the mapping properties of $E_B$ and $R_B$. 
\begin{coro}
\label{coro:SubcriticalBallExtensionRestriction}
Suppose $n\geq 2$, $\beta\geq 0$ and $0<\alpha + \beta$. 
\begin{enumerate}[(a)]
	\item If $s>0$ and if $\frac{n - 1}{n}\left(\frac 1 p - \frac{\alpha + \beta - 1}{n - 1}\right)< \frac 1 s< \frac 1 p < 1$ then 
	\begin{equation}
	\label{eq:SubcriticalBallExtension}
		\norm{E_{B} f}_{L^s(B)}\leq C_*(n, \alpha, \beta, p, s')\norm f_{L^p(\bdy B)}
	\end{equation}
	for all $f\in L^p(\bdy B)$. 
	\item If $r>0$ and if $\frac{n}{n - 1}\left(\frac 1 t - \frac{\alpha + \beta}{n}\right)< \frac 1 r< \frac 1 t < 1$ then 
	\begin{equation}
	\label{eq:SubcriticalBallRestriction}
		\norm{R_{B}g}_{L^r(\bdy B)} \leq C_*(n, \alpha, \beta, r', t)\norm g_{L^t(B)}
	\end{equation}
	for all $g\in L^t(B)$. 
\end{enumerate}
\end{coro}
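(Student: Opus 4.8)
The plan is to deduce both mapping inequalities directly from Proposition \ref{prop:SubcriticalBallHLSInequality} by Lebesgue duality, so the only real work is to translate the hypotheses on the exponents in the corollary into the hypotheses on $(p,t)$ required by the proposition. First I would handle part (a). Given $f \in L^p(\bdy B)$ with $\tfrac{n-1}{n}\left(\tfrac1p - \tfrac{\alpha+\beta-1}{n-1}\right) < \tfrac1s < \tfrac1p < 1$, set $t = s'$, so that $\tfrac1t = 1 - \tfrac1s$. The bound $\tfrac1s < \tfrac1p < 1$ forces $s > 1$, hence $t = s' \in (1,\infty)$, and moreover $\tfrac1p + \tfrac1t = \tfrac1p + 1 - \tfrac1s > 1$ precisely because $\tfrac1s < \tfrac1p$; this is the first hypothesis of the proposition. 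For the subcriticality condition \eqref{eq:ptSubcritical}, substitute $\tfrac1t = 1 - \tfrac1s$: the inequality $\tfrac1t + \tfrac{n-1}{np} < 1 + \tfrac{\alpha+\beta-1}{n}$ becomes $1 - \tfrac1s + \tfrac{n-1}{np} < 1 + \tfrac{\alpha+\beta-1}{n}$, i.e. $\tfrac1s > \tfrac{n-1}{np} - \tfrac{\alpha+\beta-1}{n} = \tfrac{n-1}{n}\left(\tfrac1p - \tfrac{\alpha+\beta-1}{n-1}\right)$, which is exactly the left-hand inequality in the hypothesis of (a). Thus Proposition \ref{prop:SubcriticalBallHLSInequality} applies with these $(p,t)$.

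With the proposition in hand, for any $g \in L^{s'}(B)$ we have
\[
	\abs{\int_B (E_B f)(\xi)\, g(\xi)\, \d\xi}
	= \abs{\int_B \int_{\bdy B} H(\xi,\zeta) f(\zeta) g(\xi)\, \d S_\zeta\, \d\xi}
	\leq C_*(n,\alpha,\beta,p,s')\, \norm{f}_{L^p(\bdy B)}\, \norm{g}_{L^{s'}(B)},
\]
where the interchange of the order of integration is justified by Fubini once the right-hand side is known to be finite (which follows from the proposition applied to $\abs f$, $\abs g$). Taking the supremum over $g$ with $\norm{g}_{L^{s'}(B)} = 1$ and using the duality characterization of the $L^s(B)$ norm gives \eqref{eq:SubcriticalBallExtension}. (One should note $E_B f$ is measurable and, by Tonelli applied to $H \geq 0$ and $\abs f$, the integral defining $E_B f$ converges for a.e.\ $\xi$.) Part (b) is entirely symmetric: given $g \in L^t(B)$ with $\tfrac{n}{n-1}\left(\tfrac1t - \tfrac{\alpha+\beta}{n}\right) < \tfrac1r < \tfrac1t < 1$, set $p = r'$, so $\tfrac1p = 1 - \tfrac1r$; the condition $\tfrac1r < \tfrac1t$ gives $\tfrac1p + \tfrac1t > 1$, and rewriting \eqref{eq:ptSubcritical} with $\tfrac1p = 1 - \tfrac1r$ turns it into $\tfrac1r > \tfrac{n}{n-1}\left(\tfrac1t - \tfrac{\alpha+\beta}{n}\right)$ after multiplying through by $\tfrac{n}{n-1}$ and rearranging, which is exactly the left-hand inequality in the hypothesis of (b). Then Proposition \ref{prop:SubcriticalBallHLSInequality} with $(p,t) = (r',t)$ together with the duality characterization of the $L^r(\bdy B)$ norm yields \eqref{eq:SubcriticalBallRestriction}.

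There is no substantive obstacle here; the corollary is a formal consequence of the proposition. The only points that require a moment of care are: (i) checking that the strict inequalities among $\tfrac1p, \tfrac1s$ (resp.\ $\tfrac1r, \tfrac1t$) indeed force the conjugate exponent to lie in $(1,\infty)$ and force $\tfrac1p + \tfrac1t > 1$, rather than merely $\geq$, so that Proposition \ref{prop:SubcriticalBallHLSInequality} genuinely applies; and (ii) confirming that the algebraic manipulation converting \eqref{eq:ptSubcritical} into the stated range for $\tfrac1s$ (resp.\ $\tfrac1r$) is reversible, so the hypothesis of the corollary is \emph{equivalent} to the hypothesis of the proposition under the substitution $t = s'$ (resp.\ $p = r'$). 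Both are elementary arithmetic with linear inequalities.
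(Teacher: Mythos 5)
Your argument is correct and is exactly the route the paper takes: the paper states the corollary follows immediately from Proposition \ref{prop:SubcriticalBallHLSInequality} by Lebesgue duality, and your exponent bookkeeping (checking that $t=s'$, resp.\ $p=r'$, converts the corollary's hypotheses into $1<\frac1p+\frac1t$ and \eqref{eq:ptSubcritical}) verifies this cleanly. Nothing is missing.
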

Under some additional assumptions assumptions on the exponents, we can show that the best constant in \eqref{eq:SubcriticalBallExtension} is attained by some nonnegative function $f\in L^P(\bdy B)$. Specifically, the following holds.\\ 
\begin{prop}
\label{prop:SubcriticalBallExtremalAttained}
Let $n\geq 2$ and suppose $\alpha, \beta$ satisfy \eqref{eq:MinimalAlphaBeta} and \eqref{eq:AlphaBetaSubAffine}. If $p, s\in \bb R$ satisfy 
\begin{equation}
\label{eq:psDoublySubcritical}
	\frac{n - \alpha - 2\beta}{2n} < \frac 1 s < \frac 1 p < \frac{n +\alpha - 2}{2(n - 1)}
\end{equation}
then there is $0\leq f\in L^p(\bdy B)$ for which both $\norm{f}_{L^p(\bdy B)} = 1$ and $\norm{E_B f}_{L^s(B)} = C_*(n, \alpha, \beta, p, s')$. 
\end{prop}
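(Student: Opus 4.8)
The plan is to obtain the extremal function as the limit of a maximizing sequence, using the direct method in the calculus of variations. First I would fix a sequence $\{f_k\}\subset L^p(\bdy B)$ with $\norm{f_k}_{L^p(\bdy B)} = 1$ and $\norm{E_B f_k}_{L^s(B)}\to C_*(n,\alpha,\beta,p,s')$; replacing $f_k$ by $|f_k|$ only increases $\norm{E_Bf_k}_{L^s(B)}$ (since $H\geq 0$), so we may assume $f_k\geq 0$. Since $L^p(\bdy B)$ is reflexive (as $1<p<\infty$, which follows from \eqref{eq:psDoublySubcritical} together with \eqref{eq:MinimalAlphaBeta}, \eqref{eq:AlphaBetaSubAffine}), after passing to a subsequence we have $f_k\weakconv f$ weakly in $L^p(\bdy B)$ for some $f\geq 0$ with $\norm{f}_{L^p(\bdy B)}\leq 1$. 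The crux is then to upgrade weak convergence to something strong enough that $E_B f_k\to E_B f$ in $L^s(B)$ (or at least pointwise a.e.\ with a dominating function), and to rule out $f\equiv 0$.

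The key step, and the one I expect to be the main obstacle, is a compactness statement: under the \emph{strict} inequalities in \eqref{eq:psDoublySubcritical} the extension operator $E_B:L^p(\bdy B)\to L^s(B)$ is compact. The strategy is to note that the strict inequalities give room to spare: there exist exponents $\tilde p$ slightly larger than $p$ and $\tilde s$ slightly larger than $s$ still satisfying the subcritical condition of Corollary~\ref{coro:SubcriticalBallExtensionRestriction}(a), so $E_B$ is bounded $L^{\tilde p}(\bdy B)\to L^{\tilde s}(B)$. Combined with the boundedness $L^p\to L^s$, an interpolation-type argument together with the fact that $H(\xi,\zeta)$ is a fixed kernel whose only singularity is the integrable one along $\xi=\zeta\in\bdy B$ should yield compactness; concretely, one can truncate $H$ near the diagonal, observe that the truncated operator has a bounded (hence continuous) kernel and is therefore compact $L^p\to L^s$ by an Arzel\`a--Ascoli / Fr\'echet--Kolmogorov argument, and control the error from the singular part uniformly using the spare integrability exponent. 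This compactness gives $E_B f_k\to E_B f$ strongly in $L^s(B)$ along a subsequence, whence $\norm{E_B f}_{L^s(B)} = C_*$.

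It remains to verify $f\not\equiv 0$ and $\norm{f}_{L^p(\bdy B)} = 1$. Since $C_* > 0$ (the kernel $H$ is positive, so $E_B$ of a positive function is positive and the supremum defining $C_*$ is strictly positive), strong convergence forces $\norm{E_B f}_{L^s(B)} = C_* > 0$, hence $f\not\equiv 0$. For the normalization, if $\norm{f}_{L^p(\bdy B)} = \lambda < 1$ then $g := f/\lambda$ satisfies $\norm{g}_{L^p(\bdy B)} = 1$ and $\norm{E_B g}_{L^s(B)} = C_*/\lambda > C_*$, contradicting the definition of $C_*$; therefore $\norm{f}_{L^p(\bdy B)} = 1$. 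Thus $f$ is the desired nonnegative extremizer.

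One technical point worth isolating as a preliminary lemma is the reduction to nonnegative competitors and the finiteness and positivity of $C_*$, both of which follow directly from $H\geq 0$ together with Proposition~\ref{prop:SubcriticalBallHLSInequality} and the strict containment of the exponent pair $(p,s')$ in the admissible region; once the compactness of $E_B$ is in hand the rest is the standard concentration-free direct method, so essentially all the work is in establishing that compactness from the strict subcriticality encoded in \eqref{eq:psDoublySubcritical}.
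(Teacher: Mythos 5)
Your proposal is correct and follows essentially the same route as the paper: a nonnegative maximizing sequence, weak compactness of the unit ball of $L^p(\bdy B)$, and --- the key point --- compactness of $E_B:L^p(\bdy B)\to L^s(B)$ under the strict inequalities \eqref{eq:psDoublySubcritical}, which is exactly the paper's Lemma \ref{lemma:BallExtensionDoublySubcriticalCompact}, followed by the same scaling argument to force $\norm{f}_{L^p(\bdy B)}=1$. The only (inessential) difference is in how that compactness is established: the paper splits the domain into $B_{1-\epsilon}$, where the kernel is bounded so weak convergence plus the Bounded Convergence Theorem apply, and the thin shell $B\setminus B_{1-\epsilon}$, where H\"older's inequality and the spare integrability coming from $\frac 1s>\frac{n-\alpha-2\beta}{2n}$ give a uniformly small contribution, whereas you truncate the kernel near its singularity and treat the regular and singular parts separately; both exploit the same structure and either works.
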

The remainder of this section is devoted to the proofs of Propositions \ref{prop:SubcriticalBallHLSInequality} and \ref{prop:SubcriticalBallExtremalAttained}. The proof of Proposition \ref{prop:SubcriticalBallHLSInequality} is based on the following lemma.
\begin{lemma}
\label{lemma:ForYoungsInequality}
Let $n\geq 2$ and suppose $\beta\geq 0$ and $1 - n< \alpha + \beta$. If $p>1$ and $t>1$ satisfy \eqref{eq:ptSubcritical} then there exists $0< a< 1$ depending on $n, \alpha, \beta, p, t$ such that both 
\begin{equation}
\label{eq:BallKernelBoundarySup}
	\sup_{\zeta\in \bdy B} \int_B H(\xi, \zeta)^{at'}\; \d \xi < \infty
\end{equation}
and
\begin{equation}
\label{eq:BallKernelInteriorSup}
	\sup_{\xi \in B}\int_{\bdy B} H(\xi, \zeta)^{(1 - a)p'}\; \d S_\zeta < \infty.
\end{equation}
\end{lemma}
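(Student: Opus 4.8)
The plan is to choose $a\in(0,1)$ so that both integrals converge, which reduces to controlling integrability near the two types of singularity of $H(\xi,\zeta)=\left(\frac{1-|\xi|^2}{2}\right)^\beta|\xi-\zeta|^{\alpha-n}$: the singularity of $|\xi-\zeta|^{\alpha-n}$ as $\xi\to\zeta$, and (only in \eqref{eq:BallKernelBoundarySup}) the degeneracy/blowup of the factor $(1-|\xi|^2)^\beta$ as $\xi$ approaches $\bdy B$. First I would record that near a fixed boundary point $\zeta$, writing $\rho = |\xi-\zeta|$, one has $1-|\xi|^2 \le 2\rho$ for $\xi\in B$, while on the other hand $1-|\xi|^2$ stays bounded above by $1$ everywhere; hence $H(\xi,\zeta)^{q} \le C\,\rho^{q(\alpha-n)}$ for any exponent $q>0$ when $\beta\ge 0$. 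Since $B\subset\bb R^n$ and the function $\xi\mapsto\rho^{q(\alpha-n)}$ is integrable over a bounded neighborhood of $\zeta$ in $\bb R^n$ precisely when $q(n-\alpha)<n$, i.e. $q<\frac{n}{n-\alpha}$, the interior integral \eqref{eq:BallKernelInteriorSup} — an integral over the $(n-1)$-dimensional sphere $\bdy B$ where the relevant condition is $q(n-\alpha)<n-1$ — is finite provided $(1-a)p'<\frac{n-1}{n-\alpha}$, uniformly in $\xi$ (one also checks the case $\alpha\ge n$ trivially, though \eqref{eq:MinimalAlphaBeta} rules it out here since $\alpha+\beta<n-\beta$). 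For \eqref{eq:BallKernelBoundarySup} I would not throw away the vanishing factor: instead, using $1-|\xi|^2\asymp 1-|\xi|$ and the coarea-type decomposition of $B$ into spheres $\{|\xi|=r\}$, the integral $\int_B (1-|\xi|^2)^{at'\beta}|\xi-\zeta|^{at'(\alpha-n)}\,\d\xi$ splits into a region bounded away from $\bdy B$ (where $|\xi-\zeta|^{\alpha-n}$ is the only issue, handled exactly as above) and a collar near $\bdy B$; in the collar I would estimate, for $r$ close to $1$, $\int_{|\xi|=r}|\xi-\zeta|^{at'(\alpha-n)}\,\d S \le C(1-r)^{at'(\alpha-n)+n-1}$ when $at'(n-\alpha)>n-1$ and $\le C$ otherwise, then integrate the resulting power of $(1-r)$ against $(1-r)^{at'\beta}$, which converges near $r=1$ exactly when $at'\beta + \big(at'(\alpha-n)+n-1\big)_{-}> -1$; the worst case is $at'\beta+at'(\alpha-n)+n-1>-1$, i.e. $at'(\alpha+\beta-n)>-n$, i.e. $at'<\frac{n}{n-\alpha-\beta}$ (using $\alpha+\beta<n$ from \eqref{eq:MinimalAlphaBeta}).

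It remains to show the two resulting open conditions on $a$ can be satisfied simultaneously: I need $0<a<1$ with
\[
    at' < \frac{n}{n-\alpha-\beta}
    \qquad\text{and}\qquad
    (1-a)p' < \frac{n-1}{n-\alpha}.
\]
Equivalently, $a$ must lie in the interval $\Big(1-\frac{n-1}{p'(n-\alpha)},\ \frac{n}{t'(n-\alpha-\beta)}\Big)$, and I must check this interval is nonempty and meets $(0,1)$. Nonemptiness amounts to
\[
    1 - \frac{n-1}{p'(n-\alpha)} < \frac{n}{t'(n-\alpha-\beta)},
\]
and here is where the hypothesis \eqref{eq:ptSubcritical} enters: rewriting $\frac1{p'}=1-\frac1p$ and $\frac1{t'}=1-\frac1t$ and clearing denominators, this inequality is (after elementary manipulation) precisely equivalent to $\frac1t+\frac{n-1}{np}<1+\frac{\alpha+\beta-1}{n}$. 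So the subcritical condition is exactly what makes the required $a$ exist. One then shrinks the interval if necessary to land strictly inside $(0,1)$: the right endpoint $\frac{n}{t'(n-\alpha-\beta)}$ is positive, and if it exceeds $1$ or the left endpoint is negative one simply replaces them by values in $(0,1)$, which only tightens the (still satisfiable) strict inequalities above. Choosing such an $a$ and tracing back through the two estimates gives \eqref{eq:BallKernelBoundarySup} and \eqref{eq:BallKernelInteriorSup}.

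The main obstacle is the bookkeeping in \eqref{eq:BallKernelBoundarySup}: one must handle the interaction of the two singular factors along the boundary collar correctly and verify that the ``worst case'' exponent is indeed the uniform one (i.e. that the estimate is uniform in $\zeta\in\bdy B$, which follows from rotational symmetry, so it suffices to fix one $\zeta$), and then confirm that the algebra collapsing the nonemptiness condition to \eqref{eq:ptSubcritical} is exact rather than merely sufficient. The sphere-integral bound $\int_{|\xi|=r}|\xi-\zeta|^{-\gamma}\,\d S \lesssim (1-r)^{n-1-\gamma}+1$ near $r=1$ for a boundary point $\zeta$ is itself a small computation — most conveniently done in coordinates where $\zeta$ is the north pole and $|\xi-\zeta|^2 = (1-r)^2 + 2r(1-\cos\theta)$ — and I would relegate the details of that estimate, if needed, to the appendix referenced in the introduction.
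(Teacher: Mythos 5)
There is a genuine gap, and it sits in the exponent you assign to the surface integral \eqref{eq:BallKernelInteriorSup}. By bounding $(1-|\xi|^2)^\beta\le 1$ there, you arrive at the integrand $|\xi-\zeta|^{(\alpha-n)(1-a)p'}$ and hence the requirement $(1-a)p'<\tfrac{n-1}{n-\alpha}$, whereas for \eqref{eq:BallKernelBoundarySup} your collar analysis produces $at'<\tfrac{n}{t'}\cdot\tfrac{1}{n-\alpha-\beta}$, i.e. $at'<\tfrac{n}{n-\alpha-\beta}$. The resulting interval
\[
	\left(1-\frac{n-1}{p'(n-\alpha)},\ \frac{n}{t'(n-\alpha-\beta)}\right)
\]
mixes the two denominators $n-\alpha$ and $n-\alpha-\beta$, and its nonemptiness is \emph{not} equivalent to \eqref{eq:ptSubcritical} when $\beta>0$: it is strictly stronger. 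Concretely, take $n=3$, $\alpha=\beta=1$, $p=2$, $t=1.1$. Then \eqref{eq:ptSubcritical} reads $\tfrac{1}{1.1}+\tfrac{2}{6}\approx 1.24<\tfrac43$ and holds (and even $\tfrac1p+\tfrac1t>1$, so this is in the regime where the lemma is actually used), yet your interval is $\bigl(1-\tfrac{2}{2\cdot 2},\ \tfrac{3}{11}\bigr)=(0.5,\,0.273)$, which is empty. So the claimed ``precise equivalence'' after clearing denominators fails, and the proof as written does not cover all admissible $(p,t)$.

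The repair is the inequality you already recorded but then set aside: since $1-|\xi|^2\le 2|\xi-\zeta|$ and $\beta\ge 0$, one has $H(\xi,\zeta)\le|\xi-\zeta|^{\alpha+\beta-n}$, and this bound should be used for \emph{both} integrals. For \eqref{eq:BallKernelInteriorSup} it gives the weaker requirement $(1-a)p'<\tfrac{n-1}{n-\alpha-\beta}$ (uniformity in $\xi\in B$ via the radial projection $\pi(\xi)=\xi/|\xi|$ and $|\zeta-\pi(\xi)|\le 2|\zeta-\xi|$), and for \eqref{eq:BallKernelBoundarySup} it gives $at'<\tfrac{n}{n-\alpha-\beta}$ directly, with no need for the sphere-by-sphere decomposition (which, as your own computation shows, yields exactly the same condition anyway). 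With both endpoints governed by $n-\alpha-\beta$, the nonemptiness of the interval is genuinely equivalent to \eqref{eq:ptSubcritical}, and $p,t>1$ guarantee the interval meets $(0,1)$. You should also note that the lemma does not assume \eqref{eq:MinimalAlphaBeta}, only $1-n<\alpha+\beta$, so the case $\alpha+\beta\ge n$ must be disposed of separately (there $H$ is bounded on $B\times\bdy B$ and both suprema are trivially finite).
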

\ifdetails 
\begin{remark}[Delete for final version]
In regards to the assumptions of Lemma \ref{lemma:ForYoungsInequality}. 
\begin{enumerate}[(a)]
	\item The assumptions $p>1$ and $t>1$ guarantee that the Lebesgue conjugate exponents $p'$ and $t'$ are positive. The positivity of $p'$ and $t'$ is used to guarantee that $a\in (0, 1)$ can be chosen. \\
	\item The assumption $1 - n< \alpha + \beta$ in Lemma \ref{lemma:ForYoungsInequality} is equivalent to the positivity of the right-hand side of \eqref{eq:ptSubcritical}. The line 
\begin{equation*}
	\frac 1 t + \frac{n -1}{np} = 1 + \frac{\alpha + \beta -1}{n}
\end{equation*}
in the $(\frac 1p, \frac 1 t)$-plane has intercepts $\frac 1 p = 1 + \frac{\alpha + \beta}{n - 1}$ and $\frac 1 t = 1 + \frac{\alpha + \beta -1}{n}$. The assumption $\alpha + \beta > 1 -n$ is equivalent to both of these intercepts being positive. Thus, the assumption $\alpha +\beta >n-1$ is equivalent to 
\begin{equation*}
	\left\{
	(p, t)\in (0, \infty)\times (0, \infty): \eqref{eq:ptSubcritical} \text{ holds }
	\right\}
	\neq 
	\emptyset, 
\end{equation*}
see Figure \ref{figure:RegionForYoungsLemma}.
	\item There is no need for an upper bound on $\alpha + \beta$. In fact, if $\alpha + \beta\geq n$ then $H$ is uniformly bounded on $B\times \bdy B$. It may be interesting to investigate a reversed inequality in the case $\alpha + \beta>n$ in the case that $\alpha + \beta >n$. 
\end{enumerate} 
\begin{figure}[h!]
\begin{center}
\begin{tikzpicture}[>=stealth]
	\coordinate (yint) at (0, 4); 
	\coordinate (xint) at (5, 0); 
	\fill (xint) node[below]{$1 + \frac{\alpha + \beta }{n - 1}$} circle (0.05); 
	\fill (yint) node[left]{$1 + \frac{\alpha + \beta -1}{n}$} circle (0.05); 
	\draw[dashed, fill = blue!10!white] (xint) -- (0, 0) -- (yint) --cycle; 
	\draw[thick] (xint) -- (yint); 
	\draw[thick,->] (-1,0) -- (7,0) node[below]{$\frac 1{p}$}; 
	\draw[thick,->] (0, -1) -- (0, 6) node[left]{$\frac 1{t}$}; 
	\node at (1.9,.7) {$\frac{1}{t} + \frac{n - 1}{np} < 1 + \frac{\alpha + \beta -1}{n}$}; 
\end{tikzpicture}
	\caption{({\bf Delete for final version}) The assumption $1 - n< \alpha + \beta$ in Lemma \ref{lemma:ForYoungsInequality} is equivalent to existence of positive $p, t$ satisfying \eqref{eq:ptSubcritical}. Note that Lemma \ref{lemma:ForYoungsInequality} may not hold on this full region as we also need $p>1$ and $t>1$ for the lemma to hold.}
	\label{figure:RegionForYoungsLemma}
\end{center}
\end{figure}
\demo
\end{remark}
\fi 
\begin{proof}[Proof \ifdetails of Lemma \ref{lemma:ForYoungsInequality}\fi]
First observe that for all $\xi \in B$ and $\zeta\in \bdy B$,
\begin{equation*}
	1 - \abs \xi^2 
	= 
	\ifdetails 
	\abs\zeta^2 - \abs \xi^2
	=
	\fi 
	(\zeta - \xi)(\zeta + \xi)
	\leq
	2\abs{\zeta - \xi}. 
\end{equation*}
Since $\beta\geq 0$ this gives
\begin{equation}
\label{eq:BallKernelUpperBound}
	H(\xi, \zeta)
	\leq
	\abs{\xi - \zeta}^{\alpha + \beta - n}
	\qquad
	\text{ for all } \xi\in B, \; \zeta\in \bdy B. 
\end{equation}
If $\alpha + \beta \geq n$ then $H$ is bounded on $B\times \bdy B$ and the assertion of the lemma follows immediately. Assume henceforth that $\alpha + \beta < n$. Since both $p>1$ and $t>1$, assumption \eqref{eq:ptSubcritical} guarantees the existence of $a\in (0, 1)$ satisfying 
\begin{equation*}
	1 - \frac{n - 1}{(n - \alpha - \beta)p'}
	< 
	a
	<
	\frac{n}{(n - \alpha -\beta)t'}. 
\end{equation*}
Fix any such $a$. For any $\zeta\in \bdy B$ inequality \eqref{eq:BallKernelUpperBound} gives
\begin{eqnarray*}
	\int_BH(\xi, \zeta)^{at'}\; \d \xi
	&\leq & 
	\int_B \abs{\xi - \zeta}^{(\alpha + \beta - n)at'}\; \d \xi
	\\
	&\leq & 
	\int_{B(\zeta, 2)}\abs{\xi - \zeta}^{(\alpha + \beta - n)a t'}\; \d \xi
	\\
	&\leq & 
	\ifdetails 
	\int_{B(0, 2)}\abs{\xi}^{(\alpha + \beta - n)at'}\; \d \xi
	\\
	&\leq & 
	\fi 
	C(n, \alpha, \beta, t, a), 
\end{eqnarray*}
the final estimate holding as $(n-\alpha - \beta)at' < n$. Estimate \eqref{eq:BallKernelBoundarySup} is established. \\

To show that \eqref{eq:BallKernelInteriorSup} holds define $\pi(\xi) = \xi/\abs \xi$ for $\xi\in B\setminus\{0\}$ and $\pi(0) = 0$. For any $\zeta\in \bdy B$ and $\xi\in B$ there holds
\begin{equation*}
	\abs{\zeta - \pi(\xi)}
	\leq 
	\abs{\zeta - \xi} + \abs{\xi - \pi(\xi)}
	\leq
	2\abs{\zeta - \xi}. 
\end{equation*}
Consequently for all $\xi\in B$
\begin{eqnarray*}
	\int_{\bdy B} H(\xi, \zeta)^{(1 - a)p'}\; \d S_\zeta
	&\leq & 
	\int_{\bdy B}\abs{\xi - \zeta}^{(\alpha + \beta - n)(1 - a)p'}\; \d S_\zeta
	\\
	&\leq & 
	C\int_{\bdy B}\abs{\zeta - \pi(\xi)}^{(\alpha + \beta - n)(1 - a)p'}\; \d S_\zeta
	\\
	&\leq & 
	C 
\end{eqnarray*}
for some positive constant $C= C(n, \alpha, \beta, p, a)$, the final estimate holding as $(n - \alpha - \beta)(1 - a)p' < n - 1$. Estimate \eqref{eq:BallKernelInteriorSup} is established. 
\end{proof}
\begin{proof}[Proof of Proposition \ref{prop:SubcriticalBallHLSInequality}]
We assume with no loss of generality that $f$ and $g$ are nonnegative. Since $p>1$ and $t>1$ satisfy $1< \frac 1 p + \frac 1 t$ there is $q>1$ for which $\frac 1 p + \frac 1 t + \frac 1 q = 2$. In particular the conjugate exponents are positive and satisfy $\frac 1{p'} + \frac 1{t'} + \frac 1{q'} = 1$. By Lemma \ref{lemma:ForYoungsInequality} there is $a\in (0, 1)$ depending on $n, \alpha, \beta, p$ and $t$ for which both \eqref{eq:BallKernelBoundarySup} and \eqref{eq:BallKernelInteriorSup} hold. Fix any such $a$ and define
\begin{eqnarray*}
	\gamma_1(\xi, \zeta) & = & f(\zeta)^{\frac{p}{t'}}H^a(\xi, \zeta)\\
	\gamma_2(\xi, \zeta) & = & g(\xi)^{\frac t{p'}} H^{1- a}(\xi, \zeta)\\
	\gamma_3(\xi, \zeta) & = & f(\zeta)^{\frac{p}{q'}} g(\xi)^{\frac{t}{q'}}. \\
\end{eqnarray*}
By H\"older's inequality we have
\begin{equation}
\label{eq:YoungArgumentApplyHolder}
\begin{array}{lcl}
	\multicolumn{3}{l}{
	\displaystyle
	\int_B\int_{\bdy B} f(\zeta)g(\xi) H(\xi, \zeta)\; \d S_\zeta\; \d \xi
	}
	\\
	&= & 
	\displaystyle
	\int_B\int_{\bdy B} \gamma_1(\xi, \zeta)\gamma_2(\xi, \zeta)\gamma_3(\xi, \zeta)\; \d S_\zeta\; \d \xi
	\\
	&\leq & 
	\displaystyle
	\norm{\gamma_1}_{L^{t'}(B\times\bdy B)}\norm{\gamma_2}_{L^{p'}(B\times\bdy B)}
	\norm{\gamma_3}_{L^{q'}(B\times\bdy B)}
	\\
	&\leq & 
	\displaystyle
	\norm{\gamma_1}_{L^{t'}(B\times\bdy B)}\norm{\gamma_2}_{L^{p'}(B\times\bdy B)}
	\norm{f}_{L^p(\bdy B)}^{\frac{p}{q'}}\norm{g}_{L^t(B)}^{\frac{t}{q'}}. 
\end{array}
\end{equation}
By the choice of $a$ there is a constant $C = C(n, \alpha, \beta, p, t)>0$ such that both
\begin{equation*}
	\norm{\gamma_1}_{L^{t'}(B\times \bdy B)}^{t'}
	=
	\int_{\bdy B}f(\zeta)^{p}\int_B H^{at'}(\xi, \zeta)\; \d \xi\; \d S_\zeta
	\leq 
	C\norm{f}_{L^p(\bdy B)}^p
\end{equation*}
and
\begin{equation*}
	\norm{\gamma_2}_{L^{p'}(B\times \bdy B)}^{p'}
	=
	\int_B g(\xi)^t\int_{\bdy B}H^{(1 - a)p'}(\xi, \zeta)\; \d S_\zeta\; \d \xi
	\leq
	C\norm{g}_{L^t(B)}^t. 
\end{equation*}
The conclusion of Proposition \ref{prop:SubcriticalBallHLSInequality} follows by using these estimates in \eqref{eq:YoungArgumentApplyHolder}.
\end{proof}
\ifdetails 
\begin{remark}[{\bf Delete for final version}]
\begin{enumerate}[(a)]
\item 
In addition to the hypotheses of Lemma \ref{lemma:ForYoungsInequality}, Proposition \ref{prop:SubcriticalBallHLSInequality} also requires $1< \frac 1 p  + \frac 1 t$ (so that ``triple'' H\"older's inequality can be applied, see the proof). This forces us to restrict $\alpha + \beta>0$ (compared to only $1 -n < \alpha + \beta< n$ in Lemma \ref{lemma:ForYoungsInequality}) Indeed, if $1- n< \alpha + \beta \leq 0$ then $\{(p, t) \in (1, \infty)\times(1, \infty): 1< \frac 1p + \frac 1 t \text{ and } \eqref{eq:ptSubcritical} \text{ holds} \} = \emptyset$, See Figures \ref{figure:BallHLSParameters} and \ref{figure:BallInequalityHypotheses}. 
\item Note that there is still no need for an upper bound on $\alpha + \beta$. 
\end{enumerate}
\demo
\end{remark}
\fi
\ifdetails 
\begin{figure}[h!]
\begin{center}
\begin{tikzpicture}[>=stealth]
	\coordinate (yint1) at (0, 2);
	\coordinate (yint2) at (0, 2.5); 
	\coordinate (xint1) at (2, 0); 
	\coordinate (xint2) at (4, 0); 
	\fill (xint1) node[below left]{$1$} circle (0.05); 
	\fill (yint1) node[below left]{$1$} circle (0.05); 
	\fill (xint2) node [above right]{$1 + \frac{\alpha + \beta}{n - 1}$} circle (0.05); 
	\fill (yint2)  node[above right]{$1 + \frac {\alpha + \beta - 1}{n}$} circle (0.05); 
	\draw[name path = line 1] 
		($(xint1)!-2.5cm!(yint1)$)node[below]{$\frac 1 p + \frac 1 t = 1 $} 
		-- ($(yint1)!-2.5cm!(xint1)$);
	\draw[name path = line 2] 
		($(xint2)!-2.5cm!(yint2)$)node[below right]{$\frac 1 t + \frac{n -1}{np} = 1 + \frac{\alpha + \beta -1}{n}$} -- ($(yint2)!-2.5cm!(xint2)$); 
	\draw[name path = line 3] (2, -1) -- (2, 5) node[right]{$\frac 1 p = 1$}; 
	\draw[name path = line 4] (-1,2) -- (5,2)node[right]{$\frac 1 t = 1$}; 
	\path [name intersections={of=line 1 and line 2,by=P1}];
	\path [name intersections={of=line 2 and line 3,by=P2}];
	\path [name intersections={of=line 2 and line 4,by=P3}];
	\node [fill,inner sep=1pt] at (P1) {}; 
	\begin{pgfonlayer}{bg}
	\fill[blue!10!white] (xint1)--(P2)-- (P3) --(yint1)--cycle; 
	\end{pgfonlayer}
	\draw[thick,->] (-3,0) -- (9,0) node[below]{$\frac 1 p$}; 
	\draw[thick,->] (0, -1) -- (0, 5) node[left]{$\frac 1 t$}; 
\end{tikzpicture}
	\caption{({\bf Delete for final version.}) Parameter region for HLS type inequality on $B$. This picture is for the case $\alpha + \beta>1$. We also have the cases (not pictured) $0< \alpha + \beta < 1$ and $\alpha + \beta = 1$.}
	\label{figure:BallHLSParameters}
\end{center}
\end{figure}
\fi 
\ifdetails 
\begin{figure}[h!]
\begin{center}
\begin{tikzpicture}[>=stealth]
	\coordinate (yint) at (0, 4); 
	\coordinate (xint) at (5, 0); 
	\fill (xint) node[below left]{$1 + \frac{\alpha + \beta }{n - 1}$} circle (0.05); 
	\fill (yint) node[left]{$1 + \frac{\alpha + \beta -1}{n}$} circle (0.05); 
	\draw[dashed, fill = blue!10!white] (xint) -- (0, 0) -- (yint) --cycle; 
	\draw[thick] (xint) -- (yint); 
	\draw[thick,->] (-1,0) -- (7,0) node[below]{$\frac 1{p}$}; 
	\draw[thick,->] (0, -1) -- (0, 6) node[left]{$\frac 1{t}$}; 
	\node at (1.9,.7) {$\frac{1}{t} + \frac{n - 1}{np} < 1 + \frac{\alpha + \beta -1}{n}$}; 
	\fill[orange!50!white, opacity = .2] (5.5, 0) -- (5.5, 5.5) -- (0, 5.5) -- cycle; 
	\draw (5.5, -1) -- (5.5 , 6.5) node[right]{$\frac 1 p = 1$}; 
	\draw (-1, 5.5) -- (7, 5.5) node [below]{$\frac 1 t = 1$}; 
	\draw (5.5, 0) -- (0, 5.5); 
	\node at (4, 4) {$\frac 1 p + \frac 1 t >1$}; 
\end{tikzpicture}
	\caption{({\bf Delete figure for final version}) In regards to the hypotheses of Proposition \ref{prop:SubcriticalBallHLSInequality} compared to the hypotheses of Lemma \ref{lemma:ForYoungsInequality}. If $1 - n< \alpha + \beta \leq 0$ then $\{(p, t) \in (1, \infty)\times(1, \infty): 1< \frac 1p + \frac 1 t \text{ and } \eqref{eq:ptSubcritical} \text{ holds} \} = \emptyset$. }
	\label{figure:BallInequalityHypotheses}
\end{center}
\end{figure}
\fi 
\ifdetails 
\begin{figure}[h!]
\begin{center}
\begin{tikzpicture}[>=stealth]
	\coordinate (yint) at (0, 4); 
	\coordinate (xint) at (5, 0); 
	\fill (xint) node[below]{$\frac{n - \alpha - \beta}{n-1}$} circle (0.05); 
	\fill (yint) node[left]{$\frac{n - \alpha - \beta}{n}$} circle (0.05); 
	\draw[dashed, fill = blue!10!white] (xint) -- (5,4) -- (yint) --cycle; 
	\draw[thick] (xint) -- (yint); 
	\draw[thick,->] (-1,0) -- (6,0) node[below]{$\frac 1{p'}$}; 
	\draw[thick,->] (0, -1) -- (0, 6) node[left]{$\frac 1{t'}$}; 
	\node at (3.3,3) {$\scriptstyle\frac{1}{t'} + \frac{n - 1}{n} \cdot \frac 1{p'}>\frac{n - (\alpha +\beta)}{n}$}; 
\end{tikzpicture}
	\caption{subcritical region of interest}
	\label{figure:SubcriticalRegion}
\end{center}
\end{figure}
\fi 
\ifdetails 
\begin{figure}[h!]
\begin{center}
\begin{tikzpicture}[>=stealth]
	\coordinate (yint1) at (0, 2);
	\coordinate (yint2) at (0, 2.5); 
	\coordinate (xint1) at (3, 0); 
	\coordinate (xint2) at (7, 0); 
	\fill (xint1) node[below]{$\frac{n}{2n - 1}$} circle (0.05); 
	\fill (yint1) node[left]{$\frac{n}{2(n-1)}$} circle (0.05); 
	\fill (xint2) node [below]{$n$} circle (0.05); 
	\fill (yint2)  node[above right]{$\frac n 2$} circle (0.05); 
	\draw[name path = line 1] 
		($(xint1)!-2.5cm!(yint1)$)node[below]{$\frac{n - \alpha - 2\beta}{2n} + \frac{n - \alpha}{2(n-1)} = 1$} 
		-- ($(yint1)!-2.5cm!(xint1)$);
	\draw[name path = line 2] 
		($(xint2)!-2.5cm!(yint2)$)node[below]{$\alpha + 2\beta = n$} -- ($(yint2)!-2.5cm!(xint2)$); 
	\path [name intersections={of=line 1 and line 2,by=P}];
	\node [fill,inner sep=1pt] at (P) {}; 
	\begin{pgfonlayer}{bg}
	\fill[blue!10!white] (xint1)--(xint2)-- (P)--cycle; 
	\end{pgfonlayer}
	\draw[thick,->] (-3,0) -- (9,0) node[below]{$\alpha$}; 
	\draw[thick,->] (0, -1) -- (0, 5) node[left]{$\beta$}; 
\end{tikzpicture}
	\caption{$\alpha$, $\beta$ region of compactness for $E_B$}
	\label{figure:AlphaBetaCompactnessRegion}
\end{center}
\end{figure}
\fi 
The proof of Proposition \ref{prop:SubcriticalBallExtremalAttained} is based on the following compactness lemma for $E_B$. \\

\begin{lemma}
\label{lemma:BallExtensionDoublySubcriticalCompact}
Suppose $n\geq 2$ and let $\beta\geq 0$ and $\alpha$ satisfy both \eqref{eq:MinimalAlphaBeta} and \eqref{eq:AlphaBetaSubAffine}. If $p, s\in \bb R$ satisfy \eqref{eq:psDoublySubcritical} then the extension operator $E_B:L^p(\bdy B)\to L^s(B)$ is compact. 
\end{lemma}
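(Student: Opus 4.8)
The plan is to prove compactness of $E_B:L^p(\bdy B)\to L^s(B)$ by exhibiting $E_B$ as a limit (in operator norm) of a sequence of operators with continuous, bounded kernels. Since the latter are Hilbert--Schmidt-type and hence compact when $p$ and $s$ lie in a suitable range, the operator-norm limit $E_B$ will be compact. The hypotheses \eqref{eq:psDoublySubcritical} are, by Corollary \ref{coro:SubcriticalBallExtensionRestriction}(a), precisely what is needed to keep the boundedness constant $C_*(n,\alpha,\beta,p,(s_j)')$ controlled along the approximation, and—crucially—they leave a margin that allows one to absorb the error kernel with a small constant.

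First I would truncate the singularity of $H$. For $\varepsilon>0$ set $H_\varepsilon(\xi,\zeta)=\min\{H(\xi,\zeta),\varepsilon^{\alpha+\beta-n}\}$, or equivalently replace $H$ by a kernel that agrees with $H$ when $\abs{\xi-\zeta}\geq\varepsilon$ and is capped otherwise; each $H_\varepsilon$ is bounded and continuous on $B\times\bdy B$ away from a negligible set, so the associated operator $E_B^\varepsilon f(\xi)=\int_{\bdy B}H_\varepsilon(\xi,\zeta)f(\zeta)\,\d S_\zeta$ maps $L^p(\bdy B)$ into $C(\overline B)$ (or into $L^\infty\cap$ a compactly-embedded space), hence into $L^s(B)$ compactly for the exponents in \eqref{eq:psDoublySubcritical}. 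The heart of the argument is the error estimate: by \eqref{eq:BallKernelUpperBound} the difference kernel $H-H_\varepsilon$ is supported in $\{\abs{\xi-\zeta}<\varepsilon\}$ and dominated there by $\abs{\xi-\zeta}^{\alpha+\beta-n}$, so the same Hölder/Young scheme used in the proof of Proposition \ref{prop:SubcriticalBallHLSInequality} applies with the domains of the $\xi$- and $\zeta$-integrals restricted to a ball of radius $\varepsilon$. Because the exponents $at'$ and $(1-a)p'$ from Lemma \ref{lemma:ForYoungsInequality} are chosen with the strict inequalities $(n-\alpha-\beta)at'<n$ and $(n-\alpha-\beta)(1-a)p'<n-1$, the truncated integrals $\int_{B_\varepsilon}\abs{\xi-\zeta}^{(\alpha+\beta-n)at'}\,\d\xi$ and $\int_{B_\varepsilon^{n-1}}\abs{\xi-\zeta}^{(\alpha+\beta-n)(1-a)p'}\,\d S_\zeta$ are bounded by positive powers of $\varepsilon$ and tend to $0$. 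Tracking these through the argument gives $\norm{E_B-E_B^\varepsilon}_{L^p(\bdy B)\to L^s(B)}\leq C\varepsilon^{\delta}$ for some $\delta>0$; here I must note that the triple-Hölder step needs $1<\tfrac1p+\tfrac1{s'}$, which follows from \eqref{eq:psDoublySubcritical} together with \eqref{eq:AlphaBetaSubAffine} exactly as in the deduction of Corollary \ref{coro:SubcriticalBallExtensionRestriction}.

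I would then conclude: $E_B$ is the uniform limit of the compact operators $E_B^\varepsilon$, hence compact. The main obstacle I anticipate is not the truncation itself but verifying cleanly that each $E_B^\varepsilon$ is genuinely compact into $L^s(B)$ — one wants to say $E_B^\varepsilon f$ is uniformly bounded and equicontinuous (Arzelà--Ascoli) on $\overline B$, but the capped kernel $H_\varepsilon$ is only Lipschitz off the diagonal, so some care is needed near $\bdy B$; alternatively one can mollify $H_\varepsilon$ further to a genuinely continuous kernel $H_{\varepsilon,\eta}$, use a double limit, and invoke the classical fact that an integral operator with continuous kernel on a compact space is compact on every $L^p$. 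A secondary point requiring attention is that the $\varepsilon$-power bound must be uniform in a neighborhood of the target exponent pair, so that the estimate survives the later limiting procedure of Section \ref{section:ConformallyInvariantExponents}; this is automatic since the constant $C$ above depends only on $n,\alpha,\beta$ and on the chosen $a\in(0,1)$, which can be fixed once and for all on a compact range of $(1/p,1/s)$.
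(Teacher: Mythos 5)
Your argument is sound, but it takes a genuinely different route from the paper. The paper's proof is a ``soft'' subsequence argument: given a bounded sequence $(f_i)$ it extracts weak limits $f_i\weakconv f$ and $E_Bf_i\weakconv E_Bf$, then splits $\norm{E_B(f_i-f)}_{L^s(B)}$ over $B_{1-\epsilon}$ and $B\setminus B_{1-\epsilon}$; on the interior the kernel is bounded, so weak convergence gives pointwise convergence and the Bounded Convergence Theorem applies, while on the boundary annulus H\"older's inequality against the critical exponent $2n/(n-\alpha-2\beta)$ together with the strict gap $\frac1s>\frac{n-\alpha-2\beta}{2n}$ produces a factor $\epsilon^{1-s(n-\alpha-2\beta)/(2n)}$. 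You instead truncate the kernel and approximate $E_B$ in \emph{operator norm} by compact operators, controlling the error by rerunning the Young-type scheme of Lemma \ref{lemma:ForYoungsInequality} on the set $\{\abs{\xi-\zeta}<\varepsilon\}$; your verification that \eqref{eq:psDoublySubcritical} implies $1<\frac1p+\frac1{s'}$ and the subcriticality \eqref{eq:ptSubcritical} for $t=s'$ is exactly right, and the strict inequalities $(n-\alpha-\beta)at'<n$, $(n-\alpha-\beta)(1-a)p'<n-1$ do yield a positive power of $\varepsilon$. Your approach buys a quantitative rate $\norm{E_B-E_B^\varepsilon}\lesssim\varepsilon^\delta$ and avoids any appeal to reflexivity, at the cost of having to certify compactness of each $E_B^\varepsilon$; the paper's approach is shorter given that Corollary \ref{coro:SubcriticalBallExtensionRestriction} is already in hand. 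On the one point you flag as delicate: the mollification detour is not actually needed. For the capped kernel $H_\varepsilon=\min\{H,\varepsilon^{\alpha+\beta-n}\}$ the map $\xi\mapsto H_\varepsilon(\xi,\cdot)$ is continuous from $\overline B$ into $L^{p'}(\bdy B)$ (for $\xi_k\to\xi_0$ one has $H_\varepsilon(\xi_k,\zeta)\to H_\varepsilon(\xi_0,\zeta)$ for every $\zeta\neq\xi_0$, dominated by the constant $\varepsilon^{\alpha+\beta-n}$), so $\{E_B^\varepsilon f:\norm f_{L^p}\leq1\}$ is uniformly bounded and equicontinuous on $\overline B$ and Arzel\`a--Ascoli gives compactness into $C(\overline B)\hookrightarrow L^s(B)$ directly.
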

\ifdetails 
\begin{remark}[{\bf Delete for final version}]
\begin{enumerate}[(a)]
\item 
Assumption \eqref{eq:AlphaBetaSubAffine} is equivalent to the inequality $\frac{n - \alpha - 2\beta}{2n}  < \frac{n +\alpha - 2}{2(n - 1)}$. In particular, this assumption is necessary for the existence of $s$ and $p$ as in \eqref{eq:psDoublySubcritical}. 
\item Still have yet to explore whether the assumption $0< \alpha + \beta< n - \beta$ in Lemma \ref{lemma:BallExtensionDoublySubcriticalCompact} can be weakened to $0< \alpha + \beta < n$. 
\end{enumerate}
\demo
\end{remark}
\fi 
\begin{proof}[Proof of Lemma \ref{lemma:BallExtensionDoublySubcriticalCompact}]
Let $(f_i)_{i = 1}^\infty$ be bounded in $L^p(\bdy B)$. By Corollary \ref{coro:SubcriticalBallExtensionRestriction} (a), $\left(E_Bf_i\right)_{i = 1}^\infty$ is bounded in $L^s(B)$. By reflexivity of $L^p(\bdy B)$ and $L^s(B)$ there are $f\in L^p(\bdy B)$, $F\in L^s(B)$ and a subsequence of $f_i$ (still denoted $f_i$) along which both $f_i\rightharpoonup f$ weakly in $L^p(\bdy B)$ and $E_Bf_i\rightharpoonup F$ weakly in $L^s(B)$. Moreover $F = E_B f$. Indeed, for any $g\in L^{s'}(B)$ Corollary \ref{coro:SubcriticalBallExtensionRestriction} (b) guarantees that $R_Bg\in L^{p'}(\bdy B)$. For any such $g$
\begin{equation*}
	\lb E_B f_i, g\rb
	=
	\lb f_i, R_B g\rb
	\to
	\lb f, R_Bg\rb
	= 
	\lb E_B f, g\rb. 
\end{equation*}
It remains to show that there is a subsequence of $(f_i)$ along which $E_Bf_i\to E_B f$ in $L^s(B)$. For any $0< \epsilon< 1$ there holds
\begin{equation}
\label{eq:SubcriticalExtensionDomainPartition}
	\norm{E_B(f_i - f)}_{L^s(B)}^s
	= 
	\norm{E_B(f_i - f)}_{L^s(B_{1 - \epsilon})}^s
	+ 
	\norm{E_B(f_i - f)}_{L^s(B\setminus B_{1 - \epsilon})}^s. 
\end{equation}

To estimate the first term on the right-hand side of \eqref{eq:SubcriticalExtensionDomainPartition} note that $0< H(\xi, \zeta)\leq \epsilon^{\alpha + \beta -n}$ for all $\xi\in B_{1 - \epsilon}$ and all $\zeta\in \bdy B$. In particular, for all $\xi\in B_{1 - \epsilon}$ we have $\zeta\mapsto H(\xi, \zeta)\in L^{p'}(\bdy B)$. For any such $\xi$, the $L^p(\bdy B)$-weak convergence $f_i\weakconv f$ guarantees the pointwise convergence
\begin{equation*}
	E_B(f_i - f)(\xi)
	= 
	\int_{\bdy B}H(\xi, \zeta)(f_i (\zeta) - f(\zeta))\; \d S_\zeta
	\to 
	0
\end{equation*}
and H\"older's inequality gives
\begin{equation*}
	\abs{E_B(f_i - f)(\xi)}
	\leq
	\epsilon^{\alpha + \beta - n}\int_{\bdy B}\abs{f_i - f}(\zeta)\; \d S_\zeta
	\leq
	\epsilon^{\alpha + \beta - n}\abs{\bdy B}^{\frac 1{p'}}\sup_i\norm{f_i - f}_{L^p(\bdy B)}.
\end{equation*}
The Bounded Convergence Theorem now guarantees that $\norm{E_B(f_i - f)}_{L^s(B_{1 - \epsilon})}^s\to 0$ as $i\to\infty$. \\

Consider next the second term on the right-hand side of \eqref{eq:SubcriticalExtensionDomainPartition}. Since $p> \frac{2(n - 1)}{n + \alpha - 2}$ applying H\"older's inequality then applying Corollary \ref{coro:SubcriticalBallExtensionRestriction} (a) with exponents $p$ and $2n/(n - \alpha - 2\beta)$ gives
\begin{eqnarray*}
	\norm{E_B(f_i - f)}_{L^s(B\setminus B_{1 - \epsilon})}^s
	& \leq & 
	\abs{B\setminus B_{1 - \epsilon}}^{1 - \frac{s(n - \alpha - 2\beta)}{2n}}
	\norm{E_B(f_i - f)}_{L^\frac{2n}{n - \alpha - 2\beta}(B\setminus B_{1 - \epsilon})}^s
	\\
	&\leq & 
	C \epsilon^{1 - \frac{s(n - \alpha - 2\beta)}{2n}}\norm{f_i - f}_{L^p(\bdy B)}^{s}
	\\
	&\leq & 
	C\epsilon^{1 - \frac{s(n - \alpha - 2\beta)}{2n}} \sup_i \norm{f_i - f}_{L^p(\bdy B)}^{s}
\end{eqnarray*}
for some constant $C = C(n, \alpha, \beta, p, s)>0$. Finally, returning to \eqref{eq:SubcriticalExtensionDomainPartition} we have a subsequence $f_i$ and a positive constant $C(n, \alpha, \beta, p, s)$ such that for all $0< \epsilon< 1$, 
\begin{equation*}
	\norm{E_B(f_i - f)}_{L^s(B)}^s 
	\leq 
	C\epsilon^{1 - \frac{s(n - \alpha - 2\beta)}{2n}} + \circ(1). 
\end{equation*}
as $i\to \infty$. The $L^s(B)$-convergence $E_B f_i\to E_B f$ follows.
\end{proof}
\begin{proof}[Proof of Proposition \ref{prop:SubcriticalBallExtremalAttained}]
Let $(f_i)_{i = 1}^\infty\subset L^p(\bdy B)$ be a sequence of nonnegative functions satisfying both $\norm{f_i}_{L^p(\bdy B)} = 1$ for all $i$ and $\norm{E_Bf_i}_{L^s(\bdy B)}\to C_*(n, \alpha, \beta, p, s')$. By Lemma \ref{lemma:BallExtensionDoublySubcriticalCompact} there is $f\in L^p(\bdy B)$ and a subsequence of $(f_i)$ (still denoted $(f_i)$) along which $E_Bf_i\to E_B f$ in $L^s(B)$. In particular, $\norm{E_B f}_{L^s(B)} = C_*(n, \alpha, \beta, p, s')$ so $f$ does not vanish identically. Since $(f_i)_{i = 1}^\infty$ is bounded in $L^p(\bdy B)$ we have $f_i\weakconv f$ weakly in $L^p(\bdy B)$. Testing this weak convergence against $f^{\frac{p}{p'}}\in L^{p'}(\bdy B)$ and using H\"older's inequality gives
\begin{equation*}
	\norm{f}_{L^p(\bdy B)}^{\frac{p}{p'}}
	=
	\norm{f_i}_{L^p(\bdy B)}\norm{f}_{L^p(\bdy B)}^{\frac{p}{p'}}
	\geq
	\int_{\bdy B}f_i(\zeta) f(\zeta)^{\frac{p}{p'}}\; \d S_\zeta
	\to 
	\int_{\bdy B} f(\zeta)^p\; \d S_\zeta
\end{equation*}
so that $\norm{f}_{L^p(\bdy B)}\leq 1$. Finally,
\begin{equation*}
	C_*(n, \alpha, \beta, p, s')
	=
	\norm{E_Bf}_{L^s(B)}
	\leq
	\frac{\norm{E_Bf}_{L^s(B)}}{\norm{f}_{L^p(\bdy B)}}
	\leq
	C_*(n, \alpha, \beta, p, s')
\end{equation*}
so that $\norm{f}_{L^p(\bdy B)} = 1$. 
\end{proof} 
\ifdetails 
\begin{figure}[h!]
\begin{center}
\begin{tikzpicture}[>=stealth]
	\coordinate (yint) at (0, 4); 
	\coordinate (xint) at (5, 0); 
	\fill (xint) node[below]{$\frac{n - \alpha - \beta}{n-1}$} circle (0.05); 
	\fill (yint) node[left]{$\frac{n - \alpha - \beta}{n}$} circle (0.05); 
	\draw[dashed, fill = blue!10!white] (xint) -- (5,4) -- (yint) --cycle; 
	\draw[thick] (xint) -- (yint); 
	\draw[thick,->] (-1,0) -- (6,0) node[below]{$\frac 1{p'}$}; 
	\draw[thick,->] (0, -1) -- (0, 6) node[left]{$\frac 1{t'}$}; 
	\fill [blue!20!white](3, 1.6) -- (3, 4)--(5,4) -- (5, 1.6) --cycle; 
	\fill (3,1.6)node[below left]{$\scriptstyle \left(\frac{n - \alpha}{2(n-1)}, \frac{n- \alpha - 2\beta}{2n}\right) $} circle (0.05); 
	\draw[dotted] (3, 1.6) -- (3, 4); 
	\draw[dotted] (3, 1.6) -- (5, 1.6); 
\end{tikzpicture}
	\caption{({\bf Delete for final version}). In terms of $t'$ and $p'$ the subcritical region is $\frac{1}{t'} + \frac{n - 1}{np'} > \frac{n - \alpha - \beta}{n}$. The dark region is where both $p$ and $t$ are strictly subcritical and hence where Lemma \ref{lemma:BallExtensionDoublySubcriticalCompact} guarantees the compactness of the subcritical extension operator. Although the theorem was stated in terms of $p$ and $s = t'$, the picture seems to be easier to draw in terms of $p'$ and $t'$. }
	\label{Figure:BothStrictlySubcritical}
\end{center}
\end{figure}
\fi 
%
\section{Classification of Extremal Functions in the Subcritical Case}
\label{section:SubcriticalExtremalClassification}
%
If $f\in L^p(\bdy B)$ is a nonnegative extremal function for inequality \eqref{eq:SubcriticalBallExtension} then up to a positive constant multiple, $f$ is a weak solution to the Euler-Lagrange equation
\begin{equation}
\label{eq:BallExtremalEulerLagrange}
	f^{p - 1}(\zeta) = \int_B H(\xi, \zeta)(E_Bf(\xi))^{s - 1}\; \d \xi
\end{equation}
for $\zeta\in \bdy B$. In this section we will show that any such function is constant and we will compute the value of the best constant $C_*$ in \eqref{eq:BallSubcriticalHLSSharpConstant}. Specifically, we have the following theorem. 
\begin{theorem}
\label{theorem:ConstantExtremalsBestEmbeddingConstant}
Let $n\geq2$ and suppose $\alpha$ and $\beta$ satisfy \eqref{eq:MinimalAlphaBeta}. If $p$ and $s$ satisfy \eqref{eq:psDoublySubcritical} and if $f\in L^p(\bdy B)$ is a nonnegative solution to \eqref{eq:BallExtremalEulerLagrange} then $f$ is constant. Consequently the optimal constant $C_*(n, \alpha, \beta, p, t)$ in \eqref{eq:BallSubcriticalHLSSharpConstant} is given by 
\begin{eqnarray*}
	C_*(n, \alpha, \beta, p, t)	
	& = & 
	(n\omega_n)^{-\frac 1p}
	\left(
	\int_B \left(
	\int_{\bdy B} H(\xi, \zeta)\; \d S_\zeta
	\right)^{t'}\; \d \xi
	\right)^{\frac{1}{t'}}. 
\end{eqnarray*}
\end{theorem}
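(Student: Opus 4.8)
The plan is to prove that every nonnegative solution $f$ of \eqref{eq:BallExtremalEulerLagrange} is constant by the method of moving spheres (on $\bdy B\cong\bb S^{n-1}$), then deduce the value of $C_*$ by direct substitution. First I would record the conformal symmetry that makes the kernel $H_{\alpha,\beta}$ amenable to moving spheres: $H$ is, up to conformal factors, the pullback to the ball of the half-space kernel $K_{\alpha,\beta}$, and the relevant conformal group of $\bb S^{n-1}$ (Möbius transformations) acts on $H(\xi,\zeta)$ with the correct Jacobian weights so that the pair of equations — the Euler--Lagrange equation \eqref{eq:BallExtremalEulerLagrange} together with the companion equation $(E_Bf(\xi))^{s-1}=\text{const}\cdot\int_{\bdy B}H(\xi,\zeta)f(\zeta)^{p-1}\,\d S_\zeta$ obtained from the variational characterization — is invariant under inversions centered at boundary points. (This is exactly the structure exploited for the half-space kernels in \cite{DouGuoZhu2017,DouZhu2013,Chen2012}; the subcriticality $1/s>1/p$, i.e.\ \eqref{eq:psDoublySubcritical}, is what makes the moving-sphere comparison strict rather than merely conformal.)

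The core argument proceeds as follows. For $\zeta_0\in\bb S^{n-1}$ and $\lambda>0$ define the Kelvin-type transform $f_{\zeta_0,\lambda}$ of $f$ (and the corresponding transform of $E_Bf$) using the conformal factor $\big(\lambda/|\zeta-\zeta_0|\big)$ raised to the dimensionally correct power determined by the exponents $p$ and $s$. One shows: (i) for $\lambda$ sufficiently small, $f_{\zeta_0,\lambda}(\zeta)\le f(\zeta)$ for all $\zeta$ outside the reflected region — this is the "start" of the moving-sphere procedure and uses that $f$ is bounded away from $\zeta_0$ and the explicit form of the conformal weight; (ii) the set of $\lambda$ for which this inequality holds is closed and, by a strong-maximum-principle / integral-comparison argument applied to the difference $f-f_{\zeta_0,\lambda}$ via the Euler--Lagrange system, open — here the strict subcriticality forces that if the inequality is not eventually an identity then one can always enlarge $\lambda$; (iii) consequently either one can move the sphere all the way (forcing $f$ to have a specific decay that is incompatible with being in $L^p(\bdy B)$ unless $f\equiv 0$, contradicting nontriviality) or there is a critical $\lambda$ at which $f_{\zeta_0,\lambda}\equiv f$. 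The standard calculus lemma on functions invariant under all inversions (e.g.\ the $\bb S^{n-1}$-analogue of Lemma 11.1 in Li--Zhu, or the argument in \cite{DouGuoZhu2017}) then forces $f$ to be constant on $\bb S^{n-1}$. I would carry out (i)--(iii) in that order, phrasing the comparison in integral form to avoid any regularity assumptions beyond $f\in L^p(\bdy B)$.

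Once $f$ is known to be constant, the computation of $C_*$ is routine: normalize $\norm{f}_{L^p(\bdy B)}=1$, so $f\equiv(n\omega_n)^{-1/p}$ since $|\bdy B|=n\omega_n$; then $E_Bf(\xi)=(n\omega_n)^{-1/p}\int_{\bdy B}H(\xi,\zeta)\,\d S_\zeta$, and
\[
	C_*(n,\alpha,\beta,p,t)=\norm{E_Bf}_{L^s(B)}
	=(n\omega_n)^{-\frac1p}\Big(\int_B\Big(\int_{\bdy B}H(\xi,\zeta)\,\d S_\zeta\Big)^{t'}\d\xi\Big)^{\frac1{t'}},
\]
using $s=t'$. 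The only point to check is that the extremal is achieved at all and that the Euler--Lagrange equation indeed characterizes it up to scaling — but that is Proposition \ref{prop:SubcriticalBallExtremalAttained} together with a standard Lagrange-multiplier computation, so nothing new is needed.

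The main obstacle I anticipate is step (ii): making the "open" part of the moving-sphere argument rigorous at the level of $L^p$ solutions without pointwise regularity. One must either first bootstrap $f$ to continuity (using the integrability of $H$ established in Lemma \ref{lemma:ForYoungsInequality} and a regularity-lifting argument on the Euler--Lagrange system), or run the entire comparison in an integrated form where the key inequality $\int(f-f_{\zeta_0,\lambda})_-\le(\text{small})$ is propagated. A secondary technical nuisance is verifying the exact conformal covariance of $H_{\alpha,\beta}$ on the ball — that the powers $(n+\alpha-2)/2$ (on the boundary side) and the corresponding interior weight match the exponents $p=2(n-1)/(n+\alpha-2)$, $s=2n/(n-\alpha-2\beta)$ in the critical case, and degrade in a controlled, sign-definite way in the subcritical case; I would isolate this as a preliminary lemma (possibly relegating the Jacobian bookkeeping to the appendix, Section \ref{section:Appendix}).
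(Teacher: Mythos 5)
Your overall strategy---exploit the sign-definite failure of conformal invariance at subcritical exponents via an inversion comparison---is in the right spirit, and your computation of $C_*$ once constancy is known is exactly the paper's. But step (iii), the endgame of your moving-sphere argument, has a genuine gap. The ``standard calculus lemma on functions invariant under all inversions'' (Lemma 5.7 of \cite{Li2004}, quoted in the paper as Lemma \ref{lemma:LiLemma5.7}) does \emph{not} force $f$ to be constant: if for every center there is a finite critical radius at which $f_{\zeta_0,\lambda}\equiv f$, the conclusion of that lemma is that $f$ is a conformal bubble $c(d^2+\abs{y-y_0}^2)^{-\mu}$ --- precisely the non-constant solutions that do occur at the critical exponents (Theorem \ref{theorem:ExtremalSystemClassification}). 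Constancy comes only from the other branch, where the comparison holds for \emph{all} radii and \emph{all} centers, and to land there you must prove that equality at a finite critical radius is impossible because the subcritical symmetry-breaking term has a strict sign (the analogue of Lemma \ref{lemma:BothStrictlyPositive}). Your first alternative, ``a decay incompatible with $f\in L^p(\bdy B)$,'' is also vacuous as stated: $\bdy B$ is compact, so every bounded function lies in $L^p(\bdy B)$; that alternative only produces a contradiction after transferring to the noncompact half-space, where a positive constant fails to be integrable. As written, neither horn of your dichotomy yields constancy. (A smaller slip: your ``companion equation'' pairs $(E_Bf)^{s-1}$ with $f^{p-1}$ incorrectly; the system is $u=f^{p-1}=\int_B H\,v^{s-1}$ together with $v=E_Bf=\int_{\bdy B}H\,u^{1/(p-1)}$.)

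For contrast, the paper sidesteps these issues by transferring the Euler--Lagrange equation to a \emph{weighted} system on $\bb R_+^n$ (Lemma \ref{lemma:SystemTRotationTransformation}), where the subcritical weights $w(x)^\tau w(y)^\sigma$ with $\sigma,\tau\ge 0$ are radially symmetric about the $x_n$-axis and monotone away from it. Moving \emph{planes} then carries the plane exactly to $\{y_1=0\}$, the unique position where the weight is reflection-symmetric (Lemmas \ref{lemma:KeyEstimatesForMovingPlanes}--\ref{lemma:PlaneLambdaBar=0}), giving axial symmetry of $u,v$ (Proposition \ref{prop:RadialSymmetryOfExtremalSystemSolutions}); constancy of $f$ is then extracted by varying the rotation $\rho$ in the transfer and intersecting the images of super- and sub-level sets --- a step for which your sketch has no analogue. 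If you wish to retain a moving-spheres formulation you must supply (a) the strict-inequality lemma ruling out equality at a finite critical radius and (b) a mechanism converting the resulting one-sided inequalities into constancy; both are where the subcritical hypothesis \eqref{eq:psDoublySubcritical} actually does its work.
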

It is routine to show that the constant $C_*$ as given in Theorem \ref{theorem:ConstantExtremalsBestEmbeddingConstant} is well-defined. For convenience a proof of this fact is given in Lemma \ref{lemma:MappingPropertiesBallExtension} of the appendix. In fact, Lemma \ref{lemma:MappingPropertiesBallExtension} proves a stronger result which also guarantees that the constant $C_e(n, \alpha, \beta)$ given in \eqref{eq:OptimalExtensionConstant} is well-defined. The major step in the proof of Theorem \ref{theorem:ConstantExtremalsBestEmbeddingConstant} is in proving symmetry about the $x_n$-axis of solutions to a corresponding system of equations on the upper half space, see Proposition \ref{prop:RadialSymmetryOfExtremalSystemSolutions} below. We start by establishing some notation and listing some elementary facts. Define $T:\overline B \to \overline{\bb R_+^n}\cup\{\infty\}$ by 
\begin{equation}
\label{eq:BallToUHSConformalMap}
	T(\xi)
	= 
	- 2e_n + \frac{4(\xi + e_n)}{\abs{\xi + e_n}^2}, 
\end{equation}
where $e_n = (0', 1) \in \bb R^n$. Evidently, $T^{-1}: \overline{\bb R_+^n} \cup\{\infty\}\to \overline B$ is given by 
\begin{equation*}
	T^{-1}(x) = - e_n + \frac{4(x + 2e_n)}{\abs{x + 2e_n}^2}
	\qquad
	 x\in \bb R_+^n \cup\{\infty\}
\end{equation*}
and $T(\bdy B) = \bdy \bb R_+^n\cup\{\infty\}$. By directly computing one can verify that 
\begin{equation}
\label{eq:DistanceBetweenPreimages}
	\abs{T^{-1}(x) - T^{-1}(y)}
	= 
	w(x)w(y)\abs{x - y}
\end{equation}
for all $x, y\in \overline{\bb R_+^n}$ and that 
\begin{equation}
\label{eq:DistanceToBoundaryUnderPreimage}
	1 - \abs{T^{-1}(x)}^2 = 2w(x)^2 x_n, 
\end{equation}
where $w$ is used to denote the weight function 
\begin{equation}
\label{eq:SubcriticalWeight}
	w(x) = \frac{2}{\abs{x + 2e_n}}
	\qquad
	\text{ for } x\in \overline{\bb R_+^n}. 
\end{equation}
For notational convenience we define
\begin{equation}
\label{eq:SubcriticalWeightExponents}
	\sigma = n + \alpha - 2 - \frac{2(n - 1)}{p}
	\quad \text{ and } \quad 
	\tau = n + \alpha + 2\beta - \frac{2n}{t}
\end{equation}
and the new exponents 
\begin{equation*}
	\theta = p' - 1 = \frac 1 {p -1} 
	\qquad
	\text{ and }
	\qquad
	\kappa = t' - 1. 
\end{equation*}

The system of equations on the upper half space corresponding to equation \eqref{eq:BallExtremalEulerLagrange} is given in the following. 
\begin{lemma}
\label{lemma:SystemTRotationTransformation}
If $f\in L^p(\bdy B)$ is a nonnegative solution to \eqref{eq:BallExtremalEulerLagrange} with $s = t'$ then for any rotation $\rho:\bb R^n\to \bb R^n$ about the origin the functions 
\begin{equation}
\label{eq:RotatedBallToUHSTransformation}
	\begin{array}{lcl}
	u_\rho(y) & = & w(y)^{\frac{2(n - 1)}{p'}} f^{p - 1}(\rho T^{-1}(y))\\
	v_\rho(x) & = & w(x)^{\frac{2n}{t'}}\left(E_Bf\right)\left(\rho T^{-1}(x)\right)
	\end{array}
\end{equation}
satisfy $u_\rho\in L^{\theta + 1}(\bdy \bb R_+^n)$, $v_\rho \in L^{\kappa + 1}(\bb R_+^n)$ and 
\begin{equation}
\label{eq:UHSSubcriticalELSystem}
	\begin{cases}
	u(y) 
	& 
	\displaystyle
	= 
	
	\int_{\bb R_+^n}\frac{x_n^\beta v^\kappa(x)}{\abs{x - y}^{n - \alpha}}
	w(x)^\tau w(y)^\sigma \; \d x
	\\
	v(x) 
	& 
	\displaystyle
	=  
	\int_{\bdy \bb R_+^n} \frac{x_n^\beta u^\theta(y)}{\abs{x - y}^{n - \alpha}}
	w(x)^\tau w(y)^\sigma \; \d y.  
	\end{cases}
\end{equation}
\end{lemma}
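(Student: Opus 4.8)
The plan is to verify Lemma~\ref{lemma:SystemTRotationTransformation} by a direct change of variables under the conformal map $T$, tracking the Jacobian factors and the kernel transformation rules \eqref{eq:DistanceBetweenPreimages} and \eqref{eq:DistanceToBoundaryUnderPreimage}. First I would record the pointwise identities needed. Since $\rho$ is an orthogonal transformation fixing the origin, $w(\rho^{-1}\xi)$ and $|\rho^{-1}\xi - \rho^{-1}\eta|$ behave like $w$ and distances without $\rho$; more importantly, because $T^{-1}\circ(\text{a rotation on }\bb R^n)$ need not be $\rho\circ T^{-1}$, I would instead note that it suffices to prove the claim for $\rho = \mathrm{id}$ applied to the rotated data $\tilde f(\zeta) = f(\rho\zeta)$ — but since \eqref{eq:BallExtremalEulerLagrange} involves the rotation-invariant kernel $H$, $\tilde f$ solves \eqref{eq:BallExtremalEulerLagrange} with $E_B$ replaced by $E_B$ composed with the rotation, and this reduces everything to the $\rho = \mathrm{id}$ case. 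So the core computation is: starting from $f^{p-1}(\zeta) = \int_B H(\xi,\zeta)(E_Bf(\xi))^{s-1}\,\d\xi$ with $s = t'$, substitute $\zeta = T^{-1}(y)$, $\xi = T^{-1}(x)$ and show the result is the first equation of \eqref{eq:UHSSubcriticalELSystem}.

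The key steps, in order: (1) From \eqref{eq:BallKernel}, $H(\xi,\zeta) = \big(\tfrac{1-|\xi|^2}{2}\big)^\beta|\xi-\zeta|^{\alpha-n}$; substituting $\xi = T^{-1}(x)$ and $\zeta = T^{-1}(y)$ and using \eqref{eq:DistanceBetweenPreimages} and \eqref{eq:DistanceToBoundaryUnderPreimage} gives $H(T^{-1}(x), T^{-1}(y)) = (w(x)^2 x_n)^\beta (w(x)w(y))^{\alpha-n}|x-y|^{\alpha-n} = x_n^\beta |x-y|^{\alpha-n} w(x)^{2\beta+\alpha-n} w(y)^{\alpha-n}$. (2) Compute the Jacobian of $T^{-1}$ restricted to $\bb R_+^n$: since $T$ is a composition of an inversion and translations, a standard computation gives $|\det DT^{-1}(x)| = w(x)^{2n}$ (up to the fixed constant coming from the factor $4$ in \eqref{eq:BallToUHSConformalMap}, which I would absorb or track explicitly). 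Similarly the surface Jacobian on $\bdy \bb R_+^n$ is $w(y)^{2(n-1)}$. (3) Insert the definitions \eqref{eq:RotatedBallToUHSTransformation}: writing $(E_Bf)(T^{-1}(x)) = w(x)^{-2n/t'} v(x)$ and $f^{p-1}(T^{-1}(y)) = w(y)^{-2(n-1)/p'} u(y)$, and raising the $E_Bf$ factor to the power $s - 1 = t' - 1 = \kappa$. (4) Collect all powers of $w(x)$ and $w(y)$ and check, using \eqref{eq:SubcriticalWeightExponents}, that they combine to exactly $w(x)^\tau$ and $w(y)^\sigma$. This is the bookkeeping heart of the lemma: the exponent of $w(y)$ on the left is $2(n-1)/p'$, and on the right one accumulates $2(n-1)$ from the boundary Jacobian, $\alpha - n$ from the kernel, and $-\kappa \cdot 0$ (the $v$ term carries no $w(y)$); using $\sigma = n+\alpha-2-2(n-1)/p$ and $\theta = 1/(p-1)$ one must verify the identity $2(n-1)/p' = 2(n-1) + (\alpha-n) - \sigma$ after also accounting for which equation ($u$ or $v$) one is checking. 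I would do the analogous check for the second equation of \eqref{eq:UHSSubcriticalELSystem}, which comes not from the Euler–Lagrange equation but directly from the definition of $E_Bf$ — i.e. $E_Bf(\xi) = \int_{\bdy B} H(\xi,\zeta)f(\zeta)\,\d S_\zeta$ — transformed the same way, with $f(\zeta) = f^{p-1}(\zeta)^{1/(p-1)} = (f^{p-1})^\theta$, hence the $u^\theta$.

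Finally, (5) the integrability claims $u_\rho \in L^{\theta+1}(\bdy \bb R_+^n)$ and $v_\rho \in L^{\kappa+1}(\bb R_+^n)$: these follow by the same change of variables applied to the norms. Since $f \in L^p(\bdy B)$ and $E_Bf \in L^s(B) = L^{t'}(B)$ (the latter by Corollary~\ref{coro:SubcriticalBallExtensionRestriction}(a), as $p,s$ satisfy the hypotheses there via \eqref{eq:psDoublySubcritical}), one has $f^{p-1} \in L^{p/(p-1)}(\bdy B) = L^{\theta+1}(\bdy B)$ and $(E_Bf) \in L^{\kappa+1}(B)$ after noting $\kappa + 1 = t'$. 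Transforming $\int_{\bdy \bb R_+^n} |u_\rho(y)|^{\theta+1}\,\d y$ back to $\bdy B$ produces a factor $w(y)^{2(n-1)(\theta+1)/p' - 2(n-1)}$ against $|f^{p-1}(T\cdot)|^{\theta+1}$; one checks the exponent of $w$ vanishes (this is exactly why the power $2(n-1)/p'$ was chosen in \eqref{eq:RotatedBallToUHSTransformation}), giving finiteness, and similarly for $v_\rho$.

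I expect the main obstacle to be purely organizational rather than conceptual: correctly and consistently tracking the several competing powers of the weight $w$ — coming from the kernel transformation, the volume and surface Jacobians, the definitions \eqref{eq:RotatedBallToUHSTransformation}, and the exponents $\kappa, \theta$ — and confirming they telescope to precisely $w(x)^\tau$ and $w(y)^\sigma$ as defined in \eqref{eq:SubcriticalWeightExponents}. The choice of the exponents $2(n-1)/p'$ and $2n/t'$ in \eqref{eq:RotatedBallToUHSTransformation} is rigged exactly so that this works and so that the integrability is preserved, so the computation must close; the only real risk is an arithmetic slip with signs or with the Lebesgue-conjugate relations $1/p + 1/p' = 1$, $\theta = p'-1$, $\kappa = t'-1$. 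A secondary, minor point is justifying that the rotation $\rho$ can be pushed through cleanly; I would handle this as indicated above by observing that $\zeta \mapsto f(\rho\zeta)$ again solves the Euler–Lagrange system because $H$ and the measure $\d S$ on $\bdy B$ are rotation invariant, so without loss of generality $\rho = \mathrm{id}$.
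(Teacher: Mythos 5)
Your proposal is correct and follows essentially the same route as the paper: the kernel identity $H(T^{-1}(x),T^{-1}(y))=x_n^\beta|x-y|^{\alpha-n}w(x)^{\alpha+2\beta-n}w(y)^{\alpha-n}$ from \eqref{eq:DistanceBetweenPreimages}--\eqref{eq:DistanceToBoundaryUnderPreimage}, the Jacobian factors $w^{2n}$ and $w^{2(n-1)}$, the exponent bookkeeping that telescopes to $w(x)^\tau w(y)^\sigma$, and the isometry argument for the integrability (your check that the powers of $w$ cancel is exactly how $\sigma$ and $\tau$ are rigged, and the arithmetic does close via $\theta/p'=1/p$ and $2n/t'=2n-2n/t$). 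The only cosmetic difference is that you dispose of the rotation by replacing $f$ with $f\circ\rho$, whereas the paper uses directly that $H(\rho\xi,\rho\zeta)=H(\xi,\zeta)$; these are equivalent uses of the same invariance.
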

\begin{proof}
The integrability assertions on $u_\rho$ and $v_\rho$ follow immediately from the fact that $f\mapsto w^{\frac{2(n-1)}{p}} f\circ T^{-1}$ is an isometry $L^p(\bdy B) \to L^p(\bb R_+^n)$, the fact that $g\mapsto w^{\frac{2n}{t}} g\circ T^{-1}$ is an isometry $L^t(B)\to L^t(\bb R_+^n)$ and from estimate \eqref{eq:SubcriticalBallExtension}. To that $u_\rho$ and $v_\rho$ satisfy \eqref{eq:UHSSubcriticalELSystem}note that since $\rho$ is an isometry \eqref{eq:DistanceToBoundaryUnderPreimage} and \eqref{eq:DistanceBetweenPreimages} guarantee that for any $x\in \bb R_+^n$ and $y\in \bdy \bb R_+^n$
\begin{eqnarray}
\label{eq:KernelRotationTransformation}
	H(\rho T^{-1}(x), \rho T^{-1}(y))
	& = &  
	H(T^{-1}(x), T^{-1}(y))
	\notag\\
	& = & 
	\displaystyle
	\frac{x_n^\beta}{\abs{x - y}^{n - \alpha}}w(x)^{\alpha + 2\beta - n}w(y)^{\alpha - n}. 
\end{eqnarray}
Using the change of variable $\zeta = \rho T^{-1}(y)$ gives
\begin{eqnarray*}
	(E_Bf)(\rho T^{-1}(x))
	&  = & 
	\int_{\bdy B} H(\rho T^{-1}(x), \zeta)f(\zeta)\; \d S_\zeta
	\\
	& = & 
	\int_{\bdy \bb R_+^n}H(\rho T^{-1}(y), \rho T^{-1}(y)) f(\rho T^{-1}(y))w(y)^{2(n - 1)}\; \d y. 
\end{eqnarray*}
Combining this equation with \eqref{eq:KernelRotationTransformation} yields the second of the two asserted equalities. To prove the first of the two asserted equalities, use the change of variable $\xi = \rho T^{-1}(x)$ in equation \eqref{eq:BallExtremalEulerLagrange} to get
\begin{eqnarray*}
	f^{p - 1}(\rho T^{-1}(y))
	& = & 
	\int_B H(\xi, \rho T^{-1}(y)) \left(E_B f\right)(\xi)^{t' - 1}\; \d \xi
	\\
	& = & 
	\int_{\bb R_+^n} H(\rho T^{-1}(x), \rho T^{-1}(y))\left(E_B f\right)(\rho T^{-1}(x))^{t' - 1}w(x)^{2n}\; \d x. 
\end{eqnarray*}
Combing this equality with \eqref{eq:KernelRotationTransformation} yields the first of the two asserted equalities. 
\end{proof}
Theorem \ref{theorem:ConstantExtremalsBestEmbeddingConstant} is implied by the following proposition. 
\begin{prop}
\label{prop:RadialSymmetryOfExtremalSystemSolutions}
Let $n\geq 2$ and suppose $\alpha$ and $\beta$ satisfy \eqref{eq:MinimalAlphaBeta}. Suppose $\theta>0$ and $\kappa>0$ satisfy $(\theta + 1)^{-1}+ (\kappa + 1)^{-1} < 1$ and 
\begin{equation}
\label{eq:ThetaKappaBothSoftlySubcritical}
	\theta \leq \frac{n + \alpha - 2}{n - \alpha}, 
	\qquad
	\kappa \leq \frac{n + \alpha + 2\beta}{n - \alpha - 2\beta}
\end{equation}
and that at least one of the inequalities in \eqref{eq:ThetaKappaBothSoftlySubcritical} is strict. If $u\in L^{\theta + 1}(\bdy \bb R_+^n)$ and $v\in L^{\kappa + 1}(\bb R_+^n)$ are positive solutions to \eqref{eq:UHSSubcriticalELSystem} then $u$ and $v$ are symmetric about the $x_n$-axis. \\
\end{prop}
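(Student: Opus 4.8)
The plan is to prove Proposition \ref{prop:RadialSymmetryOfExtremalSystemSolutions} by the \emph{method of moving planes} (equivalently, moving spheres) adapted to the integral system \eqref{eq:UHSSubcriticalELSystem}. Since the weights $w(x)^\tau$, $w(y)^\sigma$ break translation invariance in the $x'$-directions but respect rotational symmetry about the $x_n$-axis (because $w$ depends only on $\abs{x+2e_n}$, which is rotation-invariant about the axis through $-2e_n$ and the origin, i.e.\ the $x_n$-axis), the natural statement to aim for is invariance under every rotation fixing the $x_n$-axis, which is exactly symmetry about that axis. I would fix an arbitrary two-plane $P$ containing the $x_n$-axis and a direction $e\in\bdy\bb R_+^n$ orthogonal to the axis inside the boundary, and run moving planes in the $e$-direction; getting symmetry in every such direction yields radial symmetry about the axis. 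Write $\lambda\in\bb R$ for the plane $\{y\cdot e=\lambda\}$, let $\Sigma_\lambda$ be the half-space on one side, and for a point $z$ let $z^\lambda$ be its reflection across $\{y\cdot e=\lambda\}$; set $u_\lambda(y)=u(y^\lambda)$, $v_\lambda(x)=v(x^\lambda)$.

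The key computational input is a comparison of kernels: because reflection across $\{\,\cdot\, e=\lambda\}$ is an isometry, $\abs{x-y}$ is preserved under simultaneous reflection, and one checks $w(x^\lambda)\le w(x)$ and $w(y^\lambda)\le w(y)$ precisely when $x,y\in\Sigma_\lambda$ and $\lambda\le 0$ (this monotonicity of $w$ under reflection toward the center $-2e_n$ is the usual mechanism that makes moving planes work for the weighted system, and is exactly why the range $\lambda\le 0$ — equivalently sweeping from $-\infty$ — is the right one). Combined with $\beta\ge 0$ and $x_n^\beta=x_n^\beta$ being reflection-invariant, this gives for $x,y\in\Sigma_\lambda$ the pointwise inequality between the kernel at $(x,y)$ and at $(x^\lambda,y^\lambda)$, with the correct sign. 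Subtracting the integral equations over $\Sigma_\lambda$ and its complement and using this kernel comparison, I obtain the standard integral inequalities
\begin{equation*}
	(u-u_\lambda)^+(y)\le C\int_{\Sigma_\lambda}\frac{w(x)^\tau w(y)^\sigma x_n^\beta}{\abs{x-y}^{n-\alpha}}\bigl(v^\kappa-v_\lambda^\kappa\bigr)^+(x)\,\d x
\end{equation*}
on $\Sigma_\lambda\cap\bdy\bb R_+^n$, and symmetrically for $(v-v_\lambda)^+$ on $\Sigma_\lambda\cap\bb R_+^n$. Applying the mean value theorem to bound $(v^\kappa-v_\lambda^\kappa)^+\le \kappa\, v^{\kappa-1}(v-v_\lambda)^+$ on the set where $v>v_\lambda$, then Hardy--Littlewood--Sobolev / weighted Young estimates (these are available from the mapping properties proved in Section \ref{section:SubcriticalnequalityOnBall}, transported to the half-space) plus Hölder with the integrability $u\in L^{\theta+1}$, $v\in L^{\kappa+1}$, gives
\begin{equation*}
	\norm{(u-u_\lambda)^+}_{L^{\theta+1}(\Sigma_\lambda^{n-1})}\le C\,\Lambda(\Sigma_\lambda)\,\norm{(v-v_\lambda)^+}_{L^{\kappa+1}(\Sigma_\lambda^+)},
\end{equation*}
and a companion inequality, where $\Lambda(\Sigma_\lambda)\to 0$ as $\lambda\to-\infty$ (by absolute continuity of the relevant integrals, using the \emph{strict} subcriticality in one of \eqref{eq:ThetaKappaBothSoftlySubcritical}, which is what makes the tail contribution decay rather than merely stay bounded). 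Hence for $\lambda\ll 0$ the two inequalities close on each other, forcing $(u-u_\lambda)^+\equiv 0$ and $(v-v_\lambda)^+\equiv 0$: the procedure can be started.

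Next I would define $\bar\lambda=\sup\{\lambda\le 0:\ u\le u_\mu\text{ and }v\le v_\mu\text{ on }\Sigma_\mu\text{ for all }\mu\le\lambda\}$ and argue $\bar\lambda$ cannot be a point of non-symmetry. If at $\bar\lambda$ one does not already have $u\equiv u_{\bar\lambda}$, $v\equiv v_{\bar\lambda}$, then by the strict positivity of $u,v$ and the strong-maximum-principle flavor of the integral inequalities (the kernel is strictly positive, so $(v^\kappa-v_{\bar\lambda}^\kappa)^+$ cannot vanish a.e.\ unless $v\le v_{\bar\lambda}$ everywhere and similarly for $u$), one upgrades the non-strict comparison at $\bar\lambda$ to a strict one on the interior of $\Sigma_{\bar\lambda}$; then a standard measure-theoretic/continuity argument — choose a compact set carrying most of the mass, use continuity of $\lambda\mapsto u_\lambda$ in $L^{\theta+1}$ and uniform smallness of the tail $\Lambda(\Sigma_\lambda)$ for $\lambda$ near $\bar\lambda$ — shows the plane can be pushed past $\bar\lambda$, contradicting maximality unless $\bar\lambda=0$ forces equality, or the process only stops when symmetry holds. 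Since the weight is symmetric precisely about $\lambda=0$ in each direction $e$, one concludes $u(y)=u(y^0)$ and $v(x)=v(x^0)$, i.e.\ symmetry across every plane containing the $x_n$-axis, hence $u,v$ are symmetric about the $x_n$-axis. The main obstacle — and the place I would spend the most care — is verifying the kernel monotonicity $w(\cdot^\lambda)\le w(\cdot)$ on $\Sigma_\lambda$ for $\lambda\le 0$ together with the decay $\Lambda(\Sigma_\lambda)\to 0$; the first is a clean but essential geometric computation about $w(x)=2/\abs{x+2e_n}$ under reflections in directions perpendicular to the axis through $-2e_n$, and the second is exactly where assumptions \eqref{eq:AlphaBetaSubAffine} (ensuring the exponent window is nonempty) and the strictness in \eqref{eq:ThetaKappaBothSoftlySubcritical} enter, guaranteeing the weighted HLS constant over a shrinking half-space genuinely tends to zero rather than being merely controlled.
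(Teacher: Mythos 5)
Your overall strategy is the paper's: moving planes from $\lambda=-\infty$ in each direction orthogonal to the $x_n$-axis, a kernel/weight comparison under reflection for $\lambda\le 0$, weighted HLS plus H\"older to close the two integral inequalities on each other, and a measure-theoretic continuity argument to push the plane to $\lambda=0$. Two points, however, need attention; the second is a genuine gap.

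First, a sign issue: since $w(x)=2/\abs{x+2e_n}$ and reflection across $\{x_1=\lambda\}$ with $\lambda\le 0$ moves points of $\Sigma_\lambda$ \emph{closer} to $-2e_n$, the correct monotonicity is $w(x^\lambda)\geq w(x)$ and $w(y^\lambda)\geq w(y)$ (equivalently $\abs{x^\lambda+2e_n}\leq\abs{x+2e_n}$), the opposite of what you wrote. This is the direction needed, together with $\sigma,\tau\ge 0$ (which is exactly what \eqref{eq:ThetaKappaBothSoftlySubcritical} encodes), to discard the weight factors with the right sign and arrive at your displayed inequality for $(u-u_\lambda)^+$; with your stated inequality the argument would not close. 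Relatedly, the strictness of one inequality in \eqref{eq:ThetaKappaBothSoftlySubcritical} is not what makes the tail constant $\Lambda(\Sigma_\lambda)$ decay (that follows from $u\in L^{\theta+1}$, $v\in L^{\kappa+1}$ together with $(\theta+1)^{-1}+(\kappa+1)^{-1}<1$); it is used later to rule out the degenerate alternative in which $u,v$ are exactly symmetric about a plane $\{y_1=\bar\lambda\}$ with $\bar\lambda<0$.

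Second, and more seriously: the step ``mean value theorem gives $(v^\kappa-v_\lambda^\kappa)^+\le\kappa\,v^{\kappa-1}(v-v_\lambda)^+$, then H\"older with $\norm{v}_{L^{\kappa+1}}^{\kappa-1}$'' requires $\kappa\ge 1$, and the companion estimate for the $v$-equation requires $\theta\ge 1$. The hypotheses only guarantee that \emph{one} of $\theta,\kappa$ exceeds $1$; in particular $\theta=1/(p-1)<1$ occurs precisely when $\alpha<1$, a case the proposition must cover. When, say, $\theta<1$, the intermediate point in the mean value theorem gives $\theta\,\xi^{\theta-1}\ge\theta\,u^{\theta-1}$ on $\{u>u_\lambda\}$ (the bound goes the wrong way), and even formally the H\"older factor $\norm{u}_{L^{\theta+1}(\Sigma_\lambda)}^{\theta-1}$ would blow up rather than vanish as $\lambda\to-\infty$. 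Handling this case is the technical core of the argument: one must instead compare $u^{1/r}$ with $u_\lambda^{1/r}$ for a suitable $1<\tfrac1\theta<r<\kappa$, split $u$ into contributions from the good and bad sets, and use concavity of $s\mapsto s^{1/r}$ together with the triangle inequality for the $\ell^r$-norm of the two kernel pieces to reduce to $\norm{v-v_\lambda}_{L^{\kappa+1}}$ with exponents $\theta-\tfrac1r$ and $\tfrac\kappa r-1$ that do vanish in the limit. Without this (or some substitute), your closing estimates simply are not available in the admissible range of $\theta,\kappa$, so the proposal as written proves the proposition only under the extra assumption $\theta,\kappa\ge 1$.
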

Before we give the proof of Proposition \ref{prop:RadialSymmetryOfExtremalSystemSolutions} let us show that Theorem \ref{theorem:ConstantExtremalsBestEmbeddingConstant} follows from Proposition \ref{prop:RadialSymmetryOfExtremalSystemSolutions}. 
\begin{proof}[Proof of Theorem \ref{theorem:ConstantExtremalsBestEmbeddingConstant}]
Suppose for sake of obtaining a contradiction that $f$ is not constant and let $\bar f = \abs{\bdy B}^{-1}\int_{\bdy B} f\; \d S_{\zeta}$. There is $\delta>0$ for which both of $A^\delta = \{\zeta\in \bdy B: f> \bar f + \delta\}$ and $A_\delta = \{\zeta\in \bdy B: f< \bar f - \delta\}$ have positive measure. Let $\rho:\bb R^n \to \bb R^n$ be a  rotation for which there exists a rotation $\phi$ about the $x_n$-axis satisfying $\abs{\phi\rho(A^\delta)\cap \rho(A_\delta)}_{\bdy B}> 0$. This gives $\abs{T(\phi\rho(A^\delta))\cap T(\rho(A_\delta))}_{\bdy \bb R_+^n}> 0$, where $T$ as in \eqref{eq:BallToUHSConformalMap}. Using the notation of \eqref{eq:RotatedBallToUHSTransformation}, if $y\in T(\phi\rho(A^\delta))$ then
\begin{equation*}
	u_{(\phi\rho)^{-1}}(y)
	= 
	w(y)^{\frac{2(n - 1)}{p'}}f^{p - 1}\left( (\phi\rho)^{-1}T^{-1}(y)\right)
	> 
	(\bar f + \delta)^{p - 1}w(y)^{\frac{2(n - 1)}{p'}}. 
\end{equation*}
On the other hand, since $T$ commutes with $\phi$ and since $u$ is symmetric about the $x_n$-axis, if $y\in T(\rho(A_\delta))$ then
\begin{eqnarray*}
	u_{(\phi\rho)^{-1}}(y)
	& = & 
	w(y)^{\frac{2(n - 1)}{p'}}f^{p -1}\left((\phi \rho)^{-1}T^{-1}(\phi y)\right)
	\\
	& = & 
	w(y)^{\frac{2(n - 1)}{p'}} f^{p - 1}\left(\rho^{-1}T^{-1}(y)\right)
	\\
	& < & 
	(\bar f - \delta)^{p - 1}w(y)^{\frac{2(n - 1)}{p'}}. 
\end{eqnarray*}
The previous two estimates imply that $\abs{T(\phi\rho(A^\delta))\cap T(\rho(A_\delta))}_{\bdy \bb R_+^n}= 0$, a contradiction. To compute $C_*(n, \alpha, \beta, p, t)$ simply observe that function $f\equiv (n\omega_n)^{-\frac 1p}$ has unit $L^p(\bdy B)$ norm so the proposed expression for $C_*$ holds. 
\end{proof}

The remainder of this section is devoted to the proof of Proposition \ref{prop:RadialSymmetryOfExtremalSystemSolutions}. We start by giving the following corollary to Proposition \ref{prop:SubcriticalBallHLSInequality} which will be used repeatedly. Its proof follows from the properties of $T$ together with the fact that $f\mapsto w^{2(n-1)/p} f\circ T^{-1}$ is an isometry of $L^p(\bdy B)$ into $L^p(\bdy \bb R_+^n)$ and the fact that $g\mapsto w^{2n/t} g\circ T^{-1}$ is an isometry of $L^t(B)$ into $L^t(\bdy \bb R_+^n)$. 
\begin{coro}
\label{coro:UHSHLSExtensionRestriction}
Let $n\geq 2$ and suppose $\beta\geq 0$ and $0< \alpha + \beta < n$.  Let $w$, $\sigma$ and $\tau$ be as in \eqref{eq:SubcriticalWeight} and \eqref{eq:SubcriticalWeightExponents}. 
\begin{enumerate}[(a)]
	\item If $p>1$, $t>1$ satisfy both $1< \frac 1 p + \frac 1 t$ and \eqref{eq:ptSubcritical} then 
\begin{equation*}
	\abs{
	\int_{\bb R_+^n}\int_{\bdy \bb R_+^n}
	\frac{x_n^\beta f(y) g(x)}{\abs{x - y}^{n - \alpha}}w(x)^\tau w(y)^\sigma
	\; \d y\; \d x
	}
	\leq 
	C_*(n, \alpha, \beta, p, t)\norm{f}_{L^p(\bdy \bb R_+^n)}\norm{g}_{L^t(\bb R_+^n)}
\end{equation*}
for all $f\in L^p(\bdy \bb R_+^n)$ and all $g\in L^t(\bb R_+^n)$, where  and $C_*(n, \alpha, \beta, p, t)$ is given in \eqref{eq:BallSubcriticalHLSSharpConstant}. 
	\item If $s>0$ and if $\frac{n -1}{n}\left(\frac 1 p - \frac{\alpha + \beta - 1}{n - 1}\right)< \frac 1 {s}< \frac 1 p < 1$ then for all $f\in L^p(\bdy \bb R_+^n)$, 
	\begin{equation*}
		\norm{
		\int_{\bdy \bb R_+^n}
		\frac{x_n^\beta f(y)}{\abs{x- y}^{n - \alpha}}
		w(x)^\tau w(y)^\sigma \; \d y
		}_{L^{s}(\bb R_+^n;\d x)}
		\leq
		C_*(n, \alpha, \beta, p, s') \norm{f}_{L^p(\bdy \bb R_+^n)}. 
	\end{equation*}
	\item If $r>0$ and if $\frac{n}{n - 1}\left(\frac 1 t- \frac{\alpha + \beta}{n}\right)< \frac 1 {r} < \frac 1 t < 1$ then for all $g\in L^t(\bb R_+^n)$, 
	\begin{equation*}
		\norm{
		\int_{\bb R_+^n}
		\frac{x_n^\beta g(x)}{\abs{x- y}^{n - \alpha}}w(x)^\tau w(y)^\sigma \; \d x
		}_{L^{r}(\bdy \bb R_+^n; \d y)}
		\leq
		C_*(n, \alpha, \beta, r', t)\norm{g}_{L^t(\bb R_+^n)}. 
	\end{equation*}
\end{enumerate}
\end{coro}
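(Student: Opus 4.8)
\emph{Proof proposal.} The plan is to transplant the inequality of Proposition \ref{prop:SubcriticalBallHLSInequality} from $B$ to $\bb R_+^n$ by means of the conformal map $T$ of \eqref{eq:BallToUHSConformalMap}, and then to deduce parts (b) and (c) from part (a) by exactly the duality argument used to pass from Proposition \ref{prop:SubcriticalBallHLSInequality} to Corollary \ref{coro:SubcriticalBallExtensionRestriction}.

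For part (a) I would first observe that the kernel $x_n^\beta\abs{x-y}^{\alpha-n}w(x)^\tau w(y)^\sigma$ is nonnegative on $\bb R_+^n\times\bdy\bb R_+^n$, so it suffices to treat nonnegative $f$ and $g$. Given such $f$ and $g$, I would set $\hat f = (w\circ T)^{-2(n-1)/p}(f\circ T)$ on $\bdy B$ and $\hat g = (w\circ T)^{-2n/t}(g\circ T)$ on $B$, arranged so that $w^{2(n-1)/p}(\hat f\circ T^{-1}) = f$ and $w^{2n/t}(\hat g\circ T^{-1}) = g$. The stated isometry properties of $F\mapsto w^{2(n-1)/p}(F\circ T^{-1})$ and $G\mapsto w^{2n/t}(G\circ T^{-1})$ then give $\norm{\hat f}_{L^p(\bdy B)} = \norm{f}_{L^p(\bdy\bb R_+^n)}$ and $\norm{\hat g}_{L^t(B)} = \norm{g}_{L^t(\bb R_+^n)}$. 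I would apply Proposition \ref{prop:SubcriticalBallHLSInequality} to $\hat f,\hat g$ and invoke the definition \eqref{eq:BallSubcriticalHLSSharpConstant} of $C_*$, and then change variables $\xi = T^{-1}(x)$, $\zeta = T^{-1}(y)$ in the resulting integral over $B\times\bdy B$. The two isometry statements are precisely the assertions $\d S_\zeta = w(y)^{2(n-1)}\,\d y$ and $\d\xi = w(x)^{2n}\,\d x$ under this substitution, and \eqref{eq:KernelRotationTransformation} (with $\rho = \mathrm{id}$) supplies $H(T^{-1}(x),T^{-1}(y)) = x_n^\beta\abs{x-y}^{\alpha-n}w(x)^{\alpha+2\beta-n}w(y)^{\alpha-n}$. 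Collecting the powers of $w(x)$ (namely $\alpha+2\beta-n-\tfrac{2n}{t}+2n$) and of $w(y)$ (namely $\alpha-n-\tfrac{2(n-1)}{p}+2(n-1)$) and comparing with \eqref{eq:SubcriticalWeightExponents} shows these are exactly $\tau$ and $\sigma$, so the ball integral becomes precisely the half-space integral appearing in (a), and the bound follows.

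For (b) and (c) I would check that, after the substitution $t = s'$ (respectively $p = r'$), the two-sided bound on $\tfrac 1s$ (respectively $\tfrac 1r$) in the hypothesis is equivalent to the hypotheses of (a): the upper bound becomes $1 < \tfrac 1p + \tfrac 1{s'}$, and the lower bound becomes precisely \eqref{eq:ptSubcritical}. Then, fixing $f$, Tonelli's theorem together with (a) bounds $\int_{\bb R_+^n}\abs{\int_{\bdy\bb R_+^n}x_n^\beta\abs{x-y}^{\alpha-n}w(x)^\tau w(y)^\sigma f(y)\,\d y}\,\abs{g(x)}\,\d x$ by $C_*(n,\alpha,\beta,p,s')\norm{f}_{L^p}\norm{g}_{L^{s'}}$ for every $g$; taking the supremum over $\norm{g}_{L^{s'}}\leq 1$ and using the dual characterization of the $L^s(\bb R_+^n)$-norm (legitimate since $s>p>1$) yields (b), and (c) is identical with the interior and boundary roles swapped (using $r>t>1$).

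There is no genuine analytic obstacle here, since Proposition \ref{prop:SubcriticalBallHLSInequality} is already in hand and the change of variables is an exact conformal identity. The only points requiring care are the bookkeeping of the powers of $w$ — to confirm that they collapse precisely to $\sigma$ and $\tau$ — and the verification that the exponent ranges in (b) and (c) coincide with the hypotheses of (a) after the duality substitution; both are routine algebra.
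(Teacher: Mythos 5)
Your proposal is correct and follows essentially the same route as the paper's own argument: part (a) is obtained by pulling $f,g$ back to $B$ via $T$ with the conformal weights $\left(\tfrac{2}{\abs{\zeta+e_n}}\right)^{2(n-1)/p}$ and $\left(\tfrac{2}{\abs{\xi+e_n}}\right)^{2n/t}$, using the kernel identity $H(T^{-1}(x),T^{-1}(y)) = x_n^\beta\abs{x-y}^{\alpha-n}w(x)^{\alpha+2\beta-n}w(y)^{\alpha-n}$ and the change of variables to land exactly on the weighted half-space integral, then applying Proposition \ref{prop:SubcriticalBallHLSInequality} with its sharp constant; parts (b) and (c) follow by Lebesgue duality after the substitutions $t=s'$, $p=r'$, whose exponent bookkeeping you verify correctly. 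No gaps.
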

\ifdetails 
\begin{remark}[{\bf Delete for final version}]
Note that we don't need signs on both $\sigma$ and $\tau$ in Corollary \ref{coro:UHSHLSExtensionRestriction}. Of course, if both $\sigma>0$ and $\tau >0$ then the subcriticality condition \eqref{eq:ptSubcritical} holds. However, simultaneous positivity of $\sigma$ and $\tau$ is not necessary for \eqref{eq:ptSubcritical} to hold. In fact, if both $\sigma>0$ and $\tau>0$ then the left-most and the right-most inequalities of \eqref{eq:psDoublySubcritical} hold (with $s  =t'$). 
\demo
\end{remark}
\fi 
\ifdetails 
\begin{proof}[Proof of Corollary \ref{coro:UHSHLSExtensionRestriction}]
We prove only item (a). Items (b) and (c) follow from item (a) and Lebesgue duality. Let $f\in L^p(\bdy \bb R_+^n)$, let $g\in L^t(\bb R_+^n)$ and let $T$ be as in \eqref{eq:BallToUHSConformalMap}. For $F(\zeta) = \left(\frac{2}{\abs{\zeta + e_n}}\right)^{\frac{2(n - 1)}{p}}f\circ T(\zeta)$ and $G(\xi) = \left(\frac{2}{\abs{\xi + e_n}}\right)^{\frac{2n}{t}}g\circ T(\xi)$ we have $\norm{F}_{L^p(\bdy B)} = \norm{f}_{L^p(\bdy \bb R_+^n)}$ and $\norm{G}_{L^t(B)} = \norm{g}_{L^t(\bb R_+^n)}$. Moreover, for $x\in \bb R_+^n$ and $y\in \bdy \bb R_+^n$ we have
\begin{equation*}
	H(T^{-1}(x), T^{-1}(y))
	= 
	\frac{x_n^\beta}{\abs{x- y}^{n - \alpha}}w(x)^{\alpha + 2\beta - n}w(y)^{\alpha - n}. 
\end{equation*}
Therefore, using the change of variable $\zeta = T^{-1}(y)$ and $\xi = T^{-1}(x)$ together with the equalities $f(y) = w(y)^{\frac{2(n - 1)}{p}}F\circ T^{-1}(y)$ and $g(x) = w(x)^{\frac{2n}{t}}G\circ T^{-1}(x)$ we obtain 
\begin{equation*}
	\int_B\int_{\bdy B}H(\xi, \zeta)F(\zeta) G(\xi)\; \d S_\zeta\; \d \xi
	=
	\int_{\bb R_+^n}\int_{\bdy \bb R_+^n}
	\frac{x_n^\beta f(y)g(x)}{\abs{x - y}^{n - \alpha}}w(x)^\tau w(y)^\sigma\; \d y\; \d x. 
\end{equation*}
Finally, applying Proposition \ref{prop:SubcriticalBallHLSInequality} with sharp constant establishes the corollary. 
\end{proof}
\fi 

For $\lambda\in \bb R$ define
\begin{equation*}
	\Sigma_{\lambda, n-1} = \{y\in \bdy \bb R_+^n: y_1< \lambda\}, 
	\qquad
	\Sigma_{\lambda, n} = \{x\in \bb R_+^n: x_1< \lambda\}
\end{equation*}
and the reflected functions
\begin{equation*}
	u_\lambda(y) = u(y^\lambda)
	\qquad
	v_{\lambda}(x) = v(x^\lambda), 
\end{equation*}
where $y^\lambda = (2\lambda - y_1, y_2, \cdots, y_{n -1})$ and $x^\lambda = (2\lambda - x_1, x_2, \cdots, x_n)$. Define also 
\begin{equation*}
	\Sigma_{\lambda, n-1}^u 
	= 
	\{y\in \Sigma_{\lambda, n-1}: u(y)> u_\lambda(y)\}
	\qquad
	\text{ and }
	\qquad
	\Sigma_{\lambda, n}^v = 
	\{ x\in \Sigma_{\lambda, n}: v(x)>v_\lambda(x)\}. 
\end{equation*}
If $\lambda\leq 0$ and $(x,y)\in \Sigma_{\lambda, n}\times \Sigma_{\lambda,n-1}$ then 
\begin{equation}
\label{eq:LambdaNegativeConsequences}
	|y^\lambda + 2e_n|\leq \abs{ y + 2e_n},
	\quad 
	|x^\lambda - y|\geq \abs{x - y}
	\quad
	\text{ and }
	\quad
	|x^\lambda + 2e_n|\leq \abs{x + 2e_n}
\end{equation}
and these inequalities are strict when $\lambda< 0$. We also define the sets

By performing routine computations, one finds that if $u, v$ satisfy \eqref{eq:UHSSubcriticalELSystem} then 
\ifdetails 
for all $\lambda\in \bb R$ 
\begin{equation*}
	u(y)
	= 
	\int_{\Sigma_{\lambda, n}}\frac{x_n^\beta v^\kappa(x)}{\abs{x - y}^{n - \alpha}} w^\tau(x) w^\sigma(y)\; \d x
	+ 
	\int_{\Sigma_{\lambda, n}}\frac{x_n^\beta v_\lambda^\kappa(x)}{\abs{x^\lambda - y}^{n - \alpha}} w^\tau(x^\lambda) w^\sigma(y)\; \d x
\end{equation*}
for a.e. $y\in \bdy \bb R_+^n$ and 
\begin{equation*}
	v(x)
	=
	\int_{\Sigma_{\lambda, n-1}}\frac{x_n^\beta u^\theta(y)}{\abs{x - y}^{n - \alpha}}w^\tau(x)w^\sigma(y)\; \d y
	+ 
	\int_{\Sigma_{\lambda, n-1}}\frac{x_n^\beta u_\lambda^\theta(y)}{\abs{x - y^\lambda}^{n - \alpha}}w^\tau(x)w^\sigma(y^\lambda)\; \d y
\end{equation*}
for a.e. $x\in \bdy \bb R_+^n$.
Consequently, 
\fi 
\begin{equation}
\label{eq:uLambdaDifference}
\begin{array}{lcl}
	\multicolumn{3}{l}{
	\displaystyle
	u(y) - u_\lambda(y)
	}
	\\
	& = & 
	\displaystyle
	\int_{\Sigma_{\lambda, n}}
	x_n^\beta
	\left[
	\frac{v(x)^\kappa}{\abs{x - y}^{n - \alpha}}w(x)^\tau
	+
	\frac{v_\lambda(x)^\kappa}{\abs{x^\lambda -y}^{n - \alpha}}w(x^\lambda)^\tau
	\right]
	w(y)^\sigma\; \d x
	\\
	& & 
	\displaystyle
	-
	\int_{\Sigma_{\lambda, n}}
	x_n^\beta
	\left[
	\frac{v_\lambda(x)^\kappa}{\abs{x - y}^{n - \alpha}}w(x^\lambda)^\tau
	+
	\frac{v(x)^\kappa}{\abs{x^\lambda -y}^{n - \alpha}}w(x)^\tau
	\right]
	w(y^\lambda)^\sigma\; \d x
\end{array}
\end{equation}
and
\begin{equation}
\label{eq:vLambdaDifference}
\begin{array}{lcl}
	\multicolumn{3}{l}{
	v(x) - v_\lambda(x)
	}
	\\
	& = & 
	\displaystyle
	\int_{\Sigma_{\lambda, n-1}}
	x_n^\beta
	\left[
	\frac{u^\theta(y)}{\abs{x - y}^{n - \alpha}}w(y)^\sigma
	+
	\frac{u_\lambda^\theta(y)}{\abs{x^\lambda - y}^{n - \alpha}}w(y^\lambda)^\sigma
	\right]
	w(x)^\tau
	\; \d y
	\\
	& & 
	\displaystyle
	-
	\int_{\Sigma_{\lambda, n-1}}
	x_n^\beta
	\left[
	\frac{u_\lambda^\theta(y)}{\abs{x - y}^{n - \alpha}}w(y^\lambda)^\sigma
	+
	\frac{u^\theta(y)}{\abs{x^\lambda - y}^{n - \alpha}}w(y)^\sigma
	\right]
	w(x^\lambda)^\tau
	\; \d y. 
\end{array}
\end{equation}

The following lemma is the key to the moving planes process. 
\ifdetails 
It will be used first to guarantee that moving planes can start and then later to guarantee that the plane $x_1 = \lambda$ can move from $\lambda = -\infty$ all the way to $\lambda = 0$. 
\fi 

\begin{lemma}
\label{lemma:KeyEstimatesForMovingPlanes}
Under the hypotheses of Proposition \ref{prop:RadialSymmetryOfExtremalSystemSolutions} there is a $\lambda$-independent constant $C_1>0$ such that for all $\lambda<0$ the following estimates hold. 
\begin{enumerate}[(a)]
	\item If $\theta\geq1$ and $\kappa \geq 1$ then 
	\begin{equation}
	\label{eq:u-uLambdaEstimateLargeExponents}
		\norm{u - u_\lambda}_{L^{\theta+1}(\Sigma_{\lambda, n-1}^u)}
		\leq
		C_1\norm{u}_{L^{\theta + 1}(\Sigma_{\lambda, n-1}^u)}^{\theta - 1}
		\norm{v}_{L^{\kappa + 1}(\Sigma_{\lambda, n}^v)}^{\kappa - 1}
		 \norm{u - u_\lambda}_{L^{\theta + 1}(\Sigma_{\lambda, n-1}^u)}
	\end{equation}
	and
	\begin{equation}
	\label{eq:v-vLambdaEstimateLargeExponents}
		\norm{v - v_\lambda}_{L^{\kappa + 1}(\Sigma_{\lambda, n}^v)}
		\leq
		C_1\norm{u}_{L^{\theta + 1}(\Sigma_{\lambda, n-1}^u)}^{\theta - 1}
		\norm{v}_{L^{\kappa + 1}(\Sigma_{\lambda, n}^v)}^{\kappa -1} 
		\norm{v - v_\lambda}_{L^{\kappa + 1}(\Sigma_{\lambda, n}^v)}. 
	\end{equation}
	\item If $\theta<1$ then for $1< \frac 1 \theta < r< \kappa$, 
	\begin{equation}
	\label{eq:v-vLambdaEstimateSmallTheta}
	\norm{v - v_\lambda}_{L^{\kappa + 1}(\Sigma_{\lambda, n}^v)}
	\leq
	C_1\norm{u}_{L^{\theta + 1}(\Sigma_{\lambda, n - 1}^u)}^{\theta - \frac 1 r}\norm{v}_{L^{\kappa +1}(\Sigma_{\lambda, n}^v)}^{\frac \kappa r - 1}\norm{v - v_\lambda}_{L^{\kappa + 1}(\Sigma_{\lambda, n}^v)}.
	\end{equation}
	\item If $\kappa < 1$ then for $1< \frac 1 \kappa < q < \theta$, 
	\begin{equation}
	\label{eq:u-uLambdaEstimateSmallKappa}
	\norm{u - u_\lambda}_{L^{\theta+1}(\Sigma_{\lambda, n-1}^u)}
		\leq
		C_1\norm{u}_{L^{\theta + 1}(\Sigma_{\lambda, n-1}^u)}^{\frac{\theta}{q} - 1}
		\norm{v}_{L^{\kappa + 1}(\Sigma_{\lambda, n}^v)}^{\kappa - \frac1q}
		 \norm{u - u_\lambda}_{L^{\theta + 1}(\Sigma_{\lambda, n-1}^u)}. 
	\end{equation}
\end{enumerate}
\end{lemma}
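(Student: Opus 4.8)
The plan is to prove all three estimates \eqref{eq:u-uLambdaEstimateLargeExponents}--\eqref{eq:u-uLambdaEstimateSmallKappa} by the same two-step scheme: first derive a pointwise bound for the differences $u - u_\lambda$ and $v - v_\lambda$ on the respective subsets $\Sigma_{\lambda, n-1}^u$ and $\Sigma_{\lambda, n}^v$, and then apply the weighted HLS inequality of Corollary \ref{coro:UHSHLSExtensionRestriction} together with H\"older's inequality to pass to $L^{\theta+1}$/$L^{\kappa+1}$ norms. For the pointwise step, start from the exact identities \eqref{eq:uLambdaDifference} and \eqref{eq:vLambdaDifference}. Since $\lambda < 0$, the inequalities \eqref{eq:LambdaNegativeConsequences} give $w(x^\lambda) \le w(x)$, $w(y^\lambda) \le w(y)$ and $\abs{x^\lambda - y} \ge \abs{x - y}$; combining these with $\tau, \sigma \ge 0$ (which follows from \eqref{eq:ThetaKappaBothSoftlySubcritical} together with the definitions \eqref{eq:SubcriticalWeightExponents}, since $\theta \le (n+\alpha-2)/(n-\alpha)$ is equivalent to $\sigma \ge 0$ and $\kappa \le (n+\alpha+2\beta)/(n-\alpha-2\beta)$ is equivalent to $\tau \ge 0$) one can discard the ``good'' terms and obtain, for $y \in \Sigma_{\lambda, n-1}^u$,
\begin{equation*}
	0 < u(y) - u_\lambda(y)
	\le
	w(y)^\sigma \int_{\Sigma_{\lambda, n}^v} \frac{x_n^\beta \bigl(v(x)^\kappa - v_\lambda(x)^\kappa\bigr)}{\abs{x - y}^{n - \alpha}} w(x)^\tau\; \d x,
\end{equation*}
and symmetrically
\begin{equation*}
	0 < v(x) - v_\lambda(x)
	\le
	w(x)^\tau \int_{\Sigma_{\lambda, n-1}^u} \frac{x_n^\beta \bigl(u(y)^\theta - u_\lambda(y)^\theta\bigr)}{\abs{x - y}^{n - \alpha}} w(y)^\sigma\; \d y,
\end{equation*}
where on the right one restricts the integration to the sets where the integrand is positive, i.e. to $\Sigma_{\lambda, n}^v$ and $\Sigma_{\lambda, n-1}^u$ respectively. (That restriction is justified because off $\Sigma_{\lambda, n}^v$ one has $v_\lambda^\kappa - v^\kappa \ge 0$, so dropping that contribution only increases the upper bound — one should double-check the precise bookkeeping of the four terms in \eqref{eq:uLambdaDifference}, but the mechanism is the standard one for moving-planes systems.)

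Next comes the elementary calculus inequality controlling $v^\kappa - v_\lambda^\kappa$: when $\kappa \ge 1$, $v^\kappa - v_\lambda^\kappa \le \kappa\, v^{\kappa - 1}(v - v_\lambda)$ on $\Sigma_{\lambda, n}^v$; when $\kappa < 1$, one instead uses $v^\kappa - v_\lambda^\kappa \le v^{\kappa - 1/q}(v - v_\lambda)^{1/q}$ for the relevant $q$ (obtained from $a^\kappa - b^\kappa \le a^{\kappa - 1/q}(a-b)^{1/q}$ for $a > b > 0$, valid because $\kappa \le 1/q \cdot 1 \le \kappa \cdot q$... here the exponent choice $1 < 1/\kappa < q$ makes $\kappa - 1/q$ the right power), and analogously for $u^\theta - u_\theta^\lambda$ with exponent $r$. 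Insert the applicable bound, then apply Corollary \ref{coro:UHSHLSExtensionRestriction}(b)/(c) (the weighted extension/restriction inequalities with kernel $x_n^\beta\abs{x-y}^{\alpha-n}$ and weights $w^\tau, w^\sigma$) to the integral operator, reading off exponents so that the output sits in $L^{\theta+1}(\Sigma_{\lambda,n-1}^u)$ or $L^{\kappa+1}(\Sigma_{\lambda,n}^v)$; the $\lambda$-independence of the constant $C_1$ is automatic since the operator norm in Corollary \ref{coro:UHSHLSExtensionRestriction} does not see $\lambda$ and restriction to $\Sigma_{\lambda, n}$ only decreases norms. Finally, a H\"older split of the product $v^{\kappa-1}(v-v_\lambda)$ (resp. $u^{\theta-1}(u-u_\lambda)$, or their fractional analogues) over the subsets, using that $v \in L^{\kappa+1}$ and $u \in L^{\theta+1}$, separates out the factors $\norm{u}^{\cdots}\norm{v}^{\cdots}$ and the remaining $\norm{u-u_\lambda}$ or $\norm{v-v_\lambda}$ appearing in the claimed estimates; cross-substituting the two resulting inequalities (bounding $\norm{v-v_\lambda}$ in terms of $\norm{u-u_\lambda}$ and vice versa) yields the self-improving form as stated.

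One should verify that the exponent arithmetic closes: in part (a), the kernel $\abs{x-y}^{\alpha-n}$ with the pair $\bigl(L^{(\theta+1)/(\theta)},\, L^{\kappa+1}\bigr)$ — equivalently checking $1/t + (n-1)/(np) < 1 + (\alpha+\beta-1)/n$ with the relevant exponents — must lie in the admissible region for Corollary \ref{coro:UHSHLSExtensionRestriction}, and this should follow exactly from $(\theta+1)^{-1} + (\kappa+1)^{-1} < 1$ plus \eqref{eq:ThetaKappaBothSoftlySubcritical} (this is why strictness of one of those inequalities is imposed — it is precisely the strict subcriticality needed to apply the weighted HLS inequality rather than a borderline estimate); the same check in parts (b) and (c) uses the auxiliary exponents $r, q$, whose admissible ranges $1 < 1/\theta < r < \kappa$ and $1 < 1/\kappa < q < \theta$ are dictated by making both the calculus inequality and the HLS exponent condition hold simultaneously. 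I expect the main obstacle to be exactly this bookkeeping of exponents across the three cases — keeping straight which H\"older conjugate goes where so that the weighted HLS inequality applies with a $\lambda$-uniform constant and the output norm is the desired one — rather than any conceptual difficulty; the pointwise reduction and the cross-substitution are routine once $\sigma, \tau \ge 0$ has been recorded.
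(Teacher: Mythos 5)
Your treatment of part (a) is essentially the paper's proof: pointwise reduction of $u-u_\lambda$ and $v-v_\lambda$ to integrals over $\Sigma_{\lambda,n}^v$ and $\Sigma_{\lambda,n-1}^u$ using \eqref{eq:LambdaNegativeConsequences} and $\sigma,\tau\geq 0$, then the Mean Value Theorem, Corollary \ref{coro:UHSHLSExtensionRestriction}, H\"older, and cross-substitution. (One small slip: since $w(x)=2/\abs{x+2e_n}$ and $\abs{x^\lambda+2e_n}\leq\abs{x+2e_n}$, the correct monotonicity is $w(x^\lambda)\geq w(x)$ and $w(y^\lambda)\geq w(y)$, not the reverse; the final pointwise bounds you state are nevertheless the right ones, because the $w(y^\lambda)^\sigma\geq w(y)^\sigma$ direction is what lets you replace $w(y^\lambda)^\sigma$ by $w(y)^\sigma$ in the negative term and $w(x^\lambda)^\tau\geq w(x)^\tau$ is what lets you replace $v_\lambda^\kappa w(x^\lambda)^\tau$ by $v_\lambda^\kappa w(x)^\tau$.)

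The genuine gap is in parts (b) and (c). Your proposed calculus inequality $u^\theta-u_\lambda^\theta\leq u^{\theta-1/r}(u-u_\lambda)^{1/r}$ is true, but after Corollary \ref{coro:UHSHLSExtensionRestriction}(b) and H\"older it yields
\begin{equation*}
\norm{v-v_\lambda}_{L^{\kappa+1}(\Sigma_{\lambda,n}^v)}\leq C\norm{u}_{L^{\theta+1}(\Sigma_{\lambda,n-1}^u)}^{\theta-1/r}\norm{u-u_\lambda}_{L^{\theta+1}(\Sigma_{\lambda,n-1}^u)}^{1/r},
\end{equation*}
with the difference norm raised to the power $1/r<1$. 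Cross-substituting the Case-1 bound $\norm{u-u_\lambda}\leq C\norm{v}^{\kappa-1}\norm{v-v_\lambda}$ then produces $\norm{v-v_\lambda}\leq C\norm{u}^{\theta-1/r}\norm{v}^{(\kappa-1)/r}\norm{v-v_\lambda}^{1/r}$, which is not \eqref{eq:v-vLambdaEstimateSmallTheta}: the exponent on $\norm{v}$ is $(\kappa-1)/r$ rather than $\kappa/r-1$, and, more importantly, the right-hand side carries $\norm{v-v_\lambda}^{1/r}$ rather than $\norm{v-v_\lambda}^{1}$. The first power is essential downstream (Lemmas \ref{lemma:LambdaBarWellDefined} and \ref{lemma:PlaneLambdaBar=0}): with a coefficient less than $\tfrac12$ one must conclude $\norm{v-v_\lambda}=0$, whereas your version only gives $\norm{v-v_\lambda}^{1-1/r}\leq\epsilon$, i.e.\ smallness, not vanishing. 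The paper avoids this by never forming $(u-u_\lambda)^{1/r}$; instead it writes $u^\theta-u_\lambda^\theta\leq \theta r\,u^{\theta-1/r}(u^{1/r}-u_\lambda^{1/r})$ and must then control $\norm{u^{1/r}-u_\lambda^{1/r}}_{L^{r(\theta+1)}}$ by $C\norm{v}^{\kappa/r-1}\norm{v-v_\lambda}$ to the \emph{first} power. That step is the real content of the case: it requires decomposing $u=\sum_j I_j$ over $\Sigma_{\lambda,n}^v$, its reflection and the complementary ``good'' set, proving the monotonicity claim \eqref{eq:GoodSetClaim}, sandwiching $u-a\leq c\leq u_\lambda-b$ so that concavity of $s\mapsto s^{1/r}$ gives $u^{1/r}-u_\lambda^{1/r}\leq a^{1/r}-b^{1/r}$, and then applying Minkowski's inequality in $\ell^r$ to commute the $1/r$-th power past the integral before a final Mean Value Theorem and HLS application. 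None of this is present or replaceable by the pointwise inequality you propose, so as written the argument does not prove parts (b) and (c).
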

\begin{proof}
In terms of $\kappa$ and $\theta$ the subcritical weighting exponents are $\sigma = \frac{2(n - 1)}{\theta + 1} - n + \alpha \geq 0$ and $\tau = \frac{2n}{\kappa + 1} - n + \alpha + 2\beta\geq 0$. Let $\lambda\leq 0$. For a.e. $y\in \Sigma_{\lambda, n-1}$ \eqref{eq:uLambdaDifference} and \eqref{eq:LambdaNegativeConsequences} give
%
\begin{equation}
\label{eq:u-uLambdaLeftHalfEstimate1}
\begin{array}{lcl}
	\multicolumn{3}{l}{
	u(y) - u_\lambda(y)
	}
	\\
	&= & 
	\displaystyle
	\int_{\Sigma_{\lambda, n}}\frac{x_n^\beta}{\abs{x - y}^{n - \alpha}}
	\left(v^\kappa(x) w^\tau(x) w^\sigma(y) - v_\lambda^\kappa(x) w^\tau(x^\lambda) w^\sigma(y^\lambda) \right)\; \d x
	\\
	& & 
	\displaystyle
	+ \int_{\Sigma_{\lambda, n}}\frac{x_n^\beta}{\abs{x^\lambda - y}^{n - \alpha}}
	\left(v_\lambda^\kappa(x) w^\tau(x^\lambda) w^\sigma(y) - v^\kappa(x) w^\tau(x) w^\sigma(y^\lambda) \right)\; \d x
	\\
	& \leq & 
	\ifdetails 
	\displaystyle
	\int_{\Sigma_{\lambda, n}}\frac{x_n^\beta}{\abs{x - y}^{n - \alpha}}
	\left(v^\kappa(x) w^\tau(x) - v_\lambda^\kappa(x) w^\tau(x^\lambda) \right)w^\sigma(y)\; \d x
	\\
	& & 
	\displaystyle
	+ \int_{\Sigma_{\lambda, n}}\frac{x_n^\beta}{\abs{x^\lambda - y}^{n - \alpha}}
	\left(v_\lambda^\kappa(x) w^\tau(x^\lambda) - v^\kappa(x) w^\tau(x) \right)w^\sigma(y)\; \d x
	\\
	& = & 
	\fi 
	\displaystyle
	\int_{\Sigma_{\lambda, n}}
	\left(\frac{x_n^\beta}{\abs{x - y}^{n - \alpha}} - \frac{x_n^\beta}{\abs{x^\lambda - y}^{n - \alpha}}\right)
	\left(v^\kappa(x) w^\tau(x) - v_\lambda^\kappa(x) w^\tau(x^\lambda)\right) w^\sigma(y)\; \d x
	\\
	&\leq & 
	\displaystyle
	\int_{\Sigma_{\lambda, n}}
	\left(\frac{x_n^\beta}{\abs{x - y}^{n - \alpha}} - \frac{x_n^\beta}{\abs{x^\lambda - y}^{n - \alpha}}\right)
	\left(v^\kappa(x) - v_\lambda^\kappa(x)\right)w^\tau(x) w^\sigma(y)\; \d x. 
\end{array}
\end{equation}
Continuing this estimate we have
\begin{eqnarray}
\label{eq:u-uLambdaLeftHalfEstimate}
	u(y) - u_\lambda(y)
	&\leq & 
	\int_{\Sigma_{\lambda, n}^v}
	\left(\frac{x_n^\beta}{\abs{x - y}^{n - \alpha}} - \frac{x_n^\beta}{\abs{x^\lambda - y}^{n - \alpha}}\right)
	\left(v^\kappa(x) - v_\lambda^\kappa(x)\right)w^\tau(x) w^\sigma(y)\; \d x
	\notag\\
	&\leq & 
	\int_{\Sigma_{\lambda, n}^v}
	\frac{x_n^\beta\left(v^\kappa(x) - v_\lambda^\kappa(x)\right)}{\abs{x - y}^{n - \alpha}}w^\tau(x) w^\sigma(y)\; \d x. 
\end{eqnarray}
By a similar computation \eqref{eq:vLambdaDifference} and \eqref{eq:LambdaNegativeConsequences} give 
\begin{equation}
\label{eq:v-vLambdaLeftHalfEstimate}
\begin{array}{lcl}
\multicolumn{3}{l}{
	v(x) - v_\lambda(x)
	}
	\\
	&\leq & 
	\displaystyle
	\int_{\Sigma_{\lambda, n -1}^u}
	\left( \frac{x_n^\beta}{\abs{x - y}^{n - \alpha}} - \frac{x_n^\beta}{\abs{x^\lambda - y}^{n - \alpha}}\right)
	(u^\theta(y) - u_\lambda^\theta(y))w^\tau(x) w^\sigma(x)\; \d y
	\\
	&\leq & 
	\displaystyle
	\int_{\Sigma_{\lambda, n -1}^u}\frac{x_n^\beta(u^\theta(y) - u_\lambda^\theta(y))}{\abs{x - y}^{n - \alpha}}w^\tau(x) w^\sigma(x)\; \d y
\end{array}
\end{equation}
for a.e. $x\in \Sigma_{\lambda, n}$. The assumption $(\kappa + 1)^{-1} + (\theta +1)^{-1}< 1$ guarantees that one of $\kappa$ or $\theta$ is strictly larger than $1$. We split the remainder of the proof into cases accordingly. \\
{\bf Case 1:} Assume $\kappa >1$. In this case, for $y\in \Sigma_{\lambda, n-1}^u$ the Mean Value Theorem and estimate \eqref{eq:u-uLambdaLeftHalfEstimate} give 
\begin{equation*}
	0
	< 
	u(y) - u_\lambda(y)
	\leq
	\kappa\int_{\Sigma_{\lambda, n}^v}
	\frac{x_n^\beta v^{\kappa - 1}(x)(v(x) - v_\lambda(x))}{\abs{x - y}^{n - \alpha}}w^\tau(x) w^\sigma(y)\; \d y. 
\end{equation*}
Applying Corollary \ref{coro:UHSHLSExtensionRestriction} (c) and H\"older's inequality gives
\begin{eqnarray}
\label{eq:u-uLambdaInitialEstimateKappaLarge}
	\norm{u - u_\lambda}_{L^{\theta+1}(\Sigma_{\lambda, n-1}^u)}
	&\leq & 
	C\norm{v^{\kappa -1}(v - v_\lambda)}_{L^{(\kappa + 1)/\kappa}(\Sigma_{\lambda, n}^v)}
	\notag
	\\
	& \leq & 
	C\norm{v}_{L^{\kappa + 1}(\Sigma_{\lambda, n}^v)}^{\kappa - 1}\norm{v - v_\lambda}_{L^{\kappa + 1}(\Sigma_{\lambda, n}^v)}
\end{eqnarray}
for some constant $C = C(n, \alpha, \beta, \kappa, \theta)>0$. \\
{\bf Case 1 (a):} Assume $\kappa>1$ and $\theta\geq 1$. In this case, for $x\in \Sigma_{\lambda, n}^v$ the Mean Value Theorem and \eqref{eq:v-vLambdaLeftHalfEstimate} give
\begin{equation*}
	0< v(x) - v_\lambda(x)
	\leq
	\theta\int_{\Sigma_{\lambda, n-1}^u}\frac{x_n^\beta u^{\theta - 1}(y)(u(y) - u_\lambda(y))}{\abs{x - y}^{n - \alpha}}w^\tau(x) w^\sigma(y)\; \d y. 
\end{equation*}
Applying Corollary \ref{coro:UHSHLSExtensionRestriction} (b) and H\"older's inequality gives
\begin{eqnarray}
\label{eq:v-vLambdaInitialEstimateThetaLarge}
	\norm{v - v_\lambda}_{L^{\kappa +1}(\Sigma_{\lambda, n}^v)}
	&\leq & 
	C\norm{u^{\theta - 1}(u - u_\lambda)}_{L^{(\theta + 1)/\theta}(\Sigma_{\lambda, n-1}^u)}
	\notag
	\\
	&\leq & 
	C\norm{u}_{L^{\theta + 1}(\Sigma_{\lambda, n-1})}^{\theta - 1}\norm{u - u_\lambda}_{L^{\theta + 1}(\Sigma_{\lambda, n-1}^u)}
\end{eqnarray}
for some constant $C = C(n, \alpha, \beta, \kappa, \theta)>0$. Combining this with estimate \eqref{eq:u-uLambdaInitialEstimateKappaLarge} gives estimate \eqref{eq:u-uLambdaEstimateLargeExponents}. Similarly, using \eqref{eq:u-uLambdaInitialEstimateKappaLarge} in \eqref{eq:v-vLambdaInitialEstimateThetaLarge} gives estimate \eqref{eq:v-vLambdaEstimateLargeExponents}. \\
{\bf Case 1(b):} Assume $\kappa>1$ and $\theta< 1$. Let $r$ satisfy $1< \frac 1 \theta < r< \kappa$. For $x\in \Sigma_{\lambda, n}^v$ the Mean Value Theorem and \eqref{eq:v-vLambdaLeftHalfEstimate} give
\begin{equation*}
	0
	< 
	v(x) - v_\lambda(x)
	\leq
	\theta r\int_{\Sigma_{\lambda, n-1}^u}
	\frac{x_n^\beta u(y)^{\theta - 1/r}\left(u^{1/r}(y) - u_\lambda^{1/r}(y)\right)}{\abs{x - y}^{n - \alpha}}w^\tau(x) w^\sigma(y)\; \d y. 
\end{equation*}
Applying Corollary \ref{coro:UHSHLSExtensionRestriction} (b) and H\"older's inequality gives
\begin{eqnarray}
\label{eq:v-vLambdaLebesgueEstimateThetaSmall}
	\norm{v - v_\lambda}_{L^{\kappa +1}(\Sigma_{\lambda, n}^v)}
	& \leq & 
	C\norm{u^{\theta - 1/r}\left(u^{1/r} - u_\lambda^{1/r}\right)}_{L^{(\theta + 1)/\theta}(\Sigma_{\lambda, n-1}^u)}
	\notag
	\\
	& \leq & 
	C\norm{u}_{L^{\theta +1}(\Sigma_{\lambda, n-1}^u)}^{\theta - 1/r}\norm{u^{1/r} - u_\lambda^{1/r}}_{L^{r(\theta+ 1)}(\Sigma_{\lambda, n - 1}^u)}. 
\end{eqnarray}
Now we estimate the Lebesgue norm of $u^{1/r} - u_\lambda^{1/r}$ appearing on the right hand side of this estimate. Define
\begin{eqnarray*}
	I_1(y) & = & \int_{\Sigma_{\lambda, n}^v}\frac{x_n^\beta v^\kappa(x)}{\abs{x - y}^{n - \alpha}}w(x)^\tau w(y)^\sigma\; \d x\\
	I_2(y) & = & \int_{\Sigma_{\lambda, n}^v}\frac{x_n^\beta v_\lambda^\kappa(x)}{\abs{x^\lambda - y}^{n - \alpha}}w(x^\lambda)^\tau w(y)^\sigma\; \d x\\
	I_3(y) & = & \int_{\Sigma_{\lambda, n}\setminus\Sigma_{\lambda, n}^v}\frac{x_n^\beta v^\kappa(x)}{\abs{x - y}^{n - \alpha}}w(x)^\tau w(y)^\sigma\; \d x\\
	I_4(y) & = & \int_{\Sigma_{\lambda, n}\setminus\Sigma_{\lambda, n}^v}\frac{x_n^\beta v_\lambda^\kappa(x)}{\abs{x^\lambda - y}^{n - \alpha}}w(x^\lambda)^\tau w(y)^\sigma\; \d x
\end{eqnarray*}
so that $u(y) = \sum_{j = 1}^4I_j(y)$. We claim that 
\begin{equation}
\label{eq:GoodSetClaim}
	I_3(y^\lambda) + I_4(y^\lambda) 
	\geq 
	I_3(y) + I_4(y)
	\qquad
	\text{ for a.e. } y\in \Sigma_{\lambda, n-1}. 
\end{equation}
To verify this claim, write
\begin{equation*}
	I_3(y) + I_4(y) - I_3(y^\lambda) - I_4(y^\lambda)
	= 
	\int_{\Sigma_{\lambda, n}\setminus \Sigma_{\lambda, n}^v} Q(x,y)\; \d x,
\end{equation*}
where the integrand $Q(x,y)$ satisfies
\begin{eqnarray*}
	x_n^{-\beta}Q(x,y)
	& = & 
	\frac{v^\kappa(x)}{\abs{x -y}^{n - \alpha}}w(x)^\tau w(y)^\sigma
	+ 
	\frac{v_\lambda^\kappa(x)}{\abs{x^\lambda - y}^{n - \alpha}}w(x^\lambda)^\tau w(y)^\sigma
	\\
	& & 
	- 
	\frac{v^\kappa(x)}{\abs{x - y^\lambda}^{n - \alpha}} w(x)^\tau w(y^\lambda)^\sigma
	- 
	\frac{v_\lambda^\kappa(x)}{\abs{x - y}^{n - \alpha}}w(x^\lambda)^\tau w(y^\lambda)^\sigma
	\\
	& = & 
	- \left(\frac{v^\kappa(x) w(x)^\tau}{\abs{x - y}^{n - \alpha}} + \frac{v_\lambda^\kappa(x) w(x^\lambda)^\tau}{\abs{x^\lambda - y}^{n - \alpha}}\right)
	\left(w(y^\lambda)^\sigma - w(y)^\sigma\right)\\
	& & 
	-\left(v_\lambda^\kappa(x) - v^\kappa(x)\right)w(x)^\tau w(y^\lambda)^\sigma\left(\abs{x - y}^{\alpha - n} - \abs{x- y^\lambda}^{\alpha - n}\right)
	\\
	& & 
	-v_\lambda^\kappa(x)\left(w(x^\lambda)^\tau - w(x)^\tau\right) w(y^\lambda)^\sigma\left(\abs{x - y}^{\alpha - n} - \abs{x - y^\lambda}^{\alpha - n}\right)
	\\
	& \leq & 
	0
\end{eqnarray*}
for a.e. $x\in \Sigma_{\lambda, n}\setminus \Sigma_{\lambda, n}^v$ and $y\in \Sigma_{\lambda, n-1}$. Estimate \eqref{eq:GoodSetClaim} follows. \\

Defining for $y\in \Sigma_{\lambda, n-1}^u$
\begin{equation*}
	a(y)
	= 
	\int_{\Sigma_{\lambda, n}^v}\frac{x_n^\beta v^\kappa (x)}{\abs{x - y}^{n - \alpha}}w(x)^\tau w(y)^\sigma\; \d x
	+ 
	\int_{\Sigma_{\lambda, n}^v}\frac{x_n^\beta v_\lambda^\kappa (x)}{\abs{x^\lambda - y}^{n - \alpha}}w(x)^\tau w(y)^\sigma\; \d x
\end{equation*}
and
\begin{equation*}
	b(y)
	=
	\int_{\Sigma_{\lambda, n}^v}\frac{x_n^\beta v_\lambda^\kappa(x)}{\abs{x - y}^{n - \alpha}}w(x)^\tau w(y)^\sigma\; \d x
	+
	\int_{\Sigma_{\lambda, n}^v}\frac{x_n^\beta v^\kappa(x)}{\abs{x^\lambda - y}^{n - \alpha}}w(x)^\tau w(y)^\sigma\; \d x, 
\end{equation*}
we have both 
\begin{equation*}
	u(y)
	\leq 
	a(y) + I_3(y) + I_4(y)
	+
	\int_{\Sigma_{\lambda, n}^v}\frac{x_n^\beta v_\lambda^\kappa(x)}{\abs{x- y}^{n - \alpha}}
	\left(w(x^\lambda)^\tau - w(x)^\tau\right) w(y^\lambda)^\sigma \; \d x
\end{equation*}
and
\begin{equation*}
	u_\lambda(y)
	\geq
	b(y) + I_3(y^\lambda) + I_4(y^\lambda)
	+ 
	\int_{\Sigma_{\lambda, n}^v}\frac{x_n^\beta v_\lambda^\kappa(x)}{\abs{x - y}^{n - \alpha}}\left(w(x^\lambda)^\tau - w(x)^\tau\right) w(y^\lambda)^\sigma\; \d x
\end{equation*}
for a.e. $y\in \Sigma_{\lambda, n-1}^u$. Since in addition, $u_\lambda(y)\geq b(y)$ in $\Sigma_{\lambda, n -1}^u$, there is a nonnegative function $c(y)$ for which 
\begin{equation*}
	u(y) - a(y) \leq c(y) \leq u_\lambda(y) - b(y)
\end{equation*}
for a.e. $y\in \Sigma_{\lambda, n - 1}^u$. Using $r>1$ and defining
\begin{eqnarray*}
	\varphi_1(x,y) 
	& = & 
	\left(\frac{x_n^\beta v^\kappa(x)}{\abs{x - y}^{n - \alpha}}w(x)^\tau w(y)^\sigma\right)^{1/r}\\
	\varphi_2(x,y) 
	& = & 
	\left(\frac{x_n^\beta v_\lambda^\kappa(x)}{\abs{x^\lambda - y}^{n - \alpha}}w(x)^\tau w(y)^\sigma\right)^{1/r}\\
	\psi_1(x,y) 
	& = & 
	\left(\frac{x_n^\beta v_\lambda^\kappa(x)}{\abs{x - y}^{n - \alpha}}w(x)^\tau w(y)^\sigma\right)^{1/r}\\
	\psi_2(x,y) 
	& = & 
	\left(\frac{x_n^\beta v^\kappa(x)}{\abs{x^\lambda - y}^{n - \alpha}}w(x)^\tau w(y)^\sigma\right)^{1/r}, 
\end{eqnarray*}	
we have
\begin{eqnarray}
\label{eq:uReflectionPowerDifference}
	u^{1/r}(y) - u_\lambda^{1/r}(y)
	& \leq & 
	\left(a(y) + c(y)\right)^{1/r} - \left(b(y) + c(y)\right)^{1/r}
	\notag
	\\
	&\leq & 
	a(y)^{1/r} - b(y)^{1/r}
	\notag
	\\
	& = & 
	\left(\int_{\Sigma_{\lambda, n}^v}\abs{(\varphi_1, \varphi_2)}_{\ell^r}^r\; \d x\right)^{1/r}
	- 
	\left(\int_{\Sigma_{\lambda, n}^v}\abs{(\psi_1, \psi_2)}_{\ell^r}^r\; \d x\right)^{1/r}
	\notag
	\\
	& \leq & 
	\left(\int_{\Sigma_{\lambda, n}^v}\abs{(\varphi_1 - \psi_1, \varphi_2- \psi_2)}_{\ell^r}^r\; \d x\right)^{1/r}
	\notag
	\\
	& = & 
	\left(\int_{\Sigma_{\lambda, n}^v}\left(\abs{\varphi_1 - \psi_1}^r + \abs{\varphi_2- \psi_2}^r\right)\; \d x\right)^{1/r}
\end{eqnarray}
for a.e. $y\in \Sigma_{\lambda, n-1}^u$. Moreover, for a.e. $(x,y)\in \Sigma_{\lambda, n}^v\times \Sigma_{\lambda, n-1}^u$ we have both 
\begin{equation*}
	\abs{\varphi_1 - \psi_1}^r
	= 
	(\varphi_1 - \psi_1)^r
	= 
	\frac{x_n^\beta}{\abs{x -y}^{n - \alpha}}\left(v^{\kappa/ r}(x)- v_\lambda^{\kappa/r}(x)\right)^r w(x)^\tau w(y)^\sigma
\end{equation*}
and
\begin{equation*}
	\abs{\varphi_2 - \psi_2}^r
	=
	\ifdetails 
	(\psi_2 - \varphi_2)^r
	=
	\fi 
	\frac{x_n^\beta}{\abs{x^\lambda - y}^{n - \alpha}}\left(v^{\kappa/r}(x) - v_\lambda^{\kappa/r}(x)\right)^r w(x)^\tau w(y)^\sigma
	\leq
	(\varphi_1 - \psi_1)^r. 
\end{equation*}
Using these estimates in \eqref{eq:uReflectionPowerDifference} then applying the Mean Value Theorem we obtain  
\begin{eqnarray*}
	0
	& < & 
	u^{1/r}(y) - u_\lambda^{1/r}(y)
	\\
	&\leq & 
	2\left[
	\int_{\Sigma_{\lambda, n}^v}\frac{x_n^\beta}{\abs{x - y}^{n - \alpha}}\left(v^{\kappa/r}(x) - v_\lambda^{\kappa/r}(x)\right)^r w(x)^\tau w(y)^\sigma\; \d x
	\right]^{1/r}
	\\
	&\leq & 
	2\kappa\left[\int_{\Sigma_{\lambda, n}^v}\frac{x_n^\beta}{\abs{x - y}^{n - \alpha}}v^{\kappa - r}(x)\left(v(x) - v_\lambda(x)\right)^r w(x)^\tau w(y)^\sigma\; \d x
	\right]^{1/r}
\end{eqnarray*}
for a.e. $y\in \Sigma_{\lambda, n-1}^u$. An application of Corollary \ref{coro:UHSHLSExtensionRestriction} (c) followed by H\"older's inequality now gives
\begin{eqnarray*}
	\norm{u^{1/r} - u_\lambda^{1/r}}_{L^{r(\theta + 1)}(\Sigma_{\lambda, n - 1}^u)}
	& \leq& 
	C\norm{v^{\kappa - r}(v- v_\lambda)^r}_{L^{(\kappa + 1)/\kappa}(\Sigma_{\lambda, n}^v)}^{1/r}
	\\
	&\leq & 
	C\norm{v}_{L^{\kappa + 1}(\Sigma_{\lambda, n}^v)}^{\frac \kappa r - 1}\norm{v - v_\lambda}_{L^{\kappa + 1}(\Sigma_{\lambda, n}^v)}
\end{eqnarray*}
for some positive constant $C = C(n, \alpha, \beta, \theta, \kappa)$. Finally, using this estimate in \eqref{eq:v-vLambdaLebesgueEstimateThetaSmall} gives estimate \eqref{eq:v-vLambdaEstimateSmallTheta}. \\
{\bf Case 2:} Assume $\kappa \leq 1$. In this case performing computations similar in spirit to those carried out in Case 1(b) one finds that estimate \eqref{eq:u-uLambdaEstimateSmallKappa} holds. The details of this computation are omitted. 
\end{proof}
\begin{lemma}
\label{lemma:LambdaBarWellDefined}
Under the hypotheses of Proposition \ref{prop:RadialSymmetryOfExtremalSystemSolutions} there is $\lambda$ sufficiently negative such that for all $\mu\leq\lambda$, both 
\begin{equation}
\label{eq:ReflectionDominatesu}
	u_\mu \geq u \qquad \text{a.e.  in } \Sigma_{\mu, n-1}
\end{equation}
and
\begin{equation}
\label{eq:ReflectionDominatesv}
	v_\mu \geq v \qquad \text{a.e. in } \Sigma_{\mu, n}. 
\end{equation}
\end{lemma}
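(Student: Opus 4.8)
The plan is to combine the pointwise and $L^p$ estimates of Lemma~\ref{lemma:KeyEstimatesForMovingPlanes} with absolute continuity of the Lebesgue integral. The starting observation is that, since $u\in L^{\theta+1}(\bdy\bb R_+^n)$ and $v\in L^{\kappa+1}(\bb R_+^n)$ while the half-slabs $\Sigma_{\mu,n-1}$ and $\Sigma_{\mu,n}$ decrease to the empty set as $\mu\to-\infty$, the Dominated Convergence Theorem gives
\[
	\lim_{\mu\to-\infty}\norm{u}_{L^{\theta+1}(\Sigma_{\mu,n-1})}=0
	\qquad\text{and}\qquad
	\lim_{\mu\to-\infty}\norm{v}_{L^{\kappa+1}(\Sigma_{\mu,n})}=0,
\]
and both quantities are nonincreasing in $\mu$ on $(-\infty,0]$. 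Since $\Sigma_{\mu,n-1}^u\subseteq\Sigma_{\mu,n-1}$ and $\Sigma_{\mu,n}^v\subseteq\Sigma_{\mu,n}$, the same holds with these smaller sets, so every norm of $u$ or $v$ on the right-hand sides of \eqref{eq:u-uLambdaEstimateLargeExponents}--\eqref{eq:u-uLambdaEstimateSmallKappa} tends to $0$ as $\mu\to-\infty$. I would also note that $\norm{u-u_\mu}_{L^{\theta+1}(\Sigma_{\mu,n-1}^u)}$ and $\norm{v-v_\mu}_{L^{\kappa+1}(\Sigma_{\mu,n}^v)}$ are finite, because reflection preserves the $L^{\theta+1}$ and $L^{\kappa+1}$ norms; this finiteness is what makes the cancellation step below legitimate.

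Next I would split into the three cases of Lemma~\ref{lemma:KeyEstimatesForMovingPlanes}. If $\theta\geq1$ and $\kappa\geq1$, choose $\lambda<0$ so negative that $C_1\norm{u}_{L^{\theta+1}(\Sigma_{\mu,n-1})}^{\theta-1}\norm{v}_{L^{\kappa+1}(\Sigma_{\mu,n})}^{\kappa-1}\leq\tfrac12$ for all $\mu\leq\lambda$; then \eqref{eq:u-uLambdaEstimateLargeExponents} and \eqref{eq:v-vLambdaEstimateLargeExponents} read $\norm{u-u_\mu}_{L^{\theta+1}(\Sigma_{\mu,n-1}^u)}\leq\tfrac12\norm{u-u_\mu}_{L^{\theta+1}(\Sigma_{\mu,n-1}^u)}$ and similarly for $v$, forcing $\abs{\Sigma_{\mu,n-1}^u}=\abs{\Sigma_{\mu,n}^v}=0$. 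If $\theta<1$ (so necessarily $\kappa>1$), fix $r$ with $1<\tfrac1\theta<r<\kappa$; the exponents $\theta-\tfrac1r$ and $\tfrac\kappa r-1$ in \eqref{eq:v-vLambdaEstimateSmallTheta} are positive, so for $\mu$ sufficiently negative that estimate gives $\norm{v-v_\mu}_{L^{\kappa+1}(\Sigma_{\mu,n}^v)}\leq\tfrac12\norm{v-v_\mu}_{L^{\kappa+1}(\Sigma_{\mu,n}^v)}$, hence $\abs{\Sigma_{\mu,n}^v}=0$; feeding this into the pointwise bound \eqref{eq:u-uLambdaLeftHalfEstimate}, whose right-hand side is then an integral over a null set, yields $u\leq u_\mu$ a.e.\ in $\Sigma_{\mu,n-1}$. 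The case $\kappa<1$ (so $\theta>1$) is symmetric: pick $q$ with $1<\tfrac1\kappa<q<\theta$, use \eqref{eq:u-uLambdaEstimateSmallKappa} (whose exponents $\tfrac\theta q-1$ and $\kappa-\tfrac1q$ are positive) to get $\abs{\Sigma_{\mu,n-1}^u}=0$, then invoke \eqref{eq:v-vLambdaLeftHalfEstimate} over the resulting null set to obtain $v\leq v_\mu$ a.e.\ in $\Sigma_{\mu,n}$.

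In each case this produces a threshold $\lambda<0$; by the monotonicity in $\mu$ recorded in the first paragraph, the smallness condition on the coefficient $C_1(\cdots)$ persists for every $\mu\leq\lambda$, so the argument applies verbatim at each such $\mu$ and gives $\abs{\Sigma_{\mu,n-1}^u}=\abs{\Sigma_{\mu,n}^v}=0$, which is precisely \eqref{eq:ReflectionDominatesu} and \eqref{eq:ReflectionDominatesv}. I expect the only real care needed is bookkeeping: confirming finiteness of the difference norms so that they may be cancelled, and recognizing that in the two ``mixed'' cases one genuinely must bootstrap through the pointwise inequalities \eqref{eq:u-uLambdaLeftHalfEstimate}--\eqref{eq:v-vLambdaLeftHalfEstimate} rather than the integral estimates alone, since Lemma~\ref{lemma:KeyEstimatesForMovingPlanes} only supplies one of the two $L^p$ bounds there.
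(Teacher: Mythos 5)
Your argument is correct and follows the paper's proof essentially verbatim: send $\mu\to-\infty$ to make the coefficients in Lemma~\ref{lemma:KeyEstimatesForMovingPlanes} smaller than $\tfrac12$, absorb the (finite) difference norms to conclude the bad sets are null, and bootstrap in the mixed cases. The only immaterial deviation is that in those mixed cases the paper deduces the second null set from the integral estimate \eqref{eq:u-uLambdaInitialEstimateKappaLarge} (resp.\ its analogue for $v$) rather than from the pointwise bounds \eqref{eq:u-uLambdaLeftHalfEstimate}--\eqref{eq:v-vLambdaLeftHalfEstimate}, which amounts to the same thing.
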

\begin{proof}
Since $u\in L^{\theta + 1}(\bdy \bb R_+^n)$ and $v\in L^{\kappa + 1}(\bb R_+^n)$, we have both $\norm{u}_{L^{\theta + 1}(\Sigma_{\mu, n-1})}\to 0$ and $\norm{v}_{L^{\kappa + 1}(\Sigma_{\mu, n})}\to 0$ as $\mu \to -\infty$. Therefore, if $\kappa\geq1$ and $\theta \geq1$, we may choose $\lambda$ sufficiently negative such that
\begin{equation*}
	C_1\norm{u}_{L^{\theta + 1}(\Sigma_{\mu, n-1})}^{\theta - 1}
	\norm{v}_{L^{\kappa + 1}(\Sigma_{\mu, n})}^{\kappa -1}
	\leq 
	\frac 12, 
\end{equation*}
whenever $\mu< \lambda$, where $C_1 = C_1(n, \alpha, \beta, \theta, \kappa)$ is the constant whose existence is guaranteed by Lemma \ref{lemma:KeyEstimatesForMovingPlanes}. In view of estimates \eqref{eq:u-uLambdaEstimateLargeExponents} and \eqref{eq:v-vLambdaEstimateLargeExponents}, if $\mu< \lambda $ we obtain both 
\begin{equation*}
	\norm{u - u_\mu}_{L^{\theta + 1}(\Sigma_{\mu, n-1}^u)}
	\leq
	\frac 12\norm{u - u_\mu}_{L^{\theta + 1}(\Sigma_{\mu, n-1}^u)}
\end{equation*}
and 
\begin{equation*}
	\norm{v - v_\mu}_{L^{\kappa + 1}(\Sigma_{\mu, n}^v)}
	\leq
	\frac 12 \norm{v - v_\mu}_{L^{\kappa + 1}(\Sigma_{\mu, n}^v)}. 
\end{equation*}
For any such $\mu$ we have $|\Sigma_{\mu, n-1}^u| = 0 = |\Sigma_{\mu, n}^v|$. If $\theta< 1$ then for any $\frac 1 \theta< r < \kappa$ there is $\lambda$ sufficiently negative such that 
\begin{equation*}
	C_1\norm{u}_{L^{\theta + 1}(\Sigma_{\mu, n-1})}^{\theta - 1/r}\norm{v}_{L^{\kappa + 1}(\Sigma_{\mu, n})}^{\frac{\kappa}{r} -1} 
	< 
	\frac 12
\end{equation*}
whenever $\mu< \lambda$. For any such $\lambda$ and $\mu$, estimate \eqref{eq:v-vLambdaEstimateSmallTheta} guarantees that $|\Sigma_{\mu, n}^v|= 0$. Estimate \eqref{eq:u-uLambdaInitialEstimateKappaLarge} now gives $|\Sigma_{\mu, n-1}^u| = 0$ for $\mu< \lambda$. Similarly, we find that if $\kappa< 1$ then both of $\Sigma_{\mu, n}^v$ and $\Sigma_{\mu, n-1}^u$ are measure zero sets. 
\end{proof}
Define 
\begin{equation*}
	\bar \lambda
	= 
	\sup\{\lambda< 0: \text{ both \eqref{eq:ReflectionDominatesu} and \eqref{eq:ReflectionDominatesv} hold for all } \mu\leq \lambda
	\}. 
\end{equation*}
\begin{lemma}
\label{lemma:SoftInequalityAtMaximalPlane}
If $\bar\lambda\leq 0$ then $u_{\bar\lambda}\geq u$ for a.e. in $\overline\Sigma_{\bar\lambda, n-1}$ and $v_{\bar\lambda}\geq v$ for a. e. in $\overline \Sigma_{\bar\lambda, n}$.
\end{lemma}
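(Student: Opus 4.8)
The plan is to close up the moving-plane process at $\bar\lambda$ by a limiting argument. First I would record that the set $\{\lambda<0:\text{\eqref{eq:ReflectionDominatesu} and \eqref{eq:ReflectionDominatesv} hold for all }\mu\leq\lambda\}$ is, by its very definition, downward closed, hence an interval of the form $(-\infty,\bar\lambda)$ or $(-\infty,\bar\lambda]$; in particular there is an increasing sequence $\lambda_k\nearrow\bar\lambda$ with each $\lambda_k<0$ such that $u_{\lambda_k}\geq u$ a.e.\ in $\Sigma_{\lambda_k,n-1}$ and $v_{\lambda_k}\geq v$ a.e.\ in $\Sigma_{\lambda_k,n}$. (Here the hypothesis $\bar\lambda\leq 0$ is what guarantees the $\lambda_k$ can be taken strictly negative.)

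Next I would pass to the limit $k\to\infty$. Via the measure-preserving change of variables $z=y^{\bar\lambda}$ one sees that $\norm{u_{\lambda_k}-u_{\bar\lambda}}_{L^{\theta+1}(\bdy\bb R_+^n)}$ equals the $L^{\theta+1}$-norm of the difference between $u$ and its translate by $2(\lambda_k-\bar\lambda)$ in the first coordinate direction; since $u\in L^{\theta+1}(\bdy\bb R_+^n)$ with $1<\theta+1<\infty$ and $2(\lambda_k-\bar\lambda)\to0$, continuity of translations on $L^{\theta+1}$ gives $\norm{u_{\lambda_k}-u_{\bar\lambda}}_{L^{\theta+1}(\bdy\bb R_+^n)}\to0$. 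The same reasoning with $v\in L^{\kappa+1}(\bb R_+^n)$ gives $\norm{v_{\lambda_k}-v_{\bar\lambda}}_{L^{\kappa+1}(\bb R_+^n)}\to0$. Passing to a single subsequence (not relabeled), I may therefore assume $u_{\lambda_k}\to u_{\bar\lambda}$ a.e.\ on $\bdy\bb R_+^n$ and $v_{\lambda_k}\to v_{\bar\lambda}$ a.e.\ on $\bb R_+^n$.

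Finally I would combine this with the monotone exhaustions $\Sigma_{\bar\lambda,n-1}=\bigcup_k\Sigma_{\lambda_k,n-1}$ and $\Sigma_{\bar\lambda,n}=\bigcup_k\Sigma_{\lambda_k,n}$ (the half-spaces increase because $\lambda_k$ increases): for a.e.\ $y\in\Sigma_{\bar\lambda,n-1}$ there is $K$ with $y\in\Sigma_{\lambda_k,n-1}$ for all $k\geq K$, so $u_{\lambda_k}(y)\geq u(y)$ for $k\geq K$, and letting $k\to\infty$ along the subsequence yields $u_{\bar\lambda}(y)\geq u(y)$; likewise $v_{\bar\lambda}\geq v$ a.e.\ in $\Sigma_{\bar\lambda,n}$. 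Since $\overline\Sigma_{\bar\lambda,n-1}$ and $\overline\Sigma_{\bar\lambda,n}$ differ from $\Sigma_{\bar\lambda,n-1}$ and $\Sigma_{\bar\lambda,n}$ only by the measure-zero hyperplanes $\{y_1=\bar\lambda\}$ and $\{x_1=\bar\lambda\}$ (on which the reflected functions coincide with the originals in any case), the inequalities extend to the closures.

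The bookkeeping with the exhaustions is harmless; the one point that genuinely requires care is the a.e.\ convergence $u_{\lambda_k}\to u_{\bar\lambda}$ and $v_{\lambda_k}\to v_{\bar\lambda}$, which I would obtain from $L^p$-continuity of translations together with a passage to a subsequence valid for $u$ and $v$ simultaneously. Note that no use of the quantitative estimates of Lemma \ref{lemma:KeyEstimatesForMovingPlanes} is needed here; only the definition of $\bar\lambda$ and the integrability $u\in L^{\theta+1}(\bdy\bb R_+^n)$, $v\in L^{\kappa+1}(\bb R_+^n)$ enter.
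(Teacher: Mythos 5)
Your proof is correct, and it takes a genuinely different (and cleaner) route than the paper's. The paper argues by contradiction: assuming $\abs{\Sigma_{\bar\lambda,n-1}^u}>0$, it localizes a positive-measure portion of the bad set to $B_R^{n-1}\cap\{u-u_{\bar\lambda}>\delta\}$, uses the inequality $u\leq u_\lambda$ for $\lambda<\bar\lambda$ to force $\delta\leq u_\lambda-u_{\bar\lambda}$ there, and then controls $u_\lambda-u_{\bar\lambda}$ by approximating $u$ with a continuous $h$ and applying Chebyshev's inequality to $\norm{u-h}_{L^{\theta+1}}$ --- in effect a hand-built proof that $u_\lambda\to u_{\bar\lambda}$ locally in measure as $\lambda\to\bar\lambda^-$. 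You obtain the same convergence directly from continuity of translations in $L^{\theta+1}$ (after the reflection $z=y^{\bar\lambda}$), pass to a subsequence converging a.e.\ simultaneously for $u$ and $v$, and conclude by the monotone exhaustion $\Sigma_{\bar\lambda,n-1}=\bigcup_k\Sigma_{\lambda_k,n-1}$ together with the fact that a countable union of the exceptional null sets is still null. Both arguments ultimately rest on density of continuous functions in $L^p$, but yours packages this into a standard fact and avoids the contradiction setup and the $R,\delta,M$ bookkeeping; the paper's version has the advantage of being the template that is reused almost verbatim in Lemma \ref{lemma:PlaneLambdaBar=0} and in the moving-spheres analogue, where a plain translation argument is not available. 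Your closing remark that the hyperplanes $\{y_1=\bar\lambda\}$ and $\{x_1=\bar\lambda\}$ are null and fixed by the reflection correctly disposes of the passage from $\Sigma$ to $\overline\Sigma$.
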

\begin{proof}
Suppose the assertion of the lemma is false so that either $|\Sigma_{\bar\lambda, n -1}^u|>0$ or $|\Sigma_{\bar\lambda, n}^v|>0$. In fact, if one of these inequalities holds then they both must hold. Indeed, if $|\Sigma_{\bar\lambda, n - 1}^u|>0$ then estimate \eqref{eq:u-uLambdaLeftHalfEstimate} implies that $|\Sigma_{\bar\lambda, n}^v|>0$. Similarly if $|\Sigma_{\bar\lambda, n}^v|>0$ then estimate \eqref{eq:v-vLambdaLeftHalfEstimate} guarantees that $|\Sigma_{\bar\lambda, n- 1}^u|>0$. Now choose $R>0$ (large) and $\delta>0$ (small) such that 
\begin{equation*}
	3\abs{\Sigma_{\bar\lambda, n-1}^u}
	\leq
	4\abs{ \left\{y\in B_R^{n - 1}\cap\Sigma_{\bar\lambda, n-1}: u(y) - u_{\bar\lambda}(y)>\delta\right\}}.
\end{equation*}
If $\lambda< \bar\lambda$ is sufficiently close to $\bar\lambda$ then 
\begin{equation}
\label{eq:SkinnyStrip}
	4\abs{B_R^{n - 1}\cap \{\lambda \leq y_1\leq \bar\lambda\}} \leq |\Sigma_{\bar\lambda, n -1}^u|.
\end{equation} 
For any $h\in C^0\cap L^{\theta + 1}(\bdy \bb R_+^n)$ there exists $\Lambda = \Lambda(h)< \bar\lambda$ such that for all $\Lambda < \lambda< \bar\lambda$ and a.e. $y\in B_R^{n - 1}\cap \Sigma_{\lambda, n-1} \cap\{u - u_{\bar\lambda}> \delta\}$, 
\begin{eqnarray*}
	\delta
	& < & 
	u(y) - u_\lambda(y) + u_\lambda(y) - u_{\bar\lambda}(y)
	\\
	\ifdetails 
	&\leq & 
	u_\lambda(y) - u_{\bar\lambda}(y)
	\\
	\fi 
	&\leq & 
	|u(y^\lambda)  - h(y^\lambda)| + |h(y^\lambda) - h(y^{\bar\lambda})| + |h(y^{\bar\lambda}) - u(y^{\bar\lambda})|
	\\
	&\leq & 
	|u(y^\lambda)  - h(y^\lambda)| + |h(y^{\bar\lambda}) - u(y^{\bar\lambda})| + \frac \delta 2. 
\end{eqnarray*}
Choosing $h\in C^0\cap L^{\theta + 1}(\bdy \bb R_+^n)$ sufficiently close to $u$ in $L^{\theta + 1}$-norm then choosing $\lambda\in (\Lambda, \bar\lambda)$ sufficiently close to $\bar\lambda$ gives
\begin{equation*}
\begin{array}{lcl}
	\multicolumn{3}{l}{
	\displaystyle
	\abs{B_R^{n - 1}\cap \Sigma_{\lambda, n-1}\cap \{u- u_{\bar\lambda}> \delta\}}
	}
	\\
	&\leq & 
	\displaystyle
\abs{\left\{ y\in B_R^{n - 1}\cap\Sigma_{\lambda, n-1}: \frac\delta 2\leq |u(y^\lambda)  - h(y^\lambda)| + |h(y^{\bar\lambda}) - u(y^{\bar\lambda})|\right\}}	\\
	\ifdetails 
	&\leq & 
	\displaystyle
	\abs{\left\{ y\in B_R^{n - 1}\cap\Sigma_{\bar\lambda, n-1}: \frac\delta 4\leq |u(y^\lambda)  - h(y^\lambda)|\right\}}
	\\
	& & 
	\displaystyle
	+ \abs{\left\{ y\in B_R^{n - 1}\cap\Sigma_{\bar\lambda, n-1}: \frac\delta 4\leq |u(y^{\bar \lambda})  - h(y^{\bar\lambda})|\right\}}
	\\
	\fi 
	&\leq & 
	\displaystyle
	2\abs{\left\{ y\in \bdy \bb R_+^n: \frac\delta 4\leq \abs{u(y)  - h(y)}\right\}}
	\\
	&\leq & 
	\displaystyle
	C\delta^{-\theta - 1}\norm{u - h}_{L^{\theta + 1}(\bdy \bb R_+^n)}^{\theta  + 1}
	\\
	&\leq &
	\frac 14 \abs{\Sigma_{\bar\lambda, n - 1}^u}. 
\end{array}
\end{equation*}
Combining this estimate with \eqref{eq:SkinnyStrip} gives
\begin{eqnarray*}
	\frac 34 \abs{\Sigma_{\bar\lambda, n - 1}^u}
	& \leq & 
	\abs{B_R^{n - 1}\cap \{\lambda\leq y_1\leq \bar\lambda\}}
	+ 
	\abs{B_R^{n -1}\cap \Sigma_{\lambda, n - 1}\cap \{u - u_{\bar\lambda}> \delta\}}
	\\
	&\leq & 
	\frac12 \abs{\Sigma_{\bar\lambda, n - 1}^u}
\end{eqnarray*}
whenever $\lambda< \bar\lambda$ is sufficiently close to $\bar\lambda$, a contradiction. 
\end{proof}
\begin{lemma}
\label{lemma:BothStrictlyPositive}
Under the hypotheses of Proposition \ref{prop:RadialSymmetryOfExtremalSystemSolutions}, if $\bar\lambda< 0$ then $u_{\bar\lambda}> u$ a.e. in $\Sigma_{\bar\lambda, n - 1}$ and $v_{\bar\lambda}> v$ a.e. in $\Sigma_{\bar\lambda, n}$. 
\end{lemma}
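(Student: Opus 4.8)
The plan is to promote the weak comparison of Lemma \ref{lemma:SoftInequalityAtMaximalPlane} to a strict one by exploiting that, when $\bar\lambda<0$, the inequalities in \eqref{eq:LambdaNegativeConsequences} become strict. By Lemma \ref{lemma:SoftInequalityAtMaximalPlane} we have $u\leq u_{\bar\lambda}$ a.e.\ in $\overline\Sigma_{\bar\lambda,n-1}$ and $v\leq v_{\bar\lambda}$ a.e.\ in $\overline\Sigma_{\bar\lambda,n}$, so that $\abs{\Sigma_{\bar\lambda,n-1}^u}=\abs{\Sigma_{\bar\lambda,n}^v}=0$. I would prove $u_{\bar\lambda}>u$ a.e.\ in $\Sigma_{\bar\lambda,n-1}$ and then obtain $v_{\bar\lambda}>v$ a.e.\ in $\Sigma_{\bar\lambda,n}$ by the same argument applied to \eqref{eq:vLambdaDifference} in place of \eqref{eq:uLambdaDifference}. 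Throughout I use that, in the $(\theta,\kappa)$ parametrization, $\sigma\geq 0$ and $\tau\geq 0$, and that the hypothesis that at least one inequality in \eqref{eq:ThetaKappaBothSoftlySubcritical} is strict is exactly the statement that $\sigma>0$ or $\tau>0$.

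First I would take $\lambda=\bar\lambda$ in \eqref{eq:uLambdaDifference} and regroup its four integrals according to their kernels, obtaining for a.e.\ $y\in\Sigma_{\bar\lambda,n-1}$
\begin{equation*}
	u_{\bar\lambda}(y)-u(y)=\int_{\Sigma_{\bar\lambda,n}}x_n^\beta\left[\frac{N_1(x,y)}{\abs{x-y}^{n-\alpha}}+\frac{N_2(x,y)}{\abs{x^{\bar\lambda}-y}^{n-\alpha}}\right]\d x,
\end{equation*}
where $N_1=v_{\bar\lambda}^\kappa(x)w(x^{\bar\lambda})^\tau w(y^{\bar\lambda})^\sigma-v^\kappa(x)w(x)^\tau w(y)^\sigma$ and $N_2=v^\kappa(x)w(x)^\tau w(y^{\bar\lambda})^\sigma-v_{\bar\lambda}^\kappa(x)w(x^{\bar\lambda})^\tau w(y)^\sigma$. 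The crux is the elementary identity
\begin{equation*}
	\frac{N_1}{\abs{x-y}^{n-\alpha}}+\frac{N_2}{\abs{x^{\bar\lambda}-y}^{n-\alpha}}=\frac{N_1+N_2}{\abs{x^{\bar\lambda}-y}^{n-\alpha}}+N_1\left(\frac{1}{\abs{x-y}^{n-\alpha}}-\frac{1}{\abs{x^{\bar\lambda}-y}^{n-\alpha}}\right),
\end{equation*}
which rewrites the integrand as a sum of two pieces that I will show are nonnegative, the second one strictly positive.

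The main step is the pointwise analysis for $\bar\lambda<0$ and a.e.\ $(x,y)\in\Sigma_{\bar\lambda,n}\times\Sigma_{\bar\lambda,n-1}$. From \eqref{eq:LambdaNegativeConsequences} the inequalities $\abs{x-y}<\abs{x^{\bar\lambda}-y}$, $w(x^{\bar\lambda})>w(x)$ and $w(y^{\bar\lambda})>w(y)$ are strict; since $\alpha<n$ by \eqref{eq:MinimalAlphaBeta}, the map $t\mapsto t^{\alpha-n}$ is strictly decreasing, so the parenthesized factor above is strictly positive, and $x_n^\beta>0$ on $\bb R_+^n$. Using $0<v\leq v_{\bar\lambda}$ (a.e.\ on $\Sigma_{\bar\lambda,n}$) together with the strict monotonicity of $w$ under reflection and the positivity of whichever of $\sigma,\tau$ is nonzero, I get $N_1>0$; and a direct computation gives $N_1+N_2=\bigl(w(y^{\bar\lambda})^\sigma-w(y)^\sigma\bigr)\bigl(v_{\bar\lambda}^\kappa(x)w(x^{\bar\lambda})^\tau+v^\kappa(x)w(x)^\tau\bigr)\geq 0$. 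Hence the integrand is strictly positive for a.e.\ $x\in\Sigma_{\bar\lambda,n}$, and integrating over this positive-measure set yields $u_{\bar\lambda}(y)-u(y)>0$.

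For the $v$-comparison I would run the identical scheme on \eqref{eq:vLambdaDifference}: regrouping kernels produces a representation of $v_{\bar\lambda}(x)-v(x)$ whose leading term is $M_1=u_{\bar\lambda}^\theta(y)w(y^{\bar\lambda})^\sigma w(x^{\bar\lambda})^\tau-u^\theta(y)w(y)^\sigma w(x)^\tau>0$ and for which $M_1+M_2=\bigl(w(x^{\bar\lambda})^\tau-w(x)^\tau\bigr)\bigl(u_{\bar\lambda}^\theta(y)w(y^{\bar\lambda})^\sigma+u^\theta(y)w(y)^\sigma\bigr)\geq 0$, so the same identity and positivity checks close the argument. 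I expect the only genuine obstacle to be the indefinite sign of the cross terms $N_2$ (resp.\ $M_2$): the displayed identity is exactly what absorbs them, splitting the integrand into one piece that is nonnegative because $N_1+N_2\geq 0$ and one that is strictly positive because $N_1>0$ and the kernel difference is strictly positive for $\bar\lambda<0$.
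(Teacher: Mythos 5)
Your proof is correct, and it rests on the same two facts as the paper's: the strict inequalities in \eqref{eq:LambdaNegativeConsequences} when $\bar\lambda<0$, and the observation that the strictness hypothesis in \eqref{eq:ThetaKappaBothSoftlySubcritical} is exactly $\sigma>0$ or $\tau>0$. The packaging, however, is genuinely different. The paper argues by contradiction: assuming $u=u_{\bar\lambda}$ on a positive-measure set, it squeezes the last line of \eqref{eq:u-uLambdaLeftHalfEstimate1} between $0$ and $0$ to force $v=v_{\bar\lambda}$ a.e.\ in $\Sigma_{\bar\lambda,n}$, then feeds that back into the second-to-last line, where the factor $w^\tau(x)-w^\tau(x^{\bar\lambda})<0$ yields $0<0$. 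Your argument is direct: the exact splitting of $N_1\abs{x-y}^{\alpha-n}+N_2\abs{x^{\bar\lambda}-y}^{\alpha-n}$ into $(N_1+N_2)\abs{x^{\bar\lambda}-y}^{\alpha-n}$ plus $N_1$ times the positive kernel difference exhibits the integrand of \eqref{eq:uLambdaDifference} as a nonnegative term plus a strictly positive one, and your computations of $N_1+N_2$ and of $N_1>0$ check out (as does the symmetric $M_1$, $M_2$ computation for $v$). What your route buys: since $N_1$ carries the full product $w(x^{\bar\lambda})^\tau w(y^{\bar\lambda})^\sigma$, its strict positivity follows from \emph{either} $\sigma>0$ or $\tau>0$, so each of the two conclusions is obtained in one shot; the paper's displayed chain for the $u$-half uses specifically $\tau>0$ (and in the edge case $\tau=0<\sigma$ one would have to establish the $v$-inequality first and then bootstrap). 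The only point worth making explicit in a write-up is the integrability bookkeeping: since $u$ and $u_{\bar\lambda}$ are finite a.e., the left-hand side of \eqref{eq:uLambdaDifference} is finite, and because both pieces of your decomposition are nonnegative, each integrates to a finite value, so the strictly positive piece gives a finite strictly positive lower bound.
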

\begin{proof}
Suppose for the sake of obtaining a contradiction that $\bar\lambda< 0$ and that there is a positive-measure subset $A\subset \Sigma_{\bar\lambda, n-1}$ on which the equality $u_{\bar\lambda}= u$. For all $y\in A$ estimate \eqref{eq:u-uLambdaLeftHalfEstimate1} and the fact that $v_{\bar\lambda}\geq v$ a.e. in  $\Sigma_{\bar\lambda, n}$ give
\begin{eqnarray*}
	0 
	& = & 
	u(y)- u_{\bar\lambda}(y) 
	\\
	& \leq & 
	\int_{\Sigma_{\bar\lambda, n}}
	\left(\frac{x_n^\beta}{\abs{x - y}^{n -\alpha}} - \frac{x_n^\beta}{|x^{\bar\lambda} - y|^{n - \alpha}}\right)
	\left(v^\kappa(x) - v_{\bar\lambda}^\kappa(x)\right)
	w^\tau(x) w^\sigma(y)\; \d x
	\\
	& \leq & 
	0. 
\end{eqnarray*}
Consequently, $v_{\bar\lambda} = v$ a.e. in $\Sigma_{\bar\lambda, n}$. Using this equality in the second-to-last line of estimate \eqref{eq:u-uLambdaLeftHalfEstimate1}, for all $y\in A$ we obtain 
\begin{eqnarray*}
	0 
	& = & 
	u(y)- u_{\bar\lambda}(y) 
	\\
	& \leq & 
	\int_{\Sigma_{\bar\lambda, n}}
	\left(\frac{x_n^\beta}{\abs{x - y}^{n -\alpha}} - \frac{x_n^\beta}{|x^{\bar\lambda} - y|^{n - \alpha}}\right)
	\left(v^\kappa(x)w^\tau(x) - v_{\bar\lambda}^\kappa(x)w^\tau(x^{\bar\lambda})\right)
	w^\sigma(y)\; \d x
	\\
	& = & 
	\int_{\Sigma_{\bar\lambda, n}}
	\left(\frac{x_n^\beta}{\abs{x - y}^{n -\alpha}} - \frac{x_n^\beta}{|x^{\bar\lambda} - y|^{n - \alpha}}\right)
	v^\kappa(x)\left(w^\tau(x) - w^\tau(x^{\bar\lambda})\right)
	w^\sigma(y)\; \d x
	\\
	& < & 
	0, 
\end{eqnarray*}
a contradiction. By a similar argument one can verify that $v_{\bar\lambda}>v$ a.e. in $\Sigma_{\bar\lambda, n}$. 
\end{proof}
\begin{lemma}
\label{lemma:PlaneLambdaBar=0}
Under the hypotheses of Proposition \ref{prop:RadialSymmetryOfExtremalSystemSolutions} $\bar \lambda = 0$. 
\end{lemma}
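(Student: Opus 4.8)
The plan is to argue by contradiction. Suppose $\bar\lambda<0$; I will show that \eqref{eq:ReflectionDominatesu} and \eqref{eq:ReflectionDominatesv} in fact hold for all $\mu\le\bar\lambda+\varepsilon$ for some $\varepsilon>0$ small enough that $\bar\lambda+\varepsilon<0$, which contradicts the definition of $\bar\lambda$. The starting point is what is already available at the critical plane: since $\bar\lambda\le0$, Lemma \ref{lemma:SoftInequalityAtMaximalPlane} gives $u_{\bar\lambda}\ge u$ a.e.\ in $\Sigma_{\bar\lambda,n-1}$ and $v_{\bar\lambda}\ge v$ a.e.\ in $\Sigma_{\bar\lambda,n}$, and since $\bar\lambda<0$, Lemma \ref{lemma:BothStrictlyPositive} upgrades these to the strict inequalities $u_{\bar\lambda}>u$ a.e.\ in $\Sigma_{\bar\lambda,n-1}$ and $v_{\bar\lambda}>v$ a.e.\ in $\Sigma_{\bar\lambda,n}$. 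In particular $\abs{\Sigma_{\bar\lambda,n-1}^u}=\abs{\Sigma_{\bar\lambda,n}^v}=0$.

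The key step is the claim that $\norm{u}_{L^{\theta+1}(\Sigma_{\lambda,n-1}^u)}\to0$ and $\norm{v}_{L^{\kappa+1}(\Sigma_{\lambda,n}^v)}\to0$ as $\lambda\to\bar\lambda^+$. To prove it, it suffices to show that every sequence $\lambda_k\to\bar\lambda^+$ admits a subsequence along which both quantities tend to $0$. For such a sequence I would first observe that, as functions on $\bdy\bb R_+^n$, one has $u_{\lambda_k}(y)=u_{\bar\lambda}(y-2(\lambda_k-\bar\lambda)e_1)$, so continuity of translations in $L^{\theta+1}(\bdy\bb R_+^n)$ forces $u_{\lambda_k}\to u_{\bar\lambda}$ in $L^{\theta+1}$, and likewise $v_{\lambda_k}\to v_{\bar\lambda}$ in $L^{\kappa+1}(\bb R_+^n)$. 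Passing to a subsequence (not relabeled) I may assume $u_{\lambda_k}\to u_{\bar\lambda}$ a.e.\ on $\bdy\bb R_+^n$ and $v_{\lambda_k}\to v_{\bar\lambda}$ a.e.\ on $\bb R_+^n$. Then for a.e.\ $y\in\bdy\bb R_+^n$ the indicator $\chi_{\Sigma_{\lambda_k,n-1}^u}(y)$ equals $0$ for all large $k$: if $y_1>\bar\lambda$ this is because $\lambda_k<y_1$ eventually, and if $y_1<\bar\lambda$ it is because $u(y)<u_{\bar\lambda}(y)=\lim_k u_{\lambda_k}(y)$, whence $u_{\lambda_k}(y)>u(y)$ eventually. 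Thus $\chi_{\Sigma_{\lambda_k,n-1}^u}\to0$ a.e., and since $u^{\theta+1}\in L^1(\bdy\bb R_+^n)$, dominated convergence yields $\int_{\Sigma_{\lambda_k,n-1}^u}u^{\theta+1}\to0$; the identical argument gives $\int_{\Sigma_{\lambda_k,n}^v}v^{\kappa+1}\to0$. This proves the claim.

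With the claim in hand I would conclude exactly as in the proof of Lemma \ref{lemma:LambdaBarWellDefined}: in each of the three cases ($\theta\ge1$ and $\kappa\ge1$; $\theta<1$; $\kappa<1$) the prefactor appearing in the relevant estimate of Lemma \ref{lemma:KeyEstimatesForMovingPlanes} is a product of the norms $\norm{u}_{L^{\theta+1}(\Sigma_{\lambda,n-1}^u)}$ and $\norm{v}_{L^{\kappa+1}(\Sigma_{\lambda,n}^v)}$ raised to nonnegative powers (at least one power being positive, since $(\theta+1)^{-1}+(\kappa+1)^{-1}<1$ rules out $\theta=\kappa=1$), so by the claim this prefactor is $<\frac12$ once $\lambda>\bar\lambda$ is sufficiently close to $\bar\lambda$. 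Substituting this back into the corresponding estimate of Lemma \ref{lemma:KeyEstimatesForMovingPlanes} (together with \eqref{eq:u-uLambdaInitialEstimateKappaLarge} when $\theta<1$) forces $\abs{\Sigma_{\lambda,n-1}^u}=\abs{\Sigma_{\lambda,n}^v}=0$, i.e.\ \eqref{eq:ReflectionDominatesu} and \eqref{eq:ReflectionDominatesv} hold at level $\lambda$ for every $\lambda\in(\bar\lambda,\bar\lambda+\varepsilon)$. Since, by the definition of $\bar\lambda$ and Lemma \ref{lemma:SoftInequalityAtMaximalPlane}, \eqref{eq:ReflectionDominatesu} and \eqref{eq:ReflectionDominatesv} already hold for all $\mu\le\bar\lambda$, they hold for all $\mu\le\bar\lambda+\varepsilon/2$; shrinking $\varepsilon$ so that $\bar\lambda+\varepsilon/2<0$ then contradicts the maximality of $\bar\lambda$, and hence $\bar\lambda=0$.

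The step I expect to be the main obstacle is the claim in the second paragraph. The bad sets $\Sigma_{\lambda,n-1}^u$ and $\Sigma_{\lambda,n}^v$ do not vary continuously in $\lambda$ in any obvious sense, and $u,v$ are merely $L^p$ functions, so one cannot argue pointwise directly; the resolution is to combine $L^p$-continuity of translations, passage to an a.e.-convergent subsequence, the \emph{strict} domination supplied by Lemma \ref{lemma:BothStrictlyPositive}, and dominated convergence, so that the mass of $u$ (resp.\ $v$) over the bad set collapses as $\lambda\downarrow\bar\lambda$. Everything after that is a repetition of the bookkeeping already carried out in the proof of Lemma \ref{lemma:LambdaBarWellDefined}.
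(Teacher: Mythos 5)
Your proof is correct and shares the paper's overall skeleton --- establish that $\norm{u}_{L^{\theta+1}(\Sigma_{\lambda,n-1}^u)}$ and $\norm{v}_{L^{\kappa+1}(\Sigma_{\lambda,n}^v)}$ tend to $0$ as $\lambda\to\bar\lambda^+$, then feed this into Lemma \ref{lemma:KeyEstimatesForMovingPlanes} to contradict the maximality of $\bar\lambda$ --- but your proof of the vanishing claim takes a genuinely different route. The paper splits $\norm{v}_{L^{\kappa+1}(\Sigma_{\lambda,n}^v)}^{\kappa+1}$ into the contributions from $\bb R_+^n\setminus B_R$, a thin slab $\{\bar\lambda-\eta\le x_1\le\bar\lambda+\eta\}$, a boundary collar $\{x_n<\eta\}$, and a core set $\Sigma_{\lambda,n}^v(R,\eta)$; the first three are handled by absolute continuity of the integral, and for the core set the paper derives a quantitative lower bound $v_{\bar\lambda}-v\ge c_1>0$ there (via the first inequality in \eqref{eq:v-vLambdaLeftHalfEstimate} together with the strict domination) and then runs a Chebyshev-plus-continuous-approximation argument to show $\abs{\Sigma_{\lambda,n}^v(R,\eta)}\to0$. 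You instead observe that $u_\lambda$ is a translate of $u_{\bar\lambda}$, invoke $L^p$-continuity of translations, pass to an a.e.-convergent subsequence, and use the strict inequalities of Lemma \ref{lemma:BothStrictlyPositive} to conclude that the indicators of the bad sets tend to $0$ a.e., whence dominated convergence (with dominating functions $u^{\theta+1}$ and $v^{\kappa+1}$) kills the mass; the subsequence principle then upgrades this to the full limit. Your route is shorter and avoids both the region decomposition and the continuous-approximation machinery; the paper's route is more quantitative (it produces an explicit lower bound $c_1$ on $v_{\bar\lambda}-v$ away from the boundary and controls the measure of the core bad set directly) at the cost of more bookkeeping. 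Both arguments rely in the same essential way on the strict domination supplied by Lemma \ref{lemma:BothStrictlyPositive}, and the endgame --- the three exponent cases of Lemma \ref{lemma:KeyEstimatesForMovingPlanes} and the final contradiction with the definition of $\bar\lambda$ --- is the same in both.
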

\begin{proof}
We will show both that
\begin{equation}
\label{eq:MovingPlaneLambdaLimitu}
	\lim_{\lambda\to\bar\lambda^+}\norm{u}_{L^{\theta + 1}(\Sigma_{\lambda, n-1}^u)} = 0
\end{equation}
and that
\begin{equation}
\label{eq:MovingPlaneLambdaLimitv}
	\lim_{\lambda\to\bar\lambda^+}\norm{v}_{L^{\kappa + 1}(\Sigma_{\lambda, n}^v)} = 0. 
\end{equation}
Under the assumption that these limits hold Lemma \ref{lemma:KeyEstimatesForMovingPlanes} guarantees the existence of $\delta>0$ sufficiently small such that if $\lambda\leq \bar\lambda + \delta$ then 
\begin{equation*}
	\norm{u- u_\lambda}_{L^{\theta + 1}(\Sigma_{\lambda, n -1}^u)}
	\leq
	\frac 12
	\norm{u - u_\lambda}_{L^{\theta + 1}(\Sigma_{\lambda, n - 1}^u)}
\end{equation*}
and 
\begin{equation*}
	\norm{v - v_\lambda}_{L^{\kappa + 1}(\Sigma_{\lambda, n}^v)}
	\leq
	\frac 12
	\norm{v - v_\lambda}_{L^{\kappa + 1}(\Sigma_{\lambda, n}^v)}.
\end{equation*}
These inequalities imply that both $\Sigma_{\lambda,n -1}^u$ and $\Sigma_{\lambda. v}^n$ are zero-measure sets whenever $\bar\lambda \leq \lambda \leq \bar\lambda + \delta$ thus contradicting the maximality of $\bar\lambda$.\\ 

The remainder of the proof of Lemma \ref{lemma:PlaneLambdaBar=0} is devoted to showing that \eqref{eq:MovingPlaneLambdaLimitu} and \eqref{eq:MovingPlaneLambdaLimitv} hold. Since the proofs of these limits are similar, only the details of \eqref{eq:MovingPlaneLambdaLimitu} will be presented. If $\epsilon>0$ is given we may choose $R>0$ large then choose $\eta = \eta(R)>0$ small such that for all $\lambda\in [\bar\lambda, \bar\lambda + \eta]$, 
\begin{eqnarray*}
	\norm{v}_{L^{\kappa + 1}(\Sigma_{\lambda, n}^v)}^{\kappa + 1}
	& \leq & 
	\norm{v}_{L^{\kappa + 1}(\bb R_+^n\setminus B_R)}^{\kappa + 1}
	+
	\norm{v}_{L^{\kappa + 1}(B_R^+\cap\{\bar\lambda - \eta\leq x_1\leq \bar\lambda + \eta\})}^{\kappa + 1}
	\\
	& & 
	+
	\norm{v}_{L^{\kappa + 1}(B_R^+\cap\{x_n< \eta\})}^{\kappa + 1}
	+
	\norm{v}_{L^{\kappa + 1}(\Sigma_{\lambda, n}^v(R, \eta))}^{\kappa + 1}
	\\
	&\leq & 
	\norm{v}_{L^{\kappa + 1}(\Sigma_{\lambda, n}^v(R, \eta))}^{\kappa + 1} + \epsilon, 
\end{eqnarray*}
where we use the notation 
\begin{equation*}
	\Sigma_{\lambda, n}^v(R, \eta) 
	= 
	\Sigma_{\lambda, n}^v \cap B_R\cap \{x_1< \bar\lambda - \eta, x_n> \eta\}. 
\end{equation*}
Thus, to establish \eqref{eq:MovingPlaneLambdaLimitv} it is sufficient to show that for all $R>0$ large and all $\eta>0$ small, 
\begin{equation}
\label{eq:vBadSetMeasureToZero}
	\abs{\Sigma_{\lambda, n}^v(R, \eta)}
	\to 
	0
\end{equation}
as $\lambda \to \bar\lambda^+$. Suppose for the sake of obtaining a contradiction that \eqref{eq:vBadSetMeasureToZero} fails and choose $R>0$,  $\eta>0$, $\epsilon_0>0$ and a sequence $\lambda_k\to \bar\lambda^+$ for which $\abs{\Sigma_{\lambda_k, n}^v(R, \eta)}> \epsilon_0$ for all $k$. The first inequality in estimate \eqref{eq:v-vLambdaLeftHalfEstimate} guarantees the existence of a positive constant $c_1>0$ depending only on $n, \alpha, \beta, R, \eta$ and the distribution function of $u_{\bar\lambda}^\theta - u^\theta$ such that $v_{\bar\lambda} - v\geq c_1$ for a.e. $x\in \Sigma_{\bar\lambda - \eta, n}\cap B_R\cap\{x_n>\eta\}$. 
\ifdetails 
Indeed, if $|\Sigma_{\bar\lambda, n-1}^u|> 0$ then there is $R>0$ (large), $\eta>0$ (small) and $\delta>0$ (small) such that
\begin{equation*}
	\abs{\Sigma_{\bar\lambda, n-1}^u\cap B_R\cap \{u_{\bar \lambda}^\theta - u^\theta>\delta\}}> 0. 
\end{equation*}
For any such $R, \eta$ and $\delta$ we may choose $C = C(n, \alpha, \eta, R)>0$ such that $|x^{\bar\lambda} - y|^{\alpha - n} \leq (1 - \frac 1 C)|x - y|^{\alpha - n}$ for all $x\in \Sigma_{\bar\lambda- \eta, n}\cap B_R$, $y\in \Sigma_{\bar\lambda - \eta, n - 1}\cap B_R$. Thus, is $x\in \Sigma_{\lambda, n}^v(R, \eta)$ then the first inequality in estimate \eqref{eq:v-vLambdaLeftHalfEstimate} gives 
\begin{equation*}
\begin{array}{lcl}
\multicolumn{3}{l}{
	v_{\bar\lambda}(x) - v(x)
	}
	\\
	&\geq &
	\displaystyle
	\frac{\eta^\beta}{R^{\sigma+ \tau}}
	\int_{\Sigma_{\bar\lambda - \eta, n-1}^u\cap B_R}
	\left( \abs{x-  y}^{\alpha - n} - |x^{\bar\lambda} - y|^{\alpha - n}\right) 
	(u_{\bar\lambda}^\theta(y) - u^\theta(y))\; \d y 
	\\
	& \geq & 
	\displaystyle
	\frac{\eta^\beta}{CR^{\sigma + \tau}}
	\int_{\Sigma_{\bar\lambda - \eta, n-1}^u\cap B_R} 
	(u_{\bar\lambda}^\theta(y) - u^\theta(y))\; \d y 
	\\
	&\geq & 
	\displaystyle
	\frac{\delta\eta^\beta}{CR^{\sigma + \tau}}
	\abs{\Sigma_{\bar\lambda, n-1}^u\cap B_R\cap \{u_{\bar \lambda}^\theta - u^\theta>\delta\}}, 
\end{array}
\end{equation*}
which is the claimed lower bound for $v_{\bar\lambda} - v$. 
\fi 
Consequently, 
\begin{equation*}
	v_{\bar\lambda} - v_{\lambda_k}
	\geq 
	v_{\bar\lambda} - v
	\geq 
	c_1
\end{equation*}
a.e. in $\Sigma_{\lambda_k, n}^v(R, \eta)$. For any $h\in C^0\cap L^{\kappa + 1}(\bb R_+^n)$ there is $\Lambda = \Lambda(h)>\bar\lambda$ such that if $\lambda_k\in [\bar\lambda , \Lambda)$ then  
\begin{eqnarray*}
	c_1
	&\leq & 
	v_{\bar\lambda}(x) - v_{\lambda_k}(x)
	\\
	\ifdetails 
	&\leq & 
	|v(x^{\bar\lambda}) - h(x^{\bar\lambda})|
	+
	|h(x^{\bar\lambda}) - h(x^{\lambda})|
	+
	|h(x^{\lambda}) - v(x^{\lambda})|
	\\
	\fi 
	&\leq & 
	|v(x^{\bar\lambda}) - h(x^{\bar\lambda})|
	+
	|h(x^{\lambda_k}) - v(x^{\lambda_k})|
	+
	\frac{c_1}{2}
\end{eqnarray*}
for a.e. $x\in \Sigma_{\lambda_k, n}^v(R, \eta)$. Choosing $\norm{h - v}_{L^{\kappa + 1}(\bb R_+^n)}$ small (depending on $c_1$ and $\epsilon_0$) then choosing $k = k(h)$ large we get
\begin{eqnarray*}
	\epsilon_0
	&< & 
	\abs{\Sigma_{\lambda_k, n}^v(R, \eta)}
	\\
	&\leq & 
	\abs{
	\left\{
	x\in \bb R_+^n: \frac{c_1}{2}\leq 
	|v(x^{\bar\lambda}) - h(x^{\bar\lambda})|
	+
	|h(x^{\lambda_k}) - v(x^{\lambda_k})|
	\right\}}
	\\
	\ifdetails 
	&\leq & 
	\abs{
	\left\{
	x\in \bb R_+^n: \frac{c_1}{4}\leq 
	|v(x^{\bar\lambda}) - h(x^{\bar\lambda})|
	\right\}}
	+
	\abs{
	\left\{
	x\in \bb R_+^n: \frac{c_1}{4}\leq
	|h(x^{\lambda}) - v(x^{\lambda})|
	\right\}}
	\\ 
	\fi 
	&\leq & 
	2\abs{
	\left\{
	x\in \bb R_+^n: \frac{c_1}{4}\leq
	|v(x)- h(x)|\right\}}
	\\
	&\leq & 
	Cc_1^{-\kappa - 1}\norm{h - v}_{L^{\kappa + 1}(\bb R_+^n)}
	\\
	&\leq & 
	\frac{\epsilon_0}{2}, 
\end{eqnarray*}
a contradiction. This establishes \eqref{eq:vBadSetMeasureToZero} and hence \eqref{eq:MovingPlaneLambdaLimitv}. 
\end{proof}
\begin{proof}[Proof of Proposition \ref{prop:RadialSymmetryOfExtremalSystemSolutions}]
It suffices to show that for every direction $e\subset \bb S^{n - 2}\subset \bdy\bb R_+^n$, the inequalities
\begin{equation}
\label{eq:ReflectionInequalitiesArbitraryDirection}
	u \leq u\circ R_{e} 
	\qquad\text{ and }\qquad
	v\leq v\circ R_{e}  
\end{equation}
hold in $L^{\theta + 1}(\{y\in \bdy \bb R_+^n :  y\cdot e\geq 0\})$ and in $L^{\kappa + 1}(\{x\in \bb R_+^n: x\cdot e \geq 0\})$ respectively, where $R_e: \bb R^n\to \bb R^n$ is reflection about the hyperplane $\{x\in \bb R^n: x\cdot e = 0\}$. The equality $\bar\lambda = 0$ and Lemma \ref{lemma:SoftInequalityAtMaximalPlane} guarantee that inequalities \eqref{eq:ReflectionInequalitiesArbitraryDirection} hold for $e = - e_1 = (-1, 0, \ldots, 0)$. Arguing similarly to the case $e = -e_1$ one can show that inequalities \eqref{eq:ReflectionInequalitiesArbitraryDirection} hold for all $e\in \bb S^{n - 2}$.\\
\end{proof}
%

\section{Sharp Inequality and Extremal Functions for the Conformally Invariant Exponents}
\label{section:ConformallyInvariantExponents}
%
\subsection{Sharp inequality on $\bb R_+^n$}
\begin{proof}[Proof of Theorem \ref{theorem:ConformallyInvariantSharpHalfSpaceInequality}]
First we show that the sharp inequality 
\begin{equation}
\label{eq:SharpConformallyInvariantBallInequality}
	\norm{E_Bf}_{L^{\frac{2n}{n - \alpha - 2\beta}}(B)}
	\leq
	C_e(n, \alpha, \beta)\norm{f}_{L^{\frac{2(n - 1)}{n +\alpha -2}}(\bdy B)}
\end{equation}
holds for every $f\in L^{\frac{2(n -1)}{n +\alpha - 2}}(\bdy B)$. It suffices to establish \eqref{eq:SharpConformallyInvariantBallInequality} for nonnegative $f\in C^0(\bdy B)$. 
\ifdetails 
We show now that it is sufficient to establish inequality \eqref{eq:SharpConformallyInvariantBallInequality} under the additional assumption that $f\in C(\bdy B)$. Fix $0\leq f\in L^{\frac{2(n - 1)}{n +\alpha - 2}}(\bdy B)$ and let $(f_i)\subset C(\bdy B)$ satisfy both $f_i\to f$ in $L^{\frac{2(n -1)}{n +\alpha - 2}}(\bdy B)$ and $f_i\to f$ a.e. on $\bdy B$. For every $\xi\in B$, applying Fatou's Lemma gives
\begin{eqnarray*}
	0
	& \leq & 
	E_Bf(\xi)
	\\
	& = & 
	\int_{\bdy B} \liminf_i f_i(\zeta) H(\xi, \zeta)\; \d S_\zeta
	\\
	&\leq & 
	\liminf_i\int_{\bdy B}f_i(\zeta) H(\xi, \zeta)\; \d S_\zeta
	\\
	& =& 
	\liminf_i (E_Bf_i)(\xi). 
\end{eqnarray*}
Therefore, if \eqref{eq:SharpConformallyInvariantBallInequality} holds for $C(\bdy B)$-functions then after applying Fatou's Lemma again and using \eqref{eq:SharpConformallyInvariantBallInequality} we get
\begin{eqnarray*}
	\norm{E_Bf}_{L^{\frac{2n}{n - \alpha - 2\beta}}(B)}
	&\leq & 
	\norm{\liminf_i(E_Bf_i)}_{L^{\frac{2n}{n - \alpha - 2\beta}}(B)}
	\\
	&\leq & 
	\liminf_i\norm{E_Bf_i}_{L^{\frac{2n}{n - \alpha - 2\beta}}(B)}
	\\
	&\leq & 
	C_e(n, \alpha, \beta)\liminf_i\norm{f_i}_{L^{\frac{2(n -1)}{n +\alpha - 2}}(\bdy B)}
	\\
	& = & 
	C_e(n, \alpha, \beta)\norm{f}_{L^{\frac{2(n -1)}{n +\alpha - 2}}(\bdy B)}. 
\end{eqnarray*}
It remains to establish \eqref{eq:SharpConformallyInvariantBallInequality} for nonnegative $f\in C(\bdy B)$. 
\fi 
Fix any such $f$. For any $\zeta\in \bdy B$ we have $f^p(\zeta) \to f^{\frac{2(n - 1)}{n +\alpha - 2}}(\zeta)$ as $p\to\left(\frac{2(n -1)}{n +\alpha - 2}\right)^+$. Additionally, for all $\frac{2(n - 1)}{n +\alpha -2}<p \leq \frac{2(n - 1)}{n +\alpha -2} + 1$ there holds
\begin{equation*}
	f^p(\zeta)
	\leq (1 + \max_{\bdy B} f)^{1 + \frac{2(n - 1)}{n + \alpha - 2}}, 
\end{equation*}
so the Bounded Convergence Theorem gives $\norm{f}_{L^p(\bdy B)}\to \norm{f}_{L^{\frac{2(n-1)}{n +\alpha - 2}}(\bdy B)}$ as $p\to\left(\frac{2(n -1)}{n +\alpha - 2}\right)^+$. We apply a similar argument to $\norm{E_Bf}_{L^{t'}(B)}$ as $t'\to \left(\frac{2n}{n - \alpha - 2\beta}\right)^-$. By Lemma \ref{lemma:MappingPropertiesBallExtension} of the appendix we have $E_Bf\in L^{2n/(n - \alpha - 2\beta)}(B)$ and therefore the following inequality holds for a.e. $\xi \in B$:
\begin{equation*}
	(E_Bf(\xi))^{t'} \leq (1 + E_Bf(\xi))^{2n/(n - \alpha - 2\beta)}\in L^1(B).
\end{equation*}
The Dominated Convergence Theorem gives $\norm{E_Bf}_{L^{t'}(B)}\to \norm{E_Bf}_{L^{2n/(n - \alpha - 2\beta)}(B)}$ as $t'\to (2n/(n - \alpha - 2\beta))^-$. Now, if $p> \frac{2(n -1)}{n +\alpha - 2}$ and $t'<\frac{2n}{n - \alpha - 2\beta}$ then Corollary \ref{coro:SubcriticalBallExtensionRestriction} (a) with optimal constant $C_*(n, \alpha, \beta, p, t)= (n\omega_n)^{-1/p}\norm{E_B 1}_{L^{t'}(B)}$ whose value was computed in Theorem \ref{theorem:ConstantExtremalsBestEmbeddingConstant} gives
\begin{equation*}
	\norm{E_Bf}_{L^{t'}(B)}
	\leq
	C_*(n, \alpha, \beta, p, t)\norm{f}_{L^p(\bdy B)}. 
\end{equation*}
Applying the same argument to the constant function $1$ that was just applied to $f$ we find that $C_*(n, \alpha, \beta, p, t)\to C_e(n, \alpha, \beta)$ as $p\to \left(\frac{2(n - 1)}{n +\alpha - 2}\right)^+$ and $t'\to \left(\frac{2n}{n - \alpha - 2\beta}\right)^-$. Thus, letting $p\to \left(\frac{2(n - 1)}{n +\alpha - 2}\right)^+$ and $t'\to \left(\frac{2n}{n - \alpha - 2\beta}\right)^-$ we recover \eqref{eq:SharpConformallyInvariantBallInequality}. To see that $C_e(n, \alpha, \beta)$ is the optimal constant observe first that \eqref{eq:SharpConformallyInvariantBallInequality} guarantees that 
\begin{equation*}
	C_e(n, \alpha, \beta)
	\geq
	\sup\left\{
	\norm{E_Bf}_{L^{\frac{2n}{n -\alpha - 2\beta}}(B)}: 
	\norm{f}_{L^{\frac{2(n - 1)}{n +\alpha - 2}}(\bdy B)} = 1
	\right\}. 
\end{equation*}
The reverse inequality is guaranteed by the fact that the constant function $f = (n\omega_n)^{-\frac{n +\alpha -2}{2(n - 1)}}$ has unit $L^{\frac{2(n-1)}{n +\alpha -2}}(\bdy B)$-norm and attains $C_e(n, \alpha, \beta)$. \\

Finally, given $f\in L^{\frac{2(n - 1)}{n + \alpha - 2}}(\bdy \bb R_+^n)$ we define $F:\bdy B\to \bb R$ by $F(\zeta) = \left(\frac2{\abs{\zeta +e_n}}\right)^{n + \alpha - 2}f\circ T(\zeta)$, where $T$ is as in \eqref{eq:BallToUHSConformalMap}. By performing elementary computations one can verify that $\norm{F}_{L^{\frac{2(n - 1)}{n + \alpha - 2}}(\bdy B)} = \norm{f}_{L^{\frac{2(n - 1)}{n + \alpha - 2}}(\bdy \bb R_+^n)}$ and that $\norm{E_BF}_{L^{\frac{2n}{n -\alpha - 2\beta}}(B)} = \norm{Ef}_{L^{\frac{2n}{n - \alpha - 2\beta}}(\bb R_+^n)}$. The assertion of the theorem follows. 
\end{proof}
\subsection{Classification of Extremal functions}
In this subsection we will prove Theorem \ref{theorem:UHSConformalExtremalClassification}, the classification of extremal functions in inequality \eqref{eq:SharpConformallyInvariantUHSInequality}. For the remainder of this section we will use the notation
\begin{equation*}
	p = \frac{2(n - 1)}{n + \alpha -2}
	\qquad
	t = \frac{2n}{n +\alpha + 2\beta}. 
\end{equation*}
With this notation we have $\theta = p' - 1 = \frac{n + \alpha - 2}{n - \alpha}$ and $\kappa = \frac{n + \alpha + 2\beta}{n -\alpha - 2\beta}$. To prove Theorem \ref{theorem:UHSConformalExtremalClassification} we classify all positive solutions $f\in L^p(\bdy \bb R_+^n)$ to equation \eqref{eq:ConformallyInvariantELHalfSpace}. If $f$ is any such function then the functions $u(y) = f(y)^{\frac{n - \alpha}{n + \alpha - 2}}$ and $v(x) = Ef(x)$ satisfy the system 
\begin{equation}
\label{eq:ConformallyInvariantExtremalSystem}
\begin{cases}
	\displaystyle
	u(y) 
	= 
	\int_{\bb R_+^n}\frac{x_n^\beta v^\kappa(x)}{\abs{x - y}^{n - \alpha}} \; \d x
	& 
	\text{ for } y\in \bdy \bb R_+^n
	\\
	\displaystyle
	v(x) 
	=
	\int_{\bdy \bb R_+^n}\frac{x_n^\beta u^\theta (y)}{\abs{x - y}^{n - \alpha}} \; \d y
	&
	\text{ for } x\in \bb R_+^n,    
\end{cases}
\end{equation}
where here and throughout the remainder of this section we use the simplified notation $Ef = E_{\alpha, \beta}f$. Theorem \ref{theorem:UHSConformalExtremalClassification} is implied by the following theorem. 
\begin{theorem}
\label{theorem:ExtremalSystemClassification}
Let $n\geq 2$ and suppose $\alpha$ and $\beta$ satisfy \eqref{eq:MinimalAlphaBeta} and \eqref{eq:AlphaBetaSubAffine}. If $u\in L^{\theta + 1}(\bdy \bb R_+^n)$ and $v\in L^{\kappa + 1}(\bb R_+^n)$ are positive solutions to \eqref{eq:ConformallyInvariantExtremalSystem} then there exists $c_1>0$, $d> 0$ and $y_0\in \bdy \bb R_+^n$ such that
\begin{equation}
\label{eq:uClassified}
	u(y) = \frac{c_1}{\left( d^2 + \abs{y - y_0}^2\right)^{\frac{n - \alpha}{2}}}
	\qquad
	\text{ for all } y\in \bdy \bb R_+^n. 
\end{equation}
\end{theorem}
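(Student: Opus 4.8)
The plan is to classify the positive solutions of \eqref{eq:ConformallyInvariantExtremalSystem} by the method of moving spheres, following the same overall scheme as the moving-planes argument of Section \ref{section:SubcriticalExtremalClassification} (Lemmas \ref{lemma:LambdaBarWellDefined}--\ref{lemma:PlaneLambdaBar=0} and Proposition \ref{prop:RadialSymmetryOfExtremalSystemSolutions}), but with the reflections $z\mapsto z^\lambda$ replaced by sphere inversions centered at points of $\bdy\bb R_+^n$. \emph{Step 1 (conformal invariance).} For $x_0\in\bdy\bb R_+^n$ and $\lambda>0$ introduce the inversion $\Phi_{x_0,\lambda}(z)=x_0+\lambda^2(z-x_0)|z-x_0|^{-2}$, which maps $\bb R_+^n$ onto itself and $\bdy\bb R_+^n$ onto itself. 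Using the elementary identities $|\Phi_{x_0,\lambda}(x)-\Phi_{x_0,\lambda}(y)|=\lambda^2|x-y|(|x-x_0||y-x_0|)^{-1}$ and $(\Phi_{x_0,\lambda}(x))_n=\lambda^2 x_n|x-x_0|^{-2}$, together with the change of variables $z=\Phi_{x_0,\lambda}(\tilde z)$, I would verify that if $(u,v)$ solves \eqref{eq:ConformallyInvariantExtremalSystem} then so do the Kelvin transforms
\begin{equation*}
	u_{x_0,\lambda}(y)=\left(\frac{\lambda}{|y-x_0|}\right)^{n-\alpha}u\bigl(\Phi_{x_0,\lambda}(y)\bigr),
	\qquad
	v_{x_0,\lambda}(x)=\left(\frac{\lambda}{|x-x_0|}\right)^{n-\alpha-2\beta}v\bigl(\Phi_{x_0,\lambda}(x)\bigr).
\end{equation*}
The complete cancellation of all conformal weights in this computation is precisely the statement that $\theta=\frac{n+\alpha-2}{n-\alpha}$ and $\kappa=\frac{n+\alpha+2\beta}{n-\alpha-2\beta}$; this is the only place the precise values of the exponents enter.

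\emph{Step 2 (starting and advancing the spheres).} Fix $x_0\in\bdy\bb R_+^n$ and set $w^u_\lambda=u-u_{x_0,\lambda}$, $w^v_\lambda=v-v_{x_0,\lambda}$. Subtracting the integral representation of $u_{x_0,\lambda}$ (resp. $v_{x_0,\lambda}$) from that of $u$ (resp. $v$) produces identities for $w^u_\lambda$, $w^v_\lambda$ over the exterior domains $\bb R_+^n\setminus B_\lambda(x_0)$ and $\bdy\bb R_+^n\setminus B_\lambda(x_0)$, analogous to \eqref{eq:uLambdaDifference}--\eqref{eq:vLambdaDifference}. As in Lemma \ref{lemma:LambdaBarWellDefined}, an $L^{\theta+1}$--$L^{\kappa+1}$ estimate on the ``bad sets'' $\{w^u_\lambda<0\}$ and $\{w^v_\lambda<0\}$ (obtained from a Corollary \ref{coro:UHSHLSExtensionRestriction}-type inequality restricted to the exterior domains, together with the integrability $u\in L^{\theta+1}(\bdy\bb R_+^n)$, $v\in L^{\kappa+1}(\bb R_+^n)$) shows that $w^u_\lambda\ge 0$ and $w^v_\lambda\ge 0$ for all sufficiently small $\lambda>0$. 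Let $\bar\lambda(x_0)\in(0,\infty]$ be the supremum of those $\lambda>0$ for which these two inequalities hold for every $0<\mu\le\lambda$. If $\bar\lambda(x_0)<\infty$, a strong-maximum-principle argument in the spirit of Lemmas \ref{lemma:SoftInequalityAtMaximalPlane}--\ref{lemma:BothStrictlyPositive} (using the strict monotonicity of the kernel in $|x-y|$ and of the conformal weights in $\lambda$) forces $w^u_{\bar\lambda(x_0)}\equiv 0$ and $w^v_{\bar\lambda(x_0)}\equiv 0$; that is, $u$ and $v$ are invariant under the inversion $\Phi_{x_0,\bar\lambda(x_0)}$.

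\emph{Step 3 (dichotomy and the calculus lemma).} Step 2 yields the standard alternative of the method of moving spheres: either $\bar\lambda(x_0)=\infty$ for every $x_0\in\bdy\bb R_+^n$, or $\bar\lambda(x_0)<\infty$ for every $x_0$. In the first case $u_{x_0,\lambda}\le u$ on $\bdy\bb R_+^n\setminus B_\lambda(x_0)$ for all $x_0$ and all $\lambda>0$, which forces $u$ to be constant; since $u>0$, $u\in L^{\theta+1}(\bdy\bb R_+^n)$, and $\bdy\bb R_+^n$ has infinite measure, this case is impossible. Hence $\bar\lambda(x_0)<\infty$ for every $x_0$, so by Step 2 the positive function $u$ on $\bdy\bb R_+^n\cong\bb R^{n-1}$ is invariant under a sphere inversion centered at each of its points. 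After a routine regularity bootstrap for \eqref{eq:ConformallyInvariantExtremalSystem} showing $u$ is continuous (indeed smooth), the classical calculus lemma for the method of moving spheres applies and produces $c_1>0$, $d>0$, $y_0\in\bdy\bb R_+^n$ with $u(y)=c_1(d^2+|y-y_0|^2)^{-(n-\alpha)/2}$, which is \eqref{eq:uClassified}. Theorem \ref{theorem:UHSConformalExtremalClassification} then follows by setting $f=u^{(n+\alpha-2)/(n-\alpha)}$, recovering $v=Ef$ from the second equation of \eqref{eq:ConformallyInvariantExtremalSystem}, and reading off the boundary limits from the explicit formula for $u$.

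\emph{Where the difficulty lies.} The main obstacle is Step 2 at the critical exponents. In Section \ref{section:SubcriticalExtremalClassification} the bad sets shrink in measure as the plane advances, which immediately supplies a contracting constant; at the conformal exponents no such gain is available, so the estimates must instead be closed using the decay of the tails $\|u\|_{L^{\theta+1}(\bb R_+^n\setminus B_\lambda(x_0))}$ and $\|v\|_{L^{\kappa+1}(\bb R_+^n\setminus B_\lambda(x_0))}$ as $\lambda\to0^+$ (to start the spheres) and a careful localization/limiting argument as $\lambda\to\bar\lambda(x_0)^+$ (to prove equality at the critical radius), mirroring the roles of Lemmas \ref{lemma:KeyEstimatesForMovingPlanes}--\ref{lemma:PlaneLambdaBar=0}. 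Carrying out the bookkeeping for the conformal invariance in Step 1 and establishing the regularity needed for the calculus lemma are routine, but the weight $x_n^\beta$ in the kernel must be tracked throughout.
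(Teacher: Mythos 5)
Your Steps 1 and 2 track the paper's argument closely: the Kelvin transforms, the exterior difference identities, the kernel inequality, the smallness of the tails $\norm{u}_{L^{\theta+1}}$ and $\norm{v}_{L^{\kappa+1}}$ near the center to start the spheres (Lemma \ref{lemma:ConformallyInvariantMovingSpheresCanStart}), and the limiting argument at the critical radius (Lemma \ref{lemma:SymmetrySphere}) are all as in the paper, as is the use of Lemma \ref{lemma:LiLemma5.7} to rule out $\bar\lambda(z)\equiv\infty$. The genuine divergence is in how you extract \eqref{eq:uClassified} from the inversion invariance. You propose a regularity bootstrap to make $u$ continuous and then the classical calculus lemma of the moving-sphere method. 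The paper deliberately avoids any regularity theory: Lemma \ref{lemma:SymmetrySphere} only yields $u=u_{z,\bar\lambda(z)}$ \emph{almost everywhere}, and the classification is completed by applying the Frank--Lieb measure-theoretic result (Theorem \ref{theorem:FrankLiebClassification}) to $u^{\theta+1}$. The price of that choice is that Frank--Lieb requires a second input --- invariance under reflection in every hyperplane direction of $\bdy\bb R_+^n$ --- which the paper obtains by rerunning the moving-plane machinery of Section \ref{section:SubcriticalExtremalClassification} with $\sigma=\tau=0$; your route would skip that entire step. The trade-off is that your ``routine regularity bootstrap'' is not routine here: $u$ is only known to lie in $L^{\theta+1}(\bdy\bb R_+^n)$ with $v\in L^{\kappa+1}(\bb R_+^n)$, the kernel carries the weight $x_n^\beta$, and for $\alpha<1$ the exponent $\theta$ is below $1$, so upgrading to continuity requires a genuine lifting argument in the spirit of Hang's regularity theory for HLS-type integral systems, which would have to be written out (the calculus lemma also needs the a.e. identities upgraded to pointwise ones). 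So your proposal is a correct alternative architecture, but the step you flag as routine is precisely the one the paper's use of Frank--Lieb is designed to bypass, and it would need to be supplied in full for the argument to close.
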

\ifdetails 
\begin{remark}
Using the conformal invariance and computing as in the proof of Lemma 5.6 of \cite{Li2004}, one can directly verify that if $u$ as in equation \eqref{eq:uClassified} and if $v(x)$ is defined by 
\begin{equation*}
	v(x) 
	= 
	\int_{\bdy \bb R_+^n}\frac{x_n^\beta u^{\theta}(y)}{\abs{x - y}^{n - \alpha}}\; \d y
\end{equation*}
then $u$ and $v$ satisfy \eqref{eq:ConformallyInvariantExtremalSystem} and that $v$ satisfies \eqref{eq:vBoundaryRestrictionClassified}. 
\end{remark}
\fi 
Let us show that Theorem \ref{theorem:UHSConformalExtremalClassification} follows from Theorem \ref{theorem:ExtremalSystemClassification}. 
\begin{proof}[Proof of Theorem \ref{theorem:UHSConformalExtremalClassification}]
Equality \eqref{eq:ConformalBoundaryFunctionExtremal} follows immediately from equation \eqref{eq:uClassified} and the relation $f = u^{\frac{n + \alpha - 2}{n - \alpha}}$. It remains to show that the asserted limits of $Ef$ hold at $x_n = 0$. Note first that with $f$ as in \eqref{eq:ConformalBoundaryFunctionExtremal}, $Ef$ is continuous on $\bb R_+^n$. If $\alpha < 1$ then using the change of variable $y\mapsto (y - x')/x_n$ we obtain 
\begin{equation*}
	x_n^{1 -\alpha - \beta} Ef(x', x_n)
	= 
	\int_{\bdy \bb R_+^n} \frac{f(x'+ x_ny)}{(1 +\abs y^2)^{\frac{n - \alpha}{2}}}\; \d y
\end{equation*}
for all $x\in \bb R_+^n$. Since $f\in C^0\cap L^\infty(\bdy \bb R_+^n)$ the Dominated Convergence Theorem guarantees that 
\begin{equation*}
	\lim_{x_n\to 0^+}x_n^{1 -\alpha - \beta} Ef(x', x_n)
	=
	f(x') \int_{\bdy \bb R_+^n}\left(1 + \abs y^2 \right)^{\frac{\alpha - n}{2}}\; \d y. 
\end{equation*}	
Assertion (a) of Theorem \ref{theorem:UHSConformalExtremalClassification} is established. \\

To establish item (b), let $R> 3(\abs{x'} + \abs{y^0 - x'}) + 1$ and set 
\begin{eqnarray*}
	I_R(x') & = & \int_{B_R^{n - 1}(x')}\frac{f(y)}{\abs{x - y}^{n - 1}}\; \d y\\
	J_R(x') & = & \int_{\bdy \bb R_+^n\setminus B_R(x')}\frac{f(y)}{\abs{x - y}^{n - 1}}\; \d y
\end{eqnarray*}
so that
\begin{equation}
\label{eq:Alpha=1BoundaryEstimateSplit}
	x_n^{-\beta} Ef(x', x_n) = I_R(x') + J_R(x'). 
\end{equation}
For $y\in \bdy \bb R_+^n\setminus B_R(x')$ the inequalities $2\abs{x - y}\geq\abs y$ and $3\abs{y - y^0}\geq \abs y$ hold so
\begin{eqnarray*}
	J_R(x')
	&\leq & 
	C\int_{\bdy \bb R_+^n\setminus B_R(x')}\abs{y - y_0}^{1 - n}\abs{x - y}^{1 - n}\; \d y
	\\
	&\leq & 
	\ifdetails 
	C\int_{\bdy \bb R_+^n\setminus B_R(x')}\abs{y}^{-2(n - 1)}\; \d y
	\\
	&\leq & 
	\fi 
	C\int_{\bdy \bb R_+^n\setminus B_1}\abs{y}^{-2(n - 1)}\; \d y
	\\
	&\leq & 
	C.
\end{eqnarray*}
To estimate $I_R(x')$ use the change of variable $y\mapsto(y- x')/x_n$ and the fact that $\abs{\Grad f}\in L^\infty(\bdy \bb R_+^n)$ to get
\begin{eqnarray*}
	I_R(x')
	&= & 
	\int_{B_{R/x_n}^{n - 1}}\frac{f(x' + x_ny)}{(1 + \abs y^2)^{\frac{n - 1}{2}}}\; \d y
	\\
	& = & 
	f(x') \int_{B_{R/x_n}^{n - 1}}\frac{\d y}{(1 + \abs y^2)^{\frac{n - 1}{2}}}
	+
	O(1)\int_{B_{R/x_n}^{n - 1}}\frac{x_n\abs y}{(1 + \abs y^2)^{\frac{n - 1}{2}}}\; \d y. 
\end{eqnarray*}
Since 
\begin{equation*}
	\int_{B_{R/x_n}^{n - 1}}\frac{x_n\abs y}{(1 + \abs y^2)^{\frac{n - 1}{2}}}\; \d y
	\leq 
	CR
\end{equation*}
and since 
\begin{equation*}
	-\frac 1{\log x_n}\int_{B_{R/x_n}}(1 + \abs{y}^2)^{\frac{1 - n}{2}}\; \d y
	\to 
	(n - 1)\omega_{n - 1}
\end{equation*}
as $x_n\to 0^+$, bringing the estimates of $I_R(x')$ and $J_R(x')$ back into \eqref{eq:Alpha=1BoundaryEstimateSplit} gives
\begin{equation*}
	-\lim_{x_n\to 0^+}\frac{1}{x_n^\beta \log x_n}Ef(x', x_n)
	= 
	(n - 1)\omega_{n - 1}f(x'). 
\end{equation*}
Item (b) is established. \\

To establish assertion (c) of Theorem \ref{theorem:UHSConformalExtremalClassification} observe that if $\alpha>1$ then we have
\begin{equation*}
	\lim_{x_n\to 0}x_n^{-\beta} Ef(x', x_n)
	= 
	\int_{\bdy \bb R_+^n} \frac{f(y)}{\abs{x' - y}^{n - \alpha}}\; \d y. 
\end{equation*}
Thus, assertion (c) will be established once we show that, up to a positive scalar multiple, 
\begin{equation*}
	f(x')^{\frac{n - \alpha}{n + \alpha - 2}}
	= 
	\int_{\bdy \bb R_+^n} \frac{f(y)}{\abs{x' - y}^{n - \alpha}}\; \d y. 
\end{equation*}
Set
\begin{equation*}
	\tilde v(x')
	=
	\int_{\bdy \bb R_+^n} \frac{f(y)}{\abs{x' - y}^{n - \alpha}}\; \d y. 
\end{equation*}
Note first that with $w$ as in \eqref{eq:SubcriticalWeight} and $T$ as in \eqref{eq:BallToUHSConformalMap}, the function 
\begin{equation*}
	\varphi(x')
	= 
	\int_{\bdy \bb R_+^n}\frac{w(y)^{n + \alpha - 2}}{\abs{x' - y}^{n - \alpha}}\; \d y
\end{equation*}
satisfies
\begin{equation*}
	\left(\frac 2{\abs{\xi + e_n}}\right)^{n - \alpha}
	\varphi \circ T(\xi)
	= 
	\int_{\bdy B}\frac{\d S_{\zeta}}{\abs{\xi - \zeta}^{n - \alpha}}
	=
	\int_{\bdy B}\frac{\d S_\zeta}{\abs{e_n - \zeta}^{n - \alpha}}
	= 
	C(n, \alpha)
\end{equation*}
for $\xi = T^{-1}(x')\in \bdy B$. In particular, $\varphi(x') = C(n, \alpha) w(x')^{n - \alpha}$. On the other hand, using the change of variable $y \mapsto 2(y - y_0)/d$ we find that $\tilde v(dx'/2 + y_0) = C(n, \alpha, d)\varphi(x')$. Therefore, up to a positive constant multiple we have
\begin{equation*}
	\tilde v(x') 
	= 
	\varphi(2(x' - y_0)/d)
	= 
	\left(d^2 + \abs{x' - y_0}^2\right)^{\frac{\alpha - n}{2}}, 
\end{equation*}
which is the desired equality. 
\ifdetails 
\begin{remark} If one is willing to accept the restriction $\alpha>2$ one can use the classical HLS inequality on $\bb R^{n - 1}$ to show (up to positive scalar multiple)
\begin{equation*}
	f(x')^{\frac{n - \alpha}{n + \alpha - 2}}
	= 
	\int_{\bdy \bb R_+^n} \frac{f(y)}{\abs{x' - y}^{n - \alpha}}\; \d y. 
\end{equation*}
Indeed, on one hand we have $\norm{\tilde v}_{L^{p'}(\bdy \bb R_+^n)}\leq \mathcal H(n - 1, \alpha - 1, p)\norm{f}_{L^p(\bdy \bb R_+^n)}$, where $\mathcal H$ is the sharp constant in \eqref{eq:ClassicalHLS}. On the other hand, $f$ is an extremal function for the classical HLS inequality on $\bb R^{n - 1}$ so 
\begin{eqnarray*}
	\mathcal H(n - 1, \alpha - 1, p)\norm{f}_{L^p(\bdy \bb R_+^n)}^2
	& = & 
	\int_{\bdy \bb R_+^n}\int_{\bdy \bb R_+^n}\frac{f(y)f(x')}{\abs{x' - y}^{n - \alpha}}\; \d y\; \d x
	\\
	&= & 
	\int_{\bdy \bb R_+^n} \tilde v(x')f(x')\; \d x'
	\\
	&\leq & 
	\norm{\tilde v}_{L^{p'}(\bdy \bb R_+^n)}\norm{f}_{L^p(\bdy\bb R_+^n)}
	\\
	&\leq & 
	\mathcal H(n - 1, \alpha - 1, p)\norm{f}_{L^p(\bdy \bb R_+^n)}^2. 
\end{eqnarray*}
In particular, equality holds in the above application of H\"older's inequality and thus, up to a positive scalar multiple, $\tilde v^{p'} = f^p$. Assertion (c) is established. 
\end{remark}
\fi 
\end{proof}
The remainder of this subsection is devoted to the proof of Theorem \ref{theorem:ExtremalSystemClassification}. For $z\in \bdy \bb R_+^n$ and $\lambda>0$ define
\begin{equation*}
	y^{z, \lambda} = z + \frac{\lambda^2(y - z)}{\abs{ y - z}^2}
	\qquad
	\text{ and }
	\qquad
	x^{z, \lambda} = z + \frac{\lambda^2(x - z)}{\abs{x - z}^2}
\end{equation*}	
for $	y\in \bdy\bb R_+^n\setminus\{z\}$ and $x\in \bb R_+^n$ respectively. For any such $x, y$ and $z$ we have
\begin{equation}
\label{eq:DistanceBetweenInvertedPoints}
	|x^{z, \lambda}- y^{z, \lambda}|
	= 
	\frac{\lambda}{\abs{x - z}}\cdot\frac{\lambda}{\abs{y - z}}\abs{x - y}
\end{equation} 
and 
\begin{equation}
\label{eq:SphereInversionSimilarTriangles}
	\abs{y - z}|x - y^{z, \lambda}|
	= 
	\abs{x - z}|x^{z, \lambda} - y|
\end{equation}
as well as 
\begin{equation}
\label{eq:InversionNthCoordinate}
	\left(x^{z, \lambda}\right)_n 
	= 
	\left(\frac{\lambda}{\abs{x - z}}\right)^2 x_n. 
\end{equation}
Define the Kelvin-type transformations
\begin{equation*}
	u_{z, \lambda}(y)
	= 
	\left(\frac{\lambda}{\abs{ y - z}}\right)^{n - \alpha} u(y^{z, \lambda})
	\qquad
	\text{ for } y\in \bdy \bb R_+^n \setminus\{z\}
\end{equation*}
and
\begin{equation*}
	v_{z, \lambda}(x)
	= 
	\left(\frac{\lambda}{\abs{x - z}}\right)^{n - \alpha - 2\beta}
	v(x^{z, \lambda})
	\qquad
	\text{ for } x\in \bb R_+^n. 
\end{equation*}
For each $z\in \bdy \bb R_+^n$ and $\lambda >0$ these functions satisfy $\norm{u}_{L^{\theta +1}(\bdy \bb R_+^n)} = \norm{u_{z, \lambda}}_{L^{\theta + 1}(\bdy \bb R_+^n)}$ and $\norm{v}_{L^{\kappa + 1}(\bb R_+^n)} = \norm{v_{z, \lambda}}_{L^{\kappa + 1}(\bb R_+^n)}$. 
\begin{lemma}
If $u\in L^{\theta + 1}(\bdy \bb R_+^n)$ and $v\in L^{\kappa+ 1}(\bb R_+^n)$ are nonnegative functions satisfying \eqref{eq:ConformallyInvariantExtremalSystem} then for every $z\in \bb R_+^n$ and every $\lambda>0$
\begin{equation}
\label{eq:KelvinTransformedConformallyInvariantExtremalSystem}
\begin{cases}
	\displaystyle
	u_{z, \lambda}(y) 
	= 
	\int_{\bb R_+^n}\frac{x_n^\beta v_{z, \lambda}^\kappa(x)}{\abs{x - y}^{n - \alpha}} \; \d x
	& 
	\text{ for } y\in \bdy \bb R_+^n\setminus\{z\}
	\\
	\displaystyle
	v_{z, \lambda}(x) 
	=
	\int_{\bdy \bb R_+^n}\frac{x_n^\beta u_{z, \lambda}^\theta (y)}{\abs{x - y}^{n - \alpha}} \; \d y
	&
	\text{ for } x\in \bb R_+^n. 
\end{cases}
\end{equation}
\end{lemma}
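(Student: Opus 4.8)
The plan is a direct change-of-variables computation exploiting the conformal properties of the inversion $x\mapsto x^{z,\lambda}$ recorded in \eqref{eq:DistanceBetweenInvertedPoints}, \eqref{eq:SphereInversionSimilarTriangles} and \eqref{eq:InversionNthCoordinate}. The first step is to record the two facts that make everything work: since $z\in\bdy\bb R_+^n$, reflection across $\bdy\bb R_+^n$ commutes with inversion centered at $z$, so the inversion maps $\bb R_+^n$ diffeomorphically onto itself and $\bdy\bb R_+^n\setminus\{z\}$ diffeomorphically onto itself; and the Jacobian of the inversion is $\left(\lambda/\abs{\xi - z}\right)^{2n}$ on $\bb R_+^n$ and $\left(\lambda/\abs{\eta - z}\right)^{2(n-1)}$ on $\bdy\bb R_+^n$. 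No additional integrability input is needed: it was already noted that $u\mapsto u_{z,\lambda}$ and $v\mapsto v_{z,\lambda}$ are isometries of $L^{\theta+1}(\bdy\bb R_+^n)$ and $L^{\kappa+1}(\bb R_+^n)$, so all integrals below are finite for a.e.\ point.

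First I would prove the first identity in \eqref{eq:KelvinTransformedConformallyInvariantExtremalSystem}. Evaluate the first equation of \eqref{eq:ConformallyInvariantExtremalSystem} at the point $y^{z,\lambda}$ and substitute $x = \xi^{z,\lambda}$ with $\xi\in\bb R_+^n$. Using \eqref{eq:InversionNthCoordinate} to rewrite $x_n^\beta$, \eqref{eq:DistanceBetweenInvertedPoints} to rewrite $\abs{x - y^{z,\lambda}}^{\alpha - n}$, the definition of $v_{z,\lambda}$ to rewrite $v^\kappa(x)$, and the Jacobian formula to rewrite $\d x$, one collects an overall power of $\lambda/\abs{\xi - z}$ with exponent $2\beta - \kappa(n - \alpha - 2\beta) - (n - \alpha) + 2n$. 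This vanishes precisely because $\kappa = (n + \alpha + 2\beta)/(n - \alpha - 2\beta)$. The only surviving conformal factor is the $\left(\lambda/\abs{y - z}\right)^{\alpha - n}$ coming from the kernel, and multiplying through by $\left(\lambda/\abs{y - z}\right)^{n - \alpha}$ yields exactly $u_{z,\lambda}(y) = \int_{\bb R_+^n} x_n^\beta v_{z,\lambda}^\kappa(x)\abs{x - y}^{\alpha - n}\,\d x$.

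The second identity is entirely analogous. Evaluate the second equation of \eqref{eq:ConformallyInvariantExtremalSystem} at $x^{z,\lambda}$ and substitute $y = \eta^{z,\lambda}$ with $\eta\in\bdy\bb R_+^n$, now using the boundary Jacobian. This time the powers of $\lambda/\abs{\eta - z}$ cancel because $\theta = (n + \alpha - 2)/(n - \alpha)$, and what is left over is a factor $\left(\lambda/\abs{x - z}\right)^{2\beta - (n - \alpha)} = \left(\lambda/\abs{x - z}\right)^{-(n - \alpha - 2\beta)}$ multiplying $\int_{\bdy\bb R_+^n} x_n^\beta u_{z,\lambda}^\theta(y)\abs{x - y}^{\alpha - n}\,\d y$. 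Multiplying through by $\left(\lambda/\abs{x - z}\right)^{n - \alpha - 2\beta}$ reconstructs precisely $v_{z,\lambda}(x)$, giving the second equation.

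There is no genuine obstacle here; the statement is a bookkeeping exercise. The only place requiring real care is the exponent arithmetic, i.e.\ verifying that the two cancellations above occur exactly for the stated conformal values of $\theta$ and $\kappa$ (equivalently, that $\kappa(n-\alpha-2\beta) = n+\alpha+2\beta$ and $\theta(n-\alpha) = n+\alpha-2$), and keeping straight which conformal factor belongs to the $y$-variable versus the $x$-variable.
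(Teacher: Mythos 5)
Your proof is correct and is essentially the same change-of-variables computation the paper uses: evaluate each equation of \eqref{eq:ConformallyInvariantExtremalSystem} at the inverted point, substitute the inversion in the integration variable, and cancel the conformal factors using \eqref{eq:DistanceBetweenInvertedPoints}, \eqref{eq:InversionNthCoordinate} and the conformal values of $\theta$ and $\kappa$. The only difference is cosmetic: the paper writes out the second identity in detail and notes the first is analogous, whereas you sketch both and make the exponent bookkeeping explicit.
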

\begin{proof}
Using the change of variable $y\mapsto y^{z, \lambda}$ in the second of equations \eqref{eq:ConformallyInvariantExtremalSystem} gives
\begin{eqnarray*}
	v(x)
	& = & 
	\int_{\bdy \bb R_+^n} \frac{x_n^\beta u^\theta(y^{z, \lambda})}{\abs{x - y^{z, \lambda}}^{n - \alpha}}
	\left(\frac{\lambda}{\abs{y - z}}\right)^{2(n - 1)}\; \d y
	\\
	& = & 
	\int_{\bdy \bb R_+^n} \frac{x_n^\beta u_{z, \lambda}^\theta(y^{z, \lambda})}{\abs{x - y^{z, \lambda}}^{n - \alpha}}
	\left(\frac{\lambda}{\abs{y - z}}\right)^{n - \alpha}\; \d y. 
\end{eqnarray*}
Therefore, using \eqref{eq:DistanceBetweenInvertedPoints} and \eqref{eq:InversionNthCoordinate} we obtain
\begin{eqnarray*}
	v_{z, \lambda}(x)
	& = & 
	\left(\frac{\lambda}{\abs{x - z}}\right)^{n -\alpha}
	\int_{\bdy \bb R_+^n}\frac{x_n^\beta u_{z, \lambda}^\theta(y)}{\abs{x^{z, \lambda}- y^{z, \lambda}}^{n - \alpha}}
	 \left(\frac{\lambda}{\abs{y - z}}\right)^{n - \alpha}\; \d y
	\\
	& = & 
	\int_{\bdy \bb R_+^n}\frac{x_n^\beta u_{z, \lambda}^\theta(y) }{\abs{x- y}^{n - \alpha}}\; \d y. 
\end{eqnarray*}
The proof of the first equation in \eqref{eq:ConformallyInvariantExtremalSystem} is similar. 
\ifdetails 
Similarly using the change of variable $x\mapsto x^{z, \lambda}$ in the first of equations \eqref{eq:ConformallyInvariantExtremalSystem} gives
\begin{eqnarray*}
	u(y)
	& = & 
	\int_{\bb R_+^n}
	\frac{x_n^\beta v^\kappa(x^{z, \lambda})}{\abs{x^{z, \lambda}- y}^{n - \alpha}} 
	\left(\frac{\lambda}{\abs{x - z}}\right)^{2(n + \beta)}\; \d x
	\\
	&= & 
	\int_{\bb R_+^n}
	\left(\frac{\lambda}{\abs{x - z}}\right)^{n - \alpha}
	\frac{x_n^\beta}{\abs{x^{z, \lambda} - y}^{n - \alpha}}v_{z, \lambda}^\kappa(x)\; \d x. 
\end{eqnarray*}
Therefore, using \eqref{eq:DistanceBetweenInvertedPoints} we have
\begin{eqnarray*}
	u_{z, \lambda}(y)
	& =&
	\int_{\bb R_+^n}
	\left(\frac{\lambda}{\abs{y - z}}\right)^{n - \alpha}\left(\frac{\lambda}{\abs{x - z}}\right)^{n - \alpha}
	\frac{x_n^\beta v_{z, \lambda}^\kappa(x)}{\abs{x^{z, \lambda}- y^{z, \lambda}}^{n - \alpha}}\; \d x 
	\\
	& = & 
	\int_{\bb R_+^n}
	\frac{x_n^\beta v_{z, \lambda}^\kappa(x)}{\abs{x- y}^{n - \alpha}}\; \d x. 
\end{eqnarray*}
\fi 
\end{proof}
\begin{lemma}
If $u, v$ satisfy \eqref{eq:ConformallyInvariantExtremalSystem} then for all $z\in \bdy \bb R_+^n$ and $\lambda>0$, 
\begin{equation}
\label{eq:ConformallyInvariantKelvinu-u}
\begin{array}{lcl}
	\multicolumn{3}{l}{
	u_{z, \lambda}(y)- u(y)
	}
	\\
	& = & 
	\displaystyle
	\int_{\bb R_+^n\setminus B_\lambda^+(z)}
	\left(\frac{x_n^\beta}{\abs{x - y}^{n - \alpha}} 
	- 
	\left(\frac{\lambda}{\abs{x - z}}\right)^{n - \alpha}\frac{x_n^\beta}{\abs{x^{z, \lambda} - y}^{n - \alpha}}
	\right)
	\left(v_{z, \lambda}^\kappa(x) - v^\kappa(x)\right)\; \d x
\end{array}
\end{equation}
for $y\in \bdy \bb R_+^n\setminus \{z\}$ and 
\begin{equation}
\label{eq:ConformallyInvariantKelvinv-v}
\begin{array}{lcl}
	\multicolumn{3}{l}{
	v_{z, \lambda}(x) - v(x)
	}
	\\
	& = & 
	\displaystyle
	\int_{\bdy\bb R_+^n\setminus B_\lambda^{n - 1}(z)}
	\left(\frac{x_n^\beta}{\abs{x - y}^{n - \alpha}} 
	- 
	\left(\frac{\lambda}{\abs{x - z}}\right)^{n - \alpha}\frac{x_n^\beta}{\abs{x^{z, \lambda} - y}^{n - \alpha}}
	\right)
	\left(u_{z, \lambda}^\theta(y) - u^\theta(y)\right)\; \d y
\end{array}
\end{equation}
for $x\in \bb R_+^n$. Moreover, for any $z\in \bdy \bb R_+^n$ and $\lambda>0$, 
\begin{equation}
\label{eq:KernelDifferenceInequality}
	\frac{1}{\abs{x - y}^{n - \alpha}}
	>
	\left(\frac{\lambda}{\abs{x - z}}\right)^{n - \alpha} \frac{1}{\abs{x^{z, \lambda} - y}^{n - \alpha}}
	\qquad
	\text{ for }
	x\in \bb R_+^n\setminus B_\lambda(z), y\in \bdy \bb R_+^n \setminus B_\lambda(z). 
\end{equation}
\end{lemma}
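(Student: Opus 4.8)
The plan is to dispatch the kernel inequality \eqref{eq:KernelDifferenceInequality} by a direct computation, and then to obtain the two integral identities by combining the system \eqref{eq:ConformallyInvariantExtremalSystem} with its Kelvin transform \eqref{eq:KelvinTransformedConformallyInvariantExtremalSystem} and the change of variables $w\mapsto w^{z,\lambda}$.

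For \eqref{eq:KernelDifferenceInequality}, write $x^{z,\lambda}=z+\lambda^2(x-z)/\abs{x-z}^2$ and expand both sides; a short algebraic manipulation gives the identity
\[
	\abs{x^{z,\lambda}-y}^2-\left(\frac{\lambda}{\abs{x-z}}\right)^2\abs{x-y}^2
	=\frac{\left(\abs{x-z}^2-\lambda^2\right)\left(\abs{y-z}^2-\lambda^2\right)}{\abs{x-z}^2}
\]
valid for all $x\in\bb R_+^n$, $y,z\in\bdy\bb R_+^n$ and $\lambda>0$. When $\abs{x-z}>\lambda$ and $\abs{y-z}>\lambda$ the right-hand side is positive, so $\abs{x^{z,\lambda}-y}>\frac{\lambda}{\abs{x-z}}\abs{x-y}$. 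By \eqref{eq:MinimalAlphaBeta} we have $\alpha<n-2\beta\le n$, hence $n-\alpha>0$; raising the last inequality to the power $n-\alpha$, taking reciprocals, and multiplying through by $(\lambda/\abs{x-z})^{n-\alpha}$ yields \eqref{eq:KernelDifferenceInequality}.

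For the two identities, the key observation is that inversion across the sphere $\bdy B_\lambda(z)$ interchanges $B_\lambda(z)$ with its complement and carries $u,v$ to their Kelvin transforms $u_{z,\lambda},v_{z,\lambda}$. I would write the first equation of \eqref{eq:ConformallyInvariantExtremalSystem} as the sum of the integral over $B_\lambda^+(z)$ and the integral over $\bb R_+^n\setminus B_\lambda^+(z)$, and in the first of these perform the substitution $x\mapsto x^{z,\lambda}$, whose Jacobian on $\bb R_+^n$ is $(\lambda/\abs{x-z})^{2n}$; here I use \eqref{eq:InversionNthCoordinate} for the factor $x_n^\beta$, the definition of $v_{z,\lambda}$ to rewrite $v^\kappa(x^{z,\lambda})$, and \eqref{eq:SphereInversionSimilarTriangles} to rewrite $\abs{x^{z,\lambda}-y}^{\alpha-n}$. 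Since the conformal exponents fixed in this section satisfy $(n-\alpha)(\theta+1)=(n-\alpha)p'=2(n-1)$ and $(n-\alpha-2\beta)(\kappa+1)=(n-\alpha-2\beta)t'=2n$, every power of $\abs{x-z}$ produced by the substitution cancels and the surviving factors collapse to $(\lambda/\abs{x-z})^{n-\alpha}$. The result is the representation
\[
	u(y)=\int_{\bb R_+^n\setminus B_\lambda^+(z)}\left(\frac{x_n^\beta v^\kappa(x)}{\abs{x-y}^{n-\alpha}}+\left(\frac{\lambda}{\abs{x-z}}\right)^{n-\alpha}\frac{x_n^\beta v_{z,\lambda}^\kappa(x)}{\abs{x^{z,\lambda}-y}^{n-\alpha}}\right)\d x,
\]
and the identical computation applied to the first equation of \eqref{eq:KelvinTransformedConformallyInvariantExtremalSystem} gives the same formula with $v$ and $v_{z,\lambda}$ interchanged. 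Subtracting the two and canceling the common terms gives \eqref{eq:ConformallyInvariantKelvinu-u}. The same argument applied to the second equations of \eqref{eq:ConformallyInvariantExtremalSystem} and \eqref{eq:KelvinTransformedConformallyInvariantExtremalSystem}, now with the substitution $y\mapsto y^{z,\lambda}$ on $\bdy\bb R_+^n$ (Jacobian $(\lambda/\abs{y-z})^{2(n-1)}$) and the relation $(n-\alpha)(\theta+1)=2(n-1)$, gives \eqref{eq:ConformallyInvariantKelvinv-v}.

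The argument is conceptually routine, so the main obstacle is purely the bookkeeping: one must track precisely which powers of $\lambda$, $\abs{x-z}$ and $\abs{y-z}$ arise from the Jacobian, from $x_n^\beta$, from the Kelvin weights defining $u_{z,\lambda}$ and $v_{z,\lambda}$, and from $\abs{x^{z,\lambda}-y}^{\alpha-n}$, and verify that the conformal relations force the $\abs{x-z}$-powers to disappear. One must also mind the sign: the substitution sends $v^\kappa(x^{z,\lambda})$ to a multiple of $v_{z,\lambda}^\kappa(x)$ and conversely, so after the change of variables the difference $v_{z,\lambda}^\kappa-v^\kappa$ reappears with the opposite sign, which is exactly what produces the minus sign in front of the $\abs{x^{z,\lambda}-y}^{\alpha-n}$ term in \eqref{eq:ConformallyInvariantKelvinu-u} and \eqref{eq:ConformallyInvariantKelvinv-v}. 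Finally, all of the integrals converge absolutely — this is built into the hypothesis that $u\in L^{\theta+1}(\bdy\bb R_+^n)$ and $v\in L^{\kappa+1}(\bb R_+^n)$ solve \eqref{eq:ConformallyInvariantExtremalSystem}, hence \eqref{eq:KelvinTransformedConformallyInvariantExtremalSystem} — so splitting and recombining them is legitimate.
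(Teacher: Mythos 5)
Your proposal is correct and follows essentially the same route as the paper: split each integral in \eqref{eq:ConformallyInvariantExtremalSystem} across $\bdy B_\lambda(z)$, pull the inner piece back by the inversion $w\mapsto w^{z,\lambda}$ using \eqref{eq:DistanceBetweenInvertedPoints}, \eqref{eq:SphereInversionSimilarTriangles}, \eqref{eq:InversionNthCoordinate} and the conformal exponent relations, and subtract the analogous representation for the Kelvin transform. The only (cosmetic) difference is in \eqref{eq:KernelDifferenceInequality}: the paper argues via monotonicity of $h_\lambda(a,b)=\lambda^4+a^2b^2-\lambda^2(a^2+b^2)$ on $(\lambda,\infty)^2$, whereas your identity amounts to the factorization $h_\lambda(a,b)=(a^2-\lambda^2)(b^2-\lambda^2)$, which makes the positivity immediate.
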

\begin{proof}
To prove equation \eqref{eq:ConformallyInvariantKelvinv-v} use the change of variable $y\mapsto y^{z, \lambda}$ to get
\begin{equation*}
	\int_{B_\lambda^{n- 1}(z)}\frac{x_n^\beta}{\abs{x - y}^{n - \alpha}} u^\theta(y)\; \d y
	= 
	\int_{\bdy \bb R_+^n\setminus B_\lambda^{n - 1}(z)}
	\frac{x_n^\beta}{\abs{x - y^{z, \lambda}}^{n - \alpha}}u^\theta(y^{z, \lambda})
	\left(\frac{\lambda}{\abs{y - z}}\right)^{2(n - 1)}\; \d y. 
\end{equation*}
Now use \eqref{eq:SphereInversionSimilarTriangles} to obtain
\begin{equation}
\label{eq:ConformallyInvariantExtremalv}
\begin{array}{lcl}
	\multicolumn{3}{l}{
	v(x)
	}
	\\
	& = & 
	\ifdetails 
	\displaystyle
	\int_{\bdy \bb R_+^n\setminus B_\lambda^{n-1}(z)}
	\frac{x_n^\beta u^\theta(y)}{\abs{x - y}^{n - \alpha}}\; \d y
	+ 
	\int_{B_\lambda^{n - 1}(z)}\frac{x_n^\beta u^\theta(y)}{\abs{x- y}^{n - \alpha}}\; \d y
	\\
	& = & 
	\fi 
	\displaystyle
	\int_{\bdy \bb R_+^n\setminus B_\lambda^{n-1}(z)}
	\frac{x_n^\beta u^\theta(y)}{\abs{x - y}^{n - \alpha}}\; \d y
	+
	\int_{\bdy \bb R_+^n\setminus B_\lambda^{n-1}(z)}
	\left(\frac{\lambda}{\abs{y  - z}}\right)^{n - \alpha}
	\frac{x_n^\beta u_{z, \lambda}^\theta(y)}{\abs{x - y^{z, \lambda}}^{n - \alpha}}\; \d y
	\\
	& = & 
	\displaystyle
	\int_{\bdy \bb R_+^n\setminus B_\lambda^{n-1}(z)}
	\frac{x_n^\beta u^\theta(y)}{\abs{x - y}^{n - \alpha}}\; \d y
	+
	\int_{\bdy \bb R_+^n\setminus B_\lambda^{n-1}(z)}
	\left(\frac{\lambda}{\abs{x  - z}}\right)^{n - \alpha}
	\frac{x_n^\beta u_{z, \lambda}^\theta(y)}{\abs{x^{z, \lambda} - y}^{n - \alpha}}\; \d y. 
\end{array}
\end{equation}
Evaluating this expression at $x^{z, \lambda}$ and using \eqref{eq:InversionNthCoordinate} gives
\begin{equation}
\label{eq:ConformallyInvariantExtremalvKelvin}
\begin{array}{lcl}
	\multicolumn{3}{l}{
	v_{z, \lambda}(x)
	}
	\\
	& = & 
	\ifdetails 
	\displaystyle
	\left(\frac{\lambda}{\abs{x - z}}\right)^{n - \alpha- 2\beta}v(x^{z, \lambda})
	\\
	& = & 
	\fi 
	\displaystyle
	\int_{\bdy \bb R_+^n\setminus B_\lambda^{n-1}(z)}
	\left(\frac{\lambda}{\abs{x - z}}\right)^{n - \alpha}
	\frac{x_n^\beta u^\theta(y)}{\abs{x^{z, \lambda} - y}^{n - \alpha}}\; \d y
	\\
	& & 
	\displaystyle
	+ 
	\int_{\bdy \bb R_+^n\setminus B_\lambda^{n-1}(z)}
	\left(\frac{\lambda}{\abs{x - z}}\right)^{n - \alpha}\left(\frac{\lambda}{\abs{y - z}}\right)^{n - \alpha}
	\frac{x_n^\beta u_{z, \lambda}^\theta(y)}{\abs{x^{z, \lambda} - y^{z, \lambda}}^{n - \alpha}}\; \d y
	\\
	& = & 
	\displaystyle
	\int_{\bdy \bb R_+^n\setminus B_\lambda^{n-1}(z)}
	\left(\frac{\lambda}{\abs{x - z}}\right)^{n - \alpha}
	\frac{x_n^\beta u^\theta(y)}{\abs{x^{z, \lambda} - y}^{n - \alpha}}\; \d y
	+ 
	\int_{\bdy \bb R_+^n\setminus B_\lambda^{n-1}(z)}
	\frac{x_n^\beta u_{z, \lambda}^\theta(y)}{\abs{x - y}^{n - \alpha}}\; \d y, 
\end{array}
\end{equation}
where \eqref{eq:DistanceBetweenInvertedPoints} was used in the final equality. Equation \eqref{eq:ConformallyInvariantKelvinv-v} now follows from equations \eqref{eq:ConformallyInvariantExtremalv} and \eqref{eq:ConformallyInvariantExtremalvKelvin}. Equation \eqref{eq:ConformallyInvariantKelvinu-u} follows from a similar computation. 
\ifdetails 
We now prove \eqref{eq:ConformallyInvariantKelvinu-u}. Using the change of variable $x\mapsto x^{z, \lambda}$ gives
\begin{eqnarray*}
	\int_{B_\lambda^+(z)}
	\frac{x_n^\beta v^\kappa(x)}{\abs{x - y}^{n - \alpha}}\; \d x
	& = & 
	\ifdetails 
	\int_{\bb R_+^n\setminus B_\lambda(z)}
	\frac{x_n^\beta v^\kappa(x^{z, \lambda})}{\abs{x^{z, \lambda}- y}^{n - \alpha}} 
	\left(\frac{\lambda}{\abs{x - z}}\right)^{2(n + \beta)}\; \d x
	\\
	&= & 
	\fi 
	\int_{\bb R_+^n\setminus B_\lambda(z)}
	\left(\frac{\lambda}{\abs{x - z}}\right)^{n - \alpha}
	\frac{x_n^\beta v_{z, \lambda}^\kappa(x)}{\abs{x^{z, \lambda} - y}^{n - \alpha}}\; \d x
\end{eqnarray*}
and consequently
\begin{equation}
\label{eq:ConformallyInvariantExtremalu}
\begin{array}{lcl}
	\multicolumn{3}{l}{
	u(y)
	}
	\\
	& = & 
	\ifdetails 
	\displaystyle
	\int_{\bb R_+^n\setminus B_\lambda^+(z)}
	\frac{x_n^\beta v^\kappa(x)}{\abs{x - y}^{n - \alpha}}\; \d x
	+ 
	\int_{B_\lambda^+(z)}
	\frac{x_n^\beta v^\kappa(x)}{\abs{x - y}^{n - \alpha}}\; \d x
	\\
	& = & 
	\fi 
	\displaystyle
	\int_{\bb R_+^n\setminus B_\lambda^+(z)}
	\frac{x_n^\beta v^\kappa(x)}{\abs{x - y}^{n - \alpha}}\; \d x
	+ 
	\int_{\bb R_+^n\setminus B_\lambda^+(z)}
	\left(\frac{\lambda}{\abs{x - z}}\right)^{n - \alpha}
	\frac{x_n^\beta v_{z, \lambda}^\kappa(x)}{\abs{x^{z, \lambda} - y}^{n - \alpha}}\; \d x. 
\end{array}
\end{equation}
Evaluating \eqref{eq:ConformallyInvariantExtremalu} at $y^{z, \lambda}$ and using both \eqref{eq:DistanceBetweenInvertedPoints} and \eqref{eq:SphereInversionSimilarTriangles} we obtain 
\begin{equation}
\label{eq:ConformallyInvariantExtremaluKelvin}
\begin{array}{lcl}
\multicolumn{3}{l}{
	u_{z, \lambda}(y)
	}
	\\
	& = & 
	\ifdetails 
	\displaystyle
	\left(\frac{\lambda}{\abs{y - z}}\right)^{n - \alpha} u(y^{z, \lambda})
	\\
	& = & 
	\fi 
	\displaystyle
	\int_{\bb R_+^n\setminus B_\lambda^+(z)}
	\left(\frac{\lambda}{\abs{ y - z}}\right)^{n - \alpha}\frac{x_n^\beta v^\kappa(x)}{\abs{x - y^{z, \lambda}}^{n - \alpha}}\; \d x
	\\
	& & 
	\displaystyle
	+ 
	\int_{\bb R_+^n\setminus B_\lambda^+(z)}
	\left(\frac{\lambda}{\abs{y - z}}\right)^{n - \alpha}\left(\frac{\lambda}{x - z}\right)^{n - \alpha}
	\frac{x_n^\beta v_{z, \lambda}^\kappa(x)}{\abs{x^{z, \lambda}- y^{z, \lambda}}^{n - \alpha}} \; \d x
	\\
	& = & 
	\displaystyle
	\int_{\bb R_+^n\setminus B_\lambda^+(z)}
	\left(\frac{\lambda}{\abs{x - z}}\right)^{n - \alpha}
	\frac{x_n^\beta  v^\kappa(x)}{\abs{x^{z, \lambda} - y}^{n - \alpha}}\; \d x
	+ 
	\int_{\bb R_+^n\setminus B_\lambda^+(z)}
	\frac{x_n^\beta v_{z, \lambda}^\kappa(x)}{\abs{x - y}^{n - \alpha}}\; \d x. 
\end{array}
\end{equation}
Equation \eqref{eq:ConformallyInvariantKelvinu-u} follows immediately from equations \eqref{eq:ConformallyInvariantExtremalu} and \eqref{eq:ConformallyInvariantExtremaluKelvin}. \\
\fi 

Finally,  observe that inequality \eqref{eq:KernelDifferenceInequality} is equivalent to the inequality $h_\lambda(\abs{x- z}, \abs{y - z})>0$ for $\abs{x- z}> \lambda$ and $\abs{y - z}> \lambda$, where $h_\lambda(a, b) = \lambda^4 + a^2b^2 -\lambda^2(a^2 + b^2)$ for $(a, b)\in \bb R^2$. Evidently both $\partial_ah_\lambda$ and $\partial_bh_\lambda$ are strictly positive on $(\lambda, \infty)\times(\lambda, \infty)$ so since $h_\lambda(\lambda, \lambda) = 0$, inequality \eqref{eq:KernelDifferenceInequality} holds. 
\end{proof}
For $z\in \bdy \bb R_+^n$ and $\lambda>0$ define
\begin{eqnarray*}
	\mathcal B^{n -1}(z, \lambda)
	& = & 
	\{y\in \bdy \bb R_+^n\setminus B_\lambda^{n-1}(z): u(y)< u_{z, \lambda}(y)\}
	\\
	\mathcal B^n(z, \lambda)
	& = & 
	\{x\in \bb R_+^n\setminus B_\lambda^+(z): v(x)< v_{z, \lambda}(x)\}. 
\end{eqnarray*}
We begin by showing that if $\lambda$ is sufficiently small the both of $\mathcal B^{n - 1}(z, \lambda)$ and $\mathcal B^n(z, \lambda)$ have zero measure. 
\begin{lemma}
\label{lemma:ConformallyInvariantMovingSpheresCanStart}
Under the hypotheses of Theorem \ref{theorem:ExtremalSystemClassification}, for all $z\in \bdy \bb R_+^n$ there exists $\lambda(z)>0$ such that if $0< \mu\leq \lambda(z)$ then both 
\begin{equation}
\label{eq:uDominatesKelvinTransform}
	u(y) \geq u_{z, \mu}(y)
	\qquad
	\text{ for a.e. } y\in \bdy \bb R_+^n \setminus B_\mu^{n-1}(z)
\end{equation}
and
\begin{equation}
\label{eq:vDominatesKelvinTransform}
	v(x) \geq v_{z, \mu}(x)
	\qquad
	\text{ for a.e. } x\in \bb R_+^n\setminus B_\mu^+(z). 
\end{equation}
\end{lemma}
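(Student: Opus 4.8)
The plan is to run the standard ``moving spheres can start'' argument: linearize the defects $u_{z,\lambda}-u$ and $v_{z,\lambda}-v$ over the sets $\mathcal B^{n-1}(z,\lambda),\ \mathcal B^{n}(z,\lambda)$ on which they are positive, estimate these in $L^{\theta+1}$ and $L^{\kappa+1}$ by themselves times a coefficient, and observe that the coefficient vanishes as $\lambda\to0^+$.

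\emph{Step 1 (pointwise inequalities).} Fix $z$. If $y\in\mathcal B^{n-1}(z,\lambda)$ then $\abs{y-z}>\lambda$, so by \eqref{eq:KernelDifferenceInequality} the kernel appearing in \eqref{eq:ConformallyInvariantKelvinu-u} is positive; discarding the part of the $x$-integral where $v_{z,\lambda}^\kappa-v^\kappa\leq0$ and using that $t\mapsto t^\kappa$ is increasing (so that $\{v_{z,\lambda}^\kappa>v^\kappa\}=\mathcal B^n(z,\lambda)$) together with $\abs{x-y}^{\alpha-n}$ as an upper bound for the kernel gives
\[
	0<u_{z,\lambda}(y)-u(y)\leq\int_{\mathcal B^n(z,\lambda)}\frac{x_n^\beta\,(v_{z,\lambda}^\kappa(x)-v^\kappa(x))}{\abs{x-y}^{n-\alpha}}\;\d x,
\]
and, symmetrically from \eqref{eq:ConformallyInvariantKelvinv-v}, for $x\in\mathcal B^n(z,\lambda)$,
\[
	0<v_{z,\lambda}(x)-v(x)\leq\int_{\mathcal B^{n-1}(z,\lambda)}\frac{x_n^\beta\,(u_{z,\lambda}^\theta(y)-u^\theta(y))}{\abs{x-y}^{n-\alpha}}\;\d y.
\]

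\emph{Step 2 (linearize and apply the sharp inequality).} Since \eqref{eq:AlphaBetaSubAffine} is equivalent to $(\theta+1)^{-1}+(\kappa+1)^{-1}<1$, at least one of $\theta,\kappa$ exceeds $1$. Splitting into cases exactly as in the proof of Lemma~\ref{lemma:KeyEstimatesForMovingPlanes} — applying the Mean Value Theorem directly when an exponent is $\geq1$ and the fractional-power device $u^{1/r},v^{1/r}$ together with the ``good set'' comparison used there when an exponent is $<1$ — and then invoking the mapping properties of $R_{\alpha,\beta}$ (Corollary~\ref{coro:ConformallyInvariantSharpHalfSpaceInequality}, which maps $L^{2n/(n+\alpha+2\beta)}(\bb R_+^n)\to L^{\theta+1}(\bdy\bb R_+^n)$) and of $E_{\alpha,\beta}$ (Theorem~\ref{theorem:ConformallyInvariantSharpHalfSpaceInequality}, which maps $L^{p}(\bdy\bb R_+^n)\to L^{\kappa+1}(\bb R_+^n)$) followed by H\"older's inequality, one obtains a $\lambda$-independent constant $C>0$ such that, writing $\mathcal B^{n-1}=\mathcal B^{n-1}(z,\lambda)$ and $\mathcal B^{n}=\mathcal B^{n}(z,\lambda)$, in the case $\theta\geq1,\kappa\geq1$
\[
	\norm{u_{z,\lambda}-u}_{L^{\theta+1}(\mathcal B^{n-1})}\leq C\norm{u_{z,\lambda}}_{L^{\theta+1}(\mathcal B^{n-1})}^{\theta-1}\norm{v_{z,\lambda}}_{L^{\kappa+1}(\mathcal B^{n})}^{\kappa-1}\norm{u_{z,\lambda}-u}_{L^{\theta+1}(\mathcal B^{n-1})}
\]
and the analogous bound for $\norm{v_{z,\lambda}-v}_{L^{\kappa+1}(\mathcal B^{n})}$; when $\theta<1$ (resp.\ $\kappa<1$) the same scheme yields the estimate with coefficient $\norm{u_{z,\lambda}}_{L^{\theta+1}(\mathcal B^{n-1})}^{\theta-1/r}\norm{v_{z,\lambda}}_{L^{\kappa+1}(\mathcal B^{n})}^{\kappa/r-1}$ (resp.\ $\norm{u_{z,\lambda}}_{L^{\theta+1}(\mathcal B^{n-1})}^{\theta/q-1}\norm{v_{z,\lambda}}_{L^{\kappa+1}(\mathcal B^{n})}^{\kappa-1/q}$), where $1<1/\theta<r<\kappa$ (resp.\ $1<1/\kappa<q<\theta$), and in every case all exponents appearing in the coefficients are strictly positive.

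\emph{Step 3 (the coefficients vanish as $\lambda\to0^+$) and conclusion.} The change of variables $y\mapsto y^{z,\lambda}$, which carries $\bdy\bb R_+^n\setminus B_\lambda^{n-1}(z)$ onto $B_\lambda^{n-1}(z)$, combined with the identity $(n-\alpha)(\theta+1)=2(n-1)$, yields $\norm{u_{z,\lambda}}_{L^{\theta+1}(\bdy\bb R_+^n\setminus B_\lambda^{n-1}(z))}=\norm{u}_{L^{\theta+1}(B_\lambda^{n-1}(z))}$, which tends to $0$ as $\lambda\to0^+$ since $u^{\theta+1}\in L^1(\bdy\bb R_+^n)$; likewise, using $(n-\alpha-2\beta)(\kappa+1)=2n$, $\norm{v_{z,\lambda}}_{L^{\kappa+1}(\bb R_+^n\setminus B_\lambda^+(z))}=\norm{v}_{L^{\kappa+1}(B_\lambda^+(z))}\to0$. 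Because $\mathcal B^{n-1}(z,\lambda)\subset\bdy\bb R_+^n\setminus B_\lambda^{n-1}(z)$ and $\mathcal B^{n}(z,\lambda)\subset\bb R_+^n\setminus B_\lambda^+(z)$, every coefficient produced in Step 2 tends to $0$ as $\lambda\to0^+$, so there is $\lambda(z)>0$ such that for all $0<\mu\leq\lambda(z)$ that coefficient is $\leq\tfrac12$. Since $\norm{u_{z,\mu}-u}_{L^{\theta+1}(\mathcal B^{n-1}(z,\mu))}\leq 2\norm{u}_{L^{\theta+1}(\bdy\bb R_+^n)}<\infty$ and similarly for $v$, the contraction forces $\abs{\mathcal B^{n-1}(z,\mu)}=0=\abs{\mathcal B^{n}(z,\mu)}$ (when only one of $\theta,\kappa$ exceeds $1$, the vanishing of one of these sets forces the vanishing of the other through the first H\"older estimate of Step 2, exactly as in Lemma~\ref{lemma:LambdaBarWellDefined}), which is precisely \eqref{eq:uDominatesKelvinTransform}--\eqref{eq:vDominatesKelvinTransform}.

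\emph{Main obstacle.} Conceptually the crucial point is the conformal rearrangement $\norm{u_{z,\lambda}}_{L^{\theta+1}(\bdy\bb R_+^n\setminus B_\lambda^{n-1}(z))}=\norm{u}_{L^{\theta+1}(B_\lambda^{n-1}(z))}$ and its counterpart for $v$: this replaces the ``shrinking strip'' mechanism used to start the moving-planes process in Section~\ref{section:SubcriticalExtremalClassification} and is what lets the spheres start. The remaining difficulty is purely bookkeeping — carrying the $\theta,\kappa$ versus $1$ case analysis of Lemma~\ref{lemma:KeyEstimatesForMovingPlanes} through the unweighted conformal setting while checking that each coefficient exponent is strictly positive, so that the small-$\lambda$ decay actually kicks in.
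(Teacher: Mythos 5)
Your proposal is correct and follows essentially the same route as the paper: the pointwise bounds from the Kelvin difference identities and kernel positivity, linearization by the Mean Value Theorem (with the $u^{1/r}$ device and good-set comparison when $\theta<1$), the sharp conformal inequality of Theorem \ref{theorem:ConformallyInvariantSharpHalfSpaceInequality} and Corollary \ref{coro:ConformallyInvariantSharpHalfSpaceInequality} plus H\"older, and finally the conformal rearrangement sending the norms over the bad sets to norms over $B_\lambda^{n-1}(z)$ and $B_\lambda^+(z)$, which vanish as $\lambda\to 0^+$. The only quibble is the claim that \emph{all} coefficient exponents are strictly positive — when $\alpha=1$ one has $\theta-1=0$ — but the coefficient still vanishes because $\kappa>1$ always holds here, exactly as in the paper.
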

\begin{proof}
Let $z\in \bdy \bb R_+^n$. For $y\in \mathcal B(z,\lambda)^{n - 1}$, using equation \eqref{eq:ConformallyInvariantKelvinu-u}, inequality \eqref{eq:KernelDifferenceInequality} and the Mean Value Theorem gives
\begin{eqnarray*}
	0 
	& < &
	u_{z, \lambda}(y) - u(y)
	\\
	& \leq & 
	\int_{\mathcal B^n(z, \lambda)}
	\left(
	\frac{x_n^\beta}{\abs{x - y}^{n - \alpha} }
	-
	\left(\frac{\lambda}{\abs{x - z}}\right)^{n - \alpha}\frac{x_n^\beta}{\abs{x^{z, \lambda}- y}^{n - \alpha}}
	\right)
	\left(v_{z, \lambda}^\kappa(x) - v^\kappa(x)\right)\; \d x
	\\
	& \leq & 
	\int_{\mathcal B^n(z, \lambda)}
	\frac{x_n^\beta}{\abs{x - y}^{n - \alpha}}\left(v_{z, \lambda}^\kappa(x) - v^\kappa(x)\right)\; \d x
	\\
	& \leq & 
	\kappa \int_{\mathcal B^n(z, \lambda)}
	\frac{x_n^\beta}{\abs{x - y}^{n - \alpha}}v_{z, \lambda}^{\kappa - 1}(x)
	\left(v_{z, \lambda}(x) - v(x)\right)\; \d x. 
\end{eqnarray*}
Applying Corollary \ref{coro:ConformallyInvariantSharpHalfSpaceInequality} and H\"older's inequality gives
\begin{equation}
\label{eq:Kelvinu-uFirstLebesgueEstimate}
\begin{array}{lcl}
\multicolumn{3}{l}{
	\displaystyle
	\norm{u_{z, \lambda} - u}_{L^{\theta + 1}(\mathcal B^{n - 1}(z, \lambda))}
	}
	\\
	& \leq&
	\displaystyle
	C
	\norm{v_{z, \lambda}^{\kappa - 1}\left(v_{z, \lambda} - v\right)}_{L^{(\kappa + 1)/\kappa}(\mathcal B^n(z, \lambda))}
	\\
	& \leq & 
	\displaystyle
	C
	\norm{v_{z, \lambda}}_{L^{\kappa + 1}(\mathcal B^n(z, \lambda))}^{\kappa - 1}
	\norm{v_{z, \lambda} - v}_{L^{\kappa + 1}(\mathcal B^n(z, \lambda))}
	\\
	& = & 
	\displaystyle
	C
	\norm{v}_{L^{\kappa + 1}(\mathcal B(z, \lambda)^{z,\lambda})}^{\kappa - 1}
	\norm{v_{z, \lambda} - v}_{L^{\kappa + 1}(\mathcal B^n(z, \lambda))}
\end{array}
\end{equation}
for some constant $C = C(n, \alpha, \beta)>0$, where $\mathcal B^n(z, \lambda)^{z, \lambda} = \{x^{z, \lambda}: x\in \mathcal B^n(z, \lambda)\}\subset B_\lambda^+(z)$. We estimate $\norm{v_{z, \lambda} - v}_{L^{\kappa + 1}(\mathcal B^n(z, \lambda))}$ separately in two cases depending on whether $\alpha \geq 1$ or $\alpha <1$. \\
{\bf Case 1:} Assume $\alpha \geq 1$ (and hence $\theta\geq1$). For $x\in \mathcal B^n(z, \lambda)$, equation \eqref{eq:ConformallyInvariantKelvinv-v} and inequality \eqref{eq:KernelDifferenceInequality} give
\begin{eqnarray}
\label{eq:IronManWatch}
	0
	& < &
	v_{z, \lambda}(x) - v(x)
	\notag
	\\
	&\leq & 
	\int_{\mathcal B^{n-1}(z, \lambda)}
	\left(
	\frac{x_n^\beta}{\abs{x - y}^{n - \alpha} }
	-
	\left(\frac{\lambda}{\abs{x - z}}\right)^{n - \alpha}\frac{x_n^\beta}{\abs{x^{z, \lambda}- y}^{n - \alpha}}
	\right)
	\left(u_{z, \lambda}^\theta(y) - u^\theta(y)\right)\; \d y
	\notag
	\\
	&\leq & 
	\int_{\mathcal B^{n - 1}(z, \lambda)}
	\frac{x_n^\beta}{\abs{x - y}^{n - \alpha}}\left(u_{z, \lambda}^\theta(y) - u^\theta(y)\right)\; \d y. 
\end{eqnarray}
The Mean Value Theorem now gives 
\begin{equation*}
	0< v_{z, \lambda}(x) - v(x)
	\leq
	\theta \int_{\mathcal B^{n - 1}(z, \lambda)}
	\frac{x_n^\beta}{\abs{x - y}^{n - \alpha}}u_{z, \lambda}^{\theta - 1}(y)
	\left(u_{z, \lambda}(y) - u(y)\right)\; \d y
\end{equation*}
for a.e. $x\in \mathcal B^n(z, \lambda)$. Applying Theorem \ref{theorem:ConformallyInvariantSharpHalfSpaceInequality} and H\"older's inequality gives
\begin{equation}
\label{eq:Kelvinv-vFirstLebesgueEstimate}
\begin{array}{lcl}
	\multicolumn{3}{l}{
	\displaystyle
	\norm{v_{z, \lambda} - v}_{L^{\kappa + 1}(\mathcal B^n(z, \lambda))}
	}
	\\
	& \leq & 
	\displaystyle
	C
	\norm{u_{z, \lambda}^{\theta - 1}\left(u_{z, \lambda} - u\right)}_{L^{(\theta + 1)/\theta}(\mathcal B^{n - 1}(z, \lambda))}
	\\
	& \leq & 
	\displaystyle
	C
	\norm{u_{z, \lambda}}_{L^{\theta + 1}(\mathcal B^{n - 1}(z, \lambda))}^{\theta - 1}
	\norm{u_{z, \lambda} - u}_{L^{\theta+ 1}(\mathcal B^{n - 1}(z, \lambda))}
	\\
	& = & 
	\displaystyle
	C
	\norm{u}_{L^{\theta + 1}(\mathcal B^{n -1}(z, \lambda)^{z, \lambda})}^{\theta - 1}
	\norm{u_{z, \lambda} - u}_{L^{\theta+ 1}(\mathcal B^{n - 1}(z, \lambda))}
\end{array}
\end{equation}
for some positive constant $C = C(n, \alpha, \beta)$, where $\mathcal B^{n - 1}(z, \lambda)^{z, \lambda} = \{y^{z, \lambda}: y\in \mathcal B^{n - 1}(z, \lambda)\}\subset B_\lambda^{n - 1}(z)$. Using \eqref{eq:Kelvinv-vFirstLebesgueEstimate} in \eqref{eq:Kelvinu-uFirstLebesgueEstimate} gives
\begin{equation}
\label{eq:Kelvinu-uSecondLebesgueEstimate}
\begin{array}{lcl}
	\multicolumn{3}{l}{
	\displaystyle
	\norm{u_{z, \lambda} - u}_{L^{\theta + 1}(\mathcal B^{n - 1}(z, \lambda))}
	}
	\\
	&\leq & 
	\displaystyle
	C\norm{u}_{L^{\theta + 1}(\mathcal B^{n - 1}(z, \lambda)^{z, \lambda})}^{\theta - 1}
	\norm{v}_{L^{\kappa + 1}(\mathcal B^n(z, \lambda)^{z, \lambda})}^{\kappa -1}
	\norm{u_{z, \lambda} - u}_{L^{\theta + 1}(\mathcal B^{n - 1}(z, \lambda))}. 
\end{array}
\end{equation}
Similarly, using \eqref{eq:Kelvinu-uFirstLebesgueEstimate} in \eqref{eq:Kelvinv-vFirstLebesgueEstimate} gives
\begin{equation}
\label{eq:Kelvinv-vSecondLebesgueEstimate}
\begin{array}{lcl}
	\multicolumn{3}{l}{
	\displaystyle
	\norm{v_{z, \lambda} - v}_{L^{\kappa + 1}(\mathcal B^n(z, \lambda))}
	}
	\\
	&\leq & 
	C\norm{u}_{L^{\theta + 1}(\mathcal B^{n - 1}(z, \lambda)^{z, \lambda})}^{\theta - 1}
	\norm{v}_{L^{\kappa + 1}(\mathcal B^n(z, \lambda)^{z, \lambda})}^{\kappa -1}
	\norm{v_{z, \lambda} - v}_{L^{\kappa + 1}(\mathcal B^n(z, \lambda))}. 
\end{array}
\end{equation}
Since $u\in L^{\theta + 1}(\bdy \bb R_+^n)$ and $v\in L^{\kappa + 1}(\bb R_+^n)$ and since $\theta\geq 1$ there is $\lambda_0(z)>0$ such that if $0< \lambda< \lambda_0(z)$ then 
\begin{equation*}
	C\norm{u}_{L^{\theta + 1}(B_\lambda^{n-1}(z))}^{\theta - 1}
	\norm{v}_{L^{\kappa + 1}(B_\lambda^+(z))}^{\kappa -1}
	\leq 
	\frac 12. 
\end{equation*}
For any such $\lambda$ estimates \eqref{eq:Kelvinu-uSecondLebesgueEstimate} and \eqref{eq:Kelvinv-vSecondLebesgueEstimate} guarantee that 
\begin{equation*}
	\norm{u_{z, \lambda} - u}_{L^{\theta + 1}(\mathcal B^{n - 1}(z, \lambda))}
	\leq
	\frac 12
	\norm{u_{z, \lambda} - u}_{L^{\theta + 1}(\mathcal B^{n - 1}(z, \lambda))}
\end{equation*}
and
\begin{equation*}
	\norm{v_{z, \lambda} - v}_{L^{\kappa + 1}(\mathcal B^n(z, \lambda))}
	\leq
	\frac 12 
	\norm{v_{z, \lambda} - v}_{L^{\kappa + 1}(\mathcal B^n(z, \lambda))}
\end{equation*}
from which we deduce that both $\mathcal B^{n - 1}(z, \lambda)$ and $\mathcal B^n(z, \lambda)$ are measure-zero sets. \\
{\bf Case 2:} Assume $\alpha < 1$ (and hence $\theta < 1$). Let $r$ satisfy $1< \frac 1 \theta < r< \kappa$. For $x\in \mathcal B^n(z, \lambda)$ estimating as in \eqref{eq:IronManWatch} and using the Mean Value Theorem gives
\begin{eqnarray*}
	0
	& < &
	v_{z, \lambda}(x) - v(x)
	\\
	&\leq & 
	\ifdetails 
	\int_{\mathcal B^{n-1}(z, \lambda)}
	\left(
	\frac{x_n^\beta}{\abs{x - y}^{n - \alpha} }
	-
	\left(\frac{\lambda}{\abs{x - z}}\right)^{n - \alpha}\frac{x_n^\beta}{\abs{x^{z, \lambda}- y}^{n - \alpha}}
	\right)
	\left(u_{z, \lambda}^\theta(y) - u^\theta(y)\right)\; \d y
	\\
	&\leq & 
	\int_{\mathcal B^{n - 1}(z, \lambda)}
	\frac{x_n^\beta}{\abs{x - y}^{n - \alpha}}\left(u_{z, \lambda}^\theta(y) - u^\theta(y)\right)\; \d y
	\\
	& \leq & 
	\fi 
	r\theta  \int_{\mathcal B^{n - 1}(z, \lambda)}
	\frac{x_n^\beta}{\abs{x - y}^{n - \alpha}}u_{z, \lambda}^{\theta - 1/r}(y)
	\left(u_{z, \lambda}^{1/r}(y) - u(y)^{1/r}\right)\; \d y. 
\end{eqnarray*}
Applying Theorem \ref{theorem:ConformallyInvariantSharpHalfSpaceInequality} and H\"older's inequality gives
\begin{equation}
\label{eq:Kelvinv-vFirstLebesgueEstimateTheta<1}
\begin{array}{lcl}
\multicolumn{3}{l}{
	\norm{v_{z, \lambda} - v}_{L^{\kappa + 1}(\mathcal B^n(z, \lambda))}
	}
	\\
	& \leq & 
	C
	\norm{u_{z, \lambda}^{\theta - 1/r}\left(u_{z, \lambda}^{1/r} - u^{1/r}\right)}_{L^p(\mathcal B^{n - 1}(z, \lambda))}
	\\
	& \leq & 
	C
	\norm{u_{z, \lambda}}_{L^{\theta + 1}(\mathcal B^{n - 1}(z, \lambda)}^{\theta - 1/r}
	\norm{u_{z, \lambda}^{1/r} - u^{1/r}}_{L^{\theta+ 1}(\mathcal B^{n - 1}(z, \lambda))}
\end{array}
\end{equation}
for some positive constant $C = C(n, \alpha, \beta)$. To estimate the norm of $u_{z, \lambda}^{1/r} - u^{1/r}$ we define $\mathcal G^n(z, \lambda) = \bb R_+^n\setminus(B_\lambda^+(z)\cup\mathcal B^n(z, \lambda))$ (the ``good'' set) and 
\begin{equation*}
	Q(x,y) 
	= 
	\frac{x_n^\beta v^\kappa(x)}{\abs{x - y}^{n - \alpha}}
	+ 
	\left(\frac{\lambda}{\abs{x - z}}\right)^{n - \alpha}
	\frac{x_n^\beta v_{z, \lambda}^\kappa(x)}{\abs{x^{z, \lambda}- y}^{n - \alpha}}. 
\end{equation*}
Define also
\begin{eqnarray*}
	a(y) & = & \int_{\mathcal B^n(z, \lambda)}Q(x,y)\; \d x\\
	c(y) &  = & \int_{\mathcal G(z, \lambda)}Q(x,y)\; \d x
\end{eqnarray*}
so that $u(y) = a(y) + c(y)$. Equations \eqref{eq:DistanceBetweenInvertedPoints} and \eqref{eq:SphereInversionSimilarTriangles} give
\begin{equation*}
\begin{array}{lcl}
	\multicolumn{3}{l}{
	\displaystyle
	Q(x,y) - \left(\frac\lambda{\abs{y - z}}\right)^{n - \alpha} Q(x, y^{z, \lambda})
	}
	\\
	& = & 
	\displaystyle
	\left(\frac{x_n^\beta}{\abs{x - y}^{n - \alpha}} - \left(\frac\lambda{\abs{x - z}}\right)^{n - \alpha}\frac{x_n^\beta}{\abs{x^{z, \lambda} - y}^{n - \alpha}}\right)
	\left(v^\kappa(x) - v_{z, \lambda}^\kappa(x)\right)
\end{array}
\end{equation*}
for a.e. $x\in \bb R_+^n\setminus B_\lambda^+(z)$, $y\in \bdy \bb R_+^n\setminus B_\lambda^{n - 1}(z)$. In particular, for $y\in \bdy \bb R_+^n\setminus B_\lambda^{n - 1}(z)$
\begin{equation*}
	a(y)< a_{z, \lambda}(y) := \left(\frac{\lambda}{\abs{y - z}}\right)^{n - \alpha}a(y^{z, \lambda})
\end{equation*}
and 
\begin{equation*}
	c(y) > c_{z, \lambda}(y) := \left(\frac{\lambda}{\abs{y - z}}\right)^{n - \alpha} c(y^{z, \lambda}). 
\end{equation*}
If $\overline c$ is any nonnegative function satisfying $c_{z, \lambda} \leq \overline c \leq c$ in $\bdy \bb R_+^n\setminus B_{\lambda}^{n - 1}(z)$ then for a.e. $y\in \mathcal B^{n - 1}(z, \lambda)$
\begin{eqnarray}
\label{eq:1/rPowerFunctionConcave}
	0 
	& < & 
	u_{z, \lambda}^{1/r}(y) - u^{1/r}(y)
	\notag
	\\
	&\leq & 
	\left(a_{z, \lambda}(y) + \overline c(y)\right)^{1/r} - \left(a(y) + \overline c(y)\right)^{1/r}
	\notag
	\\
	&\leq & 
	a_{z, \lambda}(y)^{1/r} - a(y)^{1/r}. 
\end{eqnarray}
Using equations \eqref{eq:DistanceBetweenInvertedPoints} and \eqref{eq:SphereInversionSimilarTriangles} once more we get
\begin{equation*}
	a_{z, \lambda}(y)
	= 
	\int_{\mathcal B^n(z, \lambda)}\abs{\left(\varphi_1(x,y), \varphi_2(x,y)\right)}_{\ell^r}^r\; \d x, 
\end{equation*}
where
\begin{equation*}
	\varphi_1(x,y) 
	= 
	\left( \frac{x_n^\beta v_{z, \lambda}^\kappa(x)}{\abs{x - y}^{n - \alpha}}\right)^{1/r}
	\quad
	\text{ and } 
	\quad
	\varphi_2(x,y) 
	= 
	\left( \left(\frac{\lambda}{\abs{x - z}}\right)^{n - \alpha}\frac{x_n^\beta v^\kappa(x)}{\abs{x^{z, \lambda} - y}^{n - \alpha}}\right)^{1/r}. 
\end{equation*}
Setting also
\begin{equation*}
	\psi_1(x,y)
	= 
	\left(\frac{x_n^\beta v^\kappa(x)}{\abs{x- y}^{n - \alpha}}\right)^{1/r}
	\quad
	\text{ and }
	\quad
	\psi_2(x,y)
	= 
	\left(\left(\frac{\lambda}{\abs{x- z}}\right)^{n - \alpha}\frac{x_n^\beta v_{z, \lambda}^\kappa(x)}{\abs{x^{z, \lambda} - y}^{n - \alpha}} \right)^{1/r}
\end{equation*}
gives
\begin{equation*}
	a(y) = \int_{\mathcal B^n(z, \lambda)}\abs{\left(\psi_1(x,y), \psi_2(x,y)\right)}_{\ell^r}^r\; \d x. 
\end{equation*}
Returning to equation \eqref{eq:1/rPowerFunctionConcave} and using the Mean Value Theorem, for a.e. $y\in \mathcal B^{n - 1}(z, \lambda)$ gives
\begin{eqnarray*}
	0
	& < & 
	u_{z, \lambda}^{1/r}(y) - u^{1/r}(y)
	\\
	&\leq & 
	\left[\int_{\mathcal B^n(z, \lambda)}\abs{\left(\varphi_1(x,y), \varphi_2(x,y)\right)}_{\ell^r}^r\; \d x \right]^{1/r}
	-
	\left[\int_{\mathcal B^n(z, \lambda)}\abs{\left(\psi_1(x,y), \psi_2(x,y)\right)}_{\ell^r}^r\; \d x \right]^{1/r}
	\\
	&\leq & 
	\left[ \int_{\mathcal B^n(z, \lambda)} \abs{\varphi_1 - \psi_1}^r + \abs{\varphi_2 - \psi_2}^r\; \d x\right]^{1/r}
	\\
	& \leq & 
	C\left[ \int_{\mathcal B^n(z, \lambda)} \frac{x_n^\beta}{\abs{x - y}^{n - \alpha}} \left(v_{z, \lambda}^{\kappa/r}(x) - v^{\kappa/r}(x)\right)^r\; \d x\right]^{1/r}
	\\
	& \leq & 
	C\left[ \int_{\mathcal B^n(z, \lambda)} \frac{x_n^\beta}{\abs{x - y}^{n - \alpha}}v_{z, \lambda}^{\kappa - r}(x) \left(v_{z, \lambda}(x) - v(x)\right)^r\; \d x\right]^{1/r}.
\end{eqnarray*}
Applying Corollary \ref{coro:ConformallyInvariantSharpHalfSpaceInequality} gives
\begin{eqnarray*}
	\norm{u_{z, \lambda}^{1/r} - u^{1/r}}_{L^{r(\theta + 1)}(\mathcal B^{n - 1}(z, \lambda))}
	&\leq & 
	C\norm{v_{z, \lambda}^{\kappa - r}(v_{z, \lambda} - v)^r}_{L^{(\kappa + 1)/\kappa}(\mathcal B^n(z, \lambda))}^{1/r}
	\\
	&\leq & 
	C\norm{v_{z, \lambda}}_{L^{\kappa + 1}(\mathcal B^n(z, \lambda))}^{\frac \kappa r - 1} \norm{v_{z, \lambda} - v}_{L^{\kappa + 1}(\mathcal B^n(z, \lambda))}. 
\end{eqnarray*}
Finally, using this estimate in \eqref{eq:Kelvinv-vFirstLebesgueEstimateTheta<1} gives
\begin{equation}
\label{eq:vLambda-vFinalEstimateTheta<1}
\begin{array}{lcl}
	\multicolumn{3}{l}{
	\norm{v_{z, \lambda} - v}_{L^{\kappa + 1}(\mathcal B^n(z, \lambda))}
	}
	\\
	& \leq & 
	\displaystyle
	C\norm{u_{z, \lambda}}_{L^{\theta + 1}(\mathcal B^{n -1}(z, \lambda))}^{\theta - 1/r}
	\norm{v_{z, \lambda}}_{L^{\kappa + 1}(\mathcal B^n(z, \lambda))}^{\frac \kappa r - 1}
	\norm{v_{z, \lambda} - v}_{L^{\kappa + 1}(\mathcal B^n(z, \lambda))}
	\\
	& = & 
	\displaystyle
	C\norm{u}_{L^{\theta + 1}(\mathcal B^{n-1}(z, \lambda)^{z, \lambda})}^{\theta - 1/r}
	\norm{v}_{L^{\kappa + 1}(\mathcal B^n(z, \lambda)^{z, \lambda})}^{\frac \kappa r - 1}
	\norm{v_{z, \lambda} - v}_{L^{\kappa + 1}(\mathcal B^n(z, \lambda))}. 
\end{array}
\end{equation}
This estimate guarantees the existence of $\lambda_0(z)>0$ such that $\abs{\mathcal B^n(z, \lambda)} = 0$ whenever $0< \lambda< \lambda_0(z)$. Estimate \eqref{eq:Kelvinu-uFirstLebesgueEstimate} now guarantees that $\abs{\mathcal B^{n - 1}(z, \lambda)} = 0$ when $0< \lambda< \lambda_0(z)$. 
\end{proof}

In view of Lemma \ref{lemma:ConformallyInvariantMovingSpheresCanStart}, for each $z\in \bdy \bb R_+^n$ the quantity 
\begin{equation*}
	\bar\lambda(z)
	= 
	\sup\left\{
	\lambda>0: 
	\forall 0< \mu< \lambda, \text{ both \eqref{eq:uDominatesKelvinTransform} and \eqref{eq:vDominatesKelvinTransform} hold}
	\right\}
\end{equation*}
is well-defined and positive. The next lemma shows that if the moving sphere process stops for some $z\in \bdy \bb R_+^n$  then $u$ is symmetric about $\bdy\bb R_+^n\cap \bdy B_{\bar\lambda(z)}$ and $v$ is symmetric about $\bdy B_{\bar\lambda(z)}\cap\bb R_+^n$. 
\begin{lemma}
\label{lemma:SymmetrySphere}
If $z\in\bdy \bb R_+^n$ satisfies $\bar\lambda(z)< \infty$ then both 
\begin{equation}
\label{eq:uSymmetries}
	u = u_{z, \bar\lambda(z)} \qquad \text{a.e in } \bdy \bb R_+^n\setminus \{z\}
\end{equation}
and
\begin{equation}
\label{eq:vSymmetries}
	v = v_{z, \bar\lambda(z)} \qquad \text{a.e. in } \bb R_+^n. 
\end{equation}
\end{lemma}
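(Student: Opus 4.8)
The plan is to run the standard ``the moving sphere stops, hence we have symmetry'' argument for the system \eqref{eq:ConformallyInvariantExtremalSystem}, in three steps. Write $\bar\lambda = \bar\lambda(z)$ and assume $\bar\lambda < \infty$. \textbf{Step 1 (soft inequality at the limiting sphere).} By definition of $\bar\lambda$, \eqref{eq:uDominatesKelvinTransform} and \eqref{eq:vDominatesKelvinTransform} hold for every $\mu < \bar\lambda$. Letting $\mu \uparrow \bar\lambda$ and using that $y^{z,\mu} \to y^{z,\bar\lambda}$ and $(\mu/\abs{y - z})^{n-\alpha} \to (\bar\lambda/\abs{y-z})^{n-\alpha}$ for $y \neq z$, together with the continuity of $u$ and $v$ coming from the integral representation \eqref{eq:ConformallyInvariantExtremalSystem} (or, if one prefers to avoid regularity, a.e. convergence of the Kelvin transforms along a subsequence), one gets $u \geq u_{z,\bar\lambda}$ a.e. on $\bdy \bb R_+^n \setminus B_{\bar\lambda}^{n-1}(z)$ and $v \geq v_{z,\bar\lambda}$ a.e. on $\bb R_+^n \setminus B_{\bar\lambda}^+(z)$; equivalently $\abs{\mathcal B^{n-1}(z,\bar\lambda)} = \abs{\mathcal B^n(z,\bar\lambda)} = 0$.

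\textbf{Step 2 (dichotomy via the integral identities).} I would show that either both equalities \eqref{eq:uSymmetries}--\eqref{eq:vSymmetries} hold, or both inequalities of Step 1 are everywhere strict. Put $\lambda = \bar\lambda$ in \eqref{eq:ConformallyInvariantKelvinu-u} and \eqref{eq:ConformallyInvariantKelvinv-v}. By \eqref{eq:KernelDifferenceInequality} the two kernel differences appearing there are strictly positive (the interior factor $x_n^\beta$ being positive a.e.\ on $\bb R_+^n$), while $u_{z,\bar\lambda}^\theta - u^\theta \leq 0$ and $v_{z,\bar\lambda}^\kappa - v^\kappa \leq 0$ by Step 1. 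Hence if $u_{z,\bar\lambda}^\theta - u^\theta < 0$ on a set of positive measure, then \eqref{eq:ConformallyInvariantKelvinv-v} forces $v > v_{z,\bar\lambda}$ at \emph{every} point of $\bb R_+^n \setminus B_{\bar\lambda}^+(z)$, and then \eqref{eq:ConformallyInvariantKelvinu-u} forces $u > u_{z,\bar\lambda}$ at every point of $\bdy \bb R_+^n \setminus B_{\bar\lambda}^{n-1}(z)$; the roles of $u$ and $v$ may be interchanged. Thus it remains to rule out this strict alternative.

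\textbf{Step 3 (continuation past $\bar\lambda$).} Suppose the strict alternative holds; I would derive a contradiction with the maximality of $\bar\lambda$ by showing the moving-sphere process continues slightly beyond $\bar\lambda$. By the Lebesgue estimates already established in the proof of Lemma \ref{lemma:ConformallyInvariantMovingSpheresCanStart} (estimates \eqref{eq:Kelvinu-uSecondLebesgueEstimate}--\eqref{eq:Kelvinv-vSecondLebesgueEstimate} when $\alpha \geq 1$, and \eqref{eq:vLambda-vFinalEstimateTheta<1} together with \eqref{eq:Kelvinu-uFirstLebesgueEstimate} when $\alpha < 1$), which are valid for every $\lambda > 0$, it suffices to find $\epsilon > 0$ such that the relevant products $\norm{u}_{L^{\theta+1}(\mathcal B^{n-1}(z,\lambda)^{z,\lambda})}^{\ast}\norm{v}_{L^{\kappa+1}(\mathcal B^n(z,\lambda)^{z,\lambda})}^{\ast}$ are $< \frac12 / C$ for all $\lambda \in (\bar\lambda, \bar\lambda + \epsilon]$; those estimates then give $\abs{\mathcal B^{n-1}(z,\lambda)} = \abs{\mathcal B^n(z,\lambda)} = 0$ on that range, i.e.\ \eqref{eq:uDominatesKelvinTransform}--\eqref{eq:vDominatesKelvinTransform} persist for $\mu < \bar\lambda + \epsilon$, contradicting the definition of $\bar\lambda$. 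Since $\abs{y - z} \geq \lambda$ on $\mathcal B^{n-1}(z,\lambda)$ (so the inversion Jacobian $(\lambda/\abs{y-z})^{2(n-1)}$ is $\leq 1$ there) one has $\abs{\mathcal B^{n-1}(z,\lambda)^{z,\lambda}} \leq \abs{\mathcal B^{n-1}(z,\lambda)}$ and likewise $\abs{\mathcal B^n(z,\lambda)^{z,\lambda}} \leq \abs{\mathcal B^n(z,\lambda)}$, so by absolute continuity of the integrals of $u^{\theta+1}$ and $v^{\kappa+1}$ it is enough to show $\abs{\mathcal B^{n-1}(z,\lambda)} \to 0$ and $\abs{\mathcal B^n(z,\lambda)} \to 0$ as $\lambda \to \bar\lambda^+$. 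This is the point where the strict inequalities of Step 2 enter, exactly as in Lemma \ref{lemma:PlaneLambdaBar=0}: decompose $\mathcal B^n(z,\lambda)$ into the part outside a large ball $B_R$ (small by $v \in L^{\kappa+1}$), the part in the thin annulus $\{\bar\lambda \leq \abs{x - z} \leq \bar\lambda + \eta\}$, the part in $\{x_n < \eta\}$ (the latter two small by choosing $\eta$ small), and a ``good'' bulk part $\mathcal B^n(z,\lambda) \cap B_R \cap \{\abs{x-z} > \bar\lambda + \eta,\ x_n > \eta\}$; on the bulk, \eqref{eq:ConformallyInvariantKelvinv-v} at $\bar\lambda$ together with the positive-measure strict inequality for $u$ yields a uniform lower bound $v - v_{z,\bar\lambda} \geq c_1 > 0$, and approximating $v$ in $L^{\kappa+1}$ by $C^0$ functions to control $v_{z,\lambda} - v_{z,\bar\lambda}$ shows this bulk part is empty for $\lambda$ close to $\bar\lambda$; the symmetric decomposition handles $\mathcal B^{n-1}(z,\lambda)$.

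\textbf{Main obstacle.} Steps 1 and 2 are soft, modulo the mild regularity of $u$ and $v$ used to pass to the limit. The substantive work is Step 3, and within it the measure-shrinking limit $\abs{\mathcal B^{n-1}(z,\lambda)}, \abs{\mathcal B^n(z,\lambda)} \to 0$ as $\lambda \to \bar\lambda^+$: it requires combining the strong-maximum-principle-type lower bound supplied by the integral identities with the density/approximation argument of Lemma \ref{lemma:PlaneLambdaBar=0}, and carrying the two exponent regimes $\alpha \geq 1$ and $\alpha < 1$ separately (as in Lemma \ref{lemma:ConformallyInvariantMovingSpheresCanStart}), since the form of the absorbable Lebesgue estimate differs in the two cases.
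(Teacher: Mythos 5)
Your three-step plan is the paper's own: the soft inequalities $u\geq u_{z,\bar\lambda}$, $v\geq v_{z,\bar\lambda}$ at the limiting radius, the strict-or-equal dichotomy read off from \eqref{eq:ConformallyInvariantKelvinu-u}--\eqref{eq:ConformallyInvariantKelvinv-v} together with \eqref{eq:KernelDifferenceInequality}, and the elimination of the strict alternative by pushing the sphere past $\bar\lambda(z)$ using the absorbable Lebesgue estimates plus a decomposition of the bad set and an approximation-by-continuous-functions argument on the bulk. One cosmetic remark: your argument produces the equalities only on the exteriors $\bdy\bb R_+^n\setminus B_{\bar\lambda(z)}^{n-1}(z)$ and $\bb R_+^n\setminus B_{\bar\lambda(z)}^+(z)$; to get \eqref{eq:uSymmetries}--\eqref{eq:vSymmetries} on all of $\bdy\bb R_+^n\setminus\{z\}$ and $\bb R_+^n$ you should add the one-line observation that the inversion is an involution, so equality outside the sphere transfers inside.

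The one step that would fail as written is the reduction in Step 3 to showing $\abs{\mathcal B^{n-1}(z,\lambda)}\to 0$ and $\abs{\mathcal B^n(z,\lambda)}\to 0$. These bad sets live in the unbounded exteriors of $B_\lambda(z)$, and membership of $v$ in $L^{\kappa+1}$ does not make $\abs{\mathcal B^n(z,\lambda)\setminus B_R}$ small — it controls the norm of $v$ there, not the Lebesgue measure of the set, which a priori may even be infinite; so your Jacobian inequality $\abs{\mathcal B(z,\lambda)^{z,\lambda}}\leq\abs{\mathcal B(z,\lambda)}$, while true, is vacuous for the far region. The quantities that must be made small are the norms over the \emph{inverted} sets $\mathcal B(z,\lambda)^{z,\lambda}\subset B_\lambda(z)$, which have finite measure automatically, and the decomposition should be carried out on that side: the image of the far region $\{\abs{y-z}>R\}$ under the inversion is the ball of radius $\lambda^2/R$ about $z$, whose measure (hence the norm of $u$ or $v$ over it) is small for $R$ large; the image of the thin annulus $\{\bar\lambda<\abs{y-z}<\bar\lambda+\epsilon\}$ is an annulus of width $O(\epsilon)$; and only for the remaining bounded bulk piece does your Jacobian bound reduce matters to the measure of the un-inverted set, where your argument — a uniform lower bound $u-u_{z,\bar\lambda}\geq\delta$ (or $v-v_{z,\bar\lambda}\geq c_1$) supplied by the integral identity and the strict alternative, followed by $L^{\theta+1}$-approximation by continuous functions and a Chebyshev bound — is exactly the paper's and closes the proof. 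With that correction the proposal is sound.
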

\begin{proof}
To establish the lemma it is sufficient to show that if $\bar\lambda(z)< \infty$ then both 
\begin{equation}
\label{eq:uSymmetricOutsideBall}
	u = u_{z, \bar\lambda(z)} \qquad \text{a.e. in } \bdy \bb R_+^n\setminus B_{\bar\lambda(z)}^{n-1}(z)
\end{equation}
and 
\begin{equation}
\label{eq:vSymmetricOutsideBall}
	v = v_{z, \bar\lambda(z)} \qquad \text{a.e. in } \bb R_+^n \setminus B_{\bar\lambda(z)}^+(z). 
\end{equation}
\ifdetails 
Indeed, for any $y\in B_{\bar\lambda(z)}^{n-1}(z)\setminus \{z\}$ we have $y = \left(y^{z, \bar\lambda(z)}\right)^{z, \bar\lambda(z)}$ with $y^{z, \bar\lambda(z)}\in \bdy \bb R_+^n\setminus B_{\bar\lambda(z)}^{n-1}(z)$. Therefore, if \eqref{eq:uSymmetricOutsideBall} holds then for any such $y$ we have
\begin{equation*}
	u(y^{z, \bar\lambda(z)})
	= 
	\left(\frac{\bar\lambda(z)}{|y^{z, \bar\lambda(z)} - z|}\right)^{n - \alpha}u\left( \left(y^{z, \bar \lambda(z)}\right)^{z, \bar\lambda(z)}\right)
	= 
	\left(\frac{\abs{y - z}}{\bar\lambda(z)}\right)^{n - \alpha} u(y). 
\end{equation*}
By a similar argument one can show that if \eqref{eq:vSymmetricOutsideBall} holds then $v(x) = v_{z, \bar\lambda(z)}(x)$ for all $x\in \bb R_+^n$.
\fi 
 The remainder of the proof is devoted to establishing equalities \eqref{eq:uSymmetricOutsideBall} and \eqref{eq:vSymmetricOutsideBall}. For simplicity these equalities are established only for $z = 0'$. The proof for general $z\in \bdy \bb R_+^n$ is similar. For ease of notation, we set $\bar\lambda = \bar\lambda(0')< \infty$, $y^{\bar\lambda} = y^{0', \bar\lambda}$, $u_{\bar\lambda} = u_{0', \bar\lambda}$, $\mathcal B^{n  -1}(\bar\lambda) = \mathcal B^{n - 1}(0', \bar\lambda)$. Similar notational conventions will be used for $x\in \bb R_+^n$ and $v$.   \\

First we show both that 
\begin{equation*}
	u \geq u_{\bar\lambda}
	\qquad
	\text{a.e. in } y\in \bdy \bb R_+^n\setminus B_{\bar\lambda}^{n-1}
\end{equation*}
and that 
\begin{equation}
\label{eq:vGEQvLambda}
	v\geq v_{\bar\lambda}
	\qquad
	\text{a.e. in } \bb R_+^n\setminus B_{\bar\lambda}^+. 
\end{equation}
\begin{claim}
\label{claim:MaxLambdaBadSetsEmpty}
Both of $\mathcal B^{n-1}(\bar \lambda)$ and $\mathcal B^n(\bar \lambda)$ are measure-zero sets. 
\end{claim}
\begin{proof}[Proof of Claim \ref{claim:MaxLambdaBadSetsEmpty}]
It suffices to show that $\abs{\mathcal B^{n-1}(\bar \lambda)} = 0$. Indeed, under this assumption for a.e. $x\in \bb R_+^n\setminus B_{\bar \lambda}$, equation \eqref{eq:ConformallyInvariantKelvinv-v} gives
\begin{equation*}
\begin{array}{lcl}
\multicolumn{3}{l}{
	v_{\bar\lambda}(x) - v(x)
	}
	\\
	& \leq & 
	\displaystyle
	\int_{\mathcal B^{n - 1}(\bar\lambda)}
	\left( \frac{x_n^\beta}{\abs{x - y}^{n -\alpha}} - \left(\frac{\bar \lambda}{\abs x}\right)^{n - \alpha}\frac{x_n^\beta}{|x^{\bar\lambda} - y|^{n -\alpha}}\right)
	\left(u_{\bar \lambda}^\theta(y) - u^\theta(y)\right)\; \d y
	\\
	& = & 
	0.
\end{array}
\end{equation*}
so that $\abs{\mathcal B^{n}(\bar \lambda)} = 0$. Now we show via proof by contradiction that $\abs{\mathcal B^{n-1}(\bar\lambda)} = 0$. For $0\leq \rho< r\leq +\infty$, $\lambda>0$ $\delta\geq 0$ and $M>0$ we use the notation 
\begin{eqnarray*}
	A(\rho, r) & =  & \{y\in \bdy \bb R_+^n: \rho < \abs y < r\}\\
	A(\rho, r; \lambda, \delta) & = & \{y\in A(\rho, r): u_\lambda(y) - u(y)> \delta\}\\
	A(\rho, r; \lambda, \delta; M) & = & \{y\in A(\rho, r;\lambda, \delta): u(y^{\bar\lambda})\leq M\}. 
\end{eqnarray*}
In this notation we have $\mathcal B^{n-1}(\bar\lambda) = A(\bar\lambda, \infty; \bar\lambda, 0)$. Suppose $\abs{\mathcal B^{n - 1}(\bar\lambda)}>0$. Choose $\delta>0$ (small) and $R>0$ (large) such that $3\abs{\mathcal B^{n - 1}(\bar\lambda)}\leq 4\abs{A(\bar\lambda, R; \bar\lambda, \delta)}$. Choose $M>0$ sufficiently large so that 
\begin{equation*}
	\abs{\{y\in A(\bar\lambda, R): u(y^{\bar\lambda})> M\}}
	\leq
	\abs{\{y\in \bdy \bb R_+^n: u(y)> M\}}
	\leq
	\frac{\abs{\mathcal B^{n - 1}(\bar\lambda)}}{4}. 
\end{equation*}
For such $M$ there holds
\begin{equation}
\label{eq:BadSetMeasureUpperBound}
	\abs{\mathcal B^{n-1}(\bar\lambda)}
	\leq
	2\abs{A(\bar\lambda, R; \bar\lambda, \delta; M)}. 
\end{equation}
Moreover, if $y\in A(\bar\lambda, R; \bar\lambda, \delta; M)$ then for any $0< \lambda< \bar\lambda$ 
\begin{eqnarray*}
	0
	&\geq & 
	u_\lambda(y) - u(y)
	\\
	&\geq & 
	u_\lambda(y) - u_{\bar\lambda}(y) + \delta. 
\end{eqnarray*}
For every $h\in C^0(\bdy \bb R_+^n)$ there exists $\lambda_h< \bar\lambda$ such that for all $\lambda\in (\lambda_h, \bar\lambda)$ and all $y\in A(\bar\lambda, R; \bar\lambda, \delta; M)$
\begin{eqnarray*}
	\delta
	& \leq & 
	u_{\bar\lambda}(y) - u_\lambda(y)
	\\
	&= & 
	\left(\left(\frac{\bar\lambda}{\abs y}\right)^{n - \alpha} - \left(\frac{\lambda}{\abs y}\right)^{n - \alpha}\right) u(y^{\bar\lambda})
	+
	\left(\frac{\lambda}{\abs y}\right)^{n - \alpha}\left( u(y^{\bar\lambda}) - u(y^\lambda)\right)
	\\
	&\leq & 
	M\bar\lambda^{\alpha - n}\left(\bar\lambda^{n - \alpha} - \lambda^{n - \alpha}\right)
	+ 
	|u(y^{\bar\lambda}) - h(y^{\bar\lambda})| + |h(y^{\bar\lambda}) - h(y^\lambda)| + |h(y^\lambda) - u(y^\lambda)|
	\\
	&\leq & 
	\frac\delta 2 +  |u(y^{\bar\lambda}) - h(y^{\bar\lambda})| + |h(y^\lambda) - u(y^\lambda)|.
\end{eqnarray*}
Combining this with \eqref{eq:BadSetMeasureUpperBound} we find that for any $h\in C^0\cap L^{\theta + 1}(\bdy \bb R_+^n)$, if $\lambda\in (\lambda_h, \bar\lambda)$ then 
\begin{eqnarray*}
	\abs{\mathcal B^{n - 1}(\bar\lambda)}
	&\leq & 
	2\abs{\{y\in A(\bar\lambda, R): |u(y^{\bar\lambda}) - h(y^{\bar\lambda})| \geq \delta/8\}}
	\\
	& & 
	+ 
	2\abs{\{y\in A(\bar\lambda, R): |u(y^{\lambda}) - h(y^{\lambda})| \geq \delta/8\}}
	\\
	&\leq & 
	4\abs{\{y\in\bdy \bb R_+^n: \abs{h(y) - u(y)}\geq \delta/8\}}
	\\
	&\leq & 
	C(n, \alpha)\delta^{-\theta - 1}\norm{h - u}_{L^{\theta+ 1}(\bdy \bb R_+^n)}^{\theta + 1}. 
\end{eqnarray*}
Choosing $h$ sufficiently close to $u$ in $L^{\theta+ 1}$-norm gives a contradiction. 
\end{proof}

Next we claim that exactly one of the following alternatives holds: 
\begin{enumerate}[(a)]
	\item $u> u_{\bar\lambda}$ a.e. in $\bdy \bb R_+^n\setminus B_{\bar\lambda}$ and $v> v_{\bar\lambda}$ a.e. in $\bb R_+^n\setminus B_{\bar\lambda}$. 
	\item $u= u_{\bar\lambda}$ a.e. in $\bdy \bb R_+^n\setminus B_{\bar\lambda}$ and $v= v_{\bar\lambda}$ a.e. in $\bb R_+^n\setminus B_{\bar\lambda}$.
\end{enumerate}
To see that the claim holds, suppose there is a positive-measure subset $A\subset \bdy \bb R_+^n\setminus B_{\bar\lambda}$ on which $u = u_{\bar\lambda}$. For every $y\in A$, equaiton \eqref{eq:ConformallyInvariantKelvinu-u} and estimates \eqref{eq:KernelDifferenceInequality} and \eqref{eq:vGEQvLambda} give
\begin{eqnarray*}
	0
	& = & 
	u_{\bar\lambda}(y) - u(y)
	\\
	& = & 
	\int_{\bb R_+^n\setminus B_{\bar\lambda}^+}
	\left(
	\frac{x_n^\beta}{\abs{x - y}^{n - \alpha}}
	-
	\left(\frac{\bar\lambda}{\abs{x}}\right)^{n - \alpha}
	\frac{x_n^\beta}{|x^{\bar\lambda} - y|^{n - \alpha}}
	\right)
	\left(v_{\bar\lambda}^\kappa(x) - v^\kappa(x)\right)\; \d x
	\\
	& \leq & 
	0 
\end{eqnarray*}
and consequently $v_{\bar\lambda} = v$ a.e. in $\bb R_+^n\setminus B_{\bar \lambda}^+$. Using this in \eqref{eq:ConformallyInvariantKelvinu-u} and in view of \eqref{eq:KernelDifferenceInequality} we find that $u = u_{\bar\lambda}$ a.e. in $\bdy\bb R_+^n\setminus B_{\bar\lambda}^{n-1}$. A similar computation shows that if there is a positive-measure subset of $\bb R_+^n\setminus B_{\bar\lambda}$ on which $v = v_{\bar\lambda}$ then alternative (b) holds. The claim is established. \\

To complete the proof of Lemma \ref{lemma:SymmetrySphere} we assume for the sake of obtaining a contradiction that $u> u_{\bar \lambda}$ a.e. in $\bdy \bb R_+^n\setminus \overline B_{\bar\lambda}$ and that $v> v_{\bar \lambda}$ a.e. in $\bb R_+^n\setminus \overline B_{\bar\lambda}$. We consider two cases depending on whether $\alpha>1$ or $\alpha \leq 1$. \\
If $\alpha \geq 1$ (and hence $\theta \geq 1$) then estimating as in the proof of Lemma \ref{lemma:ConformallyInvariantMovingSpheresCanStart} (see estimates \eqref{eq:Kelvinu-uSecondLebesgueEstimate}, \eqref{eq:Kelvinv-vSecondLebesgueEstimate}) gives a constant $C_0(n, \alpha, \beta)>0$ such that for all $\lambda>0$ both 
\begin{equation*}
\begin{array}{lcl}
\multicolumn{3}{l}{
	\displaystyle
	\norm{u_\lambda - u}_{L^{\theta + 1}(\mathcal B^{n-1}(\lambda))}
	}
	\\
	&\leq&
	\ifdetails 
	\displaystyle
	C_0\norm{u_\lambda}_{L^{\theta + 1}(\mathcal B^{n-1}(\lambda))}^{\theta - 1}\norm{v_\lambda}_{L^{\kappa + 1}(\mathcal B^n(\lambda))}^{\kappa -1}
	\norm{u_\lambda - u}_{L^{\theta + 1}(\mathcal B^{n-1}(\lambda))}
	\\
	& = & 
	\fi 
	\displaystyle
	C_0\norm{u}_{L^{\theta + 1}(\mathcal B^{n-1}(\lambda)^\lambda)}^{\theta - 1}\norm{v}_{L^{\kappa + 1}(\mathcal B^n(\lambda)^\lambda)}^{\kappa -1}
	\norm{u_\lambda - u}_{L^{\theta + 1}(\mathcal B^{n-1}(\lambda))}
\end{array}
\end{equation*}
and
\begin{equation*}
\begin{array}{lcl}
\multicolumn{3}{l}{
	\displaystyle
	\norm{v_\lambda - v}_{L^{\kappa + 1}(\mathcal B^n(\lambda))}
	}
	\\
	&\leq&
	\ifdetails 
	\displaystyle
	C_0\norm{u_\lambda}_{L^{\theta + 1}(\mathcal B^{n-1}(\lambda))}^{\theta - 1}\norm{v_\lambda}_{L^{\kappa + 1}(\mathcal B^n(\lambda))}^{\kappa -1}
	\norm{v_\lambda - v}_{L^{\kappa + 1}(\mathcal B^n(\lambda))}
	\\
	& = & 
	\fi 
	\displaystyle
	C_0\norm{u}_{L^{\theta + 1}(\mathcal B^{n-1}(\lambda)^\lambda)}^{\theta - 1}\norm{v}_{L^{\kappa + 1}(\mathcal B^n(\lambda)^\lambda)}^{\kappa -1}
	\norm{v_\lambda - v}_{L^{\kappa + 1}(\mathcal B^n(\lambda))}. 
\end{array}
\end{equation*}	
If $\alpha \leq 1$ (and hence $\theta \leq 1$) then performing computations similar to those carried out in Case 2 of the proof of Lemma \ref{lemma:ConformallyInvariantMovingSpheresCanStart} (see estimate \eqref{eq:vLambda-vFinalEstimateTheta<1}) shows that for $1\leq \frac 1 \theta < r< \kappa$
\begin{equation*}
	\norm{v_\lambda- v}_{L^{\kappa + 1}(\mathcal B^n(\lambda))}
	\leq
	C\norm{u}_{L^{\theta + 1}(\mathcal B^{n - 1}(\lambda)^\lambda)}^{\theta - \frac 1 r}\norm{v}_{L^{\kappa + 1}(\mathcal B^n(\lambda)^\lambda)}^{\frac \kappa r - 1}
	\norm{v_\lambda- v}_{L^{\kappa + 1}(\mathcal B^n(\lambda))}. 
\end{equation*}
Based on these estimates, the desired contradiction will be obtained once we show that $\abs{\mathcal B^{n - 1}(\lambda)^\lambda}\to 0$ as $\lambda \to \bar\lambda^+$. First note that for any $0< \epsilon\leq \bar\lambda$ and any $R>\bar\lambda + \epsilon$, if $\bar\lambda\leq\lambda\leq\bar\lambda + \epsilon$ then we have 
\begin{equation*}
	\mathcal B^{n - 1}(\lambda)
	\subset A(\bar\lambda, \bar\lambda + \epsilon) \cup A(R, \infty)\cup A(\bar\lambda + \epsilon, R; \lambda, 0). 
\end{equation*}
Moreover, for such $\lambda$ each of the following three estimates hold
\begin{eqnarray*}
	\abs{A(\bar\lambda, \bar\lambda + \epsilon)^\lambda}
	&\leq & 
	\abs{A(\bar\lambda^2(\bar\lambda + \epsilon)^{-1}, (\bar\lambda + \epsilon)^2\bar\lambda^{-1})}
	\leq
	C(n)\bar\lambda^{n - 2}\epsilon,
	\\
	\abs{A(R, \infty)^\lambda}
	&\leq & 
	\abs{B^{n-1}(0, (\bar\lambda + \epsilon)^2R^{-1})}
	\leq
	C(n) \bar\lambda^{2(n-1)}R^{1-n}
\end{eqnarray*}
and 
\begin{eqnarray*}
	\abs{A(\bar\lambda+\epsilon, R; \lambda, 0)^\lambda}
	& = &
	\int_{A(\bar\lambda+\epsilon, R; \lambda, 0)^\lambda}\; \d y
	\\
	& = & 
	\int_{A(\bar\lambda+\epsilon, R; \lambda, 0)}\left(\frac{\lambda}{\abs y}\right)^{2(n - 1)}\; \d y
	\\
	& \leq & 
	\abs{A(\bar\lambda+\epsilon, R; \lambda, 0)}. 
\end{eqnarray*}
Thus, it suffices to show that for every $0< \epsilon< \bar\lambda$ and every $R> 2\bar\lambda$, the limit 
\begin{equation*}
	\lim_{\lambda\to \bar\lambda^+}\abs{A(\bar\lambda + \epsilon, R; \lambda, 0)}= 0
\end{equation*}
holds. We prove this by way of contradiction. Accordingly, suppose there is $0< \epsilon\leq \bar\lambda $, $R>2\bar\lambda$, a sequence $\lambda_k\to \bar\lambda^+$ and $\ell>0$ such that $\abs{A(\bar\lambda + \epsilon, R; \lambda_k, 0)}> \ell$ for all $k$. Observe first that there is $\delta>0$ depending on $n, \alpha, \beta, R, \epsilon$ and the distribution function of $v^\kappa - v_{\bar\lambda}^\kappa$ such that 
\begin{equation*}
	u(y) - u_{\bar\lambda}(y) \geq \delta
	\qquad
	\text{for all } y\in A(\bar\lambda + \epsilon, R). 
\end{equation*}
Now choose $M>0$ sufficiently large so that $2\abs{\{y\in \bdy \bb R_+^n: u(y)> M\}}< \ell$. Then we have $A(\bar\lambda + \epsilon, R; \lambda_k, 0)\subset A(\bar\lambda + \epsilon; R; \lambda_k, 0; M)\cup\{y\in \bdy \bb R_+^n: u(y)> M\}$ and consequently
\begin{equation}
\label{eq:Fingers}
	\ell < 2\abs{A(\bar\lambda + \epsilon, R; \lambda_k, 0; M)}. 
\end{equation}
For any $y\in A(\bar\lambda + \epsilon, R; \lambda_k, 0; M)$ and any $h\in C^0\cap L^{\theta+ 1}(\bdy \bb R_+^n)$ there is $k(h)\in \bb N$ such that for all $k> k(h)$
\begin{eqnarray*}
	\delta
	&\leq & 
	u(y) - u_{\bar\lambda}(y)
	\\
	& < & 
	u_{\lambda_k}(y) - u_{\bar\lambda(y)}
	\\
	&\leq & 
	\left(\left(\frac{\lambda_k}{\abs y}\right)^{n - \alpha} - \left(\frac{\bar\lambda}{\abs y}\right)^{n - \alpha}\right)u(y^{\bar\lambda})
	+ 
	\left(\frac{\lambda_k}{\abs y}\right)^{n - \alpha}\left(u(y^{\lambda_k}) - u(y^{\bar\lambda})\right)
	\\
	&\leq & 
	M(\bar\lambda + \epsilon)^{\alpha -n}\left(\lambda_k^{n - \alpha} - \bar\lambda^{n - \alpha}\right)
	+ 
	|u(y^{\lambda_k}) - h(y^{\lambda_k})| + |h(y^{\lambda_k}) - h(y^{\bar\lambda})| 
	\\
	&&+ |h(y^{\bar\lambda}) - u(y^{\bar\lambda})|
	\\
	&\leq & 
	\frac \delta 2 
	+
	|u(y^{\lambda_k}) - h(y^{\lambda_k})| + |h(y^{\bar\lambda}) - u(y^{\bar\lambda})|. 
\end{eqnarray*}
Returning to \eqref{eq:Fingers} we find that for $k> k(h)$, 
\begin{eqnarray*}
	\ell
	& < & 
	2\abs{\{y\in A(\bar\lambda + \epsilon, R): \delta\leq 4|u(y^{\lambda_k}) - h(y^{\lambda_k})|\}}
	\\
	&&
	+ 
	2\abs{\{y\in A(\bar\lambda+\epsilon, R): \delta\leq 4|u(y^{\bar\lambda}) - h(y^{\bar\lambda})|\}}
	\\
	&\leq & 
	4\abs{\{y\in \bdy \bb R_+^n: \delta\leq 4\abs{u(y)- h(y)}\}}
	\\
	&\leq & 
	C(n, \alpha)\delta^{-\theta - 1}\norm{u - h}_{L^{\theta + 1}(\bdy \bb R_+^n)}^{\theta + 1}. 
\end{eqnarray*}
Choosing $h\in C^0\cap L^{\theta + 1}(\bdy \bb R_+^n)$ sufficiently close to $u$ in $L^{\theta + 1}$-norm gives the desired contradiction. \\

\end{proof}
The following Theorem, due to Frank and Lieb \cite{FrankLieb2010} characterizes the reflection-invariant, absolutely continuous measures on $\bb R^n$. Their theorem holds in the absence of the absolute continuity assumption but since we only need to apply the theorem to the absolutely continuous measure $f(y)\; \d y = u^{\theta + 1}(y)\; \d y$ we choose not to state the theorem in its full generality. 
\begin{theorem}
\label{theorem:FrankLiebClassification}
If $f\in L^1(\bb R^n)$ is a nonnegative function satisfying both 
\begin{enumerate}[(a)]
	\item For every $z\in \bb R^n$ there exists $\lambda(z)>0$ and a set of full measure in $\bb R^n$ on which 
	\begin{equation*}
		f(x)
		= 
		\left(\frac{\lambda(z)}{\abs{x - z}}\right)^{2n}f\left(z + \frac{\lambda(z)^2(x - z)}{\abs{x- z}^2}\right)
	\end{equation*}
	and
	\item For every $e\in \bb S^{n - 1}$ there exists $\lambda(e)\in \bb R$ and a set of full measure in $\bb R^n$ on which 
	\begin{equation*}
		f(x)
		= 
		f(x + 2(\lambda - x\cdot e)e)
	\end{equation*}
\end{enumerate}
then there are $c\geq0$, $d>0$ and $x_0\in \bb R^n$ such that 
\begin{equation*}
	f(x) = c\left(d^2 + \abs{x - x_0}^2\right)^{-n}. 
\end{equation*}
\end{theorem}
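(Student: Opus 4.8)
The plan is to first use hypothesis (b) to reduce to the case where $f$ is radially symmetric about a point, and then use hypothesis (a) to pin down the radial profile. Write $\mu_f$ for the finite measure $f(x)\,\d x$; if $f$ vanishes a.e.\ the conclusion holds with $c=0$, so assume $\norm{f}_{L^1(\bb R^n)}>0$. The elementary fact used repeatedly is that \emph{no nonzero finite measure on $\bb R^n$ is invariant under a nontrivial translation or a nontrivial dilation}: iterating such a map pushes all of the mass out to infinity (respectively concentrates it at the fixed point --- here absolute continuity rules out an atom), forcing the total mass to vanish. In particular the symmetry hyperplane in (b) perpendicular to a given direction is unique (two distinct parallel ones compose to a nonzero translation) and the radius $\lambda(z)$ in (a) is unique (two radii compose to a nontrivial dilation about $z$). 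Note also that (a) says precisely that $\mu_f$ is invariant under the inversion $\Psi_z$ in the sphere $\bdy B_{\lambda(z)}(z)$, since the Jacobian determinant of that inversion at $x$ is $(\lambda(z)/\abs{x-z})^{2n}$, and (b) says $\mu_f$ is invariant under the reflection $R_e$ in the hyperplane $H_e=\{x\cdot e=\lambda(e)\}$.

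Step 1 (reflections force radial symmetry). Choose an orthonormal basis $e_1,\dots,e_n$ and let $x_0$ be the unique common point of $H_{e_1},\dots,H_{e_n}$. Since each $R_{e_i}$ preserves $\mu_f$, so does $\iota:=R_{e_1}\circ\dots\circ R_{e_n}$, which in coordinates centered at $x_0$ with axes $e_1,\dots,e_n$ is the point reflection $x\mapsto 2x_0-x$. Fix an arbitrary $e\in\bb S^{n-1}$ and put $g:=\iota\circ R_e$, which also preserves $\mu_f$. A direct computation gives $g(x)=-x+2(x\cdot e)e+2x_0-2\lambda(e)e$, whence $g^2$ is the translation by $4(x_0\cdot e-\lambda(e))\,e$. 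By the rigidity fact this vector must vanish, i.e.\ $x_0\in H_e$. Therefore $\mu_f$ is invariant under reflection in every hyperplane through $x_0$; these reflections generate $O(n)$ acting about $x_0$, so $f$ is radial about $x_0$. Translating, we may take $x_0=0$ and write $f(x)=F(\abs{x})$ with $\int_0^\infty F(r)r^{n-1}\,\d r<\infty$.

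Step 2 (inversions identify $F$). Hypothesis (a) at $z=0$ produces $\lambda_0:=\lambda(0)$ with $F(r)=(\lambda_0/r)^{2n}F(\lambda_0^2/r)$; replacing $f(x)$ by $\lambda_0^{2n}f(\lambda_0 x)$ --- which preserves all the hypotheses --- we may assume $\lambda_0=1$, so that $\mu_f$ is invariant under the inversion $\Psi_0(x)=x/\abs{x}^2$ in the unit sphere. The cleanest way to see the rest is to transfer to $\bb S^n$ via a stereographic projection $\pi$ for which $(\pi^{-1})^{*}\d\sigma=2^n(1+\abs{y}^2)^{-n}\,\d y$, $\d\sigma$ the round surface measure: setting $\nu:=(\pi^{-1})_{*}\mu_f$, radiality makes $\nu$ invariant under all rotations of $\bb S^n$ fixing the two poles, invariance under $\Psi_0$ becomes invariance under the equatorial reflection (one checks this isometry of $\bb S^n$ corresponds under $\pi$ to $y\mapsto y/\abs{y}^2$), and the remaining inversions of (a) --- after conjugating by pole-fixing rotations and using uniqueness of $\lambda$ to get $\lambda(Rz)=\lambda(z)$ for $R\in O(n)$ --- give conformal involutions of $\bb S^n$ centered at the points of each latitude sphere, all of one latitude sharing a common conformal radius. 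Composing suitable pairs of these yields isometries of $\bb S^n$ that move the polar axis, so together with the pole-fixing rotations they generate $O(n+1)$; hence $\nu$ is $O(n+1)$-invariant and so equals a constant multiple of $\d\sigma$. Pulling back gives $\mu_f=c'(1+\abs{x}^2)^{-n}\,\d x$, and undoing the normalization of this step yields $f(x)=c(d^2+\abs{x-x_0}^2)^{-n}$. Equivalently, one can bypass $\bb S^n$: after Steps 1--2 we are exactly in the hypotheses of the classical moving-sphere \emph{calculus lemma} (Li--Zhu, Li--Zhang) applied to the radial profile $F$, which returns $F(r)=c(1+r^2)^{-n}$.

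I expect the main obstacle to be the final part of Step 2 --- verifying that the non-isometric conformal symmetries genuinely upgrade the symmetry group of $\nu$ (equivalently, of $F$) to full rotational symmetry on $\bb S^n$. The difficulty is only one of regularity: the continuous version of the calculus lemma is run by differentiating $z\mapsto\lambda(z)$, which is illegitimate for merely $L^1$ data, but hypothesis (b) has already been spent in Step 1 to secure radial symmetry, and for a radial profile the surviving functional equations can be handled by real-variable (monotone-function) arguments that need no differentiability. Collecting these steps --- and keeping the trivial case $c=0$ --- gives the stated classification.
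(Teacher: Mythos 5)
First, note that the paper does not prove this statement at all: it is quoted from Frank and Lieb \cite{FrankLieb2010}, so there is no in-paper argument to compare against and you are in effect reproving a cited result. With that said, your Step 1 is correct and complete: the rigidity facts (no nonzero finite measure is invariant under a nontrivial translation or dilation, and absolute continuity kills the would-be atom), the resulting uniqueness of $\lambda(e)$ and $\lambda(z)$, the identification of hypothesis (a) with invariance of $\mu_f$ under the inversion $\Psi_z$ via the Jacobian, and the computation showing $g^2$ is the translation by $4(x_0\cdot e-\lambda(e))e$ all check out, and reflections in all hyperplanes through $x_0$ do generate $O(n)$, so $f$ is a.e.\ radial about $x_0$.

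The genuine gap is the pivotal sentence of Step 2: \emph{``composing suitable pairs of these yields isometries of $\bb S^n$ that move the polar axis.''} The rigidity principle you invoke (a parabolic or loxodromic M\"obius map cannot preserve a nonzero finite non-atomic measure) shows only that any composition of two measure-preserving conformal involutions is \emph{elliptic}, i.e.\ conjugate in the M\"obius group to an isometry of $\bb S^n$ --- equivalently an isometry of \emph{some} round metric in the conformal class, a priori a different one for each pair. It does not show that any such composition is an isometry of the \emph{standard} round metric, which is exactly what you need in order to enlarge the isometry group of $\nu$ beyond the axis-stabilizer $O(n)\times O(1)$. What is missing is a single point of hyperbolic space $\bb H^{n+1}$ fixed by the whole invariance group $G$ of $\nu$: for instance, the conformal (Douady--Earle) barycenter $b(\nu)\in\bb H^{n+1}$ is $G$-equivariant, hence $G$-fixed, and since the subgroup $O(n)\times O(1)\subset G$ fixes only the center $o$, one gets $b(\nu)=o$, hence $G\subset O(n+1)$; then each $\psi_z$ is a genuine reflection of the round sphere (equivalently $\lambda(z)^2=1+\abs{z}^2$) and your generation argument finishes. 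Without some such input the claim is unsupported. Your fallback --- the Li--Zhu/Li--Zhang calculus lemma applied to the radial profile $F$ --- does not close the gap either: those lemmas are proved for pointwise-defined (continuous) functions by comparing values along rays and at infinity, and the promised ``real-variable, monotone-function arguments'' for merely-a.e.\ identities on an $L^1$ profile are never supplied.
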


The following two lemmas will be used. 
\begin{lemma}[\cite{Li2004} Lemma 5.7]
\label{lemma:LiLemma5.7}
Let $n\geq 1$ and $\mu\in \bb R$. If $f:\bb R^n\to \bb R$ satisfies 
\begin{equation*}
	\left(\frac{\lambda}{\abs{z - x}}\right)^\mu f\left(z +\frac{\lambda^2(x- z)}{\abs{x - z}^2}\right)
	\leq
	f(x)
\end{equation*}
for all $\lambda> 0, z\in \bb R^n$ and  $x\in \bb R^n\setminus B_\lambda(z)$ then $f$ is constant. 
\end{lemma}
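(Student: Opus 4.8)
The plan is a purely algebraic ``composition of inversions'' argument: two inversions through spheres centered at the same point compose to a dilation about that point, and chaining the hypothesis along such a composition collapses the conformal weights. No regularity of $f$ beyond the stated pointwise inequality is needed.

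First I would record the elementary identities for the inversion $x\mapsto x^{z,\lambda} := z + \lambda^2(x-z)/\abs{x-z}^2$: one has $\abs{x^{z,\lambda}-z} = \lambda^2/\abs{x-z}$ and, for any $a,b>0$, the composition rule $(x^{z,a})^{z,b} = z + (b/a)^2(x-z)$. Writing $f_{z,\lambda}(x) = (\lambda/\abs{x-z})^\mu f(x^{z,\lambda})$, the hypothesis is $f_{z,\lambda}(x)\le f(x)$ whenever $\abs{x-z}\ge\lambda$.

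Next I would chain the hypothesis. Fix $z\in\bb R^n$ and $0<\lambda_1<\lambda_2\le\abs{x-z}\le\lambda_2^2/\lambda_1$; the last restriction is exactly what is needed for $x^{z,\lambda_2}$ to lie outside $B_{\lambda_1}(z)$. Applying the hypothesis with radius $\lambda_2$ at the point $x$, and then with radius $\lambda_1$ at the point $x^{z,\lambda_2}$, and eliminating $f(x^{z,\lambda_2})$ between the two inequalities, the weights multiply to $(\lambda_1/\lambda_2)^\mu$ and one gets
\[
	\Bigl(\tfrac{\lambda_1}{\lambda_2}\Bigr)^\mu f\Bigl(z + \tfrac{\lambda_1^2}{\lambda_2^2}(x-z)\Bigr)\le f(x).
\]
Setting $t=(\lambda_1/\lambda_2)^2\in(0,1)$, and observing that for any prescribed $t\in(0,1)$, any $z$ and any $x\ne z$ one can choose admissible $\lambda_1<\lambda_2$ with that ratio (the constraint $\sqrt{t}\,\abs{x-z}\le\lambda_2\le\abs{x-z}$ has solutions), this upgrades to
\[
	t^{\mu/2}f\bigl(z+t(x-z)\bigr)\le f(x)\qquad\text{for all }t\in(0,1),\ z\in\bb R^n,\ x\ne z.
\]

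Finally I would reparametrize: given arbitrary distinct $w,x\in\bb R^n$ and any $t\in(0,1)$, the choice $z=(w-tx)/(1-t)$ (which is $\ne x$ precisely because $w\ne x$) gives $z+t(x-z)=w$, so the last display becomes $t^{\mu/2}f(w)\le f(x)$ for every $t\in(0,1)$. Letting $t\to1^-$ yields $f(w)\le f(x)$; exchanging $w$ and $x$ gives the reverse inequality, so $f(w)=f(x)$ for all $w,x$, i.e. $f$ is constant. The argument has no genuinely hard step; the only care needed is in the bookkeeping of the domain conditions when chaining (checking that $x^{z,\lambda_2}\notin B_{\lambda_1}(z)$ and that every target ratio $t$ is realizable by admissible radii), together with noting that the terminal limit $t\to1^-$ is harmless for every real $\mu$ since $t^{\mu/2}\to1$.
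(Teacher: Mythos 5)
Your argument is correct, and it is self-contained where the paper simply cites \cite{Li2004} without reproducing the proof. The bookkeeping all checks out: the composition rule $(x^{z,a})^{z,b}=z+(b/a)^2(x-z)$ is right, the domain condition for the second application is exactly $\abs{x-z}\le \lambda_2^2/\lambda_1$, the two conformal weights do multiply to $(\lambda_1/\lambda_2)^\mu$ (and since each weight is a positive scalar, chaining the inequalities is legitimate with no sign hypothesis on $f$), the choice $\lambda_2=\abs{x-z}$, $\lambda_1=\sqrt t\,\abs{x-z}$ realizes every ratio, and the terminal limit $t\to1^-$ of the scalar inequality $t^{\mu/2}f(w)\le f(x)$ is harmless for arbitrary real $\mu$ and arbitrary real values of $f$.

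The route is genuinely different from Li's original one. Li proves the lemma in a single step: given $a\ne b$, he places the center $z$ far out on the ray from $a$ through $b$ and chooses $\lambda^2=\abs{a-z}\,\abs{b-z}$ so that the inversion sends $a$ to $b$ exactly; the constraint $\abs{a-z}\ge\lambda$ holds because $z$ is on the far side of $b$, and the weight $(\lambda/\abs{a-z})^\mu=(\abs{b-z}/\abs{a-z})^{\mu/2}\to1$ as $z\to\infty$, giving $f(b)\le f(a)$ directly. Your version instead composes two inversions about a common center into a dilation and then moves the center to hit an arbitrary target point; it is slightly longer but makes the underlying group-theoretic mechanism (inversions generate dilations, and the weights are a cocycle that cancels along the composition) explicit, and it uses only the two identities you state. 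Either argument is acceptable here; both use nothing beyond the pointwise hypothesis.
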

%
%
%
\begin{proof}[Proof of Theorem \ref{theorem:ExtremalSystemClassification}]
First we claim that if there exists $z_0\in \bdy \bb R_+^n$ such that $\bar\lambda(z_0)< \infty$ then $\bar \lambda(z)< \infty$ for every $z\in \bdy \bb R_+^n$. To see that the claim holds, observe that for any $z\in \bdy \bb R_+^n$ and any $0< \lambda< \bar\lambda(z)$  we have $u\geq u_{z, \lambda}$  a.e. in $\bdy \bb R_+^n\setminus B_\lambda$. Consequently, 
\ifdetails 
{\tt\%---------- (10/26/17) $u$ is not pointwise-defined this needs adjusting --------------\%}
\fi 
\begin{equation*}
	\liminf_{\abs y\to\infty} \abs{y}^{n - \alpha} u(y)
	\geq
	\liminf_{\abs y\to\infty} \abs{y}^{n - \alpha} u_{z, \lambda}(y)
	= 
	\lambda^{n - \alpha}u(z). 
\end{equation*}
On the other hand, since $\bar\lambda(z_0)< \infty$, by Lemma \ref{lemma:SymmetrySphere} we have
\begin{equation*}
	\liminf_{\abs y\to\infty} \abs{y}^{n - \alpha} u(y)
	= 
	\liminf_{\abs y\to\infty}\abs y^{n - \alpha} u_{z_0, \bar\lambda(z_0)}(y)
	= 
	\bar\lambda(z_0)^{n-\alpha} u(z_0). 
\end{equation*}
Combining the previous two computations gives
\begin{equation*}
	\lambda^{n - \alpha} u(z)
	\leq
	\bar\lambda(z_0)^{n - \alpha} u(z_0)
	\qquad
	\text{ for all }0< \lambda< \bar\lambda(z)
\end{equation*}
so $\bar\lambda(z)< \infty$. The claim is established. \\

Next, we claim that $\bar \lambda(z)<\infty$ for all $z\in \bdy \bb R_+^n$. To see this suppose for sake of obtaining a contradiction that there is $z\in \bdy \bb R_+^n$ such that $\bar\lambda(z) = +\infty$. In this case, the previous claim implies that $\bar\lambda(z) = \infty $ for all $z\in \bdy \bb R_+^n$. Specifically, for every $z\in \bdy \bb R_+^n$ and every $\lambda>0$ we have 
\begin{equation*}
	u(y)
	\geq
	\left(\frac{\lambda}{\abs{y - z}}\right)^{n - \alpha}
	u\left(z + \frac{\lambda^2(y- z)}{\abs{y - z}^2}\right)
\end{equation*}
a.e. in $\bdy \bb R_+^n\setminus B_\lambda(z)$. 
Since $u>0$, applying Lemma \ref{lemma:LiLemma5.7} to $u$ shows that $u = C_0$ for some constant $C_0>0$. This contradicts the assumption $u\in L^{\theta + 1}(\bdy \bb R_+^n)$. The claim is established. \\

By the previous two claims we have $\bar \lambda(z)< \infty$ for all $z\in \bdy \bb R_+^n$. Lemma \ref{lemma:SymmetrySphere} guarantees that both \eqref{eq:uSymmetries} and \eqref{eq:vSymmetries} hold for all $z\in \bdy \bb R_+^n$. In particular $u^{\theta + 1}$ satisfies item (a) of Theorem \ref{theorem:FrankLiebClassification} on $\bb R^{n - 1}$. Next we argue that $u^{\theta+1}$ satisfies the hypothesis (b) of Theorem \ref{theorem:FrankLiebClassification}.  Since the argument is similar to arguments we've already carried out in detail, we only give an outline of the argument. Arguing similarly to the proofs of Lemmas \ref{lemma:KeyEstimatesForMovingPlanes} and \ref{lemma:LambdaBarWellDefined} (with $\sigma = \tau = 0$) we find that for all $e\in \bb S^{n - 2}\subset\bdy \bb R_+^n$ there exists $\lambda(e)\in \bb R$ such that if $\mu\geq \lambda(e)$ then both $u\leq u\circ R_{e, \mu}$ a.e. in $\{y\cdot e\geq \mu\}\cap \bdy \bb R_+^n$ and $v\leq v\circ R_{e, \mu}$ a.e. in $\{x\cdot e\geq \mu\}\cap \bb R_+^n$, where $R_{e,\mu}(x) = x + 2(\mu- x\cdot e)e$ is the reflection of $x$ about the hyperplane $\{x\cdot e = \mu\}$. Define $\lambda^*(e)$ to be the supremum over the collection of all such $\lambda(e)$ and observe that $\lambda^*(e)< \infty$ by the existence $\lambda(-e)$. Arguing similarly to the proof of Lemma \ref{lemma:SoftInequalityAtMaximalPlane} we find both that $u\leq u\circ R_{e, \lambda^*(e)}$ a.e. in $\{y\cdot e\geq \lambda^*(e)\}\cap \bdy \bb R_+^n$ and that $v\leq v\circ R_{e, \lambda^*(e)}$ a.e. in $\{x\cdot e\geq \lambda^*(e)\}\cap \bb R_+^n$. Moreover, exactly one of the following alternatives must hold:
\begin{enumerate}[(a)]	
	\item $u< u\circ R_{e, \lambda^*(e)}$ a.e. in $\{y\cdot e > \lambda^*(e)\}\cap \bdy \bb R_+^n$ and $v< v\circ R_{e, \lambda^*(e)}$ a.e. in $\{x\cdot e> \lambda^*(e)\}\cap \bb R_+^n$, or
	\item $u= u\circ R_{e, \lambda^*(e)}$ a.e. in $\{y\cdot e \geq \lambda^*(e)\}\cap \bdy \bb R_+^n$ and $v= v\circ R_{e, \lambda^*(e)}$ a.e. in $\{x\cdot e\geq \lambda^*(e)\}\cap \bb R_+^n$. 
\end{enumerate}
Arguing similarly to the proof of Lemma \ref{lemma:PlaneLambdaBar=0} we find that if alternative (a) holds then one can contradict the maximality of $\lambda^*(e)$ and thus, alternative (b) must hold. Finally, since $R_{e, \lambda^*(e)}^2 = {\rm id}$, alternative (b) is sufficient to conclude that $u = u\circ R_{e, \lambda^*(e)}$ a.e. on $\bdy \bb R_+^n$ and that $v = v\circ R_{e, \lambda^*(e)}$ a.e. in $\bb R_+^n$. In particular, $u^{\theta + 1}$ satisfies hypothesis (b) of Theorem \ref{theorem:FrankLiebClassification}. Applying Theorem \ref{theorem:FrankLiebClassification} to $u^{\theta + 1}$ guarantees the existence of $c_1>0$, $d>0$ and $y_0\in \bdy \bb R_+^n$ such that \eqref{eq:uClassified} holds. 
\end{proof}
%
\section{Appendix}
\label{section:Appendix}
%
\begin{lemma}
\label{lemma:MappingPropertiesBallExtension}
Let $n\geq 2$. If $\alpha, \beta$ satisfy \eqref{eq:MinimalAlphaBeta} and \eqref{eq:AlphaBetaSubAffine} then $E_B$ maps $L^\infty(\bdy B)$ into $L^{2n/(n - \alpha - 2\beta)}(B)$. 
\end{lemma}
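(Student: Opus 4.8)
The plan is to reduce Lemma~\ref{lemma:MappingPropertiesBallExtension} to a one–variable integrability computation in the boundary layer of $B$. Since $H\geq 0$, for $f\in L^\infty(\bdy B)$ we have the pointwise bound $\abs{E_Bf(\xi)}\leq\norm{f}_{L^\infty(\bdy B)}\,E_B1(\xi)$, so it suffices to prove $E_B1\in L^q(B)$ with $q=\frac{2n}{n-\alpha-2\beta}$. Note first that $q$ is finite: assumption \eqref{eq:MinimalAlphaBeta} gives $\alpha+2\beta=(\alpha+\beta)+\beta<(n-\beta)+\beta=n$, and since $\alpha+\beta>0$ and $\beta\geq0$ one also has $\alpha+2\beta>0$, so in fact $q>2$. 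Next, the elementary kernel estimate \eqref{eq:BallKernelUpperBound} yields $0\leq E_B1(\xi)\leq\Phi(\xi):=\int_{\bdy B}\abs{\xi-\zeta}^{\mu-n}\,\d S_\zeta$, where $\mu:=\alpha+\beta\in(0,n)$. It therefore remains to show $\Phi\in L^q(B)$.

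The function $\Phi$ is continuous on $B$ and hence bounded on $\overline{B_{1/2}}$, so only its behaviour near $\bdy B$ matters. By rotational symmetry $\Phi(\xi)$ depends only on $\abs\xi$; writing $\xi=re_1$ with $\tfrac12\leq r<1$ and $\rho=1-r$, I would use $\abs{\xi-\zeta}^2=\rho^2+2r(1-\cos\theta)\geq c\,(\rho^2+\theta^2)$, where $\theta$ is the angle between $\xi$ and $\zeta$, together with $\d S_\zeta=c_n(\sin\theta)^{n-2}\,\d\theta\,\d\sigma$ and $\sin\theta\leq\theta$, to obtain
\[
	\Phi(\xi)\leq C\int_0^\pi(\rho^2+\theta^2)^{\frac{\mu-n}{2}}\theta^{n-2}\,\d\theta
	=C\,\rho^{\mu-1}\int_0^{\pi/\rho}(1+s^2)^{\frac{\mu-n}{2}}s^{n-2}\,\d s,
\]
the last equality by the substitution $\theta=\rho s$. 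Since the integrand of $\int_0^\infty(1+s^2)^{(\mu-n)/2}s^{n-2}\,\d s$ behaves like $s^{\mu-2}$ as $s\to\infty$, that integral converges when $\mu<1$, is $O(\log(1/\rho))$ when $\mu=1$, and is $O(\rho^{1-\mu})$ when $\mu>1$; inserting this gives
\[
	\Phi(\xi)\leq C\begin{cases}(1-\abs\xi)^{\mu-1}, & \mu<1,\\ 1+\abs{\log(1-\abs\xi)}, & \mu=1,\\ 1, & \mu>1.\end{cases}
\]

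Finally I would integrate these bounds over the shell $B\setminus B_{1/2}$; passing to polar coordinates, the shell integral is controlled, up to a constant, by $\int_0^{1/2}\psi(\rho)^q\,\d\rho$ with $\psi$ the right–hand side above. For $\mu>1$ this is the integral of a constant, and for $\mu=1$ it is $\int_0^{1/2}(1+\abs{\log\rho})^q\,\d\rho<\infty$; both are finite with no extra hypothesis. For $\mu<1$ it equals $\int_0^{1/2}\rho^{q(\mu-1)}\,\d\rho$, which is finite precisely when $q(1-\mu)<1$, and a direct computation shows that $q(1-\mu)<1$ is equivalent to $\alpha(2n-1)+2\beta(n-1)>n$, which is in turn equivalent to assumption \eqref{eq:AlphaBetaSubAffine}. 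Together with the bound on $B_{1/2}$ this gives $\Phi\in L^q(B)$ and completes the argument. The only step that takes genuine care is this last equivalence, which shows that \eqref{eq:AlphaBetaSubAffine} is exactly the integrability threshold; everything else is a routine estimate of a spherical Riesz potential, so I do not anticipate a real obstacle.
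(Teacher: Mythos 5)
Your argument is correct, and it reaches the same quantitative conclusion as the paper — the bound $E_B1(\xi)\lesssim(1-\abs\xi)^{\alpha+\beta-1}$ near $\bdy B$ (with a logarithm at the threshold $\alpha+\beta=1$) followed by the observation that \eqref{eq:AlphaBetaSubAffine} is precisely the condition $\tfrac{2n(1-\alpha-\beta)}{n-\alpha-2\beta}<1$ making that power $q$-integrable in the boundary layer — but by a genuinely different route. The paper splits into the cases $\alpha+\beta>1$, ($\alpha+\beta\leq1$, $\alpha<1$), and ($\alpha=1$, $\beta=0$), and in the latter two it transplants $\Phi_\alpha$ to the upper half space via the conformal map $T$ of \eqref{eq:BallToUHSConformalMap}, using \eqref{eq:DistanceBetweenPreimages}--\eqref{eq:DistanceToBoundaryUnderPreimage} to identify the exact limiting constant $\lim_{\abs\xi\to1}(1-\abs\xi^2)^{1-\alpha}\Phi_\alpha$; you instead absorb the factor $(1-\abs\xi^2)^\beta$ into the kernel via \eqref{eq:BallKernelUpperBound} and estimate the spherical Riesz potential $\int_{\bdy B}\abs{\xi-\zeta}^{\mu-n}\,\d S_\zeta$ directly in polar coordinates with the scaling $\theta=\rho s$. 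Your version is more elementary and self-contained (no conformal machinery), and since both routes produce the same boundary exponent $\alpha+\beta-1$, nothing is lost for the purposes of this lemma; what the paper's computation buys is the precise asymptotic constant, in the same spirit as the limits it needs later in Theorem \ref{theorem:UHSConformalExtremalClassification}. Your algebra checking that $q(1-\mu)<1$, $\alpha(2n-1)+2\beta(n-1)>n$, and \eqref{eq:AlphaBetaSubAffine} are all equivalent is correct, and that is indeed the only place the hypothesis \eqref{eq:AlphaBetaSubAffine} enters.
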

\begin{proof}
It is sufficient to show that $E_B1\in L^{2n/(n - \alpha - 2\beta)}(B)$. We split the proof into cases. \\
{\bf Case 1:} Assume $\alpha + \beta >1$. In this case, for all $(\xi, \zeta)\in B\times \bdy B$ we have 
\begin{equation*}
	H(\xi, \zeta)
	\leq 
	\abs{\xi - \zeta}^{\alpha +\beta - n}
	\leq
	2^{n-\alpha - \beta}\abs{\pi(\xi) - \zeta}^{\alpha +\beta - n}, 
\end{equation*}
where $\pi(\xi) = \xi/\abs\xi$ if $\xi\in B\setminus\{0\}$ and $\pi(0) = 0$. Consequently for all $\xi\in B$
\begin{eqnarray*}
	\int_{\bdy B} H(\xi, \zeta)\; dS_\zeta
	& \leq & 
	C(n, \alpha, \beta)
	\int_{\bdy B}\abs{\pi(\xi) - \zeta}^{\alpha +\beta - n}\; \d S_\zeta
	\\
	& \leq & 
	C(n, \alpha, \beta)
	\int_{\bdy B}\abs{e_n - \zeta}^{\alpha + \beta - n}\; \d S_\zeta
	\\
	& \leq & 
	C(n, \alpha, \beta), 
\end{eqnarray*}
the final estimate holding as $\alpha + \beta >1$. In particular $E_B1\in L^\infty(B)\subset L^{2n/(n - \alpha - 2\beta)}(B)$. \\
{\bf Case 2:} Assume $0< \alpha + \beta \leq 1$. Case 2 will be split into two subcases. \\
{\bf Case 2 (a):} Assume $0< \alpha + \beta \leq 1$ and $\alpha < 1$. The function $\Phi_\alpha:B\to \bb R$ given by 
\begin{equation*}
	\Phi_\alpha(\xi)
	= 
	\int_{\bdy B}\abs{\xi - \zeta}^{\alpha - n}\; \d S_\zeta
\end{equation*}
satisfies $\Phi_\alpha(\xi) = \Phi_\alpha(\abs \xi e_n)$. Moreover, for any $\abs{\xi}< 2$, defining $x^0 = (0', x^0_n) = x^0_ne_n$, where
\begin{equation*}
	x_n^0 = 2\frac{1 - \abs\xi}{1 + \abs \xi}
\end{equation*}
we have $\abs{\xi}e_n = T^{-1}(x^0)$. Using equation \eqref{eq:DistanceBetweenPreimages} and the change of variable $\zeta = T^{-1}(y)$ we get
\begin{equation*}
	\Phi_\alpha(\abs{\xi}e_n)
	= 
	w(x^0)^{\alpha - n}\int_{\bdy \bb R_+^n}\frac{w(y)^{n + \alpha - 2}}{\left( (x_n^0)^2 + \abs y^2\right)^{(n - \alpha)/2}}\; \d y, 
\end{equation*}
where $w$ is as in \eqref{eq:SubcriticalWeight}. Using the change of variable $y\mapsto y/x_n^0$ and equation \eqref{eq:DistanceToBoundaryUnderPreimage} we obtain 
\begin{equation*}
	\left(\frac{1 - \abs\xi^2}{2}\right)^{1 - \alpha}\Phi_\alpha(\abs{\xi}e_n)
	= 
	w(x^0)^{-n - \alpha + 2}\int_{\bdy \bb R_+^n}\frac{w(x_ny)^{n + \alpha - 2}}{\left(1 + \abs y^2\right)^{(n - \alpha)/2}}\; \d y. 
\end{equation*}
The Dominated Convergence guarantees that 
\begin{equation*}
	\lim_{\abs \xi \to 1^-}\left(\frac{1 - \abs\xi^2}{2}\right)^{1 - \alpha}\Phi_\alpha(\abs{\xi}e_n)
	= 
	\int_{\bdy \bb R_+^n}\left(1 + \abs y^2\right)^{(n - \alpha)/2}\; \d y. 
\end{equation*}
Since $\Phi_\alpha\in C^0(B)$ there is $C = C(n, \alpha, \beta)>0$ such that for every $\xi \in B$, 
\begin{equation*}
	\int_{\bdy B}H(\xi, \zeta)\; \d S_\zeta
	= 
	\left(\frac{1 - \abs\xi^2}{2}\right)^\beta \Phi_\alpha (\xi)
	\leq
	C(n, \alpha, \beta)(1 - \abs\xi)^{\alpha + \beta - 1}. 
\end{equation*}
If $\alpha + \beta = 1$ then this gives $\xi\mapsto \int_{\bdy B}H(\xi, \zeta)\; \d S_\zeta \in L^\infty(B)\subset L^{2n/(n - \alpha - 2\beta)}(B)$. If $0< \alpha + \beta < 1$ then \eqref{eq:AlphaBetaSubAffine} guarantees that $0< \frac{2n(1 - \alpha - \beta)}{n - \alpha -2\beta}< 1$ and hence $\xi\mapsto \int_BH(\xi, \zeta)\; \d S_\zeta \in L^{2n/(n -\alpha - 2\beta)}(B)$. \\
{\bf Case 2 (b):} Assume $\alpha = 1$ and $\beta = 0$. Computing similarly to Case 2 (a) we find that 
\begin{equation}
\label{eq:Phi1Decomposition}
	\Phi_1(\xi)
	= 
	\Phi_1(\abs \xi e_n)
	= 
	w(x^0)^{1 - n}\left(I(x^0) + II(x^0)\right), 
\end{equation}
where 
\begin{equation*}
	I(x^0) = \int_{B_2^{n - 1}}\frac{w^{n - 1}(y)}{\abs{x^0  - y}^{n - 1}}\; \d y
\end{equation*}
and 
\begin{equation*}
	II(x^0) = \int_{\bdy\bb R_+^n\setminus B_2^{n - 1}}\frac{w^{n - 1}(y)}{\abs{x^0  - y}^{n - 1}}\; \d y. 
\end{equation*}
For $x_n^0< 1$ 
\begin{equation}
\label{eq:SecondIntegralEstimate}
	II(x^0)
	\leq
	C \int_{\bdy \bb R_+^n\setminus B_2^{n -1}}\frac{w^{n - 1}(y)}{\abs y^{n - 1}}\; \d y
	\leq
	C(n)\int_{\bdy \bb R_+^n\setminus B_2^{n - 1}}\abs y^{-2(n - 1)}\; \d y
	\leq
	C(n). 
\end{equation}
Using the change of variable $y\mapsto y/x_n^0$ and the fact that $\norm{\Grad w}_{L^\infty(\bdy \bb R_+^n)}\leq C(n)$ gives
\begin{equation*}
	I(x^0)
	= 
	\int_{B^{n - 1}(0', 2/x_n^0)}\frac{w(x_n^0y)^{n - 1}}{\left( 1 + \abs y^2\right)^{(n - 1)/2}}\; \d y
	\leq
	\int_{B^{n - 1}(0', 2/x_n^0)}\frac{w(0)^{n - 1} + C(n) x_n^0\abs y}{\left( 1 + \abs y^2\right)^{(n - 1)/2}}\; \d y. 
\end{equation*}
Combining this estimate with the inequalities
\begin{equation*}
	\int_{B^{n - 1}(0', 2/x_n^0)}\left(1 + \abs y^2\right)^{(1- n)/2}\; d y
	\leq
	-C(n)\log x_n^0
\end{equation*}
and
\begin{equation*}
	\int_{B^{n - 1}(0', 2/x_n^0)}\frac{x_n^0\abs y}{\left( 1 + \abs y^2\right)^{(n - 1)/2}}\; \d y
	\leq
	C(n)
\end{equation*}
gives
\begin{equation}
\label{eq:FirstIntegralBoundaryLimsup}
	\limsup_{x_n^0\to 0^+} 
	\frac{-1}{\log x_n^0}I(x^0) 
	\leq
	C(n)w(0)^{n - 1}. 
\end{equation}
Finally, since $\log(x_n^0)/\log(1 - \abs\xi)\to 1$ as $\abs\xi \to 1$ using \eqref{eq:SecondIntegralEstimate} and \eqref{eq:FirstIntegralBoundaryLimsup} in \eqref{eq:Phi1Decomposition} gives
\begin{equation*}
	\limsup_{\abs \xi\to 1^-}\frac{-\Phi_1(\xi)}{\log(1 - \abs \xi)}
	\leq
	\ifdetails 
	C(n)w(0)^{1 - n}w(0)^{n - 1}
	\leq 
	\fi 
	C(n). 
\end{equation*}
Consequently there is $C(n)>0$ such that for all $\xi\in B$, 
\begin{equation*}
	\int_{\bdy B}H(\xi, \zeta)\; \d S_\zeta
	= 
	\Phi_1(\xi)
	\leq
	C(n)\abs{\log(1 - \abs \xi)}
	\in 
	L^{2n/(n - \alpha - 2\beta)}(B). 
\end{equation*}
\end{proof}
\bibliographystyle{plain}

\end{document}
